\newlength{\widehatchspread}
\newlength{\widehatchthickness}
\newlength{\widehatchshift}
\newcommand{\widehatchcolor}{}
\tikzset{hatchspread/.code={\setlength{\widehatchspread}{#1}},
	hatchthickness/.code={\setlength{\widehatchthickness}{#1}},
	hatchshift/.code={\setlength{\widehatchshift}{#1}},
	hatchcolor/.code={\renewcommand{\widehatchcolor}{#1}}}
\tikzset{hatchspread=3pt,
	hatchthickness=0.4pt,
	hatchshift=0pt,
	hatchcolor=black}
	\definecolor{navyblue}{rgb}{0.25, 0.4, 0.96}
	\definecolor{teal}{rgb}{0.04, 0.73, 0.71}
\DeclareFontFamily{OMX}{MnSymbolE}{}
\DeclareSymbolFont{MnLargeSymbols}{OMX}{MnSymbolE}{m}{n}
\DeclareFontShape{OMX}{MnSymbolE}{m}{n}{
	<-6>  MnSymbolE5
	<6-7>  MnSymbolE6
	<7-8>  MnSymbolE7
	<8-9>  MnSymbolE8
	<9-10> MnSymbolE9
	<10-12> MnSymbolE10
	<12->   MnSymbolE12
}{}
\DeclareFontShape{OMX}{MnSymbolE}{b}{n}{
	<-6>  MnSymbolE-Bold5
	<6-7>  MnSymbolE-Bold6
	<7-8>  MnSymbolE-Bold7
	<8-9>  MnSymbolE-Bold8
	<9-10> MnSymbolE-Bold9
	<10-12> MnSymbolE-Bold10
	<12->   MnSymbolE-Bold12
}{}
\let\llangle\@undefined
\let\rrangle\@undefined
\DeclareMathDelimiter{\llangle}{\mathopen}%
{MnLargeSymbols}{'164}{MnLargeSymbols}{'164}
\DeclareMathDelimiter{\rrangle}{\mathclose}%
{MnLargeSymbols}{'171}{MnLargeSymbols}{'171}
\newtheorem{theorem}{Theorem}[section]
\newtheorem{lemma}{Lemma}[section]
\newtheorem{proposition}{Proposition}[section]
\newtheorem{corollary}{Corollary}[section]
\theoremstyle{definition}
\newtheorem{definition}{Definition}[section]
\newtheorem{remark}{Remark}[section]
\newtheorem{example}{Example}[section]
\newcommand{\Crm}{\mathrm{C}}
\newcommand{\Grm}{\mathrm{G}}
\newcommand{\Irm}{\mathrm{I}}
\newcommand{\Lrm}{\mathrm{L}}
\newcommand{\Trm}{\mathrm{T}}
\newcommand{\Wrm}{\mathrm{W}}
\newcommand{\Acal}{\mathcal{A}}
\newcommand{\Bcal}{\mathcal{B}}
\newcommand{\Dcal}{\mathcal{D}}
\newcommand{\Ecal}{\mathcal{E}}
\newcommand{\Fcal}{\mathcal{F}}
\newcommand{\Gcal}{\mathcal{G}}
\newcommand{\Hcal}{\mathcal{H}}
\newcommand{\Lcal}{\mathcal{L}}
\newcommand{\Mcal}{\mathcal{M}}
\newcommand{\Ocal}{\mathcal{O}}
\newcommand{\Pcal}{\mathcal{P}}
\newcommand{\Qcal}{\mathcal{Q}}
\newcommand{\Rcal}{\mathcal{R}}
\newcommand{\Scal}{\mathcal{S}}
\newcommand{\Wcal}{\mathcal{W}}
\newcommand{\Xcal}{\mathcal{X}}
\newcommand{\Ffrak}{\mathfrak{F}}
\newcommand{\Ebf}{\mathbf{E}}
\newcommand{\Ibf}{\mathbf{I}}
\newcommand{\Nbf}{\mathbf{N}}
\newcommand{\Ybf}{\mathbf{Y}}
\newcommand{\Abb}{\mathbb{A}}
\renewcommand{\Bbb}{\mathbb{B}}
\newcommand{\Nbb}{\mathbb{N}}
\newcommand{\Sbb}{\mathbb{S}}
\newcommand{\Tbb}{\mathbb{T}}
\newcommand{\Xbb}{\mathbb{X}}
\newcommand{\Zbb}{\mathbb{Z}}
\DeclareMathOperator{\id}{id}
\DeclareMathOperator{\im}{Im}
\DeclareMathOperator*{\wslim}{w*-lim}
\DeclareMathOperator{\dist}{dist}
\DeclareMathOperator{\rank}{rank}
\DeclareMathOperator{\spn}{span}
\DeclareMathOperator{\supp}{supp}
\newcommand{\ac}[1]{{#1}^\mathrm{ac}}
\newcommand{\area}[1]{\mathrm{Area}(#1)}
\newcommand{\set}[2]{\left\{\, #1 \  \textup{\textbf{:}}\  #2 \,\right\}}
\newcommand{\setn}[2]{\{\, #1 \  \textup{\textbf{:}}\  #2 \,\}}
\newcommand{\setb}[2]{\bigl\{\, #1 \  \textup{\textbf{:}}\  #2 \,\bigr\}}
\newcommand{\setB}[2]{\Bigl\{\, #1 \  \textup{\textbf{:}}\  #2 \,\Bigr\}}
\newcommand{\setBB}[2]{\biggl\{\, #1 \  \textup{\textbf{:}}\  #2 \,\biggr\}}
\newcommand{\dpr}[1]{\langle #1 \rangle}
\newcommand{\dprb}[1]{\bigl\langle #1 \bigr\rangle}
\newcommand{\ddprb}[1]{\llangle#1\rrangle}
\newcommand{\ddprB}[1]{\Bigl\llangle #1 \Bigr\rrangle}
\newcommand{\cl}[1]{\overline{#1}}
\newcommand{\di}{\mathrm{d}}
\newcommand{\dd}{\;\mathrm{d}}
\newcommand{\N}{\mathbb{N}}
\newcommand{\R}{\mathbb{R}}
\newcommand{\Q}{\mathbb{Q}}
\newcommand{\loc}{\mathrm{loc}}
\newcommand{\sym}{\mathrm{sym}}
\newcommand{\per}{\mathrm{per}}
\newcommand{\reg}{\mathrm{reg}}
\newcommand{\sing}{\mathrm{sing}}
\newcommand{\toweak}{\rightharpoonup}
\newcommand{\toweakstar}{\overset{*}\rightharpoonup}
\newcommand{\toup}{\uparrow}
\newcommand{\todown}{\downarrow}
\newcommand{\embed}{\hookrightarrow}
\newcommand{\cembed}{\overset{c}{\embed}}
\newcommand{\BigO}{\mathrm{\textup{O}}}
\newcommand{\sbullet}{\begin{picture}(1,1)(-0.5,-2)\circle*{2}\end{picture}}
\newcommand{\frarg}{\,\sbullet\,}
\newcommand{\BV}{\mathrm{BV}}
\newcommand{\toY}{\overset{\Ybf}{\to}}
\newcommand{\eps}{\epsilon}
\DeclareMathOperator{\Tan}{Tan}
\newcommand{\proofstep}[1]{\textit{#1}}
\newcommand{\Leb}{\mathscr L}
\renewcommand{\eps}{\varepsilon}
\renewcommand{\phi}{\varphi}
\newcommand{\M}{\mathcal M}
\renewcommand*\env@matrix[1][*\c@MaxMatrixCols c]{%
	\hskip -\arraycolsep
	\let\@ifnextchar\new@ifnextchar
	\array{#1}}
\DeclareMathOperator{\Y}{\bf Y}
\DeclareMathOperator{\E}{\bf E}
\DeclareMathOperator{\e}{\bf E}
\DeclareMathOperator{\A}{\mathcal A}
\DeclareMathOperator{\B}{\mathcal B}
\DeclareMathOperator{\tr}{Tr}
\DeclareMathOperator{\Div}{div}
\newcommand{\mres}{\mathbin{\vrule height 1.6ex depth 0pt width
		0.13ex\vrule height 0.13ex depth 0pt width 1.3ex}}
\newcommand{\aveint}[2]{\mathchoice%
	{\mathop{\kern 0.2em\vrule width 0.6em height 0.69678ex depth -0.58065ex
			\kern -0.8em \intop}\nolimits_{\kern -0.45em#1}^{#2}}%
	{\mathop{\kern 0.1em\vrule width 0.5em height 0.69678ex depth -0.60387ex
			\kern -0.6em \intop}\nolimits_{#1}^{#2}}%
	{\mathop{\kern 0.1em\vrule width 0.5em height 0.69678ex depth -0.60387ex
			\kern -0.6em \intop}\nolimits_{#1}^{#2}}%
	{\mathop{\kern 0.1em\vrule width 0.5em height 0.69678ex depth -0.60387ex
			\kern -0.6em \intop}\nolimits_{#1}^{#2}}}
\title[Characterization of $\A$-free Young measures]{Characterization of generalized \\ Young measures generated by \\ $\Acal$-free measures}
\date{\today}
\author[A. Arroyo-Rabasa]{Adolfo Arroyo-Rabasa}
\address{Mathematics Institute, University of Warwick, Coventry CV4 7AL, UK}
\email{\href{mailto:adolforabasa@gmai.com}{adolforabasa@gmail.com},\href{mailto:Adolfo.Arroyo-Rabasa@warwick.ac.uk}{Adolfo.Arroyo-Rabasa@warwick.ac.uk}}
\subjclass[2010]{Primary 49J45, 49Q15; Secondary 46G10, 35B05.}
\keywords{$\mathcal{A}$-free measure, compensated compactness, generalized Young measure, concentration, oscillation, two-state problem, PDE constraint, constant rank operator.}
\begin{document}
	\maketitle

\begin{abstract} 
	We give two characterizations, one for the class of  generalized Young measures generated by $\A$-free measures  and one for the class  generated by $\Bcal$-gradient measures $\Bcal u$.
%
	Here, $\A$ and $\Bcal$ are linear homogeneous operators of arbitrary order, which we assume satisfy the   constant rank property.
The characterization places the class of generalized $\Acal$-free Young measures  in duality with the class of $\A$-quasiconvex integrands by means of a well-known  Hahn--Banach separation property. A similar statement holds for  generalized $\Bcal$-gradient Young measures. Concerning applications, we discuss several examples that showcase the rigidity or the failure of $\Lrm^1$-compensated compactness when concentration of mass is allowed. These include the failure of $\Lrm^1$-estimates for elliptic systems and the failure of $\Lrm^1$-rigidity for the two-state problem. As a byproduct of our techniques we also show that, for any bounded open set $\Omega$, the inclusions 
\begin{align*}
	\Lrm^1(\Omega) \cap \ker \Acal  & \hookrightarrow \M(\Omega) \cap \ker \A\,,\\
		\{\Bcal u\in \Crm^\infty(\Omega)\} & \hookrightarrow \{\Bcal u\in \Mcal(\Omega)\},
\end{align*}
	are dense with respect to the area-functional convergence of measures.  	
\end{abstract}

{
	\tableofcontents
}

\section{Introduction}

The last decades have witnessed an extensive  development of 
the study of {non-convex} variational energies related to equilibrium configurations of materials  in a wide range of physical models (such as the study of crystalline solids and thermoelastic materials, linear elasticity, perfect plasticity, micro-magnetics, and ferro-magnetics, among others~\cite{BallJames1987,chipot1988,desinone1993,james1992}). Often, these models consist in a minimization principle  for integrals of the form
\begin{equation}\label{eq:I}
w \mapsto I_f(w) \coloneqq \int_\Omega f(x,w(x)) \dd x,  
\end{equation}
where $\Omega \subset \R^d$ is an open and bounded set, $f : \Omega \times \R^N \to \R$ satisfies a uniform $p$-growth condition $|f(z)| \lesssim 1 + |z|^p$, and the  configurations $w : \Omega  \to \R^N$ obey a set of physical laws determined by a system of linear PDEs, where, depending on the particular model, either 
\begin{align}\label{eq:gov}
\Acal w & = 0  \!\!\!\!\!\!\!\!\!\!\!\!&&\text{in the sense of distributions on $\Omega$, or}\\
      w &= \Bcal u  \!\!\!\!\!\!\!\!\!\!\!\!&&\text{for some potential $u : \Omega \to \R^M$}.\label{eq:gov2}
\end{align}
We shall refer to the first scenario as the \emph{$\Acal$-free} framework and to the latter as the \emph{potential} or \emph{$\Bcal$-gradient} framework. In order to keep the exposition as simple as possible, we shall henceforth adopt the $\Acal$-free perspective.

In these circumstances, designs with near to minimal energy exhibit compatible equilibrium behavior at microscopical scales, while, at larger scales, configurations adapt by \emph{gluing} together the low energy patterns allowed by the governing equations in~\eqref{eq:gov}/\eqref{eq:gov2}. This interplay conveys the formation of finer and finer oscillations,  often resulting in some form of $\Lrm^p$-weak convergence $w_j \toweak w$ when $p > 1$, or {weak-$*$} convergence (in the sense of measures) 
when $p =1$ \cite{ambrosio1992-On_the_relaxation,baia2013lower-semiconti,fonseca2004mathcal-a-quasi,fonseca1993relaxation-of-q,kristensen2010relaxation-of-s,kristensen2010characterizatio,barroso2000a-relaxation-th,arroyo-rabasa2017lower-semiconti,rindler2011lower-semiconti,rindler2012lower-semiconti}.  In general, such weak forms of convergence are incompatible with the lower semicontinuity of the energy, which is usually the starting point for minimization principles. 
 Additionally, the case $p = 1$ may be ill-posed in the sense that, independently of the PDE-constraint,  a solution to the minimization problem may fail to exist. The reason is that $\Lrm^1$ is not reflexive and it naturally lacks of compactness properties that guarantee the existence of minimizers. To solve this, one relaxes the variational setting~\eqref{eq:I}-\eqref{eq:gov}/\eqref{eq:I}-\eqref{eq:gov2} to the minimization of  the extended energy functional  
\begin{equation}\label{eq:cI}
	\mu \mapsto \cl{I_f}(\mu) \coloneqq \int_{\Omega} f(x,\ac \mu(x)) \dd x + \int_{\cl \Omega} f^\infty \bigg(\frac{\mathrm d \mu}{\mathrm d|\mu|}(x)\bigg)\dd |\mu^s|,
\end{equation}
defined for measure-valued configurations $\mu \in \M(\Omega;\R^N) \cap \ker \A$ (or $\mu = \Bcal u$ for some potential $u \in \Mcal(\Omega;\R^M)$). Here, $f^\infty$ is a regularization at infinity called the \emph{strong recession function} of $f$, which is defined (provided that it exists) as
\begin{equation}\label{eq:res}
f^\infty(x,z) \coloneqq \lim_{\substack{x' \to x\\z' \to z\\t \to \infty}} \frac{f(x',tz')}{t} \qquad \text{for all $x \in \cl \Omega$ and $z  \in \R^N$}.
\end{equation}

In this paper we focus on the case $p = 1$, which requires a careful study of \emph{oscillations} and \emph{concentrations} occurring along weak-$*$ convergent sequences of measures satisfying~\eqref{eq:gov}/\eqref{eq:gov2}. 
In this regard, an equivalent approach towards the understanding of~\eqref{eq:I}-\eqref{eq:gov} consists of characterizing all {generalized} Young measures (see~\cite{diperna1987oscillations-an}) generated by sequences $\{\mu_j\} \subset \M(\Omega;\R^N) \cap \ker \A$. 
Let us recall that, formally, a generalized Young measure associated to a sequence $\{\mu_j\} \subset \Mcal(\Omega;\R^N)$ is a triple $\bm \nu = (\nu_x,\lambda,\nu^\infty_x)_{x \in \cl \Omega}$ conformed by a non-negative measure $\lambda \in \Mcal^+(\Omega)$ and two families $\{\nu_x\}$, $\{\nu_x^\infty\}$ of probability measures over the target space $\R^N$, satisfying the fundamental property that
%
%
\begin{equation*}
\begin{split}
\cl{ I_f}(\mu_j) \quad \longrightarrow  \quad & \ddprB{f,\bm \nu}\coloneqq \int_{\Omega} \underbrace{\bigg(\int_{\R^N} f(x,z) \dd \nu_x(z) \bigg)}_{\text{oscillation part}} \dd x \\
& \quad\qquad \qquad +  \int_{\cl\Omega}\underbrace{\bigg( \int_{\R^{N}} f^\infty(x,z) \dd \nu_x^\infty(z) \bigg)}_{\text{concentration part}} \dd \lambda(x),
\end{split}
\end{equation*}
for all sufficiently regular integrands $f : \Omega \times \R^N \to \R$ with linear growth at infinity. 

The main result of this paper is contained in~Theorem~\ref{thm:char} and states that a generalized Young measure $\bm \nu$, with zero boundary-values $\lambda(\partial \Omega) = 0$, is generated by a sequence $\Acal$-free measures if and only if (see Definition~\ref{def:qc})
\[
	f\left(\ddprB{\id_{\R^N},\bm \nu}\right) \le \ddprB{f,\bm \nu} \quad \text{for all  $\Acal$-quasiconvex integrands $f:\Omega \times \R^N \to \R$}\,.
\]
This separation result implies that the class of generalized $\Acal$-free Young measures is a convex set \emph{characterized by duality} in terms of all $\Acal$-quasiconvex integrands.  In addition to this duality characterization, we give a characterization in terms of the \emph{blow-up properties} of generalized Young measures (see Theorem~\ref{thm:local}). More precisely, we prove that $\bm \nu$ as above is generated by a sequence of $\Acal$-free measures if and only if its tangent cones $\Tan(\bm \nu,x)$ {almost always} contain a generalized Young measure that is generated by $\Acal$-free measures.
Lastly, in Theorem~\ref{lem:app}, we establish the following approximation result: if $\mu \in \Mcal(\Omega;\R^N)$ is a bounded $\Acal$-free measure, then there exists a sequence of $\Acal$-free functions $\{w_j\} \subset \Lrm^1(\Omega;\R^N)$ that converges to $\mu$ in the sense of the {generalized area functional}, i.e., 
\[
w_j\Leb^d \toweakstar \mu \; \text{as measures on $\Omega$, and}
\]
\[
\int_\Omega {\textstyle\sqrt{1 + |w_j|^2}} \dd x \to \int_\Omega \sqrt{1 + |\ac\mu|^2} \dd x + |\mu^s|(\Omega).
\]
We also prove analogous results in the $\Bcal$-potential setting~\eqref{eq:I}-\eqref{eq:gov2}, for generalized measures generated by sequences of the form $\{\Bcal u_j\} \subset \Mcal(\Omega;\R^N)$. These are contained in Theorem~\ref{thm:char2}, Theorem~\ref{thm:local2} and Theorem~\ref{lem:app2}.

\subsection{State of the art} 
The work of \textsc{Young} \cite{young1937generalized-cur,young1942generalized-sur,young1942generalized-surII} and the use of (classical) Young measures plays a fundamental role in representing solutions of optimal control problems. The study of Young measures,  from the point of view of partial differential equations,  started with the work of \textsc{Tartar \& Murat}, who, 
motivated by problems in continuum mechanics and electromagnetism, introduced the theory of {compactness by compensation}~\cite{arroyo-rabasa2017lower-semiconti,murat1978compacite-par-c,tartar1979compensated-com,tartar1983the-compensated,murat1985optimality-cond}. The first characterization of  Young measures in the PDE-constrained context is due to \textsc{Kinderlehrer \& Pedregal}~\cite{kinderlehrer1991characterizatio,kinder1994} for the potential configuration $w = \nabla u$, of a Sobolev function $u \in \Wrm^{1,p}(\Omega;\R^m)$ with $p > 1$. 
This characterization of \emph{$\Lrm^p$-gradient Young measures} accounts for the validity of Jensen's inequality between gradient Young measures and (curl-)quasiconvex integrands.\footnote{The importance of quasiconvexity in the calculus of variations was first observed by Morrey~\cite{Morrey1,Morrey2}, who showed quasiconvexity is a sufficient and necessary condition for the lower semicontinuity of~\eqref{eq:I}-\eqref{eq:gov2}, when $\Bcal = D$ is the gradient operator.} 
More precisely, the authors showed that 
a (purely oscillatory) family of probability distributions $\{\nu_x\}_{x \in \Omega}$ on the space of $m \times d$ matrices $M^{m\times d}$ 
is a  Young measure generated by a $p$-equi-integrable sequence of gradients $\nabla u_j \toweak \nabla u$  if and only if 
\begin{equation}\label{eq:kp}
{	f(\nabla u(x)) \le \int_{M^{m \times d}} f(z) \dd \nu_x(z) \quad \text{at $\Leb^d$-a.e. $x \in \Omega$,}}
\end{equation}
for all quasiconvex integrands $f :M^{m \times d} \to \R$ with $p$-growth at infinity. The characterization also covers the case $p =1$, but only when the generating sequences are assumed to be equi-integrable. The extension of this result to generalized Young measures generated by gradients, which is instead associated to the space $\BV(\Omega;\R^m)$ of functions of bounded variation, is due to \textsc{Kristensen \& Rindler}~\cite{kristensen2010characterizatio}. There, the authors show that a generalized Young measure $\bm \nu = (\nu_x,\lambda,\nu^\infty_x)_{x \in \cl{\Omega}}$ is generated by a sequence of gradient measures if and only if a version of~\eqref{eq:kp} holds for the absolutely continuous part of $\bm \nu$, that is,
\[
	f(\ac D u(x)) \le \int_{M^{m \times d}} f(z) \dd \nu_x(z) + \ac\lambda(x)\int_{M^{m \times d}} f^\infty(z) \dd \nu_x^\infty(z),
\] 
for all quasiconvex integrands $f :M^{m \times d} \to \R$ with linear-growth at infinity,
where $Du = \ac D u \Leb^d + D^s u$.
Somewhat surprisingly, this conveys that the nonlinear moments of the \emph{purely concentration} part $(\lambda^s,\nu_x^\infty)$ of $(\nu,\lambda,\nu^\infty)_{x \in \cl{\Omega}}$ are fully unconstrained. (This is a consequence of Alberti's rank one theorem~\cite{Alberti} and a recent rigidity result for positively homogeneous rank-one convex functions established by \textsc{Kirchheim \& Kristensen}~\cite{kirchheim2016on-rank-one-con}.)

The efforts to establish an $\A$-free variational theory for Young measures initiated with the work of \textsc{Dacorogna}~\cite{dacorogna1982weak-continuity}, who studied $\A$-free functions $w$ that are represented by potentials $w = \Bcal u$ where $\Bcal$ is a suitable first-order operator. However, it was 
 the seminal work of \textsc{Fonseca \& M\"uller} that laid the foundations for an {$\A$-free setting} under the \emph{more general} assumption of $\A$ satisfying the \emph{constant rank property}; see~\eqref{eq:cr} below.
 \footnote{A recent result of \textsc{Raita}~\cite{raitua2019potentials}, crucial to this work, establishes that \textsc{Dacorogna's} assumption and the constant rank assumption are locally equivalent, up to considering $\Bcal$ of higher-order (see Lemma~\ref{lem:raita}).}   
The authors generalized \textsc{Morrey's} notion of quasiconvexity to the $\A$-free setting and showed that the necessary and sufficient condition for the lower semicontinuity of~\eqref{eq:I}-\eqref{eq:gov}, under $p$-growth and $p$-equi-integrability assumptions, was precisely the $\A$-quasiconvexity of the integrand. 
\textsc{Fonseca \& M\"uller} also extended {Kinderlehrer--Pedregal}'s characterization theorem to the $\A$-free setting by showing that a family of probability distributions $\{\nu_x\}_{x \in \Omega} \subset \mathrm{Prob}(\R^N)$ is a Young measure generated by a $p$-equi-integrable sequence of $\A$-free maps $\{w_j\}$   if and only if the following two conditions hold:
\begin{enumerate}[(i)]
	\item there exists $w \in \Lrm^p(\Omega;\R^N)$ such that $\Acal w = 0$ and 
	\[
	w(x)  \equiv \int_{\R^N} z \dd \nu_x(z) \quad \text{as functions in $\Lrm^p(\Omega;\R^N)$,}
	\]
	\item   at $\Leb^d$-almost every $x \in \Omega$, Jensen's inequality  
	\[
	h(w(x)) \le \int_{\R^N}  h(z) \dd \nu_x(z) \dd x, 
	\]
	holds  and all $\A$-quasiconvex integrands $h :  \R^N \to \R$ with $p$-growth at infinity. 
\end{enumerate}
The generalization of this result to generalized Young measures without the $p$-equi-integrability assumption in the range $1 < p < \infty$ was later established by \textsc{Fonseca \& Kru\v{z}\'{\i}k}~\cite{kruzik}. 
In the generalized Young measure framework for $p=1$, the only characterization results are restricted to two well-known potential structures, gradients $\Bcal = D$~(\cite{kristensen2010characterizatio}) and symmetrized gradients  $\Bcal = E$ (\cite{de-philippis2017characterizatio}).\footnote{During the peer revision process of this work, \textsc{Kristensen \& Raita}~\cite[Thm~1.1]{kristensen2019oscillation} have proposed an interesting alternative proof of the characterization of $\Acal$-free generalized  Young measures (a version of Theorem~\ref{thm:char}), which seems to appeal to different methods to the ones presented here.}  The well-established proofs for the case when $\Bcal = D,E$ cited above rely on the strong \emph{rigidity properties} that gradients and symmetric gradients possess. However, such properties are not known to hold for general higher-order operators. Up to now,  the only $\Acal$-free result in the generalized setting was a partial characterization due to \textsc{Ba\'ia, Matias \& Santos}~\cite{baia2013}. There, the authors characterize all generalized Young measures generated by $\A$-free measures under the following somewhat restrictive assumptions: 
	(a) The operator $\A$ is assumed to be of first-order. This implies that its associated principal symbol map $\xi \mapsto \Abb(\xi)$ is a linear map. In turn, this allows for \emph{rigidity} and homogenization-type arguments which unfortunately fail for higher order operators.
	(b) The characterization is restricted to Young measures generated by sequences $\mu_j \toweakstar \mu$, where the limiting measure $\mu$ satisfies the following Morrey-type bound
	\begin{equation*}\label{baia}
	\sup_{r > 0} \frac{|\mu|(B_r(x))}{r^{1 + \alpha}} < \infty \qquad  \text{for some } \alpha > 0.
	\end{equation*}
	This upper-density bound on $\mu$ is in general too restrictive for applications as it rules out $1$-rectifiable measures. For instance, every non-degenerate closed smooth curve $\gamma : [0,1] \to \Gamma \subset \R^d$ defines a  divergence-free measure by setting $\mu = (\dot\gamma/|\dot\gamma|) \, \Hcal^1 \mres \Gamma$.

    The purpose of this work is to give a full characterization of all generalized Young measures generated by $\A$-free measures, as well as a characterization of generalized Young measures generated by $\Bcal$-gradients, for operators $\Acal$ and $\Bcal$ satisfying the constant rank property. Therefore, we extend the aforementioned results into a unified general setting that allows for the appearance of mass  concentrations  in the case $p =1$.
    Our strategy departs from previous ones (even in the case of gradients) in the sense that we do not work with \emph{averaged Young measure} approximations nor rely on the \emph{rigidity} of PDE-constrained measures. Instead, we work with  Lebesgue-point continuity properties and the gluing of local generating sequences at the level of \emph{potentials}; it is for this last point that the \emph{constant rank} property is fundamental because it guarantees Sobolev-type regularity estimates when the kernel of $\Acal$ (or $\Bcal$) is removed.  It is worth to mention that the characterization for $\Acal$-free measures presented here does not deal with characterizations up to the boundary. The main assumption being that a generating sequence $w_j : \Omega \to W$ does not concentrate mass on the boundary $\partial \Omega$. 
    In this regard, the work of \textsc{Ba\'ia, Kr\"{o}mer \& Kru\v{z}\'{\i}k}~\cite{BKK18} addresses the characterization of generalized gradient Young measures up to the boundary; such results for general operators $\Acal$ are  yet to be explored.

\subsection{Comments on the constant rank assumption} It is worthwhile to briefly discuss the role that the constant rank assumption plays for \emph{both} the $\Acal$-free setting and the $\Bcal$-potential setting. On the one hand, potentials allow for localizations of the form $\Bcal u\mapsto \Bcal (\phi u)$. In the case of gradients, these localizations are stable thanks to  
Poincar\'e's inequality $\|u\|_{\Lrm^p} \lesssim \|Du\|_{\Lrm^p}$.
For general $\Bcal$, this type of Poincar\'e estimates only holds after \emph{removing the kernel} of $\Bcal$, i.e., $\|u - \pi_\Bcal u\|_{\Lrm^p} \lesssim \|\Bcal u\|_{\Lrm^p}$, where $\pi_\Bcal$ is the ($\Lrm^2$-)projection onto $\ker \Bcal$. At the time the Fonseca-M\"uller characterization was given, one of the challenges \emph{was} the lack of a potential structure for $\A$-free fields. In this regard, {Fonseca and M\"uller's} strategy  in the $\A$-free setting  departs from the one of {Kinderlehrer and Pedregal}, because 
localizations had to be carried out at the level of the $\A$-free field $w \mapsto \phi w$. In order to handle this localization, the authors relied on the use of the \emph{equivalent} estimate  $\|w - \pi_\Acal w\|_{\Lrm^p} \lesssim \|\Acal w\|_{\Wrm^{-k,p}}$, often referred as the \emph{Fonseca-M\"uller projection} in the calculus of variations (it is, in fact, a Calder\'on--Zygmund-type bound).
%
%
In either case, the constant rank property is a sufficient and {necessary} condition  for the boundedness of both Poincar\'e's and Fonseca--M\"uller's estimates (see~\cite{guerra}). 

While most of the physically relevant applications are modeled by operators that \emph{do} satisfy the constant rank property (see Section~\ref{sec:ex}), the variational theory in the setting~\eqref{eq:I}-\eqref{eq:gov}/\eqref{eq:I}-\eqref{eq:gov2} is not restricted to operators satisfying the constant rank property.
Notably, \textsc{M\"uller}~\cite{muller_diagonal} characterized all (classical) Young measures generated by diagonal gradients. This setting is associated to the operator $\Acal (w_1,w_2) = (\partial_2 w_1,\partial_1 w_2)$, which is one of the simplest examples of an operator that does not satisfy the constant rank property (see~\cite{tartar1979compensated-com}). Other related work about the understanding of PDE-constraints where the constant rank property is not a main assumption include~\cite{de-philippis2017characterizatio,arroyo-rabasa2018dimensional-est,tartar}, and more recently~\cite{new}.
 
\subsection{Set-up and main results} We assume throughout the paper that $U \subset \R^d$ is an open set, and that $\Omega \subset \R^d$ is an open and bounded set satisfying $\Leb^d(\partial \Omega) = 0$. Here, $\Leb^d$ denotes the $d$-dimensional Lebesgue measure.

 We work with a homogeneous partial differential operator  
$\A$ (or $\Bcal$), from $W$ to $X$ (or $V$ to $W$), of the form
\begin{equation}\label{eq:A}
\Acal = \sum_{|\alpha|=k} A_\alpha \partial^\alpha, \quad A_ \alpha \in \mathrm{Lin}(W,X),
\end{equation}
where $W,X$ (and $V$) are finite dimensional inner product euclidean spaces. 	Here $\alpha \in \Nbb_0^d$ is a multi-index with modulus $|\alpha| = \alpha_1 + \dots + \alpha_d$  and  $\partial^\alpha$ represents the distributional derivative $\partial_1^{\alpha_1} \cdots \partial_d^{\alpha_d}$. 
Our main assumption on 
$\A$ (and $\Bcal$) is that it satisfies the following {constant rank property}: there exists a positive integer $r$ such that 
\begin{equation}\label{eq:cr} 
\rank \Abb(\xi) = r  \quad \text{for all $\xi$ in $\R^d \setminus  \{0\}$,}
\end{equation} 
where the tensor-valued $k$-homogeneous polynomial
\[
\Abb(\xi) \coloneqq (2\pi \mathrm i)^k\sum_{|\alpha|=k} A_\alpha\xi^\alpha \, \in \, \mathrm{Lin}(W;X), \qquad \xi \in \R^d,
\]
is the principal symbol associated to the operator $\Acal$. Here, $\xi^\alpha \coloneqq \xi_1^{\alpha_1}\cdots \xi_d^{\alpha_d}$.  We also recall the notion of \emph{wave cone} associated to $\Acal$, which plays
a fundamental role for the study of $\Acal$-free fields, as discussed in the work of \textsc{Murat \& Tartar}~\cite{murat1978compacite-par-c,tartar1979compensated-com,tartar1983the-compensated,murat1985optimality-cond}: 
\[
	\Lambda_\Acal = \bigcup_{\xi \in \R^d \setminus \{0\}} \ker \Abb(\xi) \subset W.
\]
The wave cone contains those Fourier amplitudes along which it is possible to construct highly oscillating
$\A$-free fields. More precisely, $P \in \Lambda_\Acal$ if and only if there
exists $\xi \in \R^d \setminus \{0\}$ such that $\A(P\, \phi(x \cdot \xi)) = 0$  for all $\phi \in \Crm^k(\R)$. 
	
	
	Let us begin our exposition by introducing a few concepts about the theory of generalized Young measures (as introduced in~\cite{diperna1987oscillations-an}, and later extended in~\cite{alibert1997non-uniform-int}):
	
	\begin{definition}[Generalized Young measure]
		A triple $\bm \nu = (\nu,\lambda,\nu^\infty)$ is called a locally {bounded generalized Young measure} on $U$, with values in $W$, provided that  
		\begin{enumerate}
			\item[(i)] $\nu : \Omega \to \mathrm{Prob}(W) : x \mapsto \nu_x$ is a weak-$*$ measurable map,
			\item[(ii)] $\lambda \in \M^+(\cl U)$ is a non-negative  Radon measure on $\cl U$, 
			\item[(iii)] $\nu^\infty : U \to \mathrm{Prob}(S_{W}):x \mapsto \nu_x^\infty$ is a weak-$*$ $\lambda$-measurable map, where $S_{W}$ is the unit sphere in $W$, and
			\item[(iv)] the map $x \mapsto \dpr{|\frarg|,\nu_x}$ belongs to $\Lrm^1_\loc(U)$.
		\end{enumerate}
	If moreover,
	\begin{enumerate}
		\item[(iv)] the map $x \mapsto \dpr{|\frarg|,\nu_x}$ belongs to $\Lrm^1(U)$, and
		\item[(v)] $\lambda$ is a finite measure,
	\end{enumerate}
then we say that $\bm \nu$ is a generalized Young measure.
	We write $\Y_\loc(U;W)$ to denote the set of locally bounded generalized  Young measures, and $\Y(U;W)$ to denote the set of generalized Young measures. 	\end{definition}

\noindent\textbf{Notation.} In the following and when no confusion arises, we will often refer to generalized Young measures simply as Young measures. We will also write 
\[
\Ybf_0(U;W) \coloneqq \set{(\nu,\lambda,\nu^\infty) \in \Ybf(U;W)}{\lambda(\partial \Omega) = 0}.
\]

\begin{definition}\label{def:gen}
	We say that a sequence of measures $\{\mu_j\} \subset \Mcal(U;W)$ generates the Young measure $\bm \nu = (\nu,\lambda,\nu^\infty)\in \Y(U;W)$ if and only if
	\begin{align*}
		\cl{I_f}(\mu_j,U) & \to \int_{U} \dprb{f,\nu} \dd \Leb^d + \int_{\cl U} \dprb{f^\infty,\nu^\infty} \dd \lambda\\
		& \coloneqq \int_{ U} \bigg(\int_{W} f(x,z) \dd \nu_x(z) \bigg) \dd x \\
		&  \qquad +  \int_{\cl U}\bigg( \int_{S_W} f^\infty(x,z) \dd \nu_x^\infty(z) \bigg) \dd \lambda(x), \\
	\end{align*}
for all integrands $f \in \E(U,W)$; see Section~\ref{sec: Young} for the precise definition of $\E(U;W)$. In this case we write
\[
	\mu_j \toY \bm \nu \; \text{on $U$.}
\]
\end{definition}

Next, we incorporate the PDE constraint into the concept of generalized Young measure. Let us recall that $\Wrm^{-k,p}(U) = (\Wrm^{k,p'}_0(U))^*$ for all $1 \le p < \infty$. Here, $p'$ is the dual exponent of $p$ and $\Wrm^{k,p'}_0(U)$ is the closure of $\Crm_c^\infty(U)$ with respect to the $\Wrm^{k,p'}$-norm.

\begin{definition}[Generalized $\Acal$-free Young measure]
	A Young measure $\bm \nu \in \Y(U;W)$ is called a {generalized $\A$-free Young measure} if there exists a   sequence $\{\mu_j\} \subset \M(U;W)$ such that
	\[
	\|\Acal \mu_j\|_{\Wrm^{-k,q}(U)} \, \to \, 0 \quad \text{for some \; $1 < q < \frac{1}{d-1}$,}
	\]
	and   
	\[
		\mu_j \toY\bm \nu \; \text{on $U$.}
	\]
		We write $\Y_\Acal(U)$ to denote the set of such Young measures.
%
\end{definition}

	\subsubsection{Characterization of $\Acal$-free Young measures}
	Let us begin by recalling the definition of $\Acal$-quasiconvexity, which will be a necessary concept to state the main characterization theorem.
	\begin{definition}\label{def:qc}
		A locally bounded Borel integrand  $h:W \to \R$ is called $\Acal$-quasiconvex if
		\[
			h(z) \le \int_{[0,1]^d} h(z + w(y)) \dd y \quad \text{for all $z \in W$},
		\]
		for all periodic fields $w \in \Crm^\infty_\per([0,1]^d;W)$ satisfying
		\[
			\Acal w = 0 \quad \text{and} \quad \int_{[0,1]^d} w \dd y= 0.
		\]
	\end{definition}
	We will also require to define a weaker notion of recession function. For a Borel integrand $h: W\to \R$ with linear growth at infinity, we define its \emph{upper recession function} as
	\begin{align*}
		h^{\#}(z) &:= \limsup_{\substack {z' \to z \\ t \to \infty}} \;\frac{h(tz')}{t}, \quad \text{for all  $z \in W$}.
	\end{align*}
	Differently from $h^\infty$, the upper recession function always exists and defines an upper semicontinuous and positively $1$-homogeneous function on $W$. 

	We are now in position to state our main characterization results. The first result extends the Hahn--Banach-type characterization~\cite[Theorem~4.1]{fonseca1999mathcal-a-quasi} to sequences of weak-$*$ convergent measures:

\begin{theorem}\label{thm:char} 
	Let
	$\bm \nu = (\nu,\lambda,\nu^\infty) \in \Y_0(\Omega;W)$. Then, $\bm \nu$ is a generalized $\Acal$-free Young measure if and only if~\vskip8pt
%
\begin{itemize}\setlength{\itemsep}{8pt}
 	\item[(i)] there exists $\mu \in \M(\Omega;W)$ satisfying
 	\[
 		\Acal \mu = 0 \quad \text{in the sense of distributions on $\Omega$},
 	\] 
 	and 
 	\[
 		\mu = \dprb{\id_W,\nu} \Leb^d + \dprb{\id_W,\nu^\infty}\lambda\,,
 	\]
 	\item[(ii)] at $\Leb^d$-almost every $x \in \Omega$, the Jensen-type inequality 
 	\begin{align*}
 		h(\ac \mu(x)) \le \dprb{h,\nu_x}   + \dprb{h^\#,\nu^\infty_x} \ac \lambda(x)
 	\end{align*}
 	holds for all  $\A$-quasiconvex upper-semicontinuous  integrands $h : W\to \R$ with linear growth at
 	 infinity, and
 	\item[(iii)] at $\lambda^s$-almost every $x \in \Omega$, 
 	\[
 		\supp(\nu_x^\infty) \subset   W_\Acal \coloneqq \spn\{\Lambda_{\Acal}\}.
 	\]
 	\end{itemize}
	
%
\end{theorem}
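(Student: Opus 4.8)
The plan is to split the equivalence into its two implications and handle them with rather different tools. For the \emph{necessity} direction---assuming $\bm\nu$ is generated by $\Acal$-free measures $\{\mu_j\}$ and deducing (i)--(iii)---I would argue as follows. Condition (i) is essentially a soft statement: the barycenter map $\mu_j \mapsto \dprb{\id_W,\nu_{x,j}}\Leb^d + (\text{concentration})$ passes to the weak-$*$ limit, and the PDE constraint $\|\Acal\mu_j\|_{\Wrm^{-k,q}}\to 0$ is preserved in the limit, so the limit measure $\mu$ satisfies $\Acal\mu = 0$ distributionally; one must check $\mu$ agrees with the stated barycenter expression, which follows from testing against affine integrands in $\E(\Omega;W)$. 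Condition (ii) is the $\Acal$-free analogue of the classical Jensen inequality and should follow by combining the generation property with the lower semicontinuity of $\overline{I_h}$ along $\Acal$-free sequences whenever $h$ is $\Acal$-quasiconvex---this lower semicontinuity is exactly the Fonseca--M\"uller-type result (or its measure-valued refinement), and the upper recession $h^\#$ appears because $h^\infty$ need not exist. Condition (iii), the support constraint $\supp(\nu^\infty_x)\subset W_\Acal$ at $\lambda^s$-a.e. $x$, should follow from a localization/blow-up at points of positive $\lambda^s$-density, using that the singular concentration directions of an $\Acal$-free measure must lie in (the span of) the wave cone---this is a standard consequence of the constant rank property and the structure of $\Acal$-free measures.

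\textbf{Sufficiency.} The harder and more novel direction is the converse: given a $\bm\nu\in\Y_0(\Omega;W)$ satisfying (i)--(iii), construct a generating sequence of (nearly) $\Acal$-free measures. The plan is to build the sequence \emph{locally} and then glue. At a typical point $x_0$ one blows up $\bm\nu$ to obtain a \enquote{homogeneous} Young measure whose barycenter is the constant $\ac\mu(x_0)$ (plus, at singular points of $\lambda$, a rank-one-like concentration directed into $W_\Acal$); the Jensen inequality (ii) guarantees that this homogeneous Young measure lies in the closed convex hull of Dirac-type $\Acal$-free Young measures, by the Hahn--Banach separation theorem (this is the \enquote{duality} content, extending~\cite[Theorem~4.1]{fonseca1999mathcal-a-quasi}). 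Thus locally one has approximating sequences of $\Acal$-free \emph{functions} (in $\Lrm^1$, possibly after the equi-integrable part) realizing the oscillation, together with concentration profiles built from one-dimensional oscillations $P\,\phi(x\cdot\xi)$ with $P\in\ker\Abb(\xi)$ realizing the concentration part. The key technical device---and this is where the constant rank hypothesis is essential---is to perform these constructions and the subsequent gluing \emph{at the level of potentials}: by Raita's lemma (Lemma~\ref{lem:raita}) the constant rank operator $\Acal$ locally admits a potential $\Bcal$, so one can localize via $\Bcal u \mapsto \Bcal(\phi u)$ and control the error terms using the Poincar\'e/Fonseca--M\"uller estimate $\|u - \pi_\Bcal u\|_{\Lrm^p}\lesssim \|\Bcal u\|_{\Lrm^p}$ (equivalently $\|w-\pi_\Acal w\|\lesssim\|\Acal w\|_{\Wrm^{-k}}$). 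A countable covering (Vitali-type) argument at Lebesgue points of the relevant densities, together with a diagonal extraction, then assembles the local sequences into a global one with $\|\Acal\mu_j\|_{\Wrm^{-k,q}}\to 0$ and $\mu_j\toY\bm\nu$; the approximation-by-smooth-functions result (area-functional density, Theorem~\ref{lem:app}) is used to ensure the approximants can be taken to be genuine $\Lrm^1$ functions off the concentration set.

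\textbf{Main obstacle.} I expect the principal difficulty to be the \emph{gluing step at singular points of $\lambda$}, i.e.\ reconciling conditions (ii) and (iii). Away from the support of $\lambda^s$ the construction is morally the Fonseca--M\"uller argument adapted to measures; but at a point where $\lambda^s$ charges mass, one must produce a generating sequence whose concentration profile matches $\nu^\infty_x$ \emph{and} is compatible with the PDE constraint, which forces the concentration to be carried by wave-cone directions (hence (iii)), while simultaneously keeping the Fonseca--M\"uller error $\|\Acal\mu_j\|_{\Wrm^{-k,q}}$ small---the restriction $1<q<\tfrac{1}{d-1}$ (so that $\Wrm^{-k,q}$ is weak enough to absorb concentrations but strong enough to retain the estimates) is precisely what makes this balance possible. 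Controlling the interaction between the oscillation scale and the concentration scale through the potential $\Bcal$, without any rigidity of $\Acal$-free measures to fall back on, is the technical heart of the argument; everything else is, in spirit, localization plus a diagonal argument.
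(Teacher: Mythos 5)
Your necessity direction matches the paper's (it is indeed soft: (i) passes to the weak-$*$ limit and the PDE constraint survives, while (ii) and (iii) are cited from earlier work on $\Acal$-free lower semicontinuity and structure). Your blueprint for sufficiency---blow up to homogeneous tangent Young measures, show these are $\Acal$-free by a Hahn--Banach duality argument, construct local generating sequences, and glue at the level of potentials via Rai\c{t}\u{a}'s lemma and the Fonseca--M\"uller projection, then assemble with a Besicovitch cover and diagonalization---is also in the same spirit as the paper's proof (which proceeds via Propositions~\ref{prop:singular} and \ref{prop:regularr} and then appeals to the local characterization Theorem~\ref{thm:local}).

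The genuine gap is at the Hahn--Banach step. You assert that the Jensen inequality (ii) guarantees the tangent Young measure lies in ``the closed convex hull of Dirac-type $\Acal$-free Young measures,'' but for Hahn--Banach to separate you first need to know that the set of $\Acal$-free Young measures with a prescribed barycenter is a \emph{convex} set---and establishing this convexity, not the separation itself, is the central technical obstacle of the whole theorem. Your remark that the concentration part can be realized ``from one-dimensional oscillations $P\,\phi(x\cdot\xi)$ with $P\in\ker\Abb(\xi)$'' is, implicitly, the blueprint of Kristensen--Rindler and De~Philippis--Rindler, which relies on the rigidity of gradients and symmetric gradients to reduce to one-directional Young measures and thence to ``artificial concentrations.'' The paper explicitly records (see the remark preceding Proposition~\ref{prop:singular}) that this route is \emph{not} available for a general constant-rank operator---$\Acal$-free maps lack the rigidity that makes one-directional reductions work---and this is precisely why the paper is forced to prove a \emph{global} convexity result, Theorem~\ref{thm:convex} (convexity of $\Y_{\Acal,0}(\mu,\Omega)$), before any Hahn--Banach separation can be applied at the tangent level. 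That theorem is the paper's main technical contribution: its proof occupies many pages and combines a fine Besicovitch-type partition at Lebesgue points, a careful classification of singular points according to whether some tangent of $\lambda_1^s+\lambda_2^s$ charges points, two quite different local constructions in the two regimes, and gluing through the Helmholtz-type decomposition of Lemma~\ref{lem:Helm}. Your proposal never engages with this, and without it the separation argument does not start. A secondary omission: even once convexity is available, passing from a separating integrand $f_H$ to the $\Acal$-quasiconvex test function $h$ in (ii) is not automatic; it requires the auxiliary relaxation Theorem~\ref{thm:mio}, together with the linear-growth/Lipschitz controls on $\Qcal_\Acal \tilde f_H$ established in Propositions~\ref{prop:properties}--\ref{prop:trans}, none of which appear in your outline.
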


	\begin{remark}\label{rem:ya} If $\Acal$ is defined in its essential domain, i.e.,   
		\[
			W= W_\Acal = \spn\{\Lambda_{\Acal}\},
		\]
	then the purely singular part $(\lambda^s,\nu^\infty)$ of $\bm \nu$ is unconstrained since then (iii) is equivalent to the trivial set inclusion
	\[
	\supp (\nu_x^\infty) \subset W \qquad \text{$\lambda^s$-a.e.} 
	\]
	In Section~\ref{sec:ex}, we shall revise a few examples of operators that satisfy this property.
\end{remark}

\begin{remark}\label{rem:kk}  The condition at regular points, embodied by property (ii), conveys a similar constraint for the supports of $\nu_x$ and $\nu_x^\infty$ on a set of full $\Leb^d$-measure. The results contained in Corollary~\ref{cor:last} imply that $\nu_x$ is the $\ac\mu(x)$ translation of a probability measure supported on $W_\Acal$, i.e., 
\[
\text{$\supp(\delta_{-\ac\mu(x)} \star \nu_x) \subset W_\Acal$ \qquad for $\Leb^d$-a.e.   $x \in \Omega$.}
\] 
The same corollary also conveys that property (iii) holds $\lambda$-a.e., that is, 
\[		
\text{$\supp(\nu_x^\infty) \subset W_\Acal$ \qquad for $(\ac\lambda \Leb^d)$-a.e.  $x \in \Omega$.}
\]
On the other hand, the property at singular points (iii) is equivalent to the complementary Jensen's inequality  
	\[
	h^\#\bigg(\frac{\mathrm d \mu}{\mathrm d|\mu|}(x)\bigg) \le \int_{S_{W}} h^\#(z) \dd \nu_x^\infty(z) \quad \text{for $\lambda^s$-a.e. $x\in \Omega$}. \tag{iii'}
	\]
	This follows directly from the structure theorem for $\Acal$-free measures~\cite[Theorem~1.1]{de-philippis2016on-the-structur} and the rigidity results established in~\cite{kirchheim2016on-rank-one-con}.
\end{remark}

 Our second result characterizes  generalized $\A$-free Young measures in terms of their tangent cone (in the spirit of~\cite{rindlerlocal}); definitions of tangent Young measures will be postponed to Section~\ref{sec: Young}.

\begin{theorem}\label{thm:local}
	Let $\bm \nu = (\nu,\lambda,\nu^\infty) \in \Y_0(\Omega;W)$. Then, $\bm \nu$ is a generalized $\Acal$-free measure if and only if\vskip8pt
	\begin{itemize}\setlength{\itemsep}{8pt}
		\item[(i)]  there exists $\mu \in \M(\Omega;W)$ satisfying
		\[
		\Acal \mu = 0 \quad \text{in the sense of distributions on $\Omega$},
		\] 
		and 
		\[
		\mu = \dprb{\id_{W},\nu} \, \Leb^d + \dprb{\id_W,\nu^\infty} \lambda\,,
		\]
		\item[(ii)] at $(\Leb^d + \lambda^s)$-almost every $x \in \Omega$, there exists a tangent Young measure 
		\[
		\bm \sigma \in \Tan(\bm \nu,x) \in \Y_\loc(\R^d;W),
		\]
		such that $\bm\sigma \mres U \in \Y_\Acal(U)$ for all open and Lipschitz subsets $U \Subset \R^d$ with $\lambda(\partial U) = 0$.
	\end{itemize}
	\end{theorem}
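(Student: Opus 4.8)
The plan is to deduce Theorem~\ref{thm:local} from the duality characterization of Theorem~\ref{thm:char}, together with the blow-up calculus for generalized Young measures developed in Section~\ref{sec: Young}. Condition~(i) is literally the same in both theorems, and in either direction it is immediate: if $\{\mu_j\}\subset\Mcal(\Omega;W)$ generates $\bm\nu$ with $\|\Acal\mu_j\|_{\Wrm^{-k,q}(\Omega)}\to0$, then $\mu_j\toweakstar\mu:=\dprb{\id_W,\nu}\,\Leb^d+\dprb{\id_W,\nu^\infty}\,\lambda$ and $\Acal\mu=\lim_j\Acal\mu_j=0$ in $\Dcal'(\Omega)$. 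So it remains to match the tangent condition~(ii) of Theorem~\ref{thm:local} with conditions~(ii)--(iii) of Theorem~\ref{thm:char}.

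\textbf{Necessity of the tangent condition.} Given $\bm\nu\in\Y_\Acal(\Omega)$ with a generating sequence $\{\mu_j\}$ as above, I would fix $x\in\Omega$, apply to the $\mu_j$ the (normalized) rescaling $T^{(x,r)}\colon y\mapsto(y-x)/r$ that defines tangent Young measures, and run a diagonal extraction over $r\downarrow0$ so that the rescaled measures generate, on $\R^d$, some $\bm\sigma\in\Tan(\bm\nu,x)$. The key is a scaling identity: since $\Acal$ is $k$-homogeneous, the $\Acal$-defect of the rescaled measures on any fixed ball is controlled by a power of $r$ (coming from the normalization of $T^{(x,r)}$) times $\|\Acal\mu_j\|_{\Wrm^{-k,q}}$ on a slightly larger ball, and hence still tends to $0$ along the diagonal subsequence. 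Because restricting a generating sequence to an open set $U$ with $\lambda_{\bm\sigma}(\partial U)=0$ preserves generation, this yields $\bm\sigma\mres U\in\Y_\Acal(U)$ for every open Lipschitz $U\Subset\R^d$ with $\lambda_{\bm\sigma}(\partial U)=0$, which is exactly~(ii).

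\textbf{Sufficiency.} Assuming~(i)--(ii) of Theorem~\ref{thm:local}, I would verify~(ii)--(iii) of Theorem~\ref{thm:char} and then invoke that theorem. Fix a countable family $\{h_m\}$ of $\Acal$-quasiconvex upper semicontinuous integrands with linear growth dense enough to test Theorem~\ref{thm:char}(ii). At $\Leb^d$-a.e.\ $x$ --- at common Lebesgue points of $\ac\lambda$ and of $y\mapsto\dprb{h_m,\nu_y}$, $y\mapsto\dprb{h_m^\#,\nu^\infty_y}$ at which~(ii) holds --- the blow-up calculus identifies the tangent Young measures at $x$: any $\bm\sigma\in\Tan(\bm\nu,x)$, in particular the one furnished by~(ii), is the homogeneous Young measure with oscillation $\nu_x$, concentration measure $\ac\lambda(x)\,\Leb^d$, concentration profile $\nu^\infty_x$, and constant barycenter $\ac\mu(x)$. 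Applying Theorem~\ref{thm:char}(ii) to $\bm\sigma\mres B$ on a small ball $B$ then reads
\[
h_m(\ac\mu(x))\le\dprb{h_m,\nu_x}+\dprb{h_m^\#,\nu^\infty_x}\,\ac\lambda(x),
\]
and a supremum over the dense family gives~(ii) of Theorem~\ref{thm:char} at $x$. Likewise, at $\lambda^s$-a.e.\ $x$ the tangent $\bm\sigma$ has the form $(\delta_0,\tau,\nu^\infty_x)$ with $\tau\in\Tan(\lambda^s,x)\setminus\{0\}$ singular with respect to $\Leb^d$ and $\nu^\infty_x$ constant in space; Theorem~\ref{thm:char}(iii) applied to $\bm\sigma\mres B$ then forces $\supp\nu^\infty_x\subset W_\Acal$, i.e.~(iii) of Theorem~\ref{thm:char}. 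Theorem~\ref{thm:char} now yields $\bm\nu\in\Y_\Acal(\Omega)$.

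\textbf{Main obstacle.} I expect the delicate part to be twofold. On the necessity side, one must check that the normalization defining tangent Young measures (a division by $\lambda(B_r(x))$, or by $(\Leb^d+\lambda)(B_r(x))$) interacts compatibly with the $k$-homogeneous scaling of $\Acal$ in the $\Wrm^{-k,q}$ norm, so that the rescaled $\Acal$-defects genuinely vanish along a diagonal subsequence that lands inside $\Tan(\bm\nu,x)$. On the sufficiency side, the point requiring care is pinning down the precise structure of $\Tan(\bm\nu,x)$ at $\Leb^d$-regular and at $\lambda^s$-singular points --- relying on the structure theorem for $\Acal$-free measures and Remark~\ref{rem:kk} at the singular points --- so that Theorem~\ref{thm:char} can be applied to these tangent objects and its Jensen inequalities descended to $\bm\nu$.
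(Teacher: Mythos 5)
Your necessity sketch is consistent with the paper's treatment, which likewise disposes of that direction by a blow-up and diagonalization argument and refers to \cite[Section~2.8]{arroyo-rabasa2017lower-semiconti}. The sufficiency argument, however, is circular. You propose to deduce Theorem~\ref{thm:local} from Theorem~\ref{thm:char}, yet in the paper the sufficiency half of Theorem~\ref{thm:char} is itself proved by invoking Theorem~\ref{thm:local}: after Propositions~\ref{prop:singular} and~\ref{prop:regularr} (which rest on the convexity Theorem~\ref{thm:convex}) show that at $(\Leb^d+\lambda^s)$-a.e.\ point there is a tangent Young measure restricting to an element of $\Y_\Acal$, the proof concludes precisely by citing Theorem~\ref{thm:local}. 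So all of the analytic content of the sufficiency direction of Theorem~\ref{thm:local} has been silently absorbed into the theorem you set out to prove; as a free-standing argument it cannot close.

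What is missing is the direct construction that constitutes the paper's actual proof of sufficiency. There one fixes a full $(\Leb^d+\lambda^s)$-measure set $B$ of points with good Lebesgue-type properties relative to the separating family $\{\psi_p\otimes h_p\}$, Besicovitch-covers $B$ by disjoint small cubes $Q_{x,m}\subset\Omega$ with $\lambda$-negligible boundary, and at each centre $x$ uses the tangent hypothesis together with the Helmholtz-type decomposition of Lemma~\ref{lem:Helm} and Corollary~\ref{cor:helm}, which writes the blow-up sequence as $\gamma_j\mres Q=\Bcal u^{(r_j)}+v^{(r_j)}\,\Leb^d$, to produce a local generating sequence for $\Tan(\bm\nu,x)$ whose potentials have been cut off so as to agree with $\mu$ near $\partial Q_{x,m}$. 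Gluing these boundary-matched pieces across the cover yields a globally $\Acal$-free sequence $\{\tau_m\}$, and a modulus-of-continuity estimate using the Lebesgue-point choice of centres gives $\ddprb{\psi_p\otimes h_p,\bm\delta_{\tau_m}}\to\ddprb{\psi_p\otimes h_p,\bm\nu}$ for all $p$, hence $\tau_m\toY\bm\nu$. This gluing step is where the constant rank property enters through the Fonseca--M\"uller estimate of Lemma~\ref{lem:3.2}, and it is exactly the work your reduction to Theorem~\ref{thm:char} avoids confronting.
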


	We close the characterization of $\A$-free Young measures with an application of the methods developed in this paper, which allows us to re-define $\A$-free measures in terms of a pure $\A$-free constraint: 
	
	\begin{corollary}\label{cor:pure} 
		Let $\bm\nu \in \Y_0(\Omega;W)$. The following are equivalent:
\begin{itemize}
	\item[(i)]  $\bm\nu$ is a generalized $\A$-free Young measure,
	\item[(ii)] $\bm\nu$ is generated by  $\A$-free measures.
\end{itemize}
\end{corollary}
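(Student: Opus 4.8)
The plan is to prove Corollary~\ref{cor:pure} as essentially a reformulation of Theorem~\ref{thm:char}, exploiting the fact that the right-hand sides of the definitions ``$\A$-free Young measure'' (meaning $\|\A\mu_j\|_{\Wrm^{-k,q}}\to 0$) and ``generated by $\A$-free measures'' (meaning $\A\mu_j=0$ exactly) lead to the \emph{same} characterization. The implication (ii)$\Rightarrow$(i) is immediate: if $\bm\nu$ is generated by a sequence $\{\mu_j\}\subset\Mcal(\Omega;W)$ with $\A\mu_j=0$, then in particular $\|\A\mu_j\|_{\Wrm^{-k,q}(\Omega)}=0\to 0$ for any admissible $q$, so $\bm\nu\in\Y_\Acal(\Omega)$ by definition. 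The content is therefore entirely in (i)$\Rightarrow$(ii).

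For (i)$\Rightarrow$(ii) I would argue as follows. Suppose $\bm\nu\in\Y_\Acal(\Omega)$ is a generalized $\A$-free Young measure. By the ``only if'' direction of Theorem~\ref{thm:char}, $\bm\nu$ satisfies conditions (i)--(iii) of that theorem, i.e. there is $\mu\in\Mcal(\Omega;W)$ with $\A\mu=0$, $\mu=\dprb{\id_W,\nu}\Leb^d+\dprb{\id_W,\nu^\infty}\lambda$, the Jensen inequality at $\Leb^d$-a.e.\ point for all $\A$-quasiconvex upper-semicontinuous integrands, and $\supp(\nu_x^\infty)\subset W_\Acal$ at $\lambda^s$-a.e.\ point. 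Now I invoke the ``if'' direction of Theorem~\ref{thm:char}: its proof must actually produce a generating sequence, and — this is the key point I would isolate and cite — the generating sequence it constructs can be taken to consist of \emph{exactly $\A$-free measures}, not merely measures with $\A\mu_j\to 0$ in $\Wrm^{-k,q}$. Concretely, the sufficiency proof glues together local generating sequences built from potentials $\Bcal u_j$ (using Raita's lemma, Lemma~\ref{lem:raita}) together with the approximation Theorem~\ref{lem:app}, and these are genuine $\A$-free objects (a $\Bcal$-gradient is automatically annihilated by $\A$ when $\A\circ\Bcal=0$, and the diagonal/gluing construction preserves the exact constraint). Hence $\bm\nu$ is generated by $\A$-free measures, which is (ii).

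The one subtlety worth flagging is a localization/boundary issue: Theorem~\ref{thm:char} is stated on the fixed bounded set $\Omega$ and for $\bm\nu\in\Y_0(\Omega;W)$ (so $\lambda(\partial\Omega)=0$), and ``generated by $\A$-free measures'' in the sense of Definition~\ref{def:gen} tests against all $f\in\E(\Omega;W)$ including boundary behaviour. Since $\lambda(\partial\Omega)=0$, the concentration integral over $\cl\Omega$ reduces to an integral over $\Omega$, so the two notions of generation coincide on $\Y_0$; I would note this compatibility explicitly but it requires no new work. Likewise, if the sufficiency proof is first carried out locally (on small cubes or Lipschitz subdomains $U\Subset\Omega$ with $\lambda(\partial U)=0$) and then patched, one uses a partition of unity at the level of potentials — again exact $\A$-freeness is preserved — together with a diagonal argument in $j$ to pass from the family of local approximations to a single global sequence; the estimates needed for this patching are exactly the constant-rank Poincaré/Fonseca--M\"uller bounds already in play.

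The main obstacle, to the extent there is one, is thus not a new estimate but a \emph{bookkeeping} point: one must verify that the generating sequence produced in the proof of the sufficiency half of Theorem~\ref{thm:char} genuinely satisfies $\A\mu_j=0$ on all of $\Omega$ (rather than $\A\mu_j\to 0$ in negative Sobolev norm), and that the gluing/diagonalization does not destroy this. If the proof of Theorem~\ref{thm:char} as written only delivers $\|\A\mu_j\|_{\Wrm^{-k,q}}\to 0$, then one extra step is needed: approximate each $\mu_j$ by a truly $\A$-free measure without changing its generated Young measure, e.g.\ by mollifying, projecting off $\ker\A$ via the Fonseca--M\"uller projection and correcting, or (cleaner) by re-running the construction with $\Bcal$-potentials so that exact $\A$-freeness is built in from the start. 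I would adopt the latter route, since Raita's lemma (Lemma~\ref{lem:raita}) makes potentials available locally and the paper's whole strategy is already potential-based.
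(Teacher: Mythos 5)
Your proposal is correct and matches the paper's argument: (ii)$\Rightarrow$(i) is trivial, and for (i)$\Rightarrow$(ii) the paper likewise points at the sufficiency construction in the proof of the local characterization — the recovery sequence $\tau_m$ in Step~4, built by gluing potentials via Lemma~\ref{lem:raita} and Corollary~\ref{cor:helm} — and observes that it is exactly $\Acal$-free, not merely asymptotically so. The paper phrases this directly through Theorem~\ref{thm:local} rather than routing through Theorem~\ref{thm:char}, but since Theorem~\ref{thm:char}'s sufficiency itself reduces to Theorem~\ref{thm:local}, this is the same argument.
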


\subsubsection{Area strict density of absolutely continuous $\Acal$-free measures} 
Independently of the characterization of $\Acal$-free Young measures, our methods 
 allow us to show that an  $\A$-free measure $\mu$ defined on an open and bounded domain $U \subset \R^d$ can be approximated in the area-strictly sense of measures, by a sequence of $\A$-free functions. This approximation result is of relevance to certain minimization principles involving the relaxation of functionals of the form
 \[
 u \mapsto \int_{U}  f(x,w(x)), \quad w \in \Lrm^1(U;W), \quad \Acal w = 0,
 \]
 Frequently, it has been \emph{accepted} to impose a geometric assumption on $U$ that guarantees the approximation of $\A$-free measures by $\Lrm^1$-integrable $\A$-free fields in the strict sense of measures (see for instance~\cite{muller1987homogenization-,arroyo-rabasa2017relaxation-and-,arroyo-rabasa2017lower-semiconti}). More precisely, that   $U$ is a {strictly star-shaped domain}, i.e., there exists $x \in U$ such that
 \[
 \cl{(U - x)} \subset  \rho ({U - x}) \quad \text{for all $\rho > 1$.} 
 \]
 The approximation result contained in Theorem~\ref{lem:app} below allows, in particular, to dispense with this assumption on the geometry of $U$. 
 In order to state this result  we need to introduce the following basic concept. The {\it  area functional} of a measure is defined as
 \begin{equation}\label{eq:area}
 \area{\mu,U} \coloneqq \int_{U} \sqrt{1 + |\ac\mu|^2} \dd x + |\mu^s|(U), \qquad \mu \in \M(U;\R^N).
 \end{equation}
 In addition to the well-known weak-$*$ convergence of measures, we say that a sequence $\{\mu_j\}$ converges in \emph{area} to $\mu$ in $\M(\Omega;W)$ if 
 \[\text{$\mu_j \toweakstar \mu$ in $\M(U;W)$} \qquad \text{and} \qquad 
 \area{\mu_j,U} \to \area{\mu,U}.\]
 This notion of convergence turns out to be stronger than the conventional {\it strict convergence} of measures, which requires $|\mu_j|(U) \to |\mu|(U)$. The {usefulness} of this form of convergence rests in the fact that  the functional
 \begin{equation}\label{eq:resh}
 \mu \mapsto \int_U f\bigg(x,\frac{\mathrm d \mu}{\mathrm d \Leb^d}(x)\bigg) \dd x + \int_{U} f^\infty\bigg(x,\frac{\mathrm d  \mu}{\mathrm d  |\mu|}(x)\bigg) \dd |\mu^s|(x)
 \end{equation}
 is area-continuous on $\Mcal(U;W)$ for all integrands $f \in \Crm(U \times W)$ such that the strong recession function $f^\infty$ exists on $\cl U \times W$ (see~\cite[Theorem~5]{kristensen2010characterizatio}).

 We have the following area-convergence approximation result 
  (see Section~\ref{sec: Young} for the definition of elementary Young measures $\bm\delta_\mu$):
 \begin{theorem}\label{lem:app} Let $\Omega \subset \R^d$ be an open and bounded set and let $\mu \in \M(\Omega;W)$ be a bounded $\A$-free measure. Then 
 	there exists a sequence $\{w_j\} \subset \Lrm^1(\Omega;W)$, satisfying 
 	\begin{align*}
 		\Acal w_j = 0 \; \text{in the sense of distributions on $\Omega$,}
 	\end{align*}
 \begin{align*}
 	w_j \, \Leb^d \; &\toweakstar \; \mu &&\text{as measures in $\M(\Omega;W)$},\\
 	w_j \, \Leb^d \; & \toY \; \bm\delta_\mu &&\text{on $\Omega$, }
 \end{align*}
and
\[
	 	\area{w_j \,\Leb^d,\Omega} \; \to \; \area{\mu,\Omega}. 	
\]
 \end{theorem}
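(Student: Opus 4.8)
The plan is to reduce the statement to a mollification argument performed at the level of a potential, using the constant rank hypothesis. First I would invoke Raita's result (Lemma~\ref{lem:raita} in the excerpt) to produce, locally, a homogeneous operator $\Bcal$ of some order $\ell$ such that $\ker \Abb(\xi) = \im \Bbb(\xi)$ for all $\xi \neq 0$; this gives, after subtracting a suitable ($\Acal$-free) polynomial/harmonic correction, a potential $u$ with $\mu = \Bcal u$ in the sense of distributions on a slightly smaller domain. Since the statement is local in nature but the conclusion is global on $\Omega$, I would either work on the whole of $\Omega$ after extending $\mu$ (exploiting $\Leb^d(\partial\Omega)=0$ and the fact that $\mu$ is a \emph{bounded} measure) or, more robustly, cover $\Omega$ by a countable family of balls, construct approximations on each, and glue them with a partition of unity — but gluing $\Acal$-free fields is exactly where the kernel must be removed, so the Fonseca--M\"uller projection estimate $\|w - \pi_\Acal w\|_{\Lrm^p}\lesssim \|\Acal w\|_{\Wrm^{-k,p}}$ is the tool that controls the error terms created by the cutoffs.

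The core construction is then: set $w_\eps \coloneqq \Bcal(u \conv \rho_\eps) = (\Bcal u)\conv\rho_\eps = \mu \conv \rho_\eps$, where $\rho_\eps$ is a standard mollifier. These $w_\eps$ are smooth, $\Acal$-free (convolution commutes with the constant-coefficient operator $\Acal$, and $\Acal\mu=0$), and bounded in $\Lrm^1$ with $\|w_\eps\|_{\Lrm^1}\le |\mu|(\Omega)$. Standard properties of mollification give $w_\eps\Leb^d\toweakstar\mu$. For the area convergence one uses lower semicontinuity of $\mu\mapsto \area{\mu,\Omega}$ under weak-$*$ convergence to get $\liminf \area{w_\eps\Leb^d,\Omega}\ge \area{\mu,\Omega}$, and for the reverse inequality one uses the subadditivity/convexity of $z\mapsto\sqrt{1+|z|^2}$ together with Jensen applied to the mollification kernel: $\sqrt{1+|\mu\conv\rho_\eps(x)|^2}\le (\sqrt{1+|\sbullet|^2})^\infty$-type estimates bound $\int_\Omega \sqrt{1+|w_\eps|^2}\,\dd x$ by $\int_\Omega\sqrt{1+|\ac\mu|^2}\,\dd x + |\mu^s|(\Omega) + o(1)$ — this is precisely the classical fact that mollification is area-strictly continuous on $\Mcal$ (cf.\ the area-continuity of the functional in \eqref{eq:resh} and \cite{kristensen2010characterizatio}). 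Once $\area{w_\eps\Leb^d,\Omega}\to\area{\mu,\Omega}$, the convergence $w_\eps\Leb^d\toY\bm\delta_\mu$ follows because area-strict convergence forces the generated Young measure to be elementary: testing against integrands $f$ with strong recession function and using area continuity of \eqref{eq:resh} pins down $\ddprb{f,\bm\nu}=\cl{I_f}(\mu,\Omega)$, hence $\bm\nu=\bm\delta_\mu$.

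The main obstacle is the passage from the local potential representation to a \emph{single} global sequence $\{w_j\}\subset\Lrm^1(\Omega;W)$ that is $\Acal$-free on all of $\Omega$: convolution $\mu\conv\rho_\eps$ is only defined on $\{x: \dist(x,\partial\Omega)>\eps\}$, so one must either extend $\mu$ $\Acal$-freely past $\partial\Omega$ (not always possible) or localize. If localizing, multiplying a local potential $u_i$ by a cutoff $\phi_i$ does not keep $\Bcal(\phi_i u_i)$ equal to $\phi_i\mu$ — there are commutator terms — and summing $\sum_i \Bcal(\phi_i u_i)$ produces a field whose $\Acal$-image is a controlled but nonzero $\Wrm^{-k,q}$-small error; one then applies the Fonseca--M\"uller projection to correct it back into $\ker\Acal$ while keeping the $\Lrm^1$ and area errors $o(1)$. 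Here the constant rank property enters twice (existence of $\Bcal$ via Lemma~\ref{lem:raita}, and boundedness of the projection), and the stated range $1<q<\tfrac{1}{d-1}$ — wait, this should be $1<q<\tfrac{d}{d-1}$, i.e. the sub-duality exponent where $\Mcal(\Omega)\embed\Wrm^{-k,q}$ — is what makes the error terms from the cutoffs negligible in the relevant negative Sobolev norm. Verifying that the gluing can be arranged so that all three convergences (weak-$*$, Young, area) survive simultaneously is the delicate bookkeeping; everything else is standard mollification and lower-semicontinuity.
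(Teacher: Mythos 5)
Your high-level plan — localize with a partition of unity, mollify, apply the Fonseca--M\"uller projection to repair the $\Acal$-constraint, and then use Reshetnyak-type area-continuity to produce the Young measure and area convergences — is essentially the paper's plan, and your observations about the typo $q<1/(d-1)$ (it should be $q<d/(d-1)$) and about area-strict convergence forcing the limiting Young measure to be elementary are both correct. However, the detour through Raita's potential lemma is a red herring rather than a genuine alternative route. Lemma~\ref{lem:raita} gives $\mu=\Bcal u$ only for mean-value-zero $\Acal$-free measures \emph{on the torus}; to get a potential on a subdomain of an arbitrary $\Omega$ you would first have to cut off $\phi\mu$, which destroys $\Acal$-freeness through commutator terms, and then correct — and that correction is done with the very Fonseca--M\"uller projection you are trying to avoid (this is exactly Lemma~\ref{lem:Helm} in the paper). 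So the potential route is not simpler: it requires the Helmholtz-type decomposition as extra machinery. The paper never introduces $\Bcal$ or $u$ in the proof of Theorem~\ref{lem:app}; it works entirely at the level of the $\Acal$-free field.

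The concrete correction step is where your proposal is vague and where the paper's trick is the content. You suggest ``apply the Fonseca--M\"uller projection to correct [the glued field] back into $\ker\Acal$.'' What the paper actually does, per localized piece $\tilde\mu_{i,j}\coloneqq\phi_i(\mu\star\rho_{\eps_j(i)})$, is to form
\[
u_j \coloneqq \sum_{i=1}^\infty\bigl(\tilde\mu_{i,j}-(\tilde\mu_{i,j})_\Abb+(\phi_i\mu)_\Abb\bigr)\,\Leb^d,
\]
where $(\cdot)_\Abb = T[\cdot]$ is the $\Acal$-representative of Lemma~\ref{lem:3.2}. Each summand $\tilde\mu_{i,j}-(\tilde\mu_{i,j})_\Abb$ is automatically $\Acal$-free since $\Acal T[w]=\Acal w$, and $\sum_i \Acal(\phi_i\mu)_\Abb=\sum_i\Acal(\phi_i\mu)=\Acal\mu=0$, so $u_j\in\ker\Acal$ \emph{exactly}, not just up to a $\Wrm^{-k,q}$-small error. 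The $\Lrm^1$-closeness of $u_j$ to $\tilde\mu_j$ then comes from $\|(\tilde\mu_{i,j})_\Abb-(\phi_i\mu)_\Abb\|_{\Lrm^q}\le C\|\Acal(\tilde\mu_{i,j}-\phi_i\mu)\|_{\Wrm^{-k,q}}\le C\,2^{-i}j^{-1}$, which gives a summable estimate; your phrasing (``project the whole sum and subtract'') loses this dyadic bookkeeping and, more importantly, does not make it obvious the result still converges area-strictly. In short: you identified the right tools, but missing is the precise per-tile add-and-subtract that makes the glued field land in $\ker\Acal$ and stay $\Lrm^1$-close to a sequence you already know converges area-strictly, and your appeal to potentials would, if pursued honestly, require more machinery than the direct argument does.
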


\begin{remark}\label{rem:reg} Notice that we do not require $\Omega$ to be Lipschitz nor  $\Leb^d(\partial \Omega) = 0$. The regularity of the recovery sequence $\{w_j\}$ can be lifted to be of class  $\Crm^k(\Omega;W)$ for any $k \in \Nbb$. 
\end{remark}

\subsubsection{Characterization of generalized $\Bcal$-gradient Young measures} In this section we state the characterization results that belong to the potential setting~\eqref{eq:I}-\eqref{eq:gov2}. 

Let $\Bcal$ be a homogeneous linear operator
of arbitrary order, from $V$ to $W$, and assume that $\Bcal$ satisfies the constant rank property~\eqref{eq:cr}. Let us first introduce the notion of $\Bcal$-gradient Young measure:

\begin{definition}
	A  Young measure $\bm \nu \in \Ybf(U;W)$ is called a generalized {$\Bcal$-gradient Young measure} if there exists a sequence $\{u_j\} \subset \Mcal(U;V)$ such that $\{\Bcal u_j\} \subset \Y(U;W)$ and
	\[
	\Bcal u_j \; \toY \; \bm \nu \; \text{in $\Ybf(U;W)$}.
	\]
	We write $\Bcal\Ybf(U)$ to denote the set of these Young measures.
\end{definition}
\begin{remark}
	We \emph{do not} require that the sequence of generating potential measures $\{u_j\}$ is uniformly bounded. However, of course, the sequence $\{\Bcal u_j\}$ must be uniformly bounded since it generates $\bm \nu$.
\end{remark}


Since $\Bcal$ is satisfies the constant rank property, the results in~\cite{raitua2019potentials} (see also Section~\ref{sec:cr}) yield the existence of an annihilator operator for $\Bcal$. More precisely, there exists $\Acal$ from $W$ to $X$ as in~\eqref{eq:A} such that 
\begin{equation}\label{eq:ee}
\im \Bbb(\xi) = \ker \Abb(\xi) \qquad \text{for all $\xi \in \R^d \setminus \{0\}$.}
\end{equation}
A localization argument and an application of the Fourier transform imply that $\Bcal$-gradients are $\Acal$-free fields and therefore, in this case,
\[
\Bcal\!\Y(U) \subset \Y_\Acal(U).
\]
A more interesting question in this context is to understand \emph{how far} is a generalized $\Acal$-free measure from being a generalized $\Bcal$-gradient Young measure. 
The first step to answer this question is to notice that, by a slight modification of the proof of Theorem~\ref{thm:local}, we obtain the following local characterization:
\begin{theorem}\label{thm:local2}
	Let $\bm \nu  = (\nu,\lambda,\nu^\infty)\in \Y_0(\Omega;W)$. Then, $\bm \nu \in \Bcal\!\Y(\Omega)$ if and only if\vskip8pt
	\begin{itemize}\setlength{\itemsep}{8pt}
		\item[(i)]  there exists $u \in \M(\Omega;V)$ such that  
		\[
		\Bcal u = \dprb{\id,\nu}   \, \Leb^d  \, + \, \dprb{\id,\nu^\infty}\, \lambda\,,
		\]
		\item[(ii)] at $(\Leb^d + \lambda^s)$-almost every $x \in \Omega$, there exists a tangent Young measure 
		\[
		\bm \sigma \in \Tan(\bm \nu,x) \in \Bcal\!\Y(\R^d)
		\]
		such that $\bm \sigma \mres U \in \Bcal\!\Y(U)$ for all open Lipschitz sets $U \Subset \R^d$ with $\lambda(\partial U) = 0$.
	\end{itemize}
\end{theorem}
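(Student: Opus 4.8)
\textbf{Proof strategy for Theorem~\ref{thm:local2}.}
The plan is to follow, essentially verbatim, the proof of Theorem~\ref{thm:local}, replacing the class $\Y_\Acal$ with $\Bcal\!\Y$ at every stage, and checking that each structural ingredient used there has a $\Bcal$-gradient counterpart. The only genuine differences are (a) the affine constraint in (i) now involves a potential $u$ rather than the datum $\mu$ with $\Acal\mu=0$, and (b) one must know that the property ``$\bm\sigma\mres U\in\Bcal\!\Y(U)$'' is stable under the localization and blow-up operations used in the proof. For the latter, the key point is~\eqref{eq:ee}: since $\Bcal$ has constant rank, Rai\c{t}\u{a}'s exactness result provides an annihilator $\Acal$ with $\im\Bbb(\xi)=\ker\Abb(\xi)$, so that $\Bcal\!\Y(U)\subset\Y_\Acal(U)$, and conversely the Poincar\'e-type estimate $\|u-\pi_\Bcal u\|_{\Lrm^p}\lesssim\|\Bcal u\|_{\Lrm^p}$ (valid precisely because of the constant rank property, as recalled in the comments section) lets us pass from $\Acal$-free generating sequences that happen to be exact $\Bcal$-gradients on a ball back to controlled potentials. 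These two facts together are what allow the entire ``gluing at the level of potentials'' machinery behind Theorem~\ref{thm:local} to run in the $\Bcal$-gradient category.

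Concretely, I would argue as follows. For the necessity direction, suppose $\bm\nu\in\Bcal\!\Y(\Omega)$, with generating sequence $\Bcal u_j\toY\bm\nu$. Condition (i) follows by testing the $\Ybf$-convergence against the identity integrand $\id_W$ (which has linear growth) to get weak-$*$ convergence $\Bcal u_j\toweakstar \dprb{\id,\nu}\Leb^d+\dprb{\id,\nu^\infty}\lambda$ in $\M(\Omega;W)$; since the Poincar\'e estimate makes $\{u_j-\pi_\Bcal u_j\}$ bounded in an appropriate space (after localizing), a subsequence of suitably normalized potentials converges to some $u\in\M(\Omega;V)$ with $\Bcal u$ equal to the stated measure. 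For condition (ii), the existence of a tangent Young measure at $(\Leb^d+\lambda^s)$-a.e. point is a general fact about the blow-up of Young measures (as in Theorem~\ref{thm:local}); the content is that the blow-up $\bm\sigma$ can again be realized by $\Bcal$-gradients. This is where one reuses the proof of Theorem~\ref{thm:local}: the local generating sequences there are constructed at the level of potentials, so when $\bm\nu$ is a $\Bcal$-gradient Young measure the blow-up inherits a $\Bcal$-gradient generating sequence by rescaling the potentials $u_j$ and using the homogeneity of $\Bcal$, together with the Poincar\'e estimate to keep the rescaled potentials under control on each $U\Subset\R^d$ with $\lambda(\partial U)=0$.

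For the sufficiency direction, one glues: given (i) and (ii), partition $\Omega$ (up to a small-measure remainder, using a Vitali-type covering adapted to $\lambda$) into small balls on which the tangent measure $\bm\sigma$ at the center is $\Bcal\!\Y$, take on each ball a $\Bcal$-gradient sequence generating (a rescaled copy of) $\bm\sigma$, cut it off against the potential $u$ from (i) using a smooth partition of unity — here one must correct the error terms $\Bcal(\phi u_j)-\phi\Bcal u_j$, which are lower-order and controlled by the same Poincar\'e/Calder\'on--Zygmund-type bounds — and diagonalize. The output is a sequence of $\Bcal$-gradients generating $\bm\nu$, which is exactly $\bm\nu\in\Bcal\!\Y(\Omega)$.

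\textbf{Main obstacle.} The delicate step, as in Theorem~\ref{thm:local}, is the gluing: ensuring that the cut-off errors $\Bcal(\phi u_j)-\phi\,\Bcal u_j$ do not pollute either the weak-$*$ limit or, more importantly, the nonlinear moments encoded by $\bm\nu$ (in particular the concentration part carried by $\lambda$). This requires that these correction terms vanish in the strong $\Wrm^{-k,q}$-topology with $1<q<d/(d-1)$ used in the definition of $\Y_\Acal$, which in turn relies on the Poincar\'e-type estimate for $\Bcal$ after removing $\ker\Bcal$ — available precisely because $\Bcal$ satisfies the constant rank property — and on arranging that concentration of $|\Bcal u_j|$ does not occur on the boundaries of the covering balls, which is why one insists on $\lambda(\partial U)=0$. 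Everything else is a routine transcription of the $\Acal$-free argument.
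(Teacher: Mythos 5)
Your proposal takes essentially the same route as the paper: both transcribe the proof of Theorem~\ref{thm:local} into the potential category, using exactness ($\im\Bbb(\xi)=\ker\Abb(\xi)$), the constant-rank $\Bcal$-Poincar\'e--Sobolev estimates, and the blow-down identity $\Grm_{x,r}[\Bcal w]=\Bcal w^{x,r}$ for rescaled potentials, with the commutator $[\Bbb,\phi]$ controlled by the same lower-order bounds you cite. The one point the paper isolates explicitly that you handle only implicitly (via "cutting off against $u$") is that assumption (i) ($[\bm\nu]=\Bcal u_0$) forces the constant $z$ in the Helmholtz decomposition of blow-ups to vanish (last statement of Corollary~\ref{cor:shift}), which is precisely what turns each glued local recovery sequence into a pure $\Bcal$-gradient rather than a $\Bcal$-gradient plus an $\Acal$-free remainder.
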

Now, before stating the analog of Theorem~\ref{thm:char} for $\Bcal$-gradients, we will need to adapt some of the preliminary definitions of the $\Acal$-framework into the $\Bcal$-framework. In the case of potentials, the role of the wave cone is replaced by the \emph{image cone}
\[
\Irm_\Bcal \coloneqq \bigcup_{\xi \in \R^d} \im \Bbb(\xi) \subset W,
\]
which contains the set of $\Bcal$-gradients in Fourier space. The exactness property~\eqref{eq:ee} has two direct consequences: Firstly, it implies  (see~\cite[Corollary~1]{raitua2019potentials}) the equivalence 
 between $\Acal$-quasiconvexity and {$\Bcal$-gradient quasiconvexity}:
 \begin{definition}
 	A locally bounded Borel integrand $h : W \to \R$ is called $\Bcal$-gradient quasiconvex if 
 	 \[
 	h(z) \le \int_{(0,1)^d} h(z + \Bcal w(y)) \dd y \qquad \text{for all $w \in \Crm^\infty_c((0,1)^d,V)$.}
 	\]
 	and all $z \in W$.
 \end{definition}   

 Secondly, the wave cone of $\Acal$ coincides with the image cone of $\Bcal$ (i.e., $\Irm_\Bcal = \Lambda_\Acal$). 
These two observations and Theorem~\ref{thm:local} imply that $\Y_\Acal(\Omega)$ and $\Bcal\!\Y(\Omega)$ are structurally equivalent, except at their associated barycenter measures:

\begin{theorem}\label{thm:char2}
	Let $\bm \nu = (\nu,\lambda,\nu^\infty) \in \Y_0(\Omega;W)$.  Then, $\bm \nu \in \Bcal\!\Y(\Omega)$ if and only if\vskip8pt
	\begin{itemize}\setlength{\itemsep}{8pt}
		\item[(i)] there exists $u \in \M(\Omega;V)$ such that 
		\[
		\Bcal u = \dprb{\id,\nu} \, \Leb^d + \dprb{\id,\nu^\infty} \lambda\, ,
		\]
		\item[(ii)] at $\Leb^d$-almost every $x \in \Omega$, 
		\begin{align*}
			h\big(\ac\Bcal u(x)\big) \le \dprb{h,\nu_x}   + \dprb{h^\#,\nu^\infty_x} \ac \lambda(x)
		\end{align*}
		for all   upper-semicontinuous $\B$-gradient quasiconvex  integrands $h : W\to \R$ with linear growth at infinity, and 
		\item[(iii)] at $\lambda^s$-almost every $x \in \Omega$, it holds
		\[
		\supp(\nu_x^\infty) \subset   \spn\{\Irm_\Bcal\}.
		\]
	\end{itemize}
\end{theorem}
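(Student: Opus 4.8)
The plan is to derive Theorem~\ref{thm:char2} from the already-established $\Acal$-free characterization, Theorem~\ref{thm:char} (together with the local version, Theorem~\ref{thm:local}), by exploiting the exactness relation~\eqref{eq:ee} between $\Bcal$ and its annihilator $\Acal$. The overall architecture is: (1) show the three conditions in Theorem~\ref{thm:char2} are equivalent to the three conditions in Theorem~\ref{thm:char} \emph{except} for the barycenter condition (i), where "$\Acal$-free measure $\mu$" is replaced by "$\mu = \Bcal u$ for some potential $u\in\Mcal(\Omega;V)$"; (2) show the barycenter condition is the only place where the two notions genuinely differ, and that on the Young-measure level $\Bcal\!\Y(\Omega)$ is exactly the subset of $\Y_\Acal(\Omega)$ whose barycenter is a $\Bcal$-gradient.

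First I would record the two structural consequences of~\eqref{eq:ee} that are quoted in the excerpt: the identity $\Irm_\Bcal=\Lambda_\Acal$ (hence $\spn\{\Irm_\Bcal\}=W_\Acal$, so condition (iii) of the two theorems literally coincide), and the equivalence of $\Acal$-quasiconvexity with $\Bcal$-gradient quasiconvexity from~\cite[Corollary~1]{raitua2019potentials} (hence condition (ii) of the two theorems coincide once the barycenter part of (i) is matched up). So the whole proof reduces to: a Young measure $\bm\nu\in\Y_0(\Omega;W)$ satisfying conditions (ii)--(iii) lies in $\Bcal\!\Y(\Omega)$ if and only if it lies in $\Y_\Acal(\Omega)$ and its barycenter $\dprb{\id,\nu}\Leb^d+\dprb{\id,\nu^\infty}\lambda$ can be written as $\Bcal u$ for some $u\in\Mcal(\Omega;V)$.

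For the forward implication ($\Bcal\!\Y\subset\Y_\Acal$ plus barycenter is a $\Bcal$-gradient): the inclusion $\Bcal\!\Y(\Omega)\subset\Y_\Acal(\Omega)$ is noted in the excerpt and follows from a localization/Fourier argument showing $\Bcal$-gradients are $\Acal$-free in the required negative Sobolev sense; and if $\Bcal u_j\toY\bm\nu$ then $\Bcal u_j\toweakstar$ the barycenter measure, so after subtracting the kernel projection $\pi_\Bcal u_j$ and using the Poincaré estimate $\|u_j-\pi_\Bcal u_j\|_{\Lrm^1}\lesssim\|\Bcal u_j\|_{\Lrm^1}$ (bounded by hypothesis) one extracts a weak-$*$ limit $u\in\Mcal(\Omega;V)$ with $\Bcal u$ equal to the barycenter. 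For the reverse implication one uses Theorem~\ref{thm:char} to get a generating sequence $\{\mu_j\}$ of $\Acal$-free measures with $\mu_j\toY\bm\nu$, together with the local characterization Theorem~\ref{thm:local}/Theorem~\ref{thm:local2}, to upgrade the generating measures to $\Bcal$-gradients: the point is that \emph{tangent} Young measures at almost every point already carry a $\Bcal$-gradient structure because their barycenters are constant (hence trivially $\Bcal$-gradients) and conditions (ii)--(iii) are tangentially inherited, so Theorem~\ref{thm:local2}(ii) holds; combined with Theorem~\ref{thm:local2}(i), which is exactly the assumption that the global barycenter is $\Bcal u$, we conclude $\bm\nu\in\Bcal\!\Y(\Omega)$. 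In effect, Theorem~\ref{thm:char2} is Theorem~\ref{thm:char} with "$\Acal\mu=0$" traded for "$\mu=\Bcal u$", legitimized by passing through the local statements where the difference evaporates.

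The main obstacle I expect is the reverse implication at the level of global generating sequences: knowing the barycenter is a $\Bcal$-gradient and that tangentially the Young measure is a $\Bcal$-gradient Young measure does not immediately produce a \emph{single} globally $\Bcal$-gradient generating sequence — one must glue local $\Bcal$-potential generating sequences without destroying the potential structure, and here the constant rank property is essential (it gives the Poincaré-type estimate $\|u-\pi_\Bcal u\|_{\Lrm^p}\lesssim\|\Bcal u\|_{\Lrm^p}$ needed to control the cut-off localizations $\Bcal u\mapsto\Bcal(\phi u)$, since $\Bcal(\phi u)=\phi\,\Bcal u+(\text{lower order in }u)$). This is precisely the mechanism already developed for Theorem~\ref{thm:local}; the work is to check it runs verbatim for potentials, replacing the Fonseca--M\"uller projection $\pi_\Acal$ by the kernel projection $\pi_\Bcal$ and invoking Lemma~\ref{lem:raita} where an explicit potential is needed. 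Once that bookkeeping is done, the identifications of (ii) and (iii) via~\eqref{eq:ee} are immediate and the proof closes.
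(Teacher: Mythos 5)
Your proposal follows the paper's own strategy almost exactly: necessity via $\Bcal\!\Y(\Omega)\subset\Y_\Acal(\Omega)$ and Theorem~\ref{thm:char}, together with the identifications $\Irm_\Bcal=\Lambda_\Acal$ and $\Acal$-quasiconvexity $=$ $\Bcal$-gradient quasiconvexity from~\eqref{eq:ee}; sufficiency by first deducing $\bm\nu\in\Y_\Acal(\Omega)$ from Theorem~\ref{thm:char}, then using the barycenter condition~(i) to recognize tangent Young measures as $\Bcal$-gradient YMs, and finally closing with the local characterization Theorem~\ref{thm:local2}. That is precisely the paper's proof.

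Two small remarks on precision that you would want to fix in writing it out. First, the justification for $\Lambda_\Acal = \Irm_\Bcal$ does not follow just from $\im\Bbb(\xi)=\ker\Abb(\xi)$. Moreover, "barycenters of tangent Young measures are constant, hence trivially $\Bcal$-gradients" is not automatic: the constant $z$ arising at regular points in the Helmholtz decomposition of Corollary~\ref{cor:helm} lies in $W$, and a constant $z\,\Leb^d$ is a $\Bcal$-gradient only if $z\in\spn\{\Irm_\Bcal\}=W_\Acal$; the global hypothesis~(i) is exactly what forces $z\in W_\Acal$ (this is the content of the last parts of Corollary~\ref{cor:helm} and Corollary~\ref{cor:shift}, via the explicit primitive $u_z$). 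Second, the extraction of a potential $u$ in the necessity of~(i) should be run through the $\Bcal$-representative $T$ of Lemma~\ref{lem:3.2} and the compact embedding $\Mcal\cembed\Wrm^{-1,q}$ for $1<q<d/(d-1)$ (Corollary~\ref{cor:compact_embedding}), rather than an $\Lrm^1$-Poincar\'e inequality, since the generating sequences are measure-valued and $\Lrm^1$-bounds for constant rank operators are delicate.
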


We close this section with the analog of Theorem~\ref{lem:app} for $\Bcal$-gradients:
\begin{theorem}\label{lem:app2} Let $\Omega \subset \R^d$ be a bounded open set and let $u \in \M(\Omega;V)$ be such that $\Bcal u \in \Mcal(\Omega;W)$ is a bounded Radon measure. Then, there exists a sequence $\{u_j\} \subset \Crm^\infty(\Omega;V)$ satisfying 
	\begin{align*}
		\Bcal u_j \, \Leb^d \; &\toweakstar \; \Bcal u \; &&\text{as measures in $\M(\Omega;W)$,}\\
		\Bcal u_j \, \Leb^d \; &\toY \; \bm\delta_{\Bcal u} \; &&\text{on $\Omega$,}
	\end{align*}
and  
\[
		\area{\Bcal u_j \,\Leb^d,\Omega} \; \to \; \area{\Bcal u,\Omega}. 	
\]
\end{theorem}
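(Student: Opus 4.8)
The plan is to deduce Theorem~\ref{lem:app2} from Theorem~\ref{lem:app} together with the relationship~\eqref{eq:ee} between $\Bcal$ and its annihilator $\Acal$. The first step is to observe that, since $\Bcal u \in \Mcal(\Omega;W)$ is a bounded Radon measure and $\Bcal$ is an annihilated differential operator, applying the Fourier transform (or simply testing against $\Acal^*$-type test functions via a localization) shows $\Acal(\Bcal u) = 0$ in the sense of distributions on $\Omega$; thus $\mu \coloneqq \Bcal u$ is a bounded $\Acal$-free measure on $\Omega$. By Theorem~\ref{lem:app} there exists a sequence $\{w_j\} \subset \Lrm^1(\Omega;W)$ with $\Acal w_j = 0$ distributionally, $w_j\,\Leb^d \toweakstar \mu$, $w_j\,\Leb^d \toY \bm\delta_\mu = \bm\delta_{\Bcal u}$, and $\area{w_j\,\Leb^d,\Omega} \to \area{\mu,\Omega} = \area{\Bcal u,\Omega}$. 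By Remark~\ref{rem:reg} we may further take $w_j \in \Crm^\infty(\Omega;W)$. It therefore remains to upgrade each $\Acal$-free smooth field $w_j$ to the form $\Bcal u_j$ for some $u_j \in \Crm^\infty(\Omega;V)$, without destroying the three convergences.

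The mechanism for this upgrade is the potential representation guaranteed by the constant rank property: by the results of Raita (see~\eqref{eq:ee} and Lemma~\ref{lem:raita}), locally, every smooth $\Acal$-free field is of the form $\Bcal u$ for a smooth potential, since $\im\Bbb(\xi) = \ker\Abb(\xi)$ for all $\xi \neq 0$. To globalize this on $\Omega$, I would not attempt a single global potential (which may be obstructed by the topology of $\Omega$); instead I would exploit that $w_j\,\Leb^d \toY \bm\delta_{\Bcal u}$ and reduce to the case where $\Omega$ is replaced by an exhausting sequence of smoothly bounded, topologically trivial open sets $\Omega_m \Subset \Omega$ with $|\Bcal u|(\partial\Omega_m) = 0$, on each of which the Bourgain--Brezis/Raita-type potential operator produces a bounded linear right inverse for $\Bcal$ acting on $\Acal$-free fields. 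A cleaner route, which I expect the paper takes, is to run the \emph{same} construction that proves Theorem~\ref{lem:app} but carried out at the level of potentials from the outset: mollify $u$ to get smooth $u_\eps$, correct boundary-layer errors using the potential-valued analogue of the Fonseca--M\"uller projection, and check that the resulting $\Bcal u_\eps$ obey the area-strict convergence via the area-continuity of the functional~\eqref{eq:resh} along area-strictly convergent sequences~\cite[Theorem~5]{kristensen2010characterizatio}. Concretely: set $u_\eps \coloneqq (\rho_\eps \conv \tilde u)|_\Omega$ for a mollifier $\rho_\eps$ and a suitable extension $\tilde u$, so that $\Bcal u_\eps = \rho_\eps \conv (\widetilde{\Bcal u})$ on compact subsets; then $\Bcal u_\eps \,\Leb^d \toweakstar \Bcal u$ and, because mollification of a measure is area-strictly convergent to it on relatively compact open subsets, $\area{\Bcal u_\eps,\Omega'} \to \area{\Bcal u,\Omega'}$ for $\Omega' \Subset \Omega$; a diagonal argument in $\eps$ and the exhaustion then yields the full claim on $\Omega$.

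The main obstacle is the passage from the \emph{local} potential representation to a \emph{global} smooth potential $u_j$ on all of $\Omega$ while simultaneously preserving all three convergences and the $\Crm^\infty$-regularity, precisely because $\Omega$ is allowed to be an arbitrary bounded open set (Remark~\ref{rem:reg} stresses no Lipschitz or $\Leb^d(\partial\Omega)=0$ hypothesis). Two sub-difficulties arise: first, the potential may only be defined up to an element of $\ker\Bcal$, so one must use the projection $\pi_\Bcal$ onto $\ker\Bcal$ and the Poincar\'e-type estimate $\|u-\pi_\Bcal u\|_{\Lrm^p}\lesssim\|\Bcal u\|_{\Lrm^p}$ (valid by the constant rank assumption) to get \emph{bounded} potentials and hence control near $\partial\Omega$; second, gluing local potentials across the exhaustion $\{\Omega_m\}$ requires a careful partition-of-unity/cutoff argument in which the commutator $[\Bcal,\phi]u$ is absorbed into a lower-order correction — here one again invokes constant rank to solve $\Bcal v = (\text{error})$ with good estimates. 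Once these are in place, the area-strict convergence is essentially free from the area-continuity of~\eqref{eq:resh}, and the Young-measure convergence $\Bcal u_j\,\Leb^d \toY \bm\delta_{\Bcal u}$ follows from the fact that area-strict convergence to an $\Lrm^1$-measure forces generation of the elementary Young measure. I expect the proof in the paper to be short, citing Theorem~\ref{lem:app}, Lemma~\ref{lem:raita}, and the area-continuity result, with the potential-gluing step handled by the same techniques already developed for the $\Acal$-free case.
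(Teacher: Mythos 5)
You correctly identify the paper's route: the proof mirrors the construction of Theorem~\ref{lem:app} but carried out at the level of potentials from the outset, and does not pass through $\Acal$-free fields via Theorem~\ref{lem:app}. Your first sketch (upgrade $\Acal$-free $w_j$ to $\Bcal u_j$ globally, e.g.\ via an exhaustion by topologically trivial subdomains) is not used and, as you yourself flag, has a genuine obstruction. But your preferred \enquote{concrete} version also has a gap that you should not wave away.

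The gap is the step \enquote{set $u_\eps := (\rho_\eps \conv \tilde u)|_\Omega$ for a suitable extension $\tilde u$}. No extension is available here: $\Omega$ is an arbitrary bounded open set with no regularity whatsoever (this is the whole point of the theorem; see Remark~\ref{rem:reg}), and $u$ is only a Radon measure with $\Bcal u$ a measure, so $u \notin \Wrm^{s,p}$ for any useful $s,p$ either. Moreover, even on compactly contained $\Omega' \Subset \Omega$, the exhaustion-plus-diagonal argument you sketch does not by itself control the mass of $\Bcal u$ near $\partial\Omega$; the area functional on all of $\Omega$ sees that mass, and there is no mechanism in your sketch to show it is not over-counted in the limit. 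What the paper actually does is localize \emph{inside} $\Omega$ by a locally finite smooth partition of unity $\{\phi_i\} \subset \Crm^\infty_c(\Omega)$, mollify each localized piece $\phi_i u$ with its own scale $\eps_{j(i)}$, take its $\Bcal$-representative $w_{i,j} = (\phi_i(u\star\rho_{\eps_{j(i)}}))_\Bbb$ (a $\Wrm^{k_\Bbb-1,q}$ object via Lemma~\ref{lem:3.2}, obtained through a bootstrap across nested cut-offs to gain derivatives), and then \emph{sum} $u_j := \sum_i w_{i,j(i)}$. The upper bound for the area functional then comes from decomposing $\Bcal u_j$ into an absolutely continuous part converging in $\Lrm^1$, a singular part dominated via Young's inequality by $|\Bcal^s u|$, and a commutator term $\sum_i[\Bbb,\phi_i](\cdot)$ which converges in $\Lrm^q$ and so does not inflate the limit; no extension operator and no boundary regularity ever enter.

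So: right high-level picture, but the specific mechanism you propose (global extension, mollify, restrict, exhaust) fails on arbitrary $\Omega$; the correct replacement is interior localization by partition of unity with per-piece mollification scales and $\Bcal$-representative estimates, as in Step~1 of the proof of Theorem~\ref{lem:app}.
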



\section{Examples}\label{sec:ex} In this section we review, with concise examples, a few of the most well-known $\Acal$-free and $\Bcal$-gradient structures; most of which ---with the exception of (e)--- satisfy the spanning property
\[
	\spn\{\Lambda_\Acal\} = W \quad \text{or} \quad \spn\{\Irm_\Bcal\}  = W.
\] 
Let us recall that, in this case, the point-wise relation (iii) of the singular part in Theorems~\ref{thm:char} and~\ref{thm:char2} is superfluous (cf. Remark~\ref{rem:ya}). In the following list of examples, the labels \enquote{$\Acal$-free} or \enquote{potential} indicate the setting on which the operator is considered: 
\begin{enumerate}[(a)]\setlength{\itemsep}{8pt}
	\item Gradients (potential). Let $D$ be the gradient operator acting on functions $u : \Omega\to \R^m$. Clearly, $D$ is first-order operator from $\R^m$ to $\R^m \otimes \R^d$. Moreover, the set of gradients in Fourier space is 
	\[
		\Irm_{D} = \set{a \otimes \xi}{a \in \R^m,\xi \in \R^d},
	\]
	which is a generating set of $\R^m \otimes \R^d$. 
	
	\item Higher order gradients (potential). In the same context as the last point, the $k$-gradient operator
	\[
		D^k u = \bigg(\frac{\partial^k u^i}{\partial x_{p_1} \cdots \partial x_{p_k}}\bigg)\qquad p_j \in \{1,\dots,d\}; \quad i= 1,\dots,m,
	\]
	is a $k$-th order operator from $\R^m$ to $\R^m \otimes E_k(\R^d)$, where $E_k(\R^d)$ is the space of $k$-th order symmetric tensors. The set of $k$-th order gradients in Fourier space is the set
	\[
		\Irm_{D^k} = \set{a \otimes^k \xi}{a \in \R^m, \xi \in \R^d}.
	\]
	A standard polarization argument implies that $\Irm_{D^k}$ indeed spans $\R^m \otimes E_k(\R^d)$.
	
	\item Symmetric gradients (potential). The symmetric gradient of a vector field $u : \R^d \to \R^d$ is defined as as $Eu = \sym(Du) = \frac 12 (Du + Du^T)$. Clearly, $E$ defines a first-order operator from $\R^d$ to $E_2(\R^d)$. The space of Fourier symmetric gradients is given by
	\[
		\Irm_{E} = \set{a \otimes \xi + \xi \otimes a}{a,\xi \in \R^d},
	\]
	which, again by a polarization argument, can be seen to generate $E_2(\R^d)$.
	
	\item Deviatoric operator (potential). The operator that considers only the shear part of the symmetric gradient is given by
	\[
		E_Du \coloneqq \sym(Du) - \frac {\Div(u)}d  I_d, \qquad u: \R^d \to \R^d,
	\]
	where $I_d$ is the identity in $\R^d \otimes \R^d$. Therefore, $E_D$ is a first-order operator form $\R^d$ to $\setn{M \in E_2(\R^d)}{\tr(M) = 0}$. The set of shear symmetric gradients in Fourier space is 
	the set
	\[
		\set{\frac 12 (a \otimes \xi + \xi \otimes a) - \frac{(a \cdot \xi)}{d}I_d}{a,\xi \in \R^d}.
	\]
	This set contains all the tensors of the form $e_i \otimes e_i - e_j \otimes e_j$ and $e_i \otimes e_j + e_j \otimes e_i$ for $i \neq j$, which conform a basis of the trace-free 
	symmetric tensors.
	\item[(e)] The Laplacian ($\Acal$-free and potential). An interesting case is the Laplacian operator 
	\[
	\Delta u = \sum_{i=1}^d \partial_{ii} u, \qquad u : \R \to \R.
	\] 
	 The Laplacian is a 2nd order operator from $\R$ to $\R$. The $\Acal$-free perspective of the Laplacian corresponds to the variational properties of the harmonic functions. The first statement of Lemma~\ref{lem:over} gives $\Lambda_\Delta = \{0\}$ and
	\[
		\Ybf_\Delta(\Omega) \cap \Y_0(\Omega;\R) = \set{(\delta_w,0,\frarg)}{\text{$w : \Omega \to \R$ is harmonic}}.
	\]
	This says that there are no concentration nor oscillation effects occurring along sequences of uniformly bounded harmonic maps.	Of course, this is not surprising since harmonic functions satisfy local $(\Wrm^{1,\infty},\Lrm^1)$-estimates. 
	
	The $\Delta$-potential perspective is completely opposite ($\Irm_\Delta = \R$). Indeed, since $\Delta$ is a full-rank elliptic operator, then the second statement in Lemma~\ref{lem:over} implies that
	\[
		\Delta\!\Ybf(\Omega;\R) \cap \Ybf_0(\Omega;\R) = \Ybf_0(\Omega;\R),
	\]
	which says that being generated by the Laplacian of a sequence of functions represents no constraint. Heuristically, this is also not surprising due to existence of  a fundamental solution for the Laplacian.
%
	\item[(f)]  Solenoidal measures ($\Acal$-free). Let us consider the scalar-divergence operator 
	\[
		\Div(w) = \partial_1 w^1 + \dots + \partial_d w^d, \qquad w : \R^d \to \R^d.
	\]
	This defines a first-order operator from $\R^d$ to $\R$. It is straightforward to verify 
	\[
		\Lambda_{\Div} = \set{\xi^\perp}{\xi \in \R^d} = \R^d.
	\]
 	In particular, every div-quasiconvex function is convex (cf. Section~\ref{sec:qc}). This, implies that both the constitutive relations of the absolutely continuous and singular parts of solenoidal Young measures are fully unconstrained:
 	\[
 		\Ybf_{\Div}(\Omega) \cap \Y_0(\Omega;\R) =  \set{\bm \nu \in \Ybf_0(\Omega;\R^d)}{\Div ([\bm \nu]) = 0},
 	\]
 	where $[\bm \nu] = \dprb{\id_{\R^d},\nu} + \lambda\dprb{\id_{\R^d},\nu^\infty}$ is the barycenter of $\bm \nu$.

	\item[(g)] Normal currents ($\Acal$-free and potential).
	The following framework has recently received attention in light of the new ideas proposed to study certain dislocation models, which are related to functionals defined on normal {$1$-currents} without boundary (boundaries of normal $2$-currents). For a thorough understanding of these models, we refer the reader to~\cite{conti2015dislocations,hudson2018existence} and references therein. 
	
	Let $1 \le m \le d$ be an integer. The space of $m$-\emph{dimensional currents} consists of all distributions $T \in \Dcal'(\Omega;\bigwedge_m\R^d)$.  The \emph{boundary} operator $\partial_m$ acts (in the sense of distributions) on $m$-dimensional currents $T$ as 
	\[
	\dprb{\partial_m T,\omega} = \dprb{T,\mathrm d\omega}, \qquad \omega \in \textstyle{\Crm^\infty(\Omega;\bigwedge^{m-1}\R^d)}.
	\]
	Therefore $\partial_m$ is first-order operator from $\bigwedge^{m} \R^d$ to $\bigwedge^{m-1} \R^d$. De Rham's theorem implies that $\partial_m$ is a constant rank operator. Indeed,  $\im \partial_{m}(\xi) = \ker \partial_{m-1}(\xi)$ for all $\xi \in \R^d \setminus \{0\}$. Hence $\rank \partial_m(\xi)$ is continuous on the sphere, and thus also constant. The space $\Nbf_m(\Omega)$ of \emph{$m$-dimensional normal currents} is defined as the space of $m$-currents $T$, such that both $T$ and $\partial_m T$ can be represented by measures: 
	\[
	\textstyle{	\Nbf_m(\Omega) \coloneqq \setn{T \in \M(\Omega;\bigwedge_m\R^d)}{\partial_m T \in \M(\Omega;\bigwedge_{m-1}\R^d)} }.
	\]
	We say that $T \in \Nbf_m(\Omega)$ is a current without boundary provided that $\partial_m T = 0$. In this context, we say that
	\begin{enumerate}[(i)]
		\item $\bm \nu \in \Ybf_{\partial_m}(\Omega;\bigwedge^m \R^d)$ is an $m$-current Young measure without boundary, 
		\item $\bm \nu \in \partial_m\!\Y(\Omega;\bigwedge^{m-1} \R^d)$ is an $m$-boundary Young measure.
	\end{enumerate}
	Notice that the symbol of $\partial_m$ acts on $m$-vectors $w \in \bigwedge^m \R^d$ precisely as the interior multiplication $\partial_m(\xi) w = w \lrcorner \xi$. If $e_1,\dots,e_d$ is a basis of $\R^d$, then $\partial_m(e_i) [ e_j \wedge v] = 0$ for all $i \neq j$ and all $v \in \bigwedge_{m-1} \R^d$. In particular
	\[
	\{e_{i_1} \wedge \dots \wedge e_{i_m}\}_{i_1 < \dots < i_m} \subset \Lambda_{\partial_m} \quad \Longrightarrow \quad \spn \{\Lambda_{\partial_m}\} = \textstyle\bigwedge_m \R^d.
	\]
	If we consider $\partial_m$ as a potential, then De Rham's theorem gives
	\[
	\spn \{\Irm_{\partial_m} \}= \spn \{\Lambda_{\partial_{m-1}} \}= \textstyle \bigwedge_{m-1
	} \R^d.
	\]
\end{enumerate}

\section{Applications} In this section we discuss some applications of the dual characterizations. First, we give an explicit description of the Young measures, both from the $\Acal$-free and potentials perspectives, associated to full-rank elliptic systems. The remaining sections are devoted to discuss, mostly via abstract constructions, the failure of classical compensated compactness results in the $\Lrm^1$ setting.

\subsection{Young measures generated by full-rank elliptic operators} We show that, for $\Acal$ a full-rank elliptic operator, we can give a simple characterization of the $\Acal$-constrained Young measures. Let us define first define ellipticity:
\begin{definition}We say that a homogeneous linear operator $\Acal$ of order $k$, from $W$ to $X$, is elliptic if there exists 
	$c > 0$ such that
	\[
	|\Abb(\xi)[w]| \ge c |\xi|^k |w| \qquad \text{for all $\xi \in \R^d \setminus \{0\}$ and all $w \in W$.} 
	\]
If moreover $\dim(W) \ge \dim(X)$, then we say that $\Acal$ is a full-rank elliptic operator.
\end{definition}
	The following result says that the sets of $\Acal$-free and $\Acal$-gradient generalized Young measures are trivial for (full-rank) elliptic operators:
\begin{lemma}\label{lem:over} Assume that $\Acal$ is an elliptic operator from $W$ to $X$. Then,
	\begin{align*}
		\Ybf_\Acal(\Omega) \cap \Ybf_0(\Omega;W) &= \set{(\delta_w,0,\frarg)}{w \in \Lrm^1(\Omega;W), \,\Acal w = 0}.
	\end{align*}
	If moreover $\Acal$ is a full-rank elliptic operator, then also
	\begin{align*}
		\Acal\!\Y(\Omega) \cap  \Ybf_0(\Omega;X) &= \Ybf_0(\Omega;X).
	\end{align*}
\end{lemma}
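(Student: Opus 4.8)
\textbf{Proof proposal for Lemma~\ref{lem:over}.}

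The plan is to treat the two statements separately, exploiting ellipticity to obtain strong regularity/rigidity that collapses the Young measure structure.

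\emph{First statement ($\Acal$-free, $\Acal$ elliptic).} One inclusion is trivial: any $w \in \Lrm^1(\Omega;W)$ with $\Acal w = 0$ generates the elementary Young measure $(\delta_w,0,\sbullet)$ via the constant sequence $\mu_j = w\Leb^d$, which is $\Acal$-free. For the converse, let $\bm\nu \in \Ybf_\Acal(\Omega) \cap \Ybf_0(\Omega;W)$ be generated by $\{\mu_j\} \subset \Mcal(\Omega;W)$ with $\|\Acal\mu_j\|_{\Wrm^{-k,q}} \to 0$. Since $\Acal$ is elliptic, it satisfies the constant rank property (with $r = \dim W$) and, crucially, $\Lambda_\Acal = \bigcup_\xi \ker\Abb(\xi) = \{0\}$ because $\Abb(\xi)$ is injective for every $\xi \neq 0$. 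Hence $W_\Acal = \spn\{\Lambda_\Acal\} = \{0\}$. Property (iii) of Theorem~\ref{thm:char} then forces $\nu^\infty_x$ to be a probability measure on $S_W$ supported in $\{0\} \cap S_W = \emptyset$ at $\lambda^s$-a.e.\ $x$; this is only possible if $\lambda^s = 0$. Similarly, Remark~\ref{rem:kk} gives $\supp(\delta_{-\ac\mu(x)}\star\nu_x) \subset W_\Acal = \{0\}$, i.e.\ $\nu_x = \delta_{\ac\mu(x)}$ for $\Leb^d$-a.e.\ $x$, and $\supp(\nu^\infty_x) \subset W_\Acal = \{0\}$ forces $\ac\lambda = 0$, so $\lambda = 0$ altogether. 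Thus $\bm\nu = (\delta_{\ac\mu},0,\sbullet)$ with $\mu = \ac\mu\,\Leb^d$ an $\Acal$-free $\Lrm^1$ field (absolute continuity of $\mu$ following from $\lambda = 0$ together with part (i) of the theorem, since $\mu = \dprb{\id,\nu}\Leb^d$). This is exactly the asserted form. (Alternatively, one can argue more directly: ellipticity gives the interior estimate $\|\mu_j - \pi_\Acal\mu_j\|_{\Lrm^1_\loc} \lesssim \|\Acal\mu_j\|_{\Wrm^{-k,q}} + (\text{l.o.t.})$ via the Fonseca--M\"uller projection, forcing the generating sequence to be $\Lrm^1_\loc$-equi-integrable after subtracting an $\Acal$-free part, which kills concentrations; I would still package this through Theorem~\ref{thm:char} to keep the argument short.)

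\emph{Second statement ($\Acal$-gradient, $\Acal$ full-rank elliptic).} Now the inclusion $\Acal\!\Y(\Omega) \cap \Ybf_0(\Omega;X) \subset \Ybf_0(\Omega;X)$ is trivial, so the content is the reverse inclusion: \emph{every} $\bm\nu \in \Ybf_0(\Omega;X)$ is generated by a sequence of the form $\{\Acal u_j\}$. The key point is that full-rank ellipticity makes $\Acal$ \emph{surjective} as a differential operator: given any target, one can solve $\Acal u = g$. Concretely, since $\Abb(\xi) : W \to X$ is surjective for all $\xi \neq 0$ (as $\dim W \ge \dim X$ and $\Abb(\xi)$ is injective, hence $\rank\Abb(\xi) = \dim X$... wait, injectivity needs $\dim W \le \dim X$) --- here I must be careful: full-rank ellipticity as defined means the lower bound $|\Abb(\xi)w| \ge c|\xi|^k|w|$ (so $\Abb(\xi)$ injective) \emph{and} $\dim W \ge \dim X$, whence $\rank\Abb(\xi) = \dim W$ and also $= \dim X$, so $\dim W = \dim X$ and $\Abb(\xi)$ is an isomorphism. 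Then $\Acal$ admits a Fourier-multiplier right inverse: define $\Tcal g := \mathcal{F}^{-1}[\Abb(2\pi\cdot)^{-1}\hat g]$ modulo the zero mode, which is a homogeneous operator of order $-k$, hence smoothing. Given an arbitrary $\bm\nu \in \Ybf_0(\Omega;X)$, pick (by the classical existence theory for generalized Young measures, e.g.\ the generating sequence construction of \textsc{Alibert--Bouchitt\'e} / Kristensen--Rindler) \emph{any} sequence $\{g_j\} \subset \Crm^\infty(\Omega;X)$ (or in $\Mcal$) with $g_j \toY \bm\nu$ on $\Omega$; then extend/truncate and set $u_j := \Tcal(\eta g_j)$ on a slightly larger ball, restricting to $\Omega$. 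One has $\Acal u_j = g_j$ on $\Omega$, so $\Acal u_j \toY \bm\nu$, and since $\Tcal$ maps $\Mcal$-bounded data into $\Lrm^{d/(d-1),\infty}$-type spaces the $u_j$ are (locally) well-defined measures/functions. Hence $\bm\nu \in \Acal\!\Y(\Omega)$.

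\emph{Main obstacle.} The delicate point in the second statement is handling the PDE \emph{inversion near the boundary of $\Omega$}: $\Tcal$ is a nonlocal (Fourier-multiplier) operator, so to get $\Acal u_j = g_j$ exactly on $\Omega$ one must first extend $g_j$ off $\Omega$, invert globally, and then restrict --- and one must check that this manipulation does not disturb the Young measure generated on $\Omega$ (it does not, since $\Acal u_j$ agrees with $g_j$ on $\Omega$ regardless of the extension, and $\bm\nu \in \Ybf_0$ so no mass sits on $\partial\Omega$). A secondary subtlety is the integrability of $u_j$ when $g_j$ is only a measure: the Sobolev/Calder\'on--Zygmund mapping properties of the order $-k$ multiplier $\Tcal$ place $u_j$ in $\Lrm^1_\loc$ (indeed in better Lorentz spaces), which is all that is required by the definition of $\Acal\!\Y(\Omega)$, since that definition imposes no a priori bound on the potentials $\{u_j\}$. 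For the first statement the only real work is confirming that $\Lambda_\Acal = \{0\}$ for elliptic $\Acal$ and then feeding this into Theorem~\ref{thm:char}; everything else is bookkeeping.
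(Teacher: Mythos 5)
Your proof is correct, and both parts take a somewhat different route from the paper. For the first statement, the paper and you both pass through Theorem~\ref{thm:char}, but the paper observes that ellipticity makes \emph{every} integrand $\Acal$-quasiconvex (since the only mean-zero $\Acal$-free periodic test field is $0$), uses this to conclude $\bm\nu$ is elementary, and then invokes the De Philippis--Rindler structure theorem with $\Lambda_\Acal = \{0\}$ to kill the singular part of $\mu$. You instead argue directly from $W_\Acal = \{0\}$ and $\{0\} \cap S_W = \varnothing$ via conditions (ii)--(iii) and Remark~\ref{rem:kk}/Corollary~\ref{cor:last}: this is essentially equivalent and arguably more transparent, and since you derive $\lambda = 0$ (not just $\lambda^s = 0$) from the support constraint, you get absolute continuity of $\mu$ for free without the structure theorem. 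For the second statement both arguments hinge on the fact that $\Abb(\xi)$ is an isomorphism (your correction of the dimension bookkeeping is right: injectivity plus $\dim W \ge \dim X$ gives $\dim W = \dim X$) and hence $\Acal$ has a Fourier right inverse; however the paper applies Theorem~\ref{thm:char2}, constructing a global potential $u$ satisfying condition (i) by extending $\Abb^{-1}$ to a tempered distribution via H\"ormander's theory of homogeneous distributions and deriving Mihlin-type Sobolev bounds, while showing (ii)--(iii) are vacuous since only constants are $\Acal$-gradient quasiconvex. You bypass Theorem~\ref{thm:char2} entirely by appealing to the generation theorem for abstract generalized Young measures (Alibert--Bouchitt\'e / Kristensen--Rindler) and inverting $\Acal$ on an arbitrary generating sequence of $\bm\nu$. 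This is valid and shorter, at the cost of importing a global generation result the paper does not explicitly cite, and of glossing over the fundamental-solution subtleties (the H\"ormander extension is what makes $\Tbb^\frarg$ genuinely well-defined when $k \ge d$); a careful write-up should also verify that $u_j = \Tcal(\eta g_j)|_\Omega$ lands in $\Mcal(\Omega;V)$, which follows from the order-$(-k)$ smoothing and Corollary~\ref{cor:compact_embedding}.
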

\begin{proof}
			Let us prove the first statement. Let us fix $\bm \nu \in \Y_\Acal(\Omega) \cap \Y_0(\Omega;W)$. The ellipticity of $\Acal$ implies that the only mean-value zero $\Acal$-free smooth $[0,1]^d$-periodic map is the zero function. Indeed, if $w \in \Crm^\infty_\per([0,1]^d;W)$ is $\Acal$-free, then applying the Fourier transform (on the torus) to the equation gives
			\[
			0 = |\Abb(\xi)[\widehat{w}(\xi)]| \ge c |\xi|^k|\widehat w(\xi)| \quad \text{for all $\xi \in \Zbb^d \setminus \{0\}$}.
	\]
	If moreover $w$ has mean-value zero, this shows that $\widehat w(\xi) = 0$ for all $\xi \in \Zbb^d$. Or equivalently, $w = 0$. Therefore, by definition, every integrand $f \in \E(W)$ is $\Acal$-quasiconvex. Since $\Crm(\cl \Omega) \times \E(W)$ separates $\Y(\Omega;W)$ (see Lemma~\ref{lem:separation}), then properties (i)-(iii) in Theorem~\ref{thm:char} imply that $\bm \nu$ must be an elementary Young measure, i.e., 
	\[
	\bm \nu = \bm\delta_{\mu} \coloneqq (\delta_{\ac \mu},|\mu^s|,\delta_{\frac{\mu}{|\mu|}}) \quad \text{for some $\Acal$-free $\mu \in \Mcal(\Omega;W)$.}
	\] 
	The ellipticity of $\Acal$ implies that $\Lambda_\Acal = \{0\}$ and hence~\cite[Theorem~1.1]{de-philippis2016on-the-structur} implies that $\mu = \ac\mu \Leb^d$. This proves that $\bm \nu = (\delta_w,0,\frarg)$ for some $\Acal$-free integrable map $w$.
	
	We now show the statement for the potential perspective when $\Acal$ is a full-rank elliptic operator. If $d = 1$, then $\Acal$ is equivalent to the operator $D^k u$ acting on real-valued functions of one-variable. Therefore, in the case $d= 1$, the second statement follows directly from the compactness properties of $\BV(\R)$-functions and existence of primitives on open intervals of the real-line (quasiconvexity is equivalent to convexity in this case). We shall focus on the case $d \ge 2$. Since $\Acal$ is a full-rank elliptic operator, the algebraic equation $\Abb(\xi)[a] = b$,
	is soluble for all $\xi \in \Zbb^d \setminus \{0\}$. Therefore, if $w \in \Crm^\infty_\per([0,1]^d;X)$ with $\int_{[0,1]^d} w = 0$, then 
	\[
	u(x) = \sum_{m \in \Zbb^d \setminus \{0\}} \Abb(m)^{-1} [\widehat w(m)] \, \mathrm{e}^{2\pi \mathrm{i} \xi \cdot x},
	\]	
	belongs to $\Crm^\infty_\per([0,1]^d;W)$ and satisfies $\Acal u = w$. Here $\widehat \frarg$ denotes the Fourier transform on the $d$-dimensional torus. This observation and a density argument convey that a function $f : {X} \to \R$ is $\Acal$-gradient quasiconvex if and only if $f$ is constant. This, in turn, conveys that (ii)-(iii)  Theorem~\ref{thm:char2} hold trivially for all $\bm \nu \in \Ybf(\Omega;X)$. Now, we show that property (i) is also holds trivially. Since $\Abb(\xi)$ is onto for all non-zero frequencies, then $\Tbb(\xi) \coloneqq \Abb(\xi)^{-1}$ exists and is homogeneous of degree $(-k)$ on $\R^d \setminus \{0\}$.
	By~\cite[Thms.~3.2.3 and 3.2.4]{HormanderBook}, $\Tbb$ extends to a distribution $\Tbb^{\frarg} \in \Dcal'(\R^d;\mathrm{Lin}(X,W))$ satisfying $p\Tbb^{\frarg} = (p \Tbb)^{\frarg}$ for all homogeneous polynomials of degree $\ell > k - d$. Moreover $\Fcal \Tbb^{\frarg}$ is smooth on $\R^d \setminus \{0\}$ (here $\Fcal$ is the Fourier transform on $\R^d$). Setting $K_\Acal \coloneqq \mathrm i^k \Fcal \Tbb^{\frarg}$ we find that if $\eta \in \Ecal'(\R^d;X)$, then
	$u \coloneqq K_\Acal \star \eta \in \Scal'(\R^d;W)$ satisfies $\Fcal[\Acal u] = \Abb \circ \Abb^{-1} \Fcal \eta = \Fcal \eta$ (here we are using that $\Abb$ is a tensor-valued  homogeneous polynomial of degree $k > k-d$).  Thus, inverting the Fourier transform, we find that
	\[
	\Acal u = \eta \quad \text{in the sense of distributions on $\R^d$}.
	\]
	Moreover, since $k - 1 > k - d$, then up to a complex constant,
	\[
	\Fcal(\partial^\alpha u)(\xi) =  [\xi^\alpha|\xi|\Abb(\xi)^{-1}]^{\frarg} (|\xi|^{-1}\Fcal \eta),
	\]
	for all multi-indexes $\alpha \in \Nbb^d_0$ such that $|\alpha| = k-1$.
	The multiplier $m(\xi) = \xi^\alpha |\xi|\Abb(\xi)$ is homogeneous of degree zero and smooth on $\Sbb^{d-1}$. Therefore, by an application of the Mihlin multiplier theorem, we deduce the bound
	\begin{align*}
		\|\partial^\alpha u\|_{\Lrm^q}   \le C_q \bigg\|\Fcal^{-1}\bigg(\frac{\Fcal \eta}{|\xi|}\bigg)\bigg\|_{\Lrm^q(\R^d)} = C_q \|\eta \|_{\Wrm^{-1,q}(\R^d)}, \quad q \in (1,\infty).
	\end{align*}
Here, in passing to the last equality we have used that $d \ge 2$ so that the Riesz potential norm $\|\Fcal^{-1}(|\xi|^{-1}\Fcal(\frarg))\|_{\Lrm^q}$ is an equivalent norm for $\Wrm^{-1,q}(\R^d)$. 

	Now, let $\mu \in \Mcal(\Omega;X)$ be an arbitrary bounded measure and let $\eta \in (\Ecal' \cap \Mcal) (\Omega;X)$ be its trivial extension by zero on $\R^d$. Define $u \coloneqq (K_\Acal \star \eta) \mres \Omega$ and observe that the bound above and Lemma~\ref{cor:compact_embedding} imply that $u \in \Wrm^{k-1,q}(\Omega)$ for all $1 \le q < d/(d-1)$. Moreover, by construction, 
	\[
	\Acal u = \mu \quad \text{in the sense of distributions on $\Omega$}.
	\] 
	We conclude that if $\bm \nu \in \Ybf_0(\Omega;X)$, then there exists $u \in \Wrm^{k-1,1}(\Omega;W)$ such that (i) in Theorem~\ref{thm:char2} holds. Since (ii)-(iii) are trivially satisfied, Theorem~\ref{thm:char2} implies that $\bm \nu \in \Acal\!\Y(\Omega)$. This proves that indeed
	\[
	\Acal\!\Y(\Omega) \cap  \Ybf_0(\Omega;X) = \Ybf_0(\Omega;X).
	\]
	This finishes the proof.
\end{proof}

	\subsection{Failure of $\Lrm^1$-compactness for elliptic systems}	In this section we collect some results and examples that showcase the lack of rigidity occurring along sequences of $\Acal$-free functions due to concentration effects. To account for this, let us recall that sequence of functions $\{u_j\} \subset \Lrm^1(\Omega)$ is said to converge weakly in $\Lrm^1(\Omega)$ if and only if there exists $u \in \Lrm^1(\Omega)$ such that

	\[
		\int_\Omega u_jg \to \int_\Omega u g \qquad \text{for all $g \in \Lrm^\infty(\Omega)$.}
	\]
	We write $u_j \toweak u$ in $\Lrm^1(\Omega)$.
	Notice that in general $u_j \Leb^d \toweakstar u\Leb^d$ does not imply weak convergence in $\Lrm^1$. This owes to concentrations that diffuse into an absolutely continuous part. In the generalized Young measure context, this corresponds to the analysis of $\nu_x^\infty$ on points $x$ where $\ac\lambda$ is non-zero and
	\[
		u_j \toY  (\nu,\lambda,\nu^\infty)
	\]
	The Dunford-Pettis theorem gives the following criterion to rule out the appearance of \emph{diffuse concentrations}: a sequence $\{u_j\}$ is sequentially weak pre-compact in $\Lrm^1(\Omega)$ if and only if $\{u_j\}$ is \emph{equi-integrable}, i.e.,  for every $\eps>0$ there exists some $\delta>0$ such that for any Borel set $U \subset \Omega$ with $\Leb^d(U) \le \delta$ it holds
	\[
		\sup_{j \in \Nbb} \int_U |u_j| \le \eps.
	\] 	
	The examples given below are intended to exhibit how classical compensated compactness assumptions fail to prevent the lack of equi-integrability of PDE-constrained sequences.  
	We begin with the following general result, which exploits the \emph{unconstrained} behavior of the singular part of $\Acal$-free Young measures:

\begin{lemma}\label{lem:counter} Let  $\lambda \in \Mcal^+(\Omega)$ be an arbitrary finite measure.
	For any fixed vector $A \in W$ and {any} probability measure 
	$p \in \mathrm{Prob}(S_{W})$ satisfying
	\[
	\supp (p) \subset W_\Acal \quad \text{and} \quad \int_{S_{W}} z \dd p(z) \; =  \; 0,
	\] 
	there exists a sequence of functions $\{w_j\} \subset \Crm^\infty(\Omega;W)$ such that
	\begin{align*}
	\Acal u_j & = 0 \;\; \text{on $\Omega$, and}\\
	 w_j \, \Leb^d &\toY (\delta_A,\lambda,p) \;\; \text{on $\Omega$}.
	\end{align*}
	In particular, 
	\begin{align*}
		w_j \, \Leb^d&\;\toweakstar \; A \Leb^d \quad \text{in $\Mcal(\Omega;W)$},\\
		w_j &\;\not\toweak \; A \Leb^d \quad \text{in $\Lrm^1(\Omega;W)$, }\\
	\end{align*}
and
\[		\text{$\{|w_j|\}$ is not equi-integrable on open neighborhoods of $\supp (\lambda) \subset \Omega$.}\]
\end{lemma}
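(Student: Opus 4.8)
The plan is to recognise the prescribed triple as a generalized Young measure, to verify for it the three conditions of Theorem~\ref{thm:char}, and then to upgrade the resulting abstract generators to smooth $\Acal$-free fields by combining Corollary~\ref{cor:pure} with Theorem~\ref{lem:app}; the ``in particular'' assertions are then routine. Set $\bm\nu:=(\delta_A,\lambda,p)$, regarding $\lambda$ as a finite nonnegative measure on $\cl\Omega$ carried by $\Omega$, so that $\bm\nu\in\Y_0(\Omega;W)$. Condition~(i) holds with $\mu:=A\,\Leb^d$: the barycenter of $\bm\nu$ equals $\dprb{\id_W,\nu}\,\Leb^d+\dprb{\id_W,\nu^\infty}\,\lambda=A\,\Leb^d$ because $\int_{S_W}z\dd p(z)=0$ by hypothesis, and $\Acal(A\,\Leb^d)=0$ since $\Acal$ is homogeneous of order $k\ge1$ and thus annihilates constants. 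Condition~(iii) is immediate: $\supp(\nu^\infty_x)=\supp(p)\subset W_\Acal$ for every~$x$.

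The substance lies in condition~(ii). Since $\ac\mu\equiv A$, $\nu_x=\delta_A$ and $\nu^\infty_x=p$, the oscillation term of the Jensen-type inequality of Theorem~\ref{thm:char}(ii) is an identity, $h(A)=\dprb{h,\delta_A}$, so the condition collapses to
\[
\ac\lambda(x)\,\dprb{h^\#,p}\ \ge\ 0\qquad\text{for $\Leb^d$-a.e.\ $x\in\Omega$}
\]
for every upper-semicontinuous $\Acal$-quasiconvex integrand $h:W\to\R$ of linear growth; as $\ac\lambda\ge0$, it reduces to the $x$-independent inequality $\dprb{h^\#,p}\ge0$. This is the only non-bookkeeping input, and I expect it to be the main obstacle: it rests on the fact that $h^\#$, the upper recession function of an $\Acal$-quasiconvex integrand of linear growth, is itself $\Acal$-quasiconvex and positively $1$-homogeneous, whence the rigidity recalled in Remark~\ref{rem:kk} (combining the structure theorem~\cite{de-philippis2016on-the-structur} with the homogeneous-rank-one rigidity of~\cite{kirchheim2016on-rank-one-con}) forces $h^\#$ to be convex on $W_\Acal=\spn\{\Lambda_\Acal\}$; since $p$ is a probability measure supported in $W_\Acal$ with barycenter $0$, Jensen's inequality gives $\dprb{h^\#,p}\ge h^\#(0)=0$. (Equivalently, this is the singular Jensen inequality~(iii$'$) of Remark~\ref{rem:kk} read off at the polar vector~$0$.) Having checked (i)--(iii), Theorem~\ref{thm:char} yields $\bm\nu\in\Y_\Acal(\Omega)$.

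It remains to produce generators in $\Crm^\infty(\Omega;W)$ and to extract the stated consequences. By Corollary~\ref{cor:pure} there exist exactly $\Acal$-free measures $\{\mu_j\}\subset\M(\Omega;W)$ with $\mu_j\toY\bm\nu$, and by Theorem~\ref{lem:app} together with Remark~\ref{rem:reg} each $\mu_j$ is itself the $\toY$-limit of a sequence of $\Crm^\infty$ $\Acal$-free fields; since $\toY$-convergence is metrizable on $\Y(\Omega;W)$, a diagonal extraction gives $\{w_j\}\subset\Crm^\infty(\Omega;W)$ with $\Acal w_j=0$ on $\Omega$ and $w_j\,\Leb^d\toY\bm\nu$. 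The consequences then follow routinely: the weak-$*$ limit of $w_j\,\Leb^d$ equals the barycenter $A\,\Leb^d$; and since the generated Young measure carries the nontrivial concentration measure $\lambda$ (all assertions being vacuous when $\lambda=0$), the equivalence between equi-integrability of a bounded sequence and vanishing of the concentration measure of the Young measure it generates (the Dunford--Pettis criterion, as recalled before the statement) shows that $\{w_j\}$ is not equi-integrable on any open $U\subset\Omega$ with $\lambda(U)>0$ --- equivalently, on every open neighbourhood of $\supp(\lambda)$ --- and hence, taking $U=\Omega$, that $w_j\not\toweak A$ in $\Lrm^1(\Omega;W)$.
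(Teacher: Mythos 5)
Your proposal is correct and follows essentially the same route as the paper: verify conditions (i)--(iii) of Theorem~\ref{thm:char} for the triple $(\delta_A,\lambda,p)$, with the only nontrivial input being $\dprb{h^\#,p}\ge0$ via the $(\Lambda_\Acal\cup W_\Acal^\perp)$-convexity of $h^\#$ and Jensen's inequality on $W_\Acal$ (exactly the Kirchheim--Kristensen rigidity the paper invokes), then pass from abstract $\Acal$-free generators to smooth ones. The one place where you genuinely deviate is the smoothness upgrade: the paper embeds $\Omega\Subset B_R$, extends the Young measure to $B_R$, gets $\Acal$-free measure generators there via Corollary~\ref{cor:pure}, and mollifies at a scale decaying faster than $j^{-1}$ before restricting to $\Omega$; you stay on $\Omega$, invoke Corollary~\ref{cor:pure} directly, then approximate each $\mu_j$ by $\Crm^\infty$ $\Acal$-free fields via Theorem~\ref{lem:app} and Remark~\ref{rem:reg}, and diagonalise using metrizability of the weak-$*$ topology on bounded subsets of $\Ebf(\Omega;W)^*$. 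Both work; yours is arguably cleaner because it avoids the extended oscillation measure on $B_R\setminus\Omega$, where the paper's choice of $\tilde\nu_x$ needs some care to keep the barycenter $\Acal$-free across $\partial\Omega$. Two small over-reaches to tidy: the structure theorem of \cite{de-philippis2016on-the-structur} is not actually needed in the Jensen step (only the $D$-convexity rigidity of \cite{kirchheim2016on-rank-one-con} enters, since $\mu=A\Leb^d$ has no singular part, so the parenthetical appeal to (iii$'$) is vacuous here); and for the final non-equi-integrability it is worth citing the Alibert--Bouchitt\'e result (\cite[Thm.~2.9]{alibert1997non-uniform-int}, as the paper does) rather than treating it as an immediate consequence of Dunford--Pettis alone, since Dunford--Pettis equates weak pre-compactness with equi-integrability but the implication from a nontrivial concentration measure in the generated Young measure to failure of equi-integrability is the content of that theorem.
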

\begin{proof} Let $R>0$ be sufficiently large so that $\Omega \Subset B_R$. Let $\tilde \lambda \in \Mcal(B_R)$ be the trivial extension by zero on $B_R$ of the measure $\lambda$. Define also $\tilde \nu_x = \delta_A$ if $x\in\Omega$ and $\nu_x = \delta_0$ if $x \in B_R \setminus \Omega$. Then, since $\Omega$ is an open set, the  triple $\tilde {\bm \nu} = (\nu,\tilde \lambda,p)$ satisfies the weak-$*$ measurability requirements to be  Young measure in $\Y_0(B_R;W)$. We claim that $\tilde {\bm \nu}$ is an $\Acal$-free measure.
	According to Theorem~\ref{thm:char} and Remark~\ref{rem:ya} it suffices to show that
	(a) $\dpr{\id,\delta_A} + \Leb^d + \dpr{\id,p} \lambda = A$, which follows by the assumption on $p$;
	and (b) that $\dpr{h,\delta_A} + \dpr{h^\#,p} \ac\lambda(x) \ge h(0)$ for all $\Acal$-quasiconvex integrands $h:W\to \R$ with linear growth at infinity. The latter follows from Remark~\ref{rem:kk} and the fact that that the restriction of $h^\#$ to $W_\Acal$ is convex at zero, i.e.,  $\dpr{h^\#,\tau} \ge h^\#(0) = 0$ for all probability measures $\tau \in \mathrm{Prob}(W_\Acal)$ satisfying $\dpr{\id_W,\tau} = 0$ (see~\cite{kirchheim2016on-rank-one-con}). Therefore, $\dpr{h,\delta_A} + \dpr{h^\#,p} \ac\lambda(x) \ge h(A) + h^\#(0) \ac\lambda(x) = h(A)$ for $\Leb^d$-a.e. $x \in \Omega$. Then, from Corollary~\ref{cor:pure} we deduce the existence of a sequence $\{\mu_j\} \subset \Mcal(B_R;W)$ of $\Acal$-free measures that generates $\tilde {\bm \nu}$. By the theory discussed in Section~\ref{sec: Young} and a standard mollification argument we conclude that there exists a sequence $\delta_j \to 0^+$ (where $\delta_j \to 0$ faster than $j \to \infty$) such that
	\[
		\mu_j \star \rho_{\delta_j} \; \toY \; \tilde{\bm\nu} \mres \Omega \equiv \bm \nu \quad \text{on $\Omega$}.
	\]
	The first two statements then follow by setting $w_j \coloneqq (\mu_j \star \rho_{\delta_j})|_{\Omega}$. That $\{|w_j|\}$ is not equi-integrable on open neighborhoods of $\supp(\lambda)$ follows follows from~\cite[Theorem~2.9]{alibert1997non-uniform-int}.
\end{proof}

\begin{remark}
	The previous result also holds if
	\[
		\int_{S_{W}} z \dd p(z)  \; \in \;  \Lambda_\Acal
	\]
	and $\lambda = c \Leb^d$ for some $c \in \R$ (see~\cite{kirchheim2016on-rank-one-con}).
\end{remark}

A direct consequence of this lemma is the following failure of the $\Lrm^1$-rigidity for elliptic systems (cf.~\cite[Section~2.6]{MullerBook} and~\cite{new}) for $\Acal$-free measures.
\begin{corollary}
	Let $L$ be a non-trivial subspace of $W_\Acal$  and assume that $L$ has no non-trivial $\Lambda_\Acal$-connections, i.e., 
	\[
	L \cap \ker\Abb(\xi) = \{0\}  \qquad \text{for all $\xi \in \R^d \setminus \{0\}$}.
	\]
	Then, there exists a sequence $\{w_j\} \subset \Crm^\infty(\Omega;W)$ of $\Acal$-free measures satisfying
	\begin{align*}
	w_j  \, \Leb^d&\; \toweakstar \; 0 \quad \text{in $\Mcal(\Omega;W)$,}\\
	\dist(w_j,L) & \; \to  \; 0 \quad \text{in $\Lrm^1(\Omega)$},
	\end{align*}
	but
	\begin{gather*}
		w_j \not\toweak 0 \in \Lrm^1(\Omega;W), \text{ and}  \\
	\{|w_j|\} \; \text{is not locally equi-integrable on any sub-domain of $\Omega$}.
	\end{gather*}
\end{corollary}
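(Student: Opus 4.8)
The plan is to deduce the statement directly from Lemma~\ref{lem:counter}, taking a concentration profile supported inside $L$. Since $L$ is a non-trivial subspace of $W_\Acal$, I would fix a unit vector $e\in L$ and set
\[
	p \coloneqq \tfrac{1}{2}\big(\delta_e + \delta_{-e}\big) \in \mathrm{Prob}(S_W).
\]
Then $\supp(p)=\{e,-e\}\subset L\subset W_\Acal$ and $\int_{S_W} z\dd p(z)=0$, so $p$ meets the hypotheses of Lemma~\ref{lem:counter}. Applying that lemma with $A=0$, with this $p$, and with the finite concentration measure $\lambda\coloneqq\Leb^d\mres\Omega$ produces a sequence $\{w_j\}\subset\Crm^\infty(\Omega;W)$ of $\Acal$-free fields with $w_j\,\Leb^d\toY(\delta_0,\lambda,p)$ on $\Omega$; the conclusions $w_j\,\Leb^d\toweakstar 0$, $w_j\not\toweak 0$ in $\Lrm^1(\Omega;W)$, and the failure of equi-integrability of $\{|w_j|\}$ near $\supp(\lambda)$ then come for free from Lemma~\ref{lem:counter}.

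It remains to verify the two localized assertions. For $\dist(w_j,L)\to 0$ in $\Lrm^1(\Omega)$, I would test the generated Young measure against the integrand $g\coloneqq\dist(\frarg,L)\colon W\to[0,\infty)$: it is $1$-Lipschitz, vanishes on $L$, and (since $L$ is a subspace) satisfies $g(tz)=t\,g(z)$ for $t\ge 0$, so its strong recession function exists with $g^\infty=g$, i.e. $g\in\E(W)$. Hence
\[
	\int_\Omega \dist(w_j,L)\dd x \;\to\; \int_\Omega \dprb{g,\delta_0}\dd x + \int_{\cl\Omega}\dprb{g^\infty,p}\dd\lambda = \int_\Omega g(0)\dd x + \int_\Omega \tfrac{1}{2}\big(g(e)+g(-e)\big)\dd x = 0,
\]
because $0,e,-e\in L$; as $g\ge 0$ this is exactly $\Lrm^1$-convergence of $\dist(w_j,L)$ to $0$. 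For the non-equi-integrability on an arbitrary sub-domain $\Omega'\subset\Omega$, I would shrink to a ball $B\Subset\Omega'$ (equi-integrability is inherited by subsets) and restrict the Young measure convergence to $B$, obtaining the generated Young measure $(\delta_0,\Leb^d\mres B,p)$ there; since its concentration measure is non-zero and $p$ is not a Dirac mass, \cite[Theorem~2.9]{alibert1997non-uniform-int} forbids equi-integrability of $\{|w_j|\}$ on $B$, hence on $\Omega'$.

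I do not anticipate a genuine difficulty: everything is powered by Lemma~\ref{lem:counter}, and the role of the hypothesis $L\cap\ker\Abb(\xi)=\{0\}$ is only to make the conclusion meaningful — it rules out the \enquote{cheap} example $w_j(x)=j\,\phi(j\,\xi\cdot x)\,e$ with $0\ne e\in L\cap\ker\Abb(\xi)$, for which $\dist(w_j,L)\equiv 0$ holds trivially rather than through a concentration mechanism. The only points that require a line of care are confirming that $\dist(\frarg,L)$ lies in the test class $\E(W)$ (existence and continuity of its recession function) and that Young-measure convergence localizes to sub-balls so that \cite[Theorem~2.9]{alibert1997non-uniform-int} may be invoked there.
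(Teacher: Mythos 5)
Your proof is correct and matches the paper's own argument step for step: reduce to Lemma~\ref{lem:counter} with $A=0$ and a balanced probability measure supported on $L\cap S_W$, then test the generated Young measure against the positively $1$-homogeneous integrand $\dist(\frarg,L)\in\E(W)$ (for which $f=f^\infty$) and use $f^\infty|_L\equiv 0$. The only additions you make are the explicit choice $p=\tfrac{1}{2}(\delta_e+\delta_{-e})$, the check that $\dist(\frarg,L)$ lies in $\E(W)$, and the localization to balls that upgrades the lemma's non-equi-integrability statement to every sub-domain — all of which the paper leaves implicit, and your closing observation that the $\Lambda_\Acal$-disconnectedness hypothesis plays no role in the construction itself (only in making the conclusion non-trivial) is accurate and worth recording.
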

\begin{proof}
	The assumption on $L$ is equivalent to requiring that $L \cap \Lambda_\Acal = \{0\}$. Since the class of probability measures satisfying $p \in \mathrm{Prob}(S_{W})$, $\supp (p) \subset L \cap W_\Acal$ and $\dpr{\id,p} = 0$ is non-empty, we may chose at least one $p$ with such properties. The previous lemma implies that the triple $\bm\nu = (\delta_0,\Leb^d \mres \Omega,p)$ is a generalized $\Acal$-free Young measure, generated by a sequence of $\Acal$-free measures $w_j \in \Crm^\infty(\Omega;W)$. The first  and the last two statements of the corollary follow from this observation. To prove that $\dist(w_j,L) \to 0$ in $\Lrm^1$, let us consider the positively $1$-homogeneous growth integrand $f = |\id_W- \pi_L[\frarg]| = \dist(\frarg,L)$, where $\pi_L : W\to W$ is the linear orthogonal projection onto $L$. Clearly $f = f^\infty$ and the fact that $w_j \Leb^d$ generates $\bm \nu$ implies
	\[
	\int_\Omega \dist(w_j,L) \dd \Leb^d \to \int_\Omega \dprb{f,\delta_0} \dd \Leb^d + \int_{\Omega} \dprb{f^\infty,p} \dd \Leb^d = 0.
	\]
	Here, we used that $\supp(p) \subset L$ and $f^\infty|_L \equiv 0$.
\end{proof}

The version of this result for constant rank potentials is the following:

\begin{corollary}Let $L \le \spn\{\Irm_\Bcal\} \le W$  be a non-trivial space satisfying
	\[
		L \cap  \im \Bbb(\xi) = \{0\} \qquad \text{for all $\xi \in \R^d$}.
	\]
Then there exists a sequence $\{u_j\} \subset \Crm^\infty(\Omega;V)$ such that 
	\begin{align*}
	\Bcal u_j \Leb^d &\; \toweakstar \; 0 \quad \text{in $\Mcal(\Omega;W)$,}\\
	\dist(\Bcal u_j,L) & \; \to  \; 0 \quad \text{in $\Lrm^1(\Omega)$},
\end{align*}
but
\begin{gather*}
	\Bcal u_j \not\toweak 0 \; \text{in $\Lrm^1(\Omega)$, and}\\
\{|\Bcal u_j|\} \; \text{is not locally equi-integrable on any sub-domain of $\Omega$}.
\end{gather*}
\end{corollary}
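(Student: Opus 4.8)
The plan is to follow the proof of its $\Acal$-free counterpart above almost verbatim, substituting Theorem~\ref{thm:char} by the potential version Theorem~\ref{thm:char2} and the wave cone $\Lambda_\Acal$ by the image cone $\Irm_\Bcal$. First I would note that the hypothesis $L \cap \im\Bbb(\xi) = \{0\}$ for every $\xi \in \R^d$ is nothing but $L \cap \Irm_\Bcal = \{0\}$, since $\im\Bbb(0) = \{0\}$. Because $L$ is non-trivial I can fix a unit vector $e \in L$ and take $p \coloneqq \tfrac12(\delta_e + \delta_{-e}) \in \mathrm{Prob}(S_W)$, which has $\supp(p) \subset L \cap S_W$ and $\dprb{\id_W,p} = 0$. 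The target Young measure will be
\[
\bm\nu \coloneqq (\delta_0,\Leb^d\mres\Omega,p) \in \Y_0(\Omega;W),
\]
which is admissible because all its data are constant in $x$ and $\Leb^d(\partial\Omega) = 0$.

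The core of the argument is to check that $\bm\nu \in \Bcal\!\Y(\Omega)$ through conditions (i)--(iii) of Theorem~\ref{thm:char2} with the trivial potential $u \equiv 0$. Condition (i) holds since $\Bcal 0 = 0$ coincides with the barycenter $\dprb{\id,\nu}\Leb^d + \dprb{\id,\nu^\infty}\lambda = 0$; condition (iii) is empty as $\lambda^s = 0$; and, because $\ac\Bcal u \equiv 0$, $\nu_x = \delta_0$ and $\ac\lambda \equiv 1$ on $\Omega$, condition (ii) collapses to the single requirement $\dprb{h^\#,p} \ge 0$ for every upper-semicontinuous $\Bcal$-gradient quasiconvex $h : W \to \R$ with linear growth. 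To see this I would use that $\Bcal$-gradient quasiconvexity coincides with $\Acal$-quasiconvexity for the annihilator $\Acal$ of $\Bcal$ (\cite[Corollary~1]{raitua2019potentials}), so that $h^\#$ is positively $1$-homogeneous and $\Lambda_\Acal$-convex; since $\Lambda_\Acal = \Irm_\Bcal$, the rigidity result of \textsc{Kirchheim \& Kristensen}~\cite{kirchheim2016on-rank-one-con} gives that $h^\#|_{\spn\{\Irm_\Bcal\}}$ is convex at $0$, and as $\supp(p) \subset L \subset \spn\{\Irm_\Bcal\}$ with $p$ of zero barycenter, $\dprb{h^\#,p} \ge h^\#(0) = 0$.

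Once Theorem~\ref{thm:char2} yields $\bm\nu \in \Bcal\!\Y(\Omega)$, by definition there is $\{v_j\} \subset \Mcal(\Omega;V)$ with $\Bcal v_j \toY \bm\nu$; mollifying and choosing $\delta_j \downarrow 0$ fast enough along a diagonal (exactly as in the proof of Lemma~\ref{lem:counter}, using $\Bcal(v_j\star\rho_{\delta_j}) = (\Bcal v_j)\star\rho_{\delta_j}$) produces smooth potentials $u_j \in \Crm^\infty(\Omega;V)$ still generating $\bm\nu$. From the generation I can then read off everything: testing $\Bcal u_j\,\Leb^d$ against products $\varphi\otimes\id_W$ gives $\Bcal u_j\,\Leb^d \toweakstar 0$ in $\Mcal(\Omega;W)$; applying the definition of generation to the positively $1$-homogeneous Lipschitz integrand $f \coloneqq \dist(\frarg,L)$ (for which $f = f^\infty = f^\#$, $f(0) = 0$ and $f^\infty$ vanishes on $L \supset \supp(p)$) gives $\int_\Omega \dist(\Bcal u_j,L)\dd\Leb^d \to 0$; and since $\ac\lambda \equiv 1 > 0$ on every sub-domain while $\nu^\infty = p$ is non-trivial, \cite[Theorem~2.9]{alibert1997non-uniform-int} rules out local equi-integrability of $\{|\Bcal u_j|\}$ on any sub-domain, so in particular (via the Dunford--Pettis theorem) $\Bcal u_j \not\toweak 0$ in $\Lrm^1(\Omega)$.

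The only step that is not purely mechanical is the verification of the Jensen inequality (ii), i.e.\ $\dprb{h^\#,p}\ge 0$; this is exactly the point where the convexity-at-zero rigidity of positively $1$-homogeneous $\Lambda$-convex functions from~\cite{kirchheim2016on-rank-one-con} enters, just as in the $\Acal$-free version, so I anticipate no genuine obstacle beyond transporting that argument to the potential notation.
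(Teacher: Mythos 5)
Your proposal is correct and follows essentially the route the paper indicates: the paper's proof points to a $\Bcal$-gradient analogue of Lemma~\ref{lem:counter} (left to the reader) obtained by replacing Theorem~\ref{thm:char} with Theorem~\ref{thm:char2}, and you have simply inlined that analogue into the corollary, verifying properties (i)--(iii) of Theorem~\ref{thm:char2} for $\bm\nu=(\delta_0,\Leb^d\mres\Omega,p)$ with $u\equiv 0$, reducing (ii) to $\dprb{h^\#,p}\ge 0$ via the equivalence of $\Bcal$-gradient quasiconvexity and $\Acal$-quasiconvexity, $\Irm_\Bcal=\Lambda_\Acal$, and the Kirchheim--Kristensen rigidity, then mollifying and testing with $\varphi\otimes\id_W$, $\dist(\cdot,L)$, and \cite[Theorem~2.9]{alibert1997non-uniform-int} exactly as in the $\Acal$-free case.
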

The proof of this corollary follows from a version of Lemma~\ref{lem:counter} for $\Bcal$-gradient Young measures, which we shall not prove, but that follows by similar arguments (to ones in the proof of Lemma~\ref{lem:counter}) by using Theorem~\ref{thm:char2} instead.
\begin{remark}In order to add some perspective to these results, let us recall a well-known result of
	\textsc{M\"uller} (see~\cite[Lemma 2.7]{MullerBook}) that states the following: if $L$ is a 
	space of matrices containing no rank-one connections and  
	\begin{align*}
		Du_j & \toweakstar 0  \quad \text{in $\BV(\Omega;\R^m)$},\\
	\dist(D u_j,L) &\to 0 \quad \text{in measure},
	\end{align*}
then $Du_j \to Du$ in measure. This may be understood as an $\Lrm^{1,\mathrm w}$-rigidity for gradients. The previous corollary shows that even under $\Lrm^1$-perturbations of elliptic systems, one cannot hope for sequential weak $\Lrm^1$-compactness for gradients. (Here, one should not confuse weak $\Lrm^1$-convergence with convergence in $\Lrm^{1,\mathrm w}$.)
\end{remark}
%
%


\subsection{Failure of $\Lrm^1$-compactness for the $n$-state problem} In the context of the rigidity properties for gradients, \textsc{\v{S}ver\'ak}~\cite{sverak} showed that if $K = \{A_1,A_2,A_3\}$ is set of matrices such that $\rank(A_i - A_j) \neq 1$, then every sequence $\{u_j\}$ with uniformly bounded Lipschitz constant satisfies the following compensated compactness property:
\[
	\dist(Du_j,K) \to 0 \; \text{in measure} \quad \Longrightarrow \quad Du_j \to const. \; \text{in measure}.
\]  
In particular, the restriction on $K$ prevents the formation of any non-trivial microstructures.
\textsc{\v{S}ver\'ak's} proof also implies that if $\{Du_j\}$ is $\Lrm^1$-uniformly bounded and
\[
		\dist(Du_j,K) \to 0 \; \text{in $\Lrm^1(\Omega)$},
\]
then, up to taking a subsequence,  
\[
	 Du_j \to const. \; \text{in $\Lrm^1(\Omega)$}.
\]
Notice however that neither of these compensated compactness results allows for concentrations. The first one assumes a uniform Lipschitz bound and the latter (implicitly) assumes equi-integrability since $\dist(Du_j,K) \to 0$ in $\Lrm^1(\Omega)$. 

For the  \emph{two-state problem}, \textsc{Garroni \& Nesi}~\cite{garroni} have shown a similar result for divergence-free fields. More recently, \textsc{De Philippis, Palmieri and Rindler}~\cite{palmieri} have extended this to general operators $\Acal$. The precise statement is the following: if $A_1 - A_2 \notin \Lambda_\Acal$ and $\{v_j\} \subset \Lrm^1(\Omega;W)$ is a sequence of $\Acal$-free functions satisfying
\begin{equation}\label{eq:2state}
	\dist(v_j,\{A_1,A_2\}) \to 0 \; \text{in $\Lrm^1(\Omega)$,}
\end{equation}
then, up to extracting a subsequence,
\[
	v_j \to const. \; \text{in $\Lrm^1(\Omega)$.}
\]
An interesting question to ask is what happens if we allow for concentrations, while still requiring the PDE constraint and requiring that~\eqref{eq:2state} holds. The next two examples show that one cannot expect $\Lrm^1$-compensated compactness if concentrations are allowed, even if the concentrations  occur in the directions  of $\{A_1,A_2\}$.

\begin{example} If $\Acal$ is a non-trivial operator, then
	there exist vectors $A_1,A_2 \in S_W \cap W_\Acal$ with $A_1 - A_2 \notin \Lambda_\Acal$ and a sequence $\{v_j\} \subset \Crm^\infty(\Omega;W)$ such that
	\[
		\Acal v_j = 0 \; \text{in the sese of distributions on $\Omega$.}
	\] 
	Moreover, the sequence satisfies 
\begin{align*}
		\dist(v_j,\{A_1,A_2\})  &\to 0 \; \text{in measure},\\
		\dist(v_j,\{\R^+A_1\} \cup \{\R^+ A_2\}) &\to 0 \; \text{in $\Lrm^1(\Omega)$}.
\end{align*}
However, $\{|v_j|\}$ is not equi-integrable and, for any subsequence $\{v_{j_h}\}$, 
\[
	v_{j_h} \not\to const. \; \text{weakly in $\Lrm^1(\Omega)$}.
\]
\end{example}
Similarly, we also have the following explicit example:
\begin{example} Assume that $d  = 2$ and 
	consider the $(2 \times 2)$ matrices 
	\[
	A_1 = \frac{1}{\sqrt{2}}\begin{pmatrix}
		1 & 0 \\ 0 & 1
	\end{pmatrix}, \quad A_2 =\begin{pmatrix}
		-1 & 0 \\ 0 & 0
	\end{pmatrix},\quad A_3 = \begin{pmatrix}
		0 & 0 \\ 0 & -1
	\end{pmatrix},
	\]
	Then, there exists a uniformly bounded sequence $\{u_j\} \subset \BV(\Omega;\R^2)$ satisfying
\begin{align*}
	\dist(D u_j,\{A_1,A_2,A_3\})  &\to 0 \; \text{in measure},\\
	\dist(D u_j,\{\R^+A_1\} \cup \{\R^+ A_2\}\cup \{\R^+ A_3\}) &\to 0 \; \text{in $\Lrm^1(\Omega)$}.
\end{align*}
And, for any subsequence $\{v_{j_h}\}$, 
\[
Du_{j_h} \not\to const. \; \text{in $\Lrm^1(\Omega)$}.
\]
Both examples follow directly from the following result (and its corollary below):
\end{example}

\begin{proposition}
	Let $A_1,\dots,A_n \in (W_\Acal \cap S_{W})$ be vectors satisfying 
	\[
	0_W\in \mathrm{conv}\{A_1,\dots,A_n\}.
	\] 
	Then, there exists a 
	sequence of $\Acal$-free functions $\{w_j\} \in \Crm^\infty(\Omega;W)$ satisfying
	\[
	w_j \toY (\delta_{A_1},\Leb^d \mres \Omega,p),
	\]
	where 
	\[
	p = c_1 \delta_{A_1} + \dots + c_n\delta_{A_n},
	\]
	\[
		c_1,\dots,c_n \in [0,1] \quad \text{and}\quad c_1A_1 + \dots c_nA_n = 0.
	\]
	An analogous statement holds for $\Bcal$-potentials.
\end{proposition}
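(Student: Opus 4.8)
The plan is to obtain the asserted Young measure directly from Lemma~\ref{lem:counter}: here the concentration profile is a prescribed finite atomic measure rather than an arbitrary one, so the only thing to check is that it is admissible. First I would use the hypothesis $0_W \in \mathrm{conv}\{A_1,\dots,A_n\}$ to fix coefficients $c_1,\dots,c_n \in [0,1]$ with $\sum_{i=1}^n c_i = 1$ and $\sum_{i=1}^n c_i A_i = 0_W$, and set $p \coloneqq \sum_{i=1}^n c_i\,\delta_{A_i}$. Since every $A_i$ lies on the unit sphere, $p$ is a probability measure on $S_W$; by assumption $\supp(p) \subset \{A_1,\dots,A_n\} \subset W_\Acal$; and $\int_{S_W} z \dd p(z) = \sum_{i=1}^n c_i A_i = 0_W$. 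Thus $p$ meets exactly the two conditions imposed on the singular profile in Lemma~\ref{lem:counter}.

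Then I would invoke Lemma~\ref{lem:counter} with $A \coloneqq A_1$ and $\lambda \coloneqq \Leb^d\mres\Omega$ (a finite nonnegative measure since $\Omega$ is bounded) to produce a sequence $\{w_j\} \subset \Crm^\infty(\Omega;W)$ with $\Acal w_j = 0$ in the sense of distributions on $\Omega$ and $w_j\,\Leb^d \toY (\delta_{A_1},\Leb^d\mres\Omega,p)$ on $\Omega$, which is precisely the claim. If one prefers to avoid citing Lemma~\ref{lem:counter} and instead re-run its argument in this special case: the triple $\bm\nu \coloneqq (\delta_{A_1},\Leb^d\mres\Omega,p)$ has barycenter $A_1\Leb^d$, which is $\Acal$-free because constants lie in $\ker\Acal$; condition~(iii) of Theorem~\ref{thm:char} is vacuous because $\lambda^s = 0$; and condition~(ii) reduces, for every $\Acal$-quasiconvex upper-semicontinuous $h$ of linear growth, to $\dprb{h^{\#},p} \ge 0$, which holds since $p$ has barycenter $0_W$ and the restriction of $h^{\#}$ to $W_\Acal$ is convex at the origin (the Kirchheim--Kristensen rigidity already used in the proof of Lemma~\ref{lem:counter}). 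Hence $\bm\nu \in \Y_\Acal(\Omega)$ by Theorem~\ref{thm:char}, Corollary~\ref{cor:pure} upgrades this to a generating sequence of $\Acal$-free measures, and a diagonal mollification gives the smooth $w_j$. For the $\Bcal$-potential statement I would argue identically with the $\Bcal$-gradient analogue of Lemma~\ref{lem:counter} announced after the corollaries above (equivalently, Theorem~\ref{thm:char2} in place of Theorem~\ref{thm:char}), with $\spn\{\Irm_\Bcal\}$ replacing $W_\Acal$; the convexity-at-the-origin input is unchanged since $\spn\{\Irm_\Bcal\} = \spn\{\Lambda_\Acal\}$ for the annihilator $\Acal$ of $\Bcal$.

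There is no serious obstacle: the real content lies inside Lemma~\ref{lem:counter} (and behind it, Theorem~\ref{thm:char} and Corollary~\ref{cor:pure}). The one point genuinely being used is that the atomic profile $p$ is admissible, and this rests entirely on the identity $\sum_i c_i A_i = 0_W$ --- it is exactly the zero-barycenter condition that feeds the Kirchheim--Kristensen convexity-at-the-origin step making Jensen's inequality~(ii) hold. So the hypothesis $0_W \in \mathrm{conv}\{A_1,\dots,A_n\}$ enters in precisely this place.
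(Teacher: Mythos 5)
Your proposal is correct and coincides exactly with the paper's own proof: the paper also extracts convex coefficients $c_1,\dots,c_n$ from the hypothesis $0_W \in \mathrm{conv}\{A_1,\dots,A_n\}$ and then invokes Lemma~\ref{lem:counter} with $A = A_1$ and $\lambda = \Leb^d \mres \Omega$. Your extra remarks — that $\sum_i c_i = 1$ guarantees $p \in \mathrm{Prob}(S_W)$, and that the admissibility of $p$ rests on $\supp(p) \subset W_\Acal$ plus the zero-barycenter identity feeding the Kirchheim--Kristensen convexity-at-the-origin step — are all accurate and merely unpack what the paper leaves implicit in its one-line proof.
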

\begin{proof} 
	By assumption we may find constants $c_1,\dots,c_n \in [0,1]$ as in the statement. The assertion then follows from Lemma~\ref{lem:counter}.
\end{proof}
\begin{remark}
	The result of the corollary remains valid even if the vectors $A_1,\dots,A_n$  are mutually $\Lambda_\Acal$-disconnected, i.e.,
	\[
	(A_i - A_j) \notin \Lambda_\Acal \qquad \text{for all distinct indexes $i,j = 1,\dots,n$.}
	\]
\end{remark}
\begin{corollary}Let $A$ be a direction in $S_{W} \cap W_\Acal$. There exists a sequence of $\Lrm^1$-uniformly bounded $\Acal$-free measures $w_j \subset \Crm^\infty(\Omega;W)$ such that
	\[
		w_j \toY \left(\delta_A,\Leb^d \mres \Omega,\frac 12 (\delta_A + \delta_{-A})\right).
	\] 
	In particular, $\{|w_j|\}$ is not equi-integrable on $\Omega$.
	
	Notice that, if $A \notin \Lambda_{\Acal}$, then 
	\[
		A_1 = A,\; A_2 = -A \quad \Longrightarrow \quad A_1 - A_2 \notin \Lambda_\Acal.
	\]
\end{corollary}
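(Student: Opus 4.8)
The plan is to obtain this corollary as the special case $n = 2$ of the preceding proposition. I would set $A_1 \coloneqq A$ and $A_2 \coloneqq -A$. Since $W_\Acal = \spn\{\Lambda_\Acal\}$ is a linear subspace of $W$, both $A_1$ and $A_2$ lie in $W_\Acal \cap S_W$ (note $\abs{-A} = \abs{A} = 1$), and the barycentric identity $0_W = \tfrac12 A + \tfrac12(-A)$ shows that $0_W \in \mathrm{conv}\{A_1,A_2\}$. Hence the hypotheses of the proposition are met, with the convex weights chosen symmetrically as $c_1 = c_2 = \tfrac12$, so that $p = \tfrac12(\delta_A + \delta_{-A})$ is a probability measure. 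The proposition then supplies a sequence $\{w_j\} \subset \Crm^\infty(\Omega;W)$ of $\Acal$-free functions with $w_j\,\Leb^d \toY \bigl(\delta_A,\Leb^d\mres\Omega,\tfrac12(\delta_A+\delta_{-A})\bigr)$ on $\Omega$; uniform $\Lrm^1$-boundedness of $\{w_j\}$ is automatic because the generated generalized Young measure is finite.

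For the non-equi-integrability claim I would repeat the argument from the end of the proof of Lemma~\ref{lem:counter}: the generated triple has $\ac\lambda \equiv 1 > 0$ on $\Omega$ together with the nontrivial concentration profile $\nu^\infty_x = \tfrac12(\delta_A+\delta_{-A})$, and by \cite[Theorem~2.9]{alibert1997non-uniform-int} such a Young measure cannot be generated by an equi-integrable sequence, so $\{\abs{w_j}\}$ fails to be equi-integrable on $\Omega$. The closing observation is immediate: the wave cone $\Lambda_\Acal = \bigcup_{\xi \neq 0}\ker\Abb(\xi)$ is a union of linear subspaces, hence invariant under nonzero scalings; consequently $A_1 - A_2 = 2A$ belongs to $\Lambda_\Acal$ if and only if $A$ does, so $A \notin \Lambda_\Acal$ forces $A_1 - A_2 \notin \Lambda_\Acal$.

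I do not expect a genuine obstacle, since all the analytic content already sits inside the proposition (which in turn rests on Lemma~\ref{lem:counter}, Theorem~\ref{thm:char}, Remarks~\ref{rem:ya} and~\ref{rem:kk}, Corollary~\ref{cor:pure}, and the rigidity of positively $1$-homogeneous rank-one convex functions from \cite{kirchheim2016on-rank-one-con}). The only point deserving a moment's care is the compatibility of the two constraints on the weights, namely $\sum_i c_i = 1$ (so that $p$ is a probability measure) and $\sum_i c_i A_i = 0_W$ (so that the generated barycenter has the prescribed absolutely continuous part $A$); with the antipodal pair $\{A,-A\}$ the symmetric choice $c_1 = c_2 = \tfrac12$ satisfies both at once, which is exactly why it yields the clean symmetric concentration profile $\tfrac12(\delta_A + \delta_{-A})$.
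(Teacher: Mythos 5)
Your argument is correct and coincides with the paper's (implicit) proof: the corollary is obtained as the case $n=2$ of the preceding proposition with the antipodal pair $A_1 = A$, $A_2 = -A$ and the forced symmetric weights $c_1 = c_2 = \tfrac12$, and the non-equi-integrability and the closing observation about $A_1 - A_2$ are handled exactly as you say. No gaps.
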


\subsection{Flexibility of divergence-free Young measures} So far we have seen how the lack of strong constraints for the concentration part of $\Acal$-free measures is responsible for the lack of rigidity in a number of interesting scenarios. 

Now, we review the case of the \emph{scalar divergence} operator
\[
\Div(\mu) = \sum_{j = 1}^d \partial_j \mu^j, \qquad u:\R^d \to \R^d,
\]
in the $\Acal$-free framework.
As we have already seen, $\Lambda_{\Div} = \R^d$ and hence 
\begin{equation}\label{eq:bad_div}
	\text{$f$ is div-quasiconvex $\quad \Longleftrightarrow \quad f$ is convex}.
\end{equation}
Since condition (ii) in Theorem~\ref{thm:char} holds for all convex functions, and (iii) holds trivially in this case, divergence-free generalized Young measures are only  constrained by the barycenter property (i). The following example exhibits how~\eqref{eq:bad_div} yields the existence of rather ill-behaved weak-$*$ convergent sequences of divergence-free fields:
\begin{proposition}
	Let $\lambda \in \BV(\R^d)$ be an \emph{arbitrary} compactly supported function of bounded variation and let $p \in \mathrm{Prob}(\Sbb^{d-1})$ be an \emph{arbitrary} probability measure. Define
	\[
	w = - \left(\dprb{\id_{\R^d},p} \cdot D\lambda \right) \star \Phi,	
	\]
	where 
	\[
	\Phi(x) =  \,\frac x{|x|^{d}}, \qquad x \in \R^d,
	\] 
	is the fundamental solution of the scalar divergence operator. Then, for every open and bounded Lipschitz set $\Omega \subset \R^d$ with $|D\lambda|(\partial \Omega) = 0$, there exists a sequence $u_j \in \Crm^\infty(\Omega;\R^d)$ satisfying
	\[
	\Div u_j = 0 \quad \text{and} \quad u_j \toY \bm \nu = (\delta_w,\lambda \mres \Omega,p) \; \text{on $\Omega$}.
	\]
\end{proposition}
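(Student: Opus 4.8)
The plan is to reduce this proposition to an application of Lemma~\ref{lem:counter} after identifying the correct barycenter and verifying the compatibility of $w$ with the divergence constraint. First I would check that $w$ is a well-defined divergence-free field: writing $b \coloneqq \dprb{\id_{\R^d},p} \in \R^d$ for the (constant) barycenter of $p$, we have $w = -(b \cdot D\lambda) \star \Phi = -\Div(\lambda\, b) \star \Phi$, and since $\Phi$ is the fundamental solution of the scalar divergence (i.e.\ $\Div \Phi = c\,\delta_0$ for the appropriate normalizing constant, which I absorb into $\Phi$), one gets $\Div w = -\Div\bigl((b\cdot D\lambda)\star\Phi\bigr)$; but $b \cdot D\lambda$ and $\Phi$ commute past the derivatives so that actually $w = -b\,(\lambda \star \Div\Phi) + (\text{correction})$ — more carefully, $\Div w = -(b \cdot D\lambda)\star \Div\Phi = -(b\cdot D\lambda)$ as a distribution, which is \emph{not} zero. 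This shows that the pointwise field $w$ is \emph{not} itself divergence-free; rather the relevant object is the \emph{Young measure} $\bm\nu = (\delta_w,\lambda\mres\Omega,p)$, whose barycenter $[\bm\nu] = w\,\Leb^d + \dprb{\id_{\R^d},\nu^\infty}\lambda = w\,\Leb^d + b\,\lambda$ must be divergence-free. So the key algebraic step is to verify
\[
\Div\bigl(w\,\Leb^d + b\,\lambda\bigr) = 0 \quad\text{in } \Dcal'(\R^d),
\]
which follows from $\Div w = -(b\cdot D\lambda) = -\Div(b\,\lambda)$ (using that $b$ is constant and $\lambda \in \BV$ so $D(b\lambda) = b\otimes D\lambda$ componentwise and its divergence is $b \cdot D\lambda$). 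Restricting to $\Omega$ and using $|D\lambda|(\partial\Omega)=0$ gives $\Div([\bm\nu]) = 0$ on $\Omega$.

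Next I would invoke Theorem~\ref{thm:char} (with Remark~\ref{rem:ya}, since $\spn\{\Lambda_{\Div}\} = \R^d$, so condition (iii) is vacuous) to conclude that $\bm\nu \in \Y_{\Div}(\Omega)$. Condition (i) is exactly the barycenter identity just verified, together with the regularity $w \in \Lrm^1(\Omega;\R^d)$ (which follows from $\Phi$ being a Calderón--Zygmund-type kernel and $D\lambda$ being a finite measure, so $w \in \Lrm^{1}_{\loc}$ and, on the bounded set $\Omega$, in $\Lrm^1$; alternatively cite the Riesz-potential mapping properties). Condition (ii) is the Jensen inequality $h(\ac\mu(x)) \le \dprb{h,\nu_x} + \dprb{h^\#,\nu_x^\infty}\ac\lambda(x)$ for all $\Div$-quasiconvex $h$; by~\eqref{eq:bad_div} these are precisely the convex integrands, and since $\nu_x = \delta_{w(x)}$ is a Dirac mass at the regular part while $h^\#$ restricted to any direction satisfies the positive $1$-homogeneous convexity bound $\dprb{h^\#,p} \ge h^\#\bigl(\dprb{\id,p}\bigr) = h^\#(b)$ (Jensen for the convex function $h^\#$), one checks $h(w(x)) + h^\#(b)\ac\lambda(x) \ge h\bigl(w(x) + b\,\ac\lambda(x)\bigr) = h(\ac\mu(x))$ — this is just convexity of $h$ applied along the ray, combined with $h^\infty \ge$ the relevant difference quotient. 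So (ii) holds.

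Finally I would produce the generating sequence: by Corollary~\ref{cor:pure} there is a sequence of $\Div$-free measures generating $\bm\nu$ on $\Omega$; mollifying (exactly as in the last paragraph of the proof of Lemma~\ref{lem:counter}, choosing $\delta_j \to 0$ fast enough) yields $u_j \in \Crm^\infty(\Omega;\R^d)$ with $\Div u_j = 0$ and $u_j\,\Leb^d \toY \bm\nu$ on $\Omega$. The Lipschitz hypothesis on $\Omega$ and $|D\lambda|(\partial\Omega) = 0$ are used to ensure $\lambda\mres\Omega$ carries no boundary mass, i.e.\ $\bm\nu \in \Y_0(\Omega;\R^d)$, so that Theorem~\ref{thm:char} applies. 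I expect the main obstacle to be the careful bookkeeping in the convolution identity $\Div w = -(b\cdot D\lambda)$ — in particular getting the normalization of $\Phi$ and the sign right, and making sure $w \in \Lrm^1(\Omega)$ rather than merely $\Lrm^1_{\loc}$ — but this is genuinely routine once the structure $w = -\Div(b\,\lambda)\star\Phi$ is recognized; the conceptual content is entirely carried by the triviality of $\Div$-quasiconvexity together with the unconstrained nature of the concentration part furnished by Lemma~\ref{lem:counter}.
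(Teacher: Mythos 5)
Your argument is correct and follows essentially the same route as the paper: compute $\Div w = -(b\cdot D\lambda)$ so that $\Div([\bm\nu]) = 0$, check that conditions (i)--(iii) of Theorem~\ref{thm:char} hold (with (ii) automatic since $\Div$-quasiconvex $=$ convex and Jensen's inequality, and (iii) vacuous by Remark~\ref{rem:ya}), and then invoke Corollary~\ref{cor:pure} plus mollification to produce the smooth $\Div$-free generating sequence. You merely fill in details that the paper leaves implicit (the paper dismisses the generating-sequence step as ``follows from the theory discussed in Section~\ref{sec: Young}''); the approach is the same.
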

\begin{proof} Since $\lambda$ is a compactly supported and $D\lambda$ is a Radon measure, Young's inequality implies that $w \in \Lrm^1_\loc(\R^d;\R^d)$.
	Moreover, by construction we find that the barycenter of  $\bm \nu$ is the Radon measure $\mu = \dpr{\id,\delta_w} \, \Leb^d + \dprb{\id,p} \lambda = w\, \Leb^d + \dpr{\id,p} \lambda$ and hence
	\[
	\Div(\mu) = \Div(w) + \dpr{\id,p}\cdot D\lambda = 0. 
	\]
	Thus, $\bm \nu$ satisfies (i)--(iii) and hence it is an $\Acal$-free measure on $\Omega$. That $\bm \nu$ can be generated on $\Omega$ by a sequence of smooth divergence-free fields follows from the theory discussed in Section~\ref{sec: Young}.
\end{proof}

\section{Preliminaries} The $d$-dimensional torus is denoted by $\Tbb^d$, and by $Q$ we denote the closed $d$-dimensional unit cube $[-1/2,1/2]^d$. We denote by $Q_r(x)$ the open cube with radius $r > 0$ and centered at  $x \in \R^d$.


\subsection{Geometric measure theory}
Let $X$ be a locally convex space. We denote by $\Crm_c(X)$ the space of compactly supported and continuous functions on $X$, and by $\Crm_0(X)$ we denote its completion with respect to the $\|\frarg\|_\infty$ norm. 
Here, $\Crm_c(X)$ is the inductive limit of Banach spaces $\Crm_0(K_m)$ where $K_m \subset X$ are compact and $K_m \nearrow X$. 
By the Riesz representation theorem, the space $\M_b(X)$ of bounded signed Radon measures on $X$ is the dual of $\Crm_0(X)$; a local argument of the same theorem states that the space $\M(X)$ of signed Radon measures on $X$ is the dual of $\Crm_c(X)$.
We denote by $\M^+(X)$ the subset of non-negative measures. 
Since $\Crm_0(X)$ is a Banach space, the Banach--Alaoglu theorem and its characterizations hold.
In particular:
\begin{enumerate}[1.]
	\item there exists a complete  and separable metric $d_\star : \M(X) \times \M(X) \to \R$. Moreover, convergence with respect to this metric coincides with the {weak-$*$} convergence of Radon measures (see Remark~14.15 in~\cite{mattila1995geometry-of-set}), that is,
		\[
	d_\star(\mu_j,\mu) \to 0 \quad  \Longleftrightarrow \quad \mu_j \toweakstar \mu \; \text{in $\M(X)$.}
	\]
	\item bounded sets of $\M(X)$ are $d_\star$-metrizable in the sense that $d_\star$ induces the (relative)
	weak-$*$ topology on the unit open ball of $\M(X)$.
\end{enumerate}
%
%
%
In a similar manner, for a finite dimensional inner-product euclidean space $W$, $\M_b(X;W)$ and $\M(X;W)$ will denote the spaces of {$W$-valued} bounded Radon measures and {$W$-valued} Radon measures respectively. The space $\Mcal_b(X;W)$ is a normed space endowed with the \emph{total variation} norm
\[
|\mu|(X;W) \coloneqq \sup\setBB{\int_{\Xbb} \varphi \dd \mu}{\phi \in \Crm_0(X;W), \|\phi\|_\infty \le 1}.
\]
The set of all positive Radon measures on $X$ with total variation equal to one is denoted by 
\[
\mathrm{Prob}(X) \coloneqq \setB{\nu \in \M^+(X)}{\nu(X) = 1}; 
\]
the set of \emph{probability measures} on $X$. 
Here and all the follows we write 
\begin{gather*}
	B_W\coloneqq \set{w \in W}{|w|^2 < 1},\\
	S_{W} \coloneqq \set{w \in W}{|w|^2 = 1},
\end{gather*} to denote the open unit ball and the unit sphere on $W$ respectively. Riesz' representation theorem states that every vector-valued measure $\mu \in \M(\Omega;W)$ can be written as 
\[
\mu \; = \; g_\mu|\mu| \quad \text{for some $g_\mu \in \Lrm^\infty_{\loc}(\Omega,|\mu|;S_{W})$}. 
\]
This decomposition is commonly referred as the \emph{polar decomposition} of $\mu$. The set $L_\mu$ of points $x_0 \in \Omega$ where 
\[
\lim_{r \todown 0} \aveint{Q_r(x_0)}{} |g_\mu(x) - g_\mu(x_0)| \dd |\mu|(x) = 0
\]
is satisfied, 
is called the set of $\mu$-Lebesgue points. This set conforms a full $|\mu|$-measure set of $\Omega$, i.e, $|\mu|(\Omega \setminus L_\mu) = 0$. In what follows, we shall always work with good representatives of $\mu$-integrable maps. If $g \in \Lrm^1_\loc(\Omega,\mu;W)$, then $g$ satisfies
\[
	g(x) = \lim_{r \to 0^+} \aveint{Q_r(x)}{} g(y) \dd \mu(y) \quad \text{for all $x \in L_\mu \subset \Omega$}. 
\] 
If $\mu,\lambda$ are Radon measures over $\Omega$, and $\lambda \ge 0$, then the Besicovitch differentiation theorem states that there exists a set $E \subset \Omega$ of zero $\lambda$-measure such that
\[
	\lim_{r \todown 0}\frac{\mu(Q_r(x))}{\lambda(Q_r(x))} = \frac{\dd \mu}{\dd \lambda}(x) \quad \text{for any $x \in \supp(\lambda) \setminus E$,}
\]
where $\frac{\dd \mu}{\dd \lambda} \in \Lrm^1_{\loc}(\Omega,\lambda;W)$ is the Radon--Nykod\'ym derivative of $\mu$ with respect to $\lambda$. 
Another resourceful representation of a measure is given by the \emph{Radon--Nykod\'ym--Lebesgue decomposition} which we shall frequently denote as
\[
\mu \; = \; \ac\mu \, \Leb^d \; + \; g_\mu |\mu^s|,
\] 
where as usual $\ac{\mu} \coloneqq \frac{\dd \mu}{\dd \Leb^d} \in \Lrm^1_\loc(\Omega;W)$, $|\mu^s|  \perp \Leb^d$.



\subsubsection{Push-forward measures}If $T : \Omega \to \Omega'$ is Borel measurable, the \emph{image} or \emph{push-forward} of $\mu$ under $T$ is defined by the formula
\[
	T [\mu](E) = \mu\big(T^{-1}(E)\big) \quad \text{for every Borel set $E \subset \Omega'$}.
\]
If $g : \Omega' \to [-\infty,\infty]$ is a Borel map, then
\[
	\int_E g(y) \dd T[\mu](y) \, = \, \int_{T^{-1}(E)} g(T(x)) \dd \mu(x).
\]
whenever the integrals above exist or if $g$ is integrable.
%
\subsubsection{Tangent measures}

In this section we recall the notion of tangent measure as introduced by \textsc{Preiss}~\cite{preiss1987geometry-of-mea}.
Let $\mu \in \Mcal(\Omega;W)$ and consider the map $\Trm_{x,r}(y) := (y - x)/r$, 
which blows up $B_r(x_0)$, the open ball around $x_0 \in \Omega$ with radius $r > 0$, into the open unit ball $B_1$. The {push-forward} of $\mu$ under $\Trm_{x,r}$ is given by the measure
\[
\Trm_{x,r}[\mu](E) := \mu(x_0+rE), \qquad \text{for all Borel $E \subset r^{-1}(\Omega-x)$.}
\]
A non-zero measure $\tau \in \M(\R^d;W)$ is said to be a \emph{tangent measure} of $\mu$ at $x_0 \in \R^d$, if there exist sequences $r_m \todown 0$ and $c_m > 0$ such that 
\[
c_m \, \Trm_{x,r_m} [\mu]  \toweakstar \tau \quad \text{in $\Mcal(\R^d;W)$};
\]
in this case the sequence $c_m \, \Trm_{x,r_m}[\mu]$ is called a {\it blow-up sequence}. We write $\Tan(\mu,x_0)$ to denote the set of all such tangent measures. 

Using the canonical zero extension that maps the space $\Mcal(\Omega;W)$ into the space $\Mcal(\R^d;W)$ we may use most of the results contained in the general theory for tangent measures when dealing with tangent measures defined on smaller domains. 
The following theorem,   due to \textsc{Preiss}, states that one may \emph{always}  find tangent measures.
\begin{theorem}[{Theorem~2.5 in~\cite{preiss1987geometry-of-mea}}]\label{thm:preiss} If $\mu$ is a Radon measure over $\R^d$, then $\Tan(\mu,x) \neq \varnothing$ for $\mu$-almost every $x \in \R^d$. 
\end{theorem}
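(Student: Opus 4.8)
The plan is to isolate a full $\mu$-measure set of points at which a one-sided density estimate holds, extract there a bounded blow-up sequence, and pass to a weak-$*$ limit. Concretely, I would fix a point $x$ at which
\begin{equation*}
	\lim_{\rho \downarrow 0} \frac{\mu(\overline{B_\rho}(x))}{\rho^{d+1}} \;=\; +\infty. \tag{$\dagger$}
\end{equation*}
That ($\dagger$) holds at $\mu$-a.e.\ $x$ is a standard density fact: if $\liminf_{\rho\downarrow 0}\rho^{-(d+1)}\mu(\overline{B_\rho}(x))$ were finite on a set of positive $\mu$-measure, then on some bounded piece $E$ of it, for every small $\delta>0$ one uses the Besicovitch covering theorem to select balls $\overline{B_{\rho_i}}(x_i)$ with $x_i\in E$, $\rho_i<\delta$, $\mu(\overline{B_{\rho_i}}(x_i))<M\rho_i^{d+1}$ and bounded overlap; comparing with $\Leb^d$ forces $\sum_i\rho_i^{\,d}\le C(d,E)$, hence $\mu(E)\le M\sum_i\rho_i^{\,d+1}\le M\delta\,C(d,E)\to 0$, a contradiction. (Atoms are covered automatically, since $\mu(\{x\})>0$ implies ($\dagger$).) Given such an $x$, I would take as candidate blow-ups $\nu_m \coloneqq \Trm_{x,r_m}[\mu]\big/\mu(\overline{B_{r_m}}(x))$ for radii $r_m\downarrow 0$ to be chosen, so that $\nu_m(\overline{B_1})=1$ for every $m$; the only point to arrange is that $\sup_m \nu_m(\overline{B_R})=\sup_m \mu(\overline{B_{Rr_m}}(x))\big/\mu(\overline{B_{r_m}}(x))$ be finite for each fixed $R\ge 1$.

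The key step is the choice of radii. Set $G(\rho)\coloneqq \rho^{-(d+1)}\mu(\overline{B_\rho}(x))$; by ($\dagger$) it is finite on $(0,1]$, bounded on every $[\varepsilon,1]$, and $G(\rho)\to+\infty$ as $\rho\downarrow 0$. For each $m$ I would pick $r_m\in[1/m,1]$ with $G(r_m)\ge\tfrac12\sup_{[1/m,1]}G$. Since $\sup_{[1/m,1]}G\to+\infty$ while $G$ stays bounded on each $[\varepsilon,1]$, the $r_m$ must tend to $0$; moreover for every $\rho\in[r_m,1]$ one has $G(\rho)\le\sup_{[1/m,1]}G\le 2\,G(r_m)$, i.e.
\[
	\mu(\overline{B_\rho}(x)) \;\le\; 2\Big(\frac{\rho}{r_m}\Big)^{d+1}\mu(\overline{B_{r_m}}(x)), \qquad r_m\le\rho\le 1.
\]
Taking $\rho=Rr_m$ gives $\nu_m(\overline{B_R})\le 2R^{d+1}$ as soon as $Rr_m\le 1$; together with the trivial finiteness of $\nu_m(\overline{B_R})$ for the finitely many remaining indices, this yields $\sup_m\nu_m(\overline{B_R})<\infty$ for every $R\ge 1$.

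It then remains to extract a limit and check it is nonzero. Using the weak-$*$ metrizability of bounded subsets of $\M(\overline{B_R})$ recalled in the Preliminaries (Banach--Alaoglu for $\Crm_0(\overline{B_R})^*$) and a diagonal argument over $R\in\N$, I would pass to a subsequence with $\nu_{m_j}\toweakstar\tau$ in $\M(\R^d;W)$. Finally $\tau\neq 0$: $\overline{B_1}$ is compact and $\nu_{m_j}(\overline{B_1})=1$ for all $j$, so upper semicontinuity of mass on compact sets under weak-$*$ convergence gives $\tau(\overline{B_1})\ge\limsup_j\nu_{m_j}(\overline{B_1})=1$. With $c_{m_j}\coloneqq \mu(\overline{B_{r_{m_j}}}(x))^{-1}$ we have $c_{m_j}\Trm_{x,r_{m_j}}[\mu]=\nu_{m_j}\toweakstar\tau\neq 0$, hence $\tau\in\Tan(\mu,x)$ and so $\Tan(\mu,x)\neq\varnothing$ at $\mu$-a.e.\ $x$.

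The main obstacle — which the density estimate ($\dagger$) exists to defeat — is producing a \emph{single} radius sequence along which the rescaled masses stay bounded \emph{simultaneously} over all balls $\overline{B_R}$: a naive diagonalization over $R$ degrades the per-scale constants and does not converge. Choosing $r_m$ as a near-maximizer of $G$ converts ($\dagger$) into the scale-uniform comparison displayed above, which is precisely what makes the blow-up sequence weak-$*$ precompact.
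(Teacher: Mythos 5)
The paper does not prove this statement; it is cited directly as Theorem~2.5 of Preiss (the same result appears as Theorem~14.3 in the Mattila reference used throughout the paper). Your argument is correct and reproduces the standard proof essentially verbatim: the $\mu$-a.e.\ blow-up of $\rho^{-(d+1)}\mu(\overline{B_\rho}(x))$ via a Besicovitch covering argument; the near-maximizer choice of $r_m$ for $G(\rho)=\rho^{-(d+1)}\mu(\overline{B_\rho}(x))$ on $[1/m,1]$, which converts the density blow-up into the scale-uniform comparison $\mu(\overline{B_{Rr_m}}(x))\le 2R^{d+1}\mu(\overline{B_{r_m}}(x))$ and thereby defeats the simultaneity obstruction you correctly identify as the crux; and the diagonal extraction plus upper semicontinuity of mass on the compact ball $\overline{B_1}$ to obtain a nonzero weak-$*$ limit.
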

This property of Radon measures measures will play a silent, but fundamental role, in our results. We shall use it to \enquote{amend} the current lack of a Poincar\'e inequality for general domains; this, because~\eqref{eq:goodbu} acts as an artificial \emph{extension operator} for tangent measures restricted to the unit cube $Q \subset \R^d$. 
Returning to the properties of tangent measures, one can show (see Remark 14.4 in~\cite{mattila1995geometry-of-set}) that, for a tangent measure $\tau \in \Tan(\mu,x_0)$, it is always possible to choose the scaling constants $c_m > 0$ in the blow-up sequence to be
\[
c_m := c \mu(x_0 + r_m \cl{U})^{-1},
\]
for any open and bounded set $U \subset \R^d$ containing the origin and with the property that $\sigma(U) > 0$, for some positive constant $c = c(U)$; this process may involve passing to a subsequence. Then, from~\cite[Thm~2.6(1)]{preiss1987geometry-of-mea} it follows that at $\mu$-almost every $x \in \Omega$ we can find 
$\tau \in \Tan(\mu,x)$ as the weak-$*$ limit a blow-up sequence of the form
\begin{equation}\label{eq:goodbu}
\frac{1}{|\mu|({Q_{r_m}(x)})} \, \Trm_{x,r_m}[\mu] \toweakstar \tau \;\; \text{in $\M(\R^d;W)$},  \qquad  |\tau|(Q) = |\tau|(\cl Q) = 1.
\end{equation}
Yet another special property of tangent measures is that at, $|\mu|$-almost every $x \in \R^d$, it holds that  
\begin{gather*}
\text{$\tau \in \Tan(\mu,x)$ \; if and only if \; $|\tau| \in \Tan(|\mu|,x)$},\\
	\tau = g_\mu (x) |\tau|;
\end{gather*}
which in particular conveys that tangent measures are generated by strictly-converging blow-up sequences. 
%
If $\mu, \lambda \in \M^+(\R^d)$ are two Radon measures with  $\mu \ll \lambda$, i.e., that $\mu$ is absolutely continuous with respect to $\lambda$, then (see Lemma 14.6 of~\cite{mattila1995geometry-of-set})
\begin{equation}\label{eq: tangent absolute}
\Tan(\mu,x) = \Tan(\lambda,x) \quad \text{for $\mu$-almost every $x \in \R^d$}.
\end{equation}
Then, a consequence of~\eqref{eq: tangent absolute} and Lebesgue's differentiation theorem  is that
%
%
%
\begin{equation}\label{eq:ae tangent}
\Tan(\mu,x) = \setB{\alpha \, \ac\mu(x) \, \Leb^d}{\alpha \in (0,\infty) }, \quad \text{at $\Leb^d$-a.e.\ $x \in \R^d$}.
\end{equation}
In fact, if $f \in \Lrm^1(\Omega,W)$, then it is an simple consequence from the Lebesgue Differentiation Theorem that 
\begin{equation}\label{eq:l1t}
	\frac{1}{r^d} \cdot \Trm_{x_0,r}[f \, \Leb^d] \, \to \, f(x_0) \, \Leb^d \quad \text{strongly in $\Lrm^1_\loc(\R^d,W)$.}
\end{equation}


\subsection{Integrands and Young measures }\label{sec: Young} Bounded generalized Young measures conform a set of dual objects to the integrands in~$\E(U;W)$. We recall briefly some aspects of this theory, which was introduced by \textsc{DiPerna} and \textsc{Majda} in~\cite{diperna1987oscillations-an} and later extended in~\cite{alibert1997non-uniform-int,kristensen2010characterizatio}.\\

\noindent\textbf{Notation.} We remind the reader that $U \subset \R^d$ denotes an open set and $\Omega \subset \R^d$  denotes an open an bounded set with $\Leb^d(\partial \Omega) = 0$.\\

For $f \in \Crm(U \times W$) we define the transformation 
\[
(Sf)(x,\widehat{z}) := (1 - |\widehat{z}|)f\left(x,\frac{\widehat{z}}{1 - |\widehat{z}|}\right), \qquad (x,\widehat z) \in {U}\times  B_W,
\]
where $B_W$ denotes the open  unit ball in $W$. Then, $Sf \in \Crm(U \times B_W)$. We set 
\begin{align*}
\E(U;W)&\coloneqq \setb{ f \in \Crm(U \times W) }{ \text{$Sf$ extends to $\Crm(\overline{U \times  B_W})$} },\\
\E(W) &\coloneqq \setb{f \in \Crm(W)}{\text{$Sf$ extends to $\Crm(\overline{B_W})$}}.
\end{align*}
Heuristically, $\E(W)$ is isomorphic to the continuous functions on the compactification of $W$ that adheres to it each direction at infinity.
In particular, all $f \in \e(U;W)$ have linear growth at infinity, i.e., there exists a positive constant $M$ such that
$|f(x,z)| \leq M(1 + |z|)$ for all $x \in U$ and all $z \in W$. With the norm 
\[
\| f \|_{ \e(\Omega;W)} := \| Sf \|_\infty, \qquad f \in  \e(U;W),
\]
the space $\e(\Omega;W)$ turns out to be a Banach space and $S$ is an isometry with inverse
\[
	(Tg)(x,{z}) := (1 + |{z}|)g\left(x,\frac{{z}}{1 + |{z}|}\right), \qquad (x, z) \in {U}\times W, 
\]
Also, by definition, for each $f \in \e(U;W)$ the limit
\[
f^\infty(x,z) := \lim_{\substack {x' \to x \\ z' \to z \\ t \to \infty}} \frac{f(x',tz')}{t}, \qquad (x,z) \in \cl{U} \times W,
\]
exists and defines a positively $1$-homogeneous function called the 
{\it strong recession function} of $f$. Moreover every $f \in \E(U;W)$ satisfies 
\begin{equation}\label{eq:mamo}
	f(x,z) = (1 + |z|) Sf\bigg(x,\frac{z}{1 + |z|}\bigg)  \quad \text{for all $x \in U$ and $z \in W$.}
\end{equation}
In particular, there exists a modulus of continuity $\omega : [0,\infty) \to [0,\infty)$, depending solely on the uniform continuity of $Sf$, such that
\[
	|f(x,z) - f(y,z)| \le \omega(|x-y|)(1 + |z|) \qquad \text{for all $x,y \in \Omega$ and $z\in W$}. 
\]
%
%
%
%
For an integrand $f \in \E(U;W)$ and a Young measure $ {\bm \nu} \in \Y(U;W)$, we define a \emph{duality paring} between $f$ and $\bm \nu$ by setting
\begin{align*}
\ddprB{f, {\bm \nu}} & \coloneqq \int_{\cl U} \dprb{f ,\nu_x}  \dd x
+ \int_{\cl U}  \dprb{f^\infty,\nu^\infty_x}    \dd \lambda(x) \\
& \coloneqq \int_{\cl U}\bigg( \int_Wf(x,z) \dd \nu_x(z) \bigg)\dd x \\
& \qquad +
 \int_{\cl U} \bigg( \int_{S_{W}}  f^\infty (x,\widehat z) \dd \nu^\infty(\widehat z) \bigg) \dd\lambda(x).
\end{align*}

The \emph{barycenter} of a Young measure $\nu \in \Y_\loc({U};\R^N)$ is defined as the measure
\[
[\bm \nu] \coloneqq \dprb{\id,\nu} \, \Leb^d \mres \Omega + \dprb{ \id, \nu^\infty  } \, \lambda \in \Mcal(\cl{U};W). 
\]
The generation of a Young measure is (cf. Definition~\ref{def:gen}) a local property in the sense that
\begin{equation}\label{eq:loc_y}
	\mu_j \toY\bm \nu \; \text{on $U$} \quad \Longrightarrow \quad \mu_j    \toY \bm \nu \; \text{on $U'$}
\end{equation} 
for all open Lipschitz sets $U' \Subset U$ with $\lambda(\partial U') = 0$.
In many cases it will be sufficient to work with functions $f \in \e(U;W)$ that 
are Lipschitz continuous and compactly supported on the $x$-variable. The following density lemma can be found in~\cite[Lemma~3]{kristensen2010characterizatio}.
\begin{lemma}\label{lem:separation}
	There exist countable families of non-negative functions $\{\varphi_p\} \subset \Crm_c(\cl{U})$ and Lipschitz integrands $\{h_p\} \subset \E(W)$ such that, for any given two Young measures $ {\bm \nu}_1,  {\bm \nu}_2 \in \Y_\loc(U;W)$,
	\[
	\text{$\ddprB{ \phi_p \otimes h_p, {\bm \nu}_1} = \ddprB{ \phi_p \otimes h_p, {\bm \nu}_2}\quad \forall \; p,q \in \N  \qquad \Longrightarrow \qquad  {\bm \nu}_1 =  {\bm \nu}_2$}.
	\]
	\begin{remark}
	Bounded sets of $\Ebf(U;W)^*$ are metrizable with respect to the  weak-$*$ topology (see~\cite[Theorem 3.28]{BrezisBook}).
	\end{remark}
\end{lemma}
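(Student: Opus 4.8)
The plan is to reduce the separation of generalized Young measures to three classical facts: a locally finite Radon measure on the $\sigma$-compact set $\cl U$ is determined by its pairing with $\Crm_c(\cl U)$; a probability measure on $W$ by its pairing with $\Crm_c(W)$; and a probability measure on the compact sphere $S_W$ by its pairing with $\Crm(S_W)$. The isometry $S:\E(W)\to\Crm(\cl{B_W})$ and its inverse $T$ are the bridge between the last two and $\E(W)$. For the countable families: since $\cl{B_W}$ is a compact metric space, $\Crm(\cl{B_W})$ is separable and its Lipschitz functions are dense, so I would fix a countable dense family $\{g_q\}$ of Lipschitz functions on $\cl{B_W}$ and set $h_q\coloneqq Tg_q\in\E(W)$. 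A chain-rule computation shows each $h_q$ is globally Lipschitz on $W$: writing $\widehat z\coloneqq z/(1+|z|)$ one has $|D\widehat z(z)|\lesssim(1+|z|)^{-1}$, hence $|D(Tg)(z)|\le\|g\|_\infty+(1+|z|)\,\mathrm{Lip}(g)\,|D\widehat z(z)|\lesssim\|g\|_\infty+\mathrm{Lip}(g)$. Since $\{Sh_q\}=\{g_q\}$ is dense in $\Crm(\cl{B_W})$, the family $\{h_q\}$ is dense in $\E(W)$. For $\{\varphi_p\}$ I would take a countable family of non-negative functions in $\Crm_c(\cl U)$ whose span is dense (say, a uniformly dense countable family on each member of a compact exhaustion); as every real $\varphi\in\Crm_c(\cl U)$ equals $\varphi^+-\varphi^-$, such a family determines locally finite Radon measures on $\cl U$.

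Suppose now $\bm\nu_1=(\nu^1,\lambda^1,\nu^{1,\infty})$ and $\bm\nu_2=(\nu^2,\lambda^2,\nu^{2,\infty})$ pair equally against every $\varphi_p\otimes h_q$. Because $h_q$ does not depend on $x$, $(\varphi_p\otimes h_q)^\infty=\varphi_p\otimes h_q^\infty$, so $\ddprB{\varphi_p\otimes h_q,\bm\nu_i}=\int_{\cl U}\varphi_p\dd\mu^{(i)}_{h_q}$, where for $h\in\E(W)$ I write $\mu^{(i)}_h\in\Mcal(\cl U)$ for the locally finite Radon measure
\[
\mu^{(i)}_h\coloneqq\langle h,\nu^i_\cdot\rangle\,\Leb^d+\langle h^\infty,\nu^{i,\infty}_\cdot\rangle\,\lambda^i ;
\]
it is locally finite since $|h(z)|\le(1+|z|)\|h\|_{\E(W)}$ and $x\mapsto\langle|\frarg|,\nu^i_x\rangle\in\Lrm^1_\loc$ control the absolutely continuous part, while $|h^\infty|\le\|h\|_{\E(W)}$ on $S_W$ controls the concentration part. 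Density of $\{\varphi_p\}$ and Riesz representation give $\mu^{(1)}_{h_q}=\mu^{(2)}_{h_q}$ for each $q$; the estimate $|\mu^{(i)}_h-\mu^{(i)}_{h'}|\le\|h-h'\|_{\E(W)}\big(\Leb^d+\langle|\frarg|,\nu^i_\cdot\rangle\Leb^d+\lambda^i\big)$ makes $h\mapsto\mu^{(i)}_h$ continuous into the locally finite Radon measures, so density of $\{h_q\}$ in $\E(W)$ upgrades this to $\mu^{(1)}_h=\mu^{(2)}_h$ for all $h\in\E(W)$.

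It remains to extract the three components. Put $\Lambda\coloneqq\Leb^d+\lambda^1+\lambda^2$ and write $\Leb^d=\theta_0\Lambda$, $\lambda^i=\theta_i\Lambda$ by Radon--Nikodym. Comparing $\Lambda$-densities of $\mu^{(1)}_h$ and $\mu^{(2)}_h$ as $h$ ranges over the countable dense family $\{h_q\}$ yields a single $\Lambda$-null set $N$ off which
\[
\theta_0(x)\,\langle h,\nu^1_x-\nu^2_x\rangle+\theta_1(x)\,\langle h^\infty,\nu^{1,\infty}_x\rangle-\theta_2(x)\,\langle h^\infty,\nu^{2,\infty}_x\rangle=0
\]
for every $h\in\E(W)$ (the growth bound $|h|\le(1+|\frarg|)\|h\|_{\E(W)}$ and the $\Lrm^1_\loc$-integrability of $x\mapsto\langle|\frarg|,\nu^i_x\rangle$ let one pass from $\{h_q\}$ to all of $\E(W)$, and $N$ also absorbs the $\Leb^d$-null set where $\langle|\frarg|,\nu^i_x\rangle=+\infty$). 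Testing with $h\in\Crm_c(W)$, where $h^\infty\equiv0$, forces $\nu^1_x=\nu^2_x$ wherever $\theta_0(x)>0$, i.e. $\Leb^d$-a.e.; feeding this back and testing with arbitrary $h$, and using that $h\mapsto h^\infty|_{S_W}=Sh|_{S_W}$ maps $\E(W)$ onto $\Crm(S_W)$ by Tietze's extension theorem, the residual identity $\theta_1(x)\langle g,\nu^{1,\infty}_x\rangle=\theta_2(x)\langle g,\nu^{2,\infty}_x\rangle$ for all $g\in\Crm(S_W)$ gives $\theta_1=\theta_2$ (take $g\equiv1$), hence $\lambda^1=\lambda^2=:\lambda$, and then $\nu^{1,\infty}_x=\nu^{2,\infty}_x$ for $\lambda$-a.e. $x$. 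Therefore $\bm\nu_1=\bm\nu_2$.

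I expect the last step to be the main obstacle: disentangling the oscillation and concentration parts forces one to work with a common dominating measure and to control the fact that the $\Lambda$-null exceptional set a priori depends on the test integrand (handled by restricting first to a countable dense family of $h$'s and then exploiting linear growth together with the local integrability of $x\mapsto\langle|\frarg|,\nu^i_x\rangle$), and to locate $\Crm_c(W)$ and $\Crm(S_W)$ among the value and recession functions carried by $\E(W)$. The only other point that is not routine is the density of Lipschitz integrands in $\E(W)$, which is exactly what the uniform bound on $D(Tg)$ above supplies.
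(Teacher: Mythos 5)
Your argument is correct and follows the standard route. The paper itself does not prove this lemma~--- it cites \cite[Lemma~3]{kristensen2010characterizatio}~--- and your blind reconstruction matches that proof in all essentials: realizing $\E(W)\cong\Crm(\cl{B_W})$ via the isometry $S$ (and its inverse $T$) to manufacture a countable Lipschitz-dense family $\{h_q\}$, pairing with non-negative $\varphi_p$ to reduce equality of Young measures to equality of the locally finite Radon measures $\mu^{(i)}_h$, extending from the countable family to all of $\E(W)$ by the continuity estimate $|\mu^{(i)}_h-\mu^{(i)}_{h'}|\le\|h-h'\|_{\E(W)}\bigl(\Leb^d+\dprb{|\frarg|,\nu^i_\cdot}\Leb^d+\lambda^i\bigr)$, and finally disintegrating against a common dominating measure $\Lambda=\Leb^d+\lambda^1+\lambda^2$ and testing first with $\Crm_c(W)$ (to separate the oscillation part) and then with $g\equiv 1$ and $g\in\Crm(S_W)$ (to separate $\lambda$ and the concentration part). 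Your verification that $Tg$ remains Lipschitz when $g$ is Lipschitz on $\cl{B_W}$ is also correct, and is indeed the only non-routine estimate needed; the handling of the $\Lambda$-null exceptional set depending a priori on $h$ is done exactly as it must be, using the $\Lrm^1_\loc$-bound on $x\mapsto\dprb{|\frarg|,\nu^i_x}$ to pass from the countable family to all of $\E(W)$ pointwise in $x$.
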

Since $\Y(U;W)$ is contained in the dual space of $\E(U;W)$ via the duality pairing $\ddprb{ \frarg, \frarg }$, we say that a sequence of Young measures $( {\bm \nu}_j)  \subset \Y(U;W)$ {\it weak-$*$ converges} to $ {\bm \nu} \in \Y(U;W)$, in symbols $ {\bm \nu}_j \toweakstar {\bm \nu}$, if
\[
\ddprB{ f, {\bm \nu}_j } \to \ddprB{ f,  {\bm \nu} } \qquad \text{for all $f \in \E(U;W)$}.
\]
Fundamental for all Young measure theory is the following compactness result, see~\cite[Section 3.1]{kristensen2010characterizatio} for a proof. 
\begin{lemma}[compactness] \label{lem:YM_compact}
	Let $\{{\bm \nu}_j\} \subset \Y(U;W)$ be a sequence of Young measures satisfying 
	\begin{itemize}
		\item[(i)] the family $\setB{\dprb{ |\frarg|,{\nu}_j}}{j \in \Nbb}$ is uniformly bounded in $\Lrm^1(U)$, 
		\item[(ii)] $\sup_j \{\lambda_{j}(\cl{U})\} < \infty$.
	\end{itemize}
	Then, there exists a subsequence (not relabeled) and ${\bm \nu} \in \Y(U;W)$ such that
	${\bm \nu}_j \toweakstar {\bm \nu}$ in $\E(U;W)^*$.
\end{lemma}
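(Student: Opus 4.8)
The plan is to follow the classical route for generalized Young measures (DiPerna--Majda, Alibert--Bouchitté, Kristensen--Rindler): realize each $\bm\nu_j$ as a positive Radon measure $\Gamma_j$ on the \emph{fixed} space $\cl U\times\cl{B_W}$, extract a weak-$*$ limit $\Gamma$ via Banach--Alaoglu and the separability of $\E(U;W)$, and disintegrate $\Gamma$ to recover the three components of the limiting Young measure, with hypotheses (i)--(ii) entering only through mass bounds and the absolute continuity of the oscillation part.

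For the reduction I would use the identity~\eqref{eq:mamo}, $f(x,z)=(1+|z|)\,Sf\bigl(x,\tfrac{z}{1+|z|}\bigr)$ on $U\times W$, together with $Sf(x,\hat z)=f^\infty(x,\hat z)$ for $\hat z\in S_W$, and introduce the positive measure $\Gamma_j\in\M^+(\cl U\times\cl{B_W})$ given by the push-forward of $(1+|z|)\,(\nu_{j,x}\otimes\Leb^d\mres U)$ under $(x,z)\mapsto(x,\tfrac{z}{1+|z|})$, plus the measure $\nu^\infty_{j,x}\otimes\lambda_j$ supported on $\cl U\times S_W$. A direct computation then gives $\ddprb{f,\bm\nu_j}=\int_{\cl U\times\cl{B_W}}Sf\dd\Gamma_j$ for every $f\in\E(U;W)$, together with $\Gamma_j(\cl U\times\cl{B_W})=\int_U\bigl(1+\dprb{|\frarg|,\nu_{j,x}}\bigr)\dd x+\lambda_j(\cl U)$, so (i)--(ii) yield $\sup_j\Gamma_j(\cl U\times\cl{B_W})\le C<\infty$. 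Since $\E(U;W)$ is separable (cf.\ the remark after Lemma~\ref{lem:separation}), bounded subsets of $\M(\cl U\times\cl{B_W})$ are sequentially weak-$*$ compact, so I pass to a subsequence with $\Gamma_j\toweakstar\Gamma$ in $\M^+(\cl U\times\cl{B_W})$; hence $\ddprb{f,\bm\nu_j}\to\int Sf\dd\Gamma$ for all $f\in\E(U;W)$. (When $U$ is unbounded one first works on an exhaustion of $U$ by bounded open sets and passes to a diagonal subsequence.)

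To recover $\bm\nu$, split $\Gamma=\Gamma'+\Gamma''$ with $\Gamma':=\Gamma\mres(\cl U\times B_W)$ and $\Gamma'':=\Gamma\mres(\cl U\times S_W)$. For the concentration part, let $\lambda$ be the marginal of $\Gamma''$ on $\cl U$ (finite, since $\lambda(\cl U)\le C$) and disintegrate $\Gamma''=\lambda\otimes\nu^\infty_x$ to obtain a weak-$*$ $\lambda$-measurable family $\{\nu^\infty_x\}\subset\mathrm{Prob}(S_W)$. For the oscillation part, the key point is that the $\cl U$-marginal of $\Gamma'$ is absolutely continuous with respect to $\Leb^d$: for each $\delta>0$, the marginal of $\Gamma_j\mres\{|\hat z|\le1-\delta\}$ is bounded by $(1+R_\delta)\Leb^d\mres U$ with $R_\delta=\tfrac{1-\delta}{\delta}$, and a weak-$*$ lower semicontinuity argument followed by $\delta\todown0$ gives the claim. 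I then disintegrate $\Gamma'=\Leb^d\mres U\otimes\gamma_x$ and invert the change of variables, defining $\nu_x\in\M^+(W)$ by $\int_W h\dd\nu_x:=\int_{B_W}h\bigl(\tfrac{\hat z}{1-|\hat z|}\bigr)(1-|\hat z|)\dd\gamma_x(\hat z)$. Testing $\Gamma_j\toweakstar\Gamma$ against $(1-|\hat z|)\,\psi(x)$ with $\psi\in\Crm_c(U)$, $\psi\ge0$ (the factor $1-|\hat z|$ annihilates the sphere part) yields $\int_{B_W}(1-|\hat z|)\dd\gamma_x=1$ for $\Leb^d$-a.e.\ $x$, so $\nu_x\in\mathrm{Prob}(W)$; moreover $\int_U\dprb{|\frarg|,\nu_x}\dd x=\int|\hat z|\dd\Gamma'\le C$, so $x\mapsto\dprb{|\frarg|,\nu_x}\in\Lrm^1(U)$, and the disintegration theorem gives weak-$*$ measurability of $x\mapsto\nu_x$. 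Thus $\bm\nu:=(\nu,\lambda,\nu^\infty)\in\Y(U;W)$; unwinding the change of variables shows $\int_{\cl U\times B_W}Sf\dd\Gamma'=\int_U\dprb{f,\nu_x}\dd x$, while $Sf=f^\infty$ on $\cl U\times S_W$ gives $\int_{\cl U\times S_W}Sf\dd\Gamma''=\int_{\cl U}\dprb{f^\infty,\nu^\infty_x}\dd\lambda(x)$, so that $\int Sf\dd\Gamma=\ddprb{f,\bm\nu}$ and hence $\bm\nu_j\toweakstar\bm\nu$.

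The main obstacle is this reconstruction step: proving that the finite-amplitude part $\Gamma'$ of the limit has an $\Leb^d$-absolutely continuous spatial marginal — ruling out a spurious singular concentration at finite height — and that the reconstructed $\nu_x$ are genuine probability measures. Both are consequences of hypothesis (i): the uniform $\Lrm^1(U)$-bound on $\dprb{|\frarg|,\nu_{j,x}}$ is precisely what forces concentrating mass to escape to the sphere $S_W$ (where it is absorbed into the finite measure $\lambda$) rather than charge a Lebesgue-null set, and it is what underlies the normalization $\int_{B_W}(1-|\hat z|)\dd\gamma_x=1$.
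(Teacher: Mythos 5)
Your argument is correct and is precisely the standard DiPerna--Majda/Alibert--Bouchitt\'e/Kristensen--Rindler realization-and-disintegration proof; the paper does not give its own proof but refers directly to~\cite[Section~3.1]{kristensen2010characterizatio}, which proceeds exactly as you do. The only point worth flagging is that you should be explicit that the absolute continuity of the spatial marginal of $\Gamma'$ is obtained by applying the bound $(1+R_\delta)\Leb^d$ at each fixed $\delta$ and then taking the increasing limit $\delta \todown 0$ (so $\pi_*\Gamma'$ is a monotone limit of measures each $\ll \Leb^d$), rather than by a single lower semicontinuity pass.
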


The Radon--Nykod\'ym--Lebesgue decomposition induces a natural embedding 
\[
	\Mcal({U};W) \embed \Y_\loc(U;W)
\] 
via the identification $\mu \mapsto \bm\delta_\mu \coloneqq (\delta_{\ac\mu},|\mu^s|,\delta_{g_\mu})$.
Notice that a sequence of measures $\{\mu_j\} \subset \M(U;W)$ {\it generates} the Young measure $\bm\nu$ in $U$  if and only if 
\[
\text{$\bm\delta_{\mu_j} \toweakstar \bm\nu$ in $\E(U',W)^*$}
\] 
for all Lipschitz subdomains $U' \Subset U$ with $\lambda(\partial U') = 0$.


\begin{remark}For a sequence $\{\mu_j\} \subset \Mcal(U;W)$ that area-strictly converges to some limit $\mu \in \Mcal(U;W)$, it is
relatively easy to characterize the (unique) Young measure it generates. Indeed, an immediate consequence of the Separation Lemma~\ref{lem:separation} and a version of Reshetnyak's continuity theorem (see~\cite[Theorem~5]{kristensen2010characterizatio}) is that
\[
	\mu_j \to \mu \; \text{area strictly in $U$} \quad \Longleftrightarrow\quad  \bm\delta_{\mu_j} \toweakstar \bm\delta_{\mu} \; \text{in $\Ebf(U;W)^*$}.
\]

\end{remark}

Since tangent Young measures are only locally bounded,  it will also be convenient to introduce a concept of locally bounded $\Acal$-free Young measure:

\begin{definition}
	A Young measure $\bm \nu \in \Y_\loc(U;W)$ is called a locally bounded generalized $\Acal$-free  Young measure if there exists a sequence $\{\mu_j\} \subset \Mcal(U;W)$ such that
	\[
		\Acal \mu_j \to 0 \; \text{in $\Wrm^{-k,q}_\loc(U)$, \quad for some $1 < q < \frac{d}{d-1}$,}
	\]
	and
	\[
		\mu_j \toY \bm \nu \; \text{on $U'$}
	\]
	for all Lipschitz open sets $U' \Subset U$ with $\lambda(\partial U')= 0$.
\end{definition}

The proof of the following result follows the same principles used in the proof of~\cite[Lem.~2.15]{arroyo-rabasa2017lower-semiconti} with $\Acal \equiv 0$.
\begin{proposition}\label{prop:Lpclose}
	Let $\bm \nu \in \Y(U;W)$ be a Young measure generated by a sequence of the form $\{u_j \, \Leb^d\}$. If there exists another sequence $\{v_j\} \subset \Lrm^1(U;W)$ that satisfies 
	\[
		\lim_{j \to \infty} \|u_j - v_j\|_{\Lrm^1(U)} = 0,
	\]
	then
	\[
		v_j \, \Leb^d \toY \bm \nu \quad \text{on $U$}. 
	\]
\end{proposition}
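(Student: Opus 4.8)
The plan is to reduce the statement to a one-line Lipschitz estimate by combining the Separation Lemma~\ref{lem:separation} with the weak-$*$ compactness of generalized Young measures (Lemma~\ref{lem:YM_compact}). Throughout, recall that for an $\Lrm^1$-density $w\in\Lrm^1(U;W)$ the associated elementary Young measure $\bm\delta_{w\Leb^d}=(\delta_w,0,\frarg)$ has vanishing concentration measure and $\ddprb{f,\bm\delta_{w\Leb^d}}=\int_U f(x,w(x))\dd x$ for every $f\in\Ebf(U;W)$; in particular generation of $\bm\nu$ by $\{u_j\Leb^d\}$ means exactly $\bm\delta_{u_j\Leb^d}\toweakstar\bm\nu$ in $\Ebf(U;W)^*$.

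\textbf{Step 1 (precompactness of $\{\bm\delta_{v_j\Leb^d}\}$).} First I would test $u_j\Leb^d\toY\bm\nu$ against the admissible integrand $|\frarg|\in\Ebf(U;W)$ to get $\|u_j\|_{\Lrm^1(U)}\to\ddprb{|\frarg|,\bm\nu}<\infty$, hence $\sup_j\|u_j\|_{\Lrm^1(U)}<\infty$; together with $\|u_j-v_j\|_{\Lrm^1(U)}\to0$ this yields $\sup_j\|v_j\|_{\Lrm^1(U)}<\infty$. Since the concentration measures of the $\bm\delta_{v_j\Leb^d}$ vanish identically, hypotheses (i)--(ii) of Lemma~\ref{lem:YM_compact} are satisfied, so every subsequence of $\{\bm\delta_{v_j\Leb^d}\}$ admits a further subsequence converging weak-$*$ in $\Ebf(U;W)^*$ to some $\bm\mu\in\Y(U;W)$; note also that this sequence lies in a bounded set of $\Ebf(U;W)^*$, which is weak-$*$ metrizable by the remark following Lemma~\ref{lem:separation}.

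\textbf{Step 2 (identification on a separating family).} Let $\{\varphi_p\}\subset\Crm_c(\cl U)$ and the \emph{Lipschitz} integrands $\{h_p\}\subset\Ebf(W)$ be the countable families provided by Lemma~\ref{lem:separation}. For each $p$, since both concentration parts vanish, $\ddprb{\varphi_p\otimes h_p,\bm\delta_{v_j\Leb^d}}-\ddprb{\varphi_p\otimes h_p,\bm\delta_{u_j\Leb^d}}=\int_U\varphi_p(x)\bigl(h_p(v_j(x))-h_p(u_j(x))\bigr)\dd x$, and using that $h_p$ is globally Lipschitz and $\varphi_p$ bounded,
\[
\Bigl|\int_U\varphi_p(x)\bigl(h_p(v_j(x))-h_p(u_j(x))\bigr)\dd x\Bigr|\le\|\varphi_p\|_\infty\,\mathrm{Lip}(h_p)\,\|v_j-u_j\|_{\Lrm^1(U)}\,\longrightarrow\,0.
\]
On the other hand $\varphi_p\otimes h_p\in\Ebf(U;W)$ and $u_j\Leb^d\toY\bm\nu$ give $\ddprb{\varphi_p\otimes h_p,\bm\delta_{u_j\Leb^d}}\to\ddprb{\varphi_p\otimes h_p,\bm\nu}$; combining, $\ddprb{\varphi_p\otimes h_p,\bm\delta_{v_j\Leb^d}}\to\ddprb{\varphi_p\otimes h_p,\bm\nu}$ for every $p$.

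\textbf{Step 3 (conclusion).} Any weak-$*$ subsequential limit $\bm\mu$ from Step 1 then satisfies $\ddprb{\varphi_p\otimes h_p,\bm\mu}=\ddprb{\varphi_p\otimes h_p,\bm\nu}$ for all $p$, hence $\bm\mu=\bm\nu$ by Lemma~\ref{lem:separation}. Thus every subsequence of the precompact, weak-$*$ metrizable sequence $\{\bm\delta_{v_j\Leb^d}\}$ has a further subsequence converging to the \emph{same} limit $\bm\nu$, so the whole sequence converges: $\bm\delta_{v_j\Leb^d}\toweakstar\bm\nu$ in $\Ebf(U;W)^*$, i.e. $v_j\Leb^d\toY\bm\nu$ on $U$. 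I expect the only genuine obstacle to be conceptual: a general $f\in\Ebf(U;W)$ is merely continuous with linear growth, so $f(x,v_j)-f(x,u_j)$ cannot be controlled directly by $\|v_j-u_j\|_{\Lrm^1(U)}$; the Separation Lemma circumvents this by letting us test only against the Lipschitz family $\{\varphi_p\otimes h_p\}$, at the mild price of needing the Young-measure compactness of Step 1 to upgrade convergence on a separating family to genuine weak-$*$ convergence.
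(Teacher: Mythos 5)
Your proof is correct, and it fills in what the paper only gestures at (the paper defers to \cite[Lem.~2.15]{arroyo-rabasa2017lower-semiconti} with $\Acal\equiv 0$ rather than writing an argument). The structure you use --- uniform $\Lrm^1$-boundedness plus Lemma~\ref{lem:YM_compact} for precompactness, the Lipschitz estimate to get convergence on the separating family $\{\varphi_p\otimes h_p\}$ of Lemma~\ref{lem:separation}, and then the ``every subsequence has a further subsequence converging to $\bm\nu$'' argument on a metrizable bounded set of $\Ebf(U;W)^*$ --- is precisely the standard route for this kind of $\Lrm^1$-stability statement, and there is every reason to believe it is the same route the cited reference takes. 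You also correctly identify the pivot of the whole argument: a general $f\in\Ebf(U;W)$ only has linear growth and is not Lipschitz, so $\int_U|f(x,v_j)-f(x,u_j)|\dd x$ cannot be bounded directly by $\|u_j-v_j\|_{\Lrm^1}$, and it is exactly the fact that the separating family of Lemma~\ref{lem:separation} consists of \emph{Lipschitz} integrands $h_p$ that makes the one-line estimate in Step~2 work. One small point worth being explicit about when writing this up: the boundedness of $\{\bm\delta_{v_j\Leb^d}\}$ in $\Ebf(U;W)^*$, which you use to invoke weak-$*$ metrizability, requires $\Leb^d(U)<\infty$ (the norm is controlled by $\Leb^d(U)+\sup_j\|v_j\|_{\Lrm^1}$); this is harmless in the paper's applications, where the proposition is always applied on bounded domains, but is an implicit hypothesis of the statement as written.
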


The following notion of translation or shift of a Young measure will be used to deal with the fact that $W$ might be in fact larger than $W_\Acal$ in the proof of Theorem~\ref{thm:char}. 
	\begin{definition}[$\Lrm^1$-shifts]\label{def:shift} 
			We define the $v$-shift of a generalized Young measure $\bm \nu = (\nu,\lambda,\nu^\infty) \in \Ybf_\loc(U;W)$, with respect to $v \in \Lrm^1_\loc(U;W)$, as
		\[
		\Gamma_v[\bm \nu] \coloneqq (\nu \star \delta_{-v},\lambda,\nu^\infty) \in \Y_\loc(U,W).
		\]	
		Notice that if $f \in \E(U;W)$, then
	\[
	\ddprB{f,\Gamma_v[\bm \nu]} = \int_{\cl U} \dprb{f,\nu}_{x + v(x)} \dd \Leb^d(x) \, + \, \int_{\cl U} \dprb{f^\infty,\nu^\infty}_{x} \dd \lambda(x).
	\]
	For a subset $\Xcal \subset \Lrm^1_\loc(U,W)$ we write 
	\[
	\mathrm{Shift}_\Xcal[\bm \nu] \coloneqq  \setB{\Gamma_v[\bm\nu]}{v \in \Xcal}.
	\]
\end{definition}

\subsubsection{Tangent Young measures}\label{sec:tan}

	Similarly to the case of measures, we can define the push-forward of Young measures. If $T: U \to U'$ is Borel, the push-forward of $\bm \nu = (\nu,\lambda,\nu^\infty) \in \Y(U;W)$ under $T$ is the Young measure acting on $f \in \E(U',W)$ as 
	\begin{align*}
		\ddprB{f,T[\bm \nu]} & = \ddprB{f \circ (T,\id_{W}), \bm \nu} \\
							 & = \int_{U'} \dprb{f,\nu} \,\dd T[\mathrm d \Leb^d] \;  + \; \int_{\cl U'} \dprb{f^\infty,\nu^\infty}  \dd T[\lambda].
	\end{align*}
	Suppose that $x \in \Omega$. A non-zero Young measure $\bm \sigma \in \Y_\loc(\R^d;W)$ is said to be a tangent Young measure of $\bm \nu$ at $x$ if there exist sequences $r_m \searrow 0$ and $c_m > 0$ such that
	\begin{equation}\label{eq:tyoung}
		c_m \cdot \Trm_{x,r_m}[\bm \nu] \toweakstar \bm \sigma \quad \text{in $\E(U;W)^*$}
	\end{equation}
 for all $U \Subset \R^d$ with $(\lambda + \Leb^d)(\partial U) = 0$.
	The set of tangent Young measures of $\bm \nu$ at $x \in \Omega$ will be denoted as $\Tan(\bm \nu,x)$. 
	Since Young measures can be seen, via disintegration, as Radon measures over $\cl U \times W$, the property of tangent measures contained in Theorem~\ref{thm:preiss} lifts to a similar principle for tangent Young measures: 
 	\begin{proposition}\label{prop: localization regular} If $\bm \nu = (\nu,\lambda,\nu^\infty)\in \Y(\Omega;\R^d)$ is a Young measure, then  
 	\[
 		\text{$\Tan(\bm \nu,x) \neq \varnothing$ for $(\Leb^d + \lambda^s)$-almost every $x \in \Omega$.}
 	\]
 	\end{proposition}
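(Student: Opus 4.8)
The plan is to realize $\bm\nu$ as a genuine Radon measure, transplant \person{Preiss}'s existence theorem for tangent measures (Theorem~\ref{thm:preiss}) to it, and treat the $\Leb^d$-regular points of $\Omega$ and the points carrying $\lambda^s$ by two different blow-up normalizations. First I would encode $\bm\nu=(\nu,\lambda,\nu^\infty)$ as the finite positive Radon measure
\[
\Theta_{\bm\nu}\;\coloneqq\;\int_{\Omega}\beta\bigl[(1+|z|)\,\nu_x\bigr]\dd\Leb^d(x)\;+\;\int_{\cl\Omega}\nu^\infty_x\dd\lambda(x)
\]
on the \emph{compact} space $\cl\Omega\times\cl{B_W}$, where $\beta(z)=z/(1+|z|)$ maps $W$ onto $B_W$ and $\nu^\infty_x$ is read as a probability measure on $S_W=\partial B_W$; using~\eqref{eq:mamo} one checks $\ddpr{f,\bm\nu}=\int S f\dd\Theta_{\bm\nu}$ for all $f\in\E(\Omega;W)$, and that the $x$-marginal $\pi[\Theta_{\bm\nu}]$ is a finite measure whose absolutely continuous density equals $1+\dprb{|\frarg|,\nu_x}+\ac\lambda(x)\ge 1$, so $\pi[\Theta_{\bm\nu}]$ and $\Leb^d+\lambda^s$ have the same null sets. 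Conversely, a locally finite positive measure on $\R^d\times\cl{B_W}$ \emph{decodes} into an element of $\Y_\loc(\R^d;W)$ once an appropriate marginal identity holds, and this is the criterion I would use to recognise tangent objects. Blowing $\bm\nu$ up at $x_0$ in the sense of~\eqref{eq:tyoung} amounts to pushing $\Theta_{\bm\nu}$ forward under $\Trm_{x_0,r}$ in the $x$-variable and rescaling; since $\Trm_{x_0,r}[\Leb^d]=r^d\Leb^d$ while $\lambda$ carries no Jacobian, the oscillation and concentration parts scale differently, which is exactly the mismatch the free scalar $c_m$ in~\eqref{eq:tyoung} absorbs and the reason two regimes must be distinguished.

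\emph{Regular points.} At $\Leb^d$-a.e.\ $x_0\in\Omega$ I would take \emph{any} $r_m\downarrow 0$ with $c_m=r_m^{-d}$. Then $c_m\,\Trm_{x_0,r_m}[\bm\nu]$ is, for each $m$, a genuine Young measure on $\Omega_{r_m}\coloneqq(\Omega-x_0)/r_m$ (its oscillation reference is exactly $\Leb^d$); its concentration reference $r_m^{-d}\Trm_{x_0,r_m}[\lambda]$ converges weak-$*$ to $\ac\lambda(x_0)\Leb^d$ by Besicovitch differentiation ($\lambda^s$ has vanishing upper $\Leb^d$-density and $\ac\lambda\,\Leb^d$ converges at its Lebesgue points); and $x\mapsto\dprb{|\frarg|,\nu_x}$ lies in $\Lrm^1$, so its translate--dilates converge strongly in $\Lrm^1_\loc$ by~\eqref{eq:l1t}, keeping the oscillation mass locally bounded. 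Hence the sequence is locally bounded in $\E^*$, and by the compactness Lemma~\ref{lem:YM_compact} together with a diagonal exhaustion a subsequence converges in $\E^*$ to some $\bm\sigma\in\Y_\loc(\R^d;W)$; the limit is non-zero since its oscillation reference is $\Leb^d$, and $\bm\sigma\in\Tan(\bm\nu,x_0)$ by construction. No appeal to Theorem~\ref{thm:preiss} is needed here — ordinary differentiation suffices.

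\emph{Concentration points, and the main obstacle.} This is the step I expect to be the crux, because at a point carrying $\lambda^s$ ordinary differentiation is empty and the two reference measures can no longer be rescaled in step. Here I would invoke Theorem~\ref{thm:preiss} for $\lambda^s$: for $\lambda^s$-a.e.\ $x_0$ there are radii $r_m\downarrow 0$ and constants $c_m=\lambda^s(Q_{r_m}(x_0))^{-1}$ with $c_m\,\Trm_{x_0,r_m}[\lambda^s]\toweakstar\tau$ in $\M^+(\R^d)$ and $|\tau|(Q)=|\tau|(\cl Q)=1$, as in~\eqref{eq:goodbu}; since $\ac\lambda\,\Leb^d\perp\lambda^s$, Besicovitch differentiation also gives $c_m\,\Trm_{x_0,r_m}[\lambda]\toweakstar\tau\neq 0$. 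One then pushes $\Theta_{\bm\nu}$ forward under $\Trm_{x_0,r_m}$ with this $c_m$ and extracts, via Lemma~\ref{lem:YM_compact} and the metrizability of bounded sets of $\E^*$, a weak-$*$ convergent subsequence. The genuinely delicate point — where the whole difficulty of the proposition concentrates — is the verification that the resulting $\E^*$-limit is again \emph{represented by a (locally bounded) Young measure}: one must track what the oscillation part of $\Theta_{\bm\nu}$ does under this non-Lebesgue normalization and check that the limiting oscillation datum is admissible, which is precisely where the structure of tangent measures of $\Leb^d$ and of $\Lrm^1$-densities (namely~\eqref{eq:ae tangent} and~\eqref{eq:l1t}) and the mutual singularity of $\ac\lambda\,\Leb^d$ and $\lambda^s$ enter. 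Granting this, the limit $\bm\sigma\in\Y_\loc(\R^d;W)$ is non-zero because $\tau\neq 0$ and lies in $\Tan(\bm\nu,x_0)$.

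\emph{Conclusion.} Writing $G_1\subset\Omega$ for the $\Leb^d$-conull set of points handled in the regular case and $G_2\subset\Omega$ for the $\lambda^s$-conull set handled in the concentration case, a tangent Young measure exists at every point of $G_1\cup G_2$, and
\[
(\Leb^d+\lambda^s)\bigl(\Omega\setminus(G_1\cup G_2)\bigr)\;\le\;\Leb^d(\Omega\setminus G_1)+\lambda^s(\Omega\setminus G_2)\;=\;0,
\]
which is the claim.
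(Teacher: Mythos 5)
Your encoding of $\bm\nu$ as a Radon measure $\Theta_{\bm\nu}$ on the compact product $\cl\Omega\times\cl{B_W}$ followed by an appeal to Preiss's theorem is essentially the route the paper silently relies on (the text says only that existence of tangent Young measures ``lifts'' from Theorem~\ref{thm:preiss} ``via disintegration''), and your observation that Lebesgue differentiation already suffices at regular points is a correct and agreeable refinement. The marginal computation $\pi[\Theta_{\bm\nu}] = (1+\dprb{|\frarg|,\nu_x}+\ac\lambda)\Leb^d + \lambda^s$, hence mutual absolute continuity with $\Leb^d+\lambda^s$, is also correct.

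However, there is a concrete gap at singular points, and it is larger than you seem to realize. You write that you extract a convergent subsequence ``via Lemma~\ref{lem:YM_compact} and the metrizability of bounded sets of $\E^*$,'' but Lemma~\ref{lem:YM_compact} requires a sequence in $\Y(U;W)$, and with the singular normalization $c_m=\lambda^s(Q_{r_m}(x_0))^{-1}$ the objects $c_m\,\Trm_{x_0,r_m}[\bm\nu]$ are \emph{not} Young measures: their oscillation part sits over the reference measure $c_m r_m^d\,\Leb^d$, and since the upper $\Leb^d$-density of $\lambda^s$ is $+\infty$ at $\lambda^s$-a.e.\ point one has $c_m r_m^d\to 0$, so the normalizing convention $\lambda_{\mathrm{osc}}=\Leb^d$ fails along the sequence. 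Metrizability of $\E^*$-bounded sets and Banach--Alaoglu then only give an $\E^*$-limit, and the crux you flag --- showing that this limit decodes to an element of $\Y_\loc(\R^d;W)$, i.e.\ that in the limit the residual oscillation collapses to $\delta_0$ as in Proposition~\ref{prop: localization} rather than to a singularly-distributed defect over $B_W$ --- is exactly what is not supplied. You correctly identify this as the load-bearing step, but flagging it is not proving it; this is precisely what the cited works~\cite{rindler2011lower-semiconti,rindler2012lower-semiconti} and the Appendix of~\cite{arroyo-rabasa2017lower-semiconti} establish by combining~\eqref{eq:chinga}-type $\Lrm^1_\loc$ convergence of the rescaled absolutely continuous part with the mutual singularity $\ac\lambda\,\Leb^d\perp\lambda^s$. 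To close the gap you would need to show that, along the extracted subsequence, the $B_W$-part of $c_m\,\Trm_{x_0,r_m}[\Theta_{\bm\nu}]$ converges weak-$*$ to $\beta[\delta_0]\otimes\Leb^d$, which forces $\sigma_y=\delta_0$ in the limit, and that the residual $S_W$-part has marginal equal to the nonzero tangent measure $\tau\in\Tan(\lambda^s,x_0)$; only then does $\bm\sigma$ sit in $\Y_\loc(\R^d;W)$ and is nontrivial.
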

 	Young measures also enjoy a Lebesgue-point property in the sense that a tangent Young measure $\bm \sigma \in \Tan(\bm \nu,x)$ truly represents the values of $\bm \nu$ around $x$. More precisely, we have the following localization principle for $(\Leb^d + \lambda^s)$-almost every $x_0 \in \Omega$: every tangent measure $\bm \sigma \in \Tan(\bm \nu,x_0)$ is a  homogeneous Young measure of the form
 	\begin{equation}\label{eq:tangenty}
 	\bm \sigma = (\nu_{x_0},\tau,\nu_{x_0}^\infty), \qquad \text{where} \; \tau \in \Tan(\lambda,x).
 	\end{equation}
 We state two general localization principles for Young measures, one at {\it regular} points and another one at {\it singular} points. These are well-established results, for a proof we refer the reader to~\cite{rindler2011lower-semiconti,rindler2012lower-semiconti}; see also the Appendix in~\cite{arroyo-rabasa2017lower-semiconti}. 
 
 \begin{proposition}\label{prop: localization regular}
 	Let $\bm \nu = (\nu,\lambda,\nu^\infty) \in \Y(U;W)$ be a generalized Young measure. Then for $\Leb^d$-a.e.\ $x_0 \in U$ there exists a {regular tangent Young measure} $\bm \sigma = (\sigma,\gamma,\sigma^\infty) \in  \Tan(\bm \nu,x_0)$, that is, 
 	\begin{align*}
 		[\sigma] \in \Tan([\nu],x_0), \quad &  \quad \sigma_y = \nu_{x_0} \; \text{a.e.}, \\
 		\lambda_\sigma = \frac{\di \lambda_\nu}{\di \Lcal^d}(x_0) \, \Leb^d\in \Tan(\lambda_\nu,x_0), \quad & \quad \sigma_y^\infty = \nu^\infty_{x_0} \; \text{$\lambda_\sigma$-a.e.}
 	\end{align*}
 \end{proposition}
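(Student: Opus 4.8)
The plan is to run the classical blow-up (localization-at-regular-points) scheme for generalized Young measures, as in~\cite{rindler2011lower-semiconti,rindler2012lower-semiconti} and the Appendix of~\cite{arroyo-rabasa2017lower-semiconti}, and to verify that the natural Euclidean rescaling $c_m = r_m^{-d}$ produces a tangent Young measure of the stated form at $\Leb^d$-almost every point. First I would fix a countable separating family $\{\varphi_p \otimes h_p\} \subset \Crm_c(\cl U) \times \E(W)$ as in Lemma~\ref{lem:separation}, with each $h_p$ Lipschitz so that its strong recession $h_p^\infty$ exists and is bounded on $S_W$. Then $x \mapsto \dprb{h_p,\nu_x}$ and $x \mapsto \ac\lambda(x)\dprb{h_p^\infty,\nu^\infty_x}$ lie in $\Lrm^1_\loc(U)$ (using $|h_p(z)| \lesssim 1+|z|$, $\dprb{|\frarg|,\nu_\bullet}\in\Lrm^1_\loc$, and $\ac\lambda\in\Lrm^1_\loc$). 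I would let $G\subset U$ be the full-$\Leb^d$-measure set of points $x_0$ that are simultaneously: Lebesgue points of all these countably many functions; Lebesgue points of $\ac\lambda$ with $r^{-d}\lambda^s(Q_r(x_0))\to 0$ as $r\downarrow 0$ (valid $\Leb^d$-a.e.\ since $\lambda^s\perp\Leb^d$); and points where~\eqref{eq:ae tangent} holds for both the barycenter $[\bm\nu]$ and $\lambda$.

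Next, for $x_0\in G$, an arbitrary sequence $r_m\downarrow 0$, and $c_m:=r_m^{-d}$, I would set $\bm\nu_m := c_m\,\Trm_{x_0,r_m}[\bm\nu] \in \Y_\loc(r_m^{-1}(U-x_0);W)$ and let $\bm\sigma := (\sigma,\gamma,\sigma^\infty)$ be the homogeneous candidate with $\sigma_y\equiv\nu_{x_0}$, $\gamma:=\ac\lambda(x_0)\,\Leb^d$, $\sigma^\infty_y\equiv\nu^\infty_{x_0}$. A change of variables and the defining properties of $G$ show that, restricted to any fixed bounded $V\Subset\R^d$, the sequence $\{\bm\nu_m\}$ satisfies the hypotheses of the compactness Lemma~\ref{lem:YM_compact}, so after a diagonal argument over an exhaustion $V_k\nearrow\R^d$ every subsequence of $\{\bm\nu_m\}$ has a further subsequence converging weak-$*$ to some $\bm\tau\in\Y_\loc(\R^d;W)$. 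To identify $\bm\tau$ I would test against $\varphi\otimes h$ with $h$ Lipschitz: the push-forward formula and the substitution $z=x_0+r_my$ give
\[
\ddprB{\varphi\otimes h,\bm\nu_m} = \int\varphi(y)\dprb{h,\nu_{x_0+r_m y}}\dd y + r_m^{-d}\!\int_{\cl U}\varphi\!\Big(\tfrac{z-x_0}{r_m}\Big)\dprb{h^\infty,\nu^\infty_z}\dd\lambda(z),
\]
where the first term tends to $\dprb{h,\nu_{x_0}}\int\varphi$ by the Lebesgue-point property, and in the second term the $\lambda^s$-contribution vanishes (it is controlled by $r_m^{-d}\lambda^s(Q_{Cr_m}(x_0))$) while the $\ac\lambda$-contribution tends to $\ac\lambda(x_0)\dprb{h^\infty,\nu^\infty_{x_0}}\int\varphi$. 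Hence $\ddprB{\varphi\otimes h,\bm\nu_m}\to\ddprB{\varphi\otimes h,\bm\sigma}$; applying this to the separating family and invoking Lemma~\ref{lem:separation}, every subsequential limit equals $\bm\sigma$, so the whole sequence converges, $\bm\nu_m\toweakstar\bm\sigma$, and $\bm\sigma\in\Tan(\bm\nu,x_0)$; since $\sigma_y=\nu_{x_0}$ is a probability measure, $\bm\sigma\neq 0$, and it is homogeneous of the form~\eqref{eq:tangenty}.

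Finally I would read off the stated identities. The equalities $\sigma_y=\nu_{x_0}$ and $\sigma^\infty_y=\nu^\infty_{x_0}$ ($\gamma$-a.e.) hold by construction; $\lambda_\sigma=\gamma=\ac\lambda(x_0)\Leb^d$ is the weak-$*$ limit $c_m\Trm_{x_0,r_m}[\lambda]\toweakstar\ac\lambda(x_0)\Leb^d$, hence lies in $\Tan(\lambda,x_0)$ by~\eqref{eq:ae tangent}; and since $\id_W\in\E(W)$ with $\id_W^\infty=\id_W$, the barycenter map is weak-$*$ continuous, so $[\bm\sigma]=\lim_m c_m\Trm_{x_0,r_m}[[\bm\nu]]=\ac{[\bm\nu]}(x_0)\Leb^d\in\Tan([\bm\nu],x_0)$, again by~\eqref{eq:ae tangent}. (At the $\Leb^d$-negligible set of points where these absolutely continuous densities vanish one simply discards them; $\bm\sigma$ remains a genuine nonzero tangent Young measure in any case.) I expect the only real subtlety to be the bookkeeping between the oscillation and the concentration parts under the Euclidean scaling $c_m=r_m^{-d}$: one must check that this scaling makes the singular concentration $\lambda^s$ invisible in the limit while the absolutely continuous part $\ac\lambda$ survives as a constant Lebesgue density, and, correspondingly, that the direction fibers $\nu^\infty_\bullet$ must be Lebesgue-differentiated against $\ac\lambda\,\Leb^d$ rather than against $\lambda$ itself. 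A secondary point requiring care is upgrading the convergence on the countable separating family to genuine weak-$*$ convergence of Young measures, which is why the a priori compactness from Lemma~\ref{lem:YM_compact} plus the diagonal exhaustion of $\R^d$ is inserted before the identification step; everything else is Lebesgue/Besicovitch differentiation.
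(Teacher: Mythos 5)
Your proof is correct and follows exactly the standard blow-up/Lebesgue-point argument that the paper itself does not spell out but attributes to \cite{rindler2011lower-semiconti,rindler2012lower-semiconti} and to the Appendix of \cite{arroyo-rabasa2017lower-semiconti}: the Euclidean scaling $c_m=r_m^{-d}$ kills $\lambda^s$ and freezes both $\nu_{\bullet}$, $\nu^\infty_\bullet$, and $\ac\lambda$ at their Lebesgue values, the compactness Lemma~\ref{lem:YM_compact} plus an exhaustion provides subsequential weak-$*$ limits in $\Ybf_\loc$, and Lemma~\ref{lem:separation} identifies every such limit with the homogeneous candidate. The one place you should be careful with wording is the parenthetical at the end: the set where $\ac{[\bm\nu]}(x_0)=0$ (or $\ac\lambda(x_0)=0$) need not be $\Leb^d$-negligible, so you cannot literally discard it; what saves the day is that $\bm\sigma$ is always a nonzero element of $\Ybf_\loc$ because $\sigma_y=\nu_{x_0}$ is a probability measure, and the inclusions $[\bm\sigma]\in\Tan([\bm\nu],x_0)$, $\lambda_\sigma\in\Tan(\lambda,x_0)$ are to be read with the usual convention that the zero measure is admitted as a degenerate tangent (this is exactly how the statement is intended in the references the paper cites).
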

\begin{proposition}\label{prop: localization}
	Let $\bm \nu = (\nu,\lambda,\nu^\infty) \in \Y(U;W)$ be a generalized Young measure. Then there exists a set 
	$S \subset U$ with $\lambda^s(U \setminus S) = 0$ such that for all $x_0 \in S$
	there exists a  singular tangent   Young measure $\bm \sigma = (\sigma,\gamma,\sigma^\infty) \in \Tan(\bm \nu,x_0)$, that is,
	\begin{align*}
		[\bm \sigma] \in 
		\Tan([\nu],x_0), \quad &  \quad \sigma_y = \delta_0 \; \text{a.e.}, \\
		\gamma \in \Tan(\lambda_\nu^s,x_0),\qquad \gamma(Q) = 1, \quad &
		\gamma(\partial Q) = 0, \quad \sigma_y^\infty = \nu^\infty_{x_0} \; \text{$\gamma$-a.e.}
	\end{align*}
\end{proposition}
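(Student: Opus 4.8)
The plan is to realize $\bm\sigma$ as a rescaled blow-up of $\bm\nu$ in the $x$-variable only, keeping the target variable fixed, in the spirit of the tangent-measure constructions recalled above; alternatively one may blow up (again only in $x$) the positive measure obtained by lifting the functional $f\mapsto\ddprb{f,\bm\nu}$ along the isometry $S$ to the compactified target. Fix once and for all a countable family $\{g_p\}_{p\in\N}\subset\Crm(S_W)$ that is dense in $\Crm(S_W)$ and contains the restrictions to $S_W$ of the coordinate functions of $W$. First I would isolate the set of admissible base points: since $\lambda^s\perp\Leb^d$, the Besicovitch differentiation theorem applied to the absolutely continuous measures $\Leb^d$, $\ac\lambda\,\Leb^d$ and $\dprb{|\sbullet|,\nu_{\sbullet}}\,\Leb^d$ gives, for $\lambda^s$-a.e.\ $x_0$,
\[
\lim_{r\downarrow0}\frac{\Leb^d(Q_r(x_0))+\int_{Q_r(x_0)}\big(\ac\lambda(x)+\dprb{|\sbullet|,\nu_x}\big)\dd x}{\lambda^s(Q_r(x_0))}=0;
\]
applying the same theorem to the $\lambda^s$-integrable functions $x\mapsto\dprb{g_p,\nu^\infty_x}$, we may also require that $x_0$ is a $\lambda^s$-Lebesgue point of each of them, and hence also of the polar density $x\mapsto\dprb{\id,\nu^\infty_x}=\dd[\bm\nu]^s/\dd\lambda^s$; and by Theorem~\ref{thm:preiss} together with the good blow-up property~\eqref{eq:goodbu} we may assume there are radii $r_m\downarrow0$ and constants $c_m>0$ with $c_m\,\Trm_{x_0,r_m}[\lambda^s]\toweakstar\gamma$ in $\Mcal(\R^d)$ for some $\gamma\in\Tan(\lambda^s,x_0)$ satisfying $\gamma(Q)=\gamma(\cl Q)=1$. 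Let $S$ be the resulting set; then $\lambda^s(U\setminus S)=0$.

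Now fix $x_0\in S$ and put $\bm\nu_m\coloneqq c_m\,\Trm_{x_0,r_m}[\bm\nu]$. Writing $\lambda=\ac\lambda\,\Leb^d+\lambda^s$ and changing variables, for every $f\in\E(V;W)$ with $V\Subset\R^d$ the oscillation contribution to $\ddprb{f,\bm\nu_m}$ and the contribution carried by $\ac\lambda\,\Leb^d$ are bounded by a constant depending on $f$ times
\[
c_m r_m^d+c_m\int_{Q_{Cr_m}(x_0)}\big(\ac\lambda(x)+\dprb{|\sbullet|,\nu_x}\big)\dd x,
\]
which tends to $0$ by the displayed limit above, using that $\lambda^s(Q_{Cr_m}(x_0))/\lambda^s(Q_{r_m}(x_0))$ stays bounded along the chosen sequence because $\gamma$ is locally finite. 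Consequently $\{\bm\nu_m\}$ satisfies the hypotheses of the compactness Lemma~\ref{lem:YM_compact} on every open Lipschitz $V\Subset\R^d$ with $\gamma(\partial V)=0$; exhausting $\R^d$ by such sets and passing to a diagonal subsequence yields $\bm\nu_m\toweakstar\bm\sigma$, so $\bm\sigma\in\Tan(\bm\nu,x_0)$. By the localization structure~\eqref{eq:tangenty} we have $\bm\sigma=(\sigma,\gamma,\sigma^\infty)$ with the measure $\gamma$ fixed above; the vanishing of the two contributions just estimated forces the oscillation profile to be trivial, $\sigma_y=\delta_0$ for $\Leb^d$-a.e.\ $y$; and testing against integrands $f\in\E(V;W)$ whose strong recession function is $(y,\omega)\mapsto\phi(y)g_p(\omega)$ on $V\times S_W$, while using the $\lambda^s$-Lebesgue point property of $x\mapsto\dprb{g_p,\nu^\infty_x}$ and $c_m\,\Trm_{x_0,r_m}[\lambda^s]\toweakstar\gamma$, gives $\dprb{g_p,\sigma^\infty_y}=\dprb{g_p,\nu^\infty_{x_0}}$ for $\gamma$-a.e.\ $y$ and every $p$; by density of $\{g_p\}$, $\sigma^\infty_y=\nu^\infty_{x_0}$ for $\gamma$-a.e.\ $y$.

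It remains to verify the barycenter relation. Since $[\bm\nu]^s=\dprb{\id,\nu^\infty_{\sbullet}}\,\lambda^s$ is singular with respect to $\Leb^d$ and absolutely continuous with respect to $\lambda^s$ with Radon--Nikod\'ym density $x\mapsto\dprb{\id,\nu^\infty_x}$, the identity~\eqref{eq: tangent absolute} together with the usual blow-up analysis of the Radon--Nikod\'ym--Lebesgue decomposition gives, at $\lambda^s$-a.e.\ $x_0$, $\Tan([\bm\nu],x_0)=\setb{\dprb{\id,\nu^\infty_{x_0}}\,\tau}{\tau\in\Tan(\lambda^s,x_0)}$; on the other hand $[\bm\sigma]=\dprb{\id,\sigma}\,\Leb^d+\dprb{\id,\sigma^\infty}\,\gamma=\dprb{\id,\nu^\infty_{x_0}}\,\gamma$ by the previous paragraph, and this lies in the set just described. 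I expect the main obstacle to be the simultaneous, coherent choice of the normalizing constants $c_m$: they must make the concentration part converge to a nontrivial tangent measure $\gamma$ of $\lambda^s$ with $\gamma(Q)=\gamma(\cl Q)=1$ while still guaranteeing that the limit $\bm\sigma$ be a \emph{genuine} locally bounded Young measure with trivial oscillation part, and they must remain compatible with the weak-$*$ Besicovitch identification $\sigma^\infty_y\equiv\nu^\infty_{x_0}$; the remaining arguments are the by now standard bookkeeping of tangent-measure theory and the application of Lemma~\ref{lem:YM_compact} on the noncompact domain $\R^d$ through an exhaustion.
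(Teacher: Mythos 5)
The paper does not give a proof of this proposition: it is stated as a well-established fact, with references to \textsc{Rindler}~\cite{rindler2011lower-semiconti,rindler2012lower-semiconti} and the appendix of~\cite{arroyo-rabasa2017lower-semiconti}. So there is no internal proof to compare against; your blind reconstruction reproduces the standard blow-up argument from those sources, and most of its ingredients (selection of $\lambda^s$-Lebesgue points, Besicovitch differentiation against $\Leb^d+\ac\lambda\Leb^d$, the good singular blow-up~\eqref{eq:goodbu} with $c_m=\lambda^s(Q_{r_m}(x_0))^{-1}$, and the identification $\sigma_y^\infty\equiv\nu_{x_0}^\infty$ via the countable dense family $\{g_p\}$) are correct and are indeed what a careful proof would do.

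There is one point you explicitly flag as a worry and which is genuinely the crux: you invoke Lemma~\ref{lem:YM_compact} for the sequence $\bm\nu_m\coloneqq c_m\Trm_{x_0,r_m}[\bm\nu]$, but with the singular normalization the rescaled oscillation part carries ``Lebesgue weight'' $c_m r_m^d\to 0$, so $\bm\nu_m$ is \emph{not} a generalized Young measure in the sense of the paper's Definition~4.1 (its oscillation projection is $c_m r_m^d\Leb^d$, not $\Leb^d$), and Lemma~\ref{lem:YM_compact} is stated for sequences \emph{in} $\Y(U;W)$. Likewise, the conclusion ``vanishing of the oscillation contribution forces $\sigma_y=\delta_0$'' is not literally a consequence: testing against compactly $z$-supported integrands only gives $\dprb{h,\sigma_y}=0$ for $h\in\Crm_c(W)$, which would force $\sigma_y=0$, not $\delta_0$. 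What actually happens in the careful proofs is that one works directly with the lifted measures on $\cl U\times\cl{B_W}$, uses weak-$*$ compactness there, observes that in the limit the mass on $\R^d\times B_W$ disappears (mass has escaped to the sphere at infinity), and then \emph{re-installs} the trivial oscillation $\delta_0$ by declaration; the resulting triple $(\delta_0,\gamma,\nu^\infty_{x_0})$ is a genuine element of $\Y_\loc(\R^d;W)$ and, since $c_m r_m^d\to0$ and $f(\cdot,0)$ integrates to an error of order $c_m r_m^d$, one checks that its pairing with any $f\in\E(V;W)$ is the limit of $\ddprb{f,\bm\nu_m}$ up to this vanishing defect. You should make this step explicit; as written, the direct invocation of Lemma~\ref{lem:YM_compact} to $\bm\nu_m\notin\Y$ is a gap, even though the intended argument is standard and the rest of your bookkeeping (the barycenter identity and the density argument for $\sigma^\infty_y$) is in order.
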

 	This properties tell us that certain aspects of the weak-$*$ measurable maps $\nu$ and $\nu^\infty$ belonging to $\bm \nu$ can be effectively studied by looking at tangent measures of $\bm\nu$ itself. 
 	In a similar fashion to~\eqref{eq:goodbu}, at every $x_0$ where Proposition~\ref{prop: localization regular} holds, we may find a tangent Young measure $\bm \sigma\in \Tan(\bm \nu,x_0)$ as in~\eqref{eq:tangenty} with
 	\begin{equation}\label{eq:goodbu2}
	 \tau(\partial Q) = 0,
 	\end{equation}
 	and $\bm \sigma$ is generated by a blow-up sequence as in~\eqref{eq:tyoung} where 
\begin{equation}\label{eq:tangent1}
c_m  \coloneqq \begin{cases}
\Leb^d(Q_{r_m}(x))^{-1} & \text{if $x$ is a regular point of $\lambda$}\\
\lambda^s(Q_{r_m}(x))^{-1} & \text{if $x$ is a singular point of $\lambda$}
\end{cases};
\end{equation}
in any case $c_m$ can be taken to be $(\ddprb{|\frarg|,\bm \nu \mres Q_r(x)})^{-1}$. 
At singular points we may assume without loss of generality that 
\begin{equation}\label{eq:chinga}
\frac{1}{|\lambda^s|(Q_{r_m}(x))} \, \Trm_{x,r_m} [\,\ac{[\bm \nu]} \, \Leb ^d\,] \to 0 \; \text{strongly in $\Lrm^1_\loc(\R^d;W)$.}
\end{equation}

\subsection{$\A$-quasiconvexity}\label{sec:qc} We write $\Tbb^d\cong \R^d/\Zbb^d$ to denote the $d$-dimensional flat torus. 
We this convention we have $\Crm^\infty_\per([0,1]^d;W) = \Crm^\infty(\Tbb^d;W)$. For a function $u \in \Crm^\infty(\Tbb^d;W)$, we write
\[
	\int_{\Tbb^d} u \dd y \coloneqq \int_{[0,1)^d} u \dd y.
\]
In all that follows we shall write $\Crm_\sharp^\infty(\Tbb^d;W)$ to denote the subspace of smooth, $W$-valued periodic functions with mean-value zero. We recall,  from the theory discussed in~\cite[Section~2.5]{arroyo-rabasa2017lower-semiconti}, that maps
\begin{equation}\label{eq:span}
	\{\Crm_\sharp^\infty(\Tbb^d;W) \cap \ker \Acal\} \subset \Crm^\infty(\Tbb^d;W_\Acal).
\end{equation}
This set contention will be crucial for the proof of Theorem~\ref{thm:char}.

\begin{definition}[$\Acal$-quasiconvex envelope]If $h : W \to \R$ is a locally bounded Borel integrand, we define its $\Acal$-quasiconvex envelope $\Qcal_\Acal : W \to \R \cup \{-\infty\}$ as
	\[
		\Qcal_\Acal h(z) \coloneqq \inf\set{\int_{\Tbb^d} h(z + w(y)) \dd y}{w \in \Crm^\infty_\sharp(\Tbb^d;W) \cap \ker \Acal}.
	\]	
	If $f : U \times W \to \R$ is a locally bounded Borel integrand, we define its $\Acal$-quasiconvex envelope $\Qcal_\Acal f : U \times W \to \R \cup\{-\infty\}$ as
	\[
		\Qcal_\Acal f(x,z) \coloneqq \Qcal_\Acal(f(x,\frarg))(z).
	\]
\end{definition}
Below we recall some well-known convexity and Lipschitz properties of $\Acal$-quasiconvex functions.

Let $\mathcal D$ be a balanced cone of directions in $W$, that is, we assume that $tA \in \mathcal D$ for all $A \in \mathcal D$ and every $t \in \R$. A real-valued function $h \colon W\to \R$ is said to be $\mathcal D$-convex provided its restrictions to all line segments in $W$ with directions in $\mathcal D$ are convex.  
We recall the following $\Lambda_{\Acal}$-convexity property of $\A$-quasiconvex functions contained in lemma from~\cite[Proposition~3.4]{fonseca1999mathcal-a-quasi} for first-order operators and in~\cite[Lemma~2.19]{arroyo-rabasa2017lower-semiconti} for the general case:
\begin{lemma}\label{lem:cone_convex} 
	If $h:W\to \R$ is locally finite and $\Acal$-quasiconvex, then $h$ is $\Lambda_{\Acal}$-convex. 
\end{lemma}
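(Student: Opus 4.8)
The plan is to reduce the assertion to a single one-dimensional midpoint inequality and then verify that inequality by testing the $\Acal$-quasiconvexity of $h$ against a laminated plane wave. First I would fix $z\in W$ and a direction $Q\in\Lambda_\Acal$; since $h$ is locally bounded, so is $t\mapsto h(z+tQ)$, and a locally bounded midpoint-convex function is convex. As $\Lambda_\Acal$ is a cone (a union of linear subspaces), midpoint convexity along every line with direction in $\Lambda_\Acal$ follows once one knows
\[
h(z)\le\tfrac12 h(z+Q)+\tfrac12 h(z-Q)\qquad\text{for all }z\in W,\ Q\in\Lambda_\Acal,
\]
so it suffices to prove this inequality.

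To produce a competitor for the $\Acal$-quasiconvexity inequality with barycenter $z$, I would use plane waves. By definition of the wave cone there is $\xi\in\R^d\setminus\{0\}$ with $Q\in\ker\Abb(\xi)$, and since $\Abb$ is $k$-homogeneous the same holds for every positive multiple of $\xi$. Because $\Acal$ has constant rank, the orthogonal projection $\Pi(\zeta)$ onto $\ker\Abb(\zeta)$ depends continuously on $\zeta\in\R^d\setminus\{0\}$; choosing $\xi_n\in\Zbb^d\setminus\{0\}$ with $\xi_n/|\xi_n|\to\xi/|\xi|$ and setting $Q_n:=\Pi(\xi_n)[Q]$ gives $Q_n\in\ker\Abb(\xi_n)$ and $Q_n\to\Pi(\xi)[Q]=Q$. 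Fix $n$, and for $0<\eta<\tfrac14$ pick $\psi_\eta\in\Crm^\infty_\per([0,1])$ with $\int_0^1\psi_\eta=0$, $\|\psi_\eta\|_\infty\le1$, $\psi_\eta\equiv1$ on $[\eta,\tfrac12-\eta]$ and $\psi_\eta\equiv-1$ on $[\tfrac12+\eta,1-\eta]$. Then $w(y):=Q_n\,\psi_\eta(y\cdot\xi_n)$ defines a map in $\Crm^\infty_\per([0,1]^d;W)$ with zero mean (here $\xi_n\in\Zbb^d$ is used) that satisfies $\Acal w=0$, because it is a plane wave of frequency $\xi_n$ with amplitude in $\ker\Abb(\xi_n)$. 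Testing Definition~\ref{def:qc} with this $w$ and using the equidistribution of $y\mapsto y\cdot\xi_n$ modulo $1$ yields
\[
h(z)\le\int_{[0,1]^d}h\bigl(z+Q_n\psi_\eta(y\cdot\xi_n)\bigr)\dd y=\int_0^1 h\bigl(z+Q_n\psi_\eta(t)\bigr)\dd t.
\]

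I would then split $[0,1]$ into $\{\psi_\eta=1\}$, $\{\psi_\eta=-1\}$ (each of measure $\tfrac12-2\eta$) and $\{|\psi_\eta|<1\}$ (of measure $4\eta$), bound $h$ on the last set by its supremum $M_n$ over the compact segment $\{z+sQ_n:|s|\le1\}$, and let $\eta\downarrow0$ to obtain $h(z)\le\tfrac12 h(z+Q_n)+\tfrac12 h(z-Q_n)$. Letting $n\to\infty$ and using $Q_n\to Q$ together with upper semicontinuity of $h$ — which is the relevant hypothesis in Theorem~\ref{thm:char}, the general locally bounded Borel case following from the same scheme combined with a routine mollification of $h$, cf.~\cite{fonseca1999mathcal-a-quasi,arroyo-rabasa2017lower-semiconti} — gives the midpoint inequality, and hence the lemma. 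The one genuinely delicate point is the second step: replacing the abstract frequency $\xi$ by an integer frequency while keeping the wave amplitude inside $\ker\Abb$, which is precisely where the constant rank property is indispensable; the rest is a direct computation.
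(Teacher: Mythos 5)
Your central construction --- use the constant-rank hypothesis to make the orthogonal projection $\id_W-\pi(\xi)$ onto $\ker\Abb(\xi)$ continuous, rationalise the wave vector, and test $\Acal$-quasiconvexity against a mean-zero sawtooth plane wave of integer frequency --- is correct and is indeed the standard mechanism behind this lemma; the paper gives no argument and simply cites \cite{fonseca1999mathcal-a-quasi,arroyo-rabasa2017lower-semiconti}, and your scheme is in the same spirit as those proofs. The reduction to a midpoint inequality, the passage from midpoint convexity to convexity via local boundedness, and your identification of exactly where constant rank enters (continuity of $\xi\mapsto\pi(\xi)$, hence $Q_n=(\id_W-\pi(\xi_n))Q\to Q$) are all sound.

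The one genuine gap is the final limit $Q_n\to Q$ for $h$ that is not upper semicontinuous, and \enquote{routine mollification} does not close it as stated. Indeed $h_\eps:=h\star\rho_\eps$ is still $\Acal$-quasiconvex and continuous, so your argument shows each $h_\eps$ is $\Lambda_\Acal$-convex; but $h_\eps\to h$ only at Lebesgue points of $h$, i.e.\ a.e., which is not enough to transfer a pointwise midpoint inequality to an arbitrary $z$. The correct patch is a bootstrap that never invokes regularity of $h$ in the first round: if $\xi\in\Qbb^d\setminus\{0\}$, your laminate already has integer frequency after clearing denominators, so the midpoint inequality holds directly for every $Q\in\ker\Abb(\xi)$ with rational $\xi$ and every $z$, no continuity assumption needed. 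Under constant rank the rational wave vectors already generate $W_\Acal=\spn\Lambda_\Acal$ (density of $\Qbb^d$ plus continuity of $\id_W-\pi$), so $h$ is $\Dcal$-convex for a balanced spanning cone $\Dcal\subset W_\Acal$ and therefore locally Lipschitz on every coset of $W_\Acal$ by the same estimate used in Proposition~\ref{prop:properties}(d). That continuity in $W_\Acal$-directions is what lets you pass $Q_n\to Q$ for irrational $\xi$. Your upper-semicontinuous version stands as written and covers the instances actually used in Theorem~\ref{thm:char}; the bootstrap above is what closes the lemma in the generality stated.
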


In Section~\ref{sec:main}, we will require to work around the fact that $W_\Acal$ may not necessarily be equal to $W$. The following definition and propositions will play an important role in this regard.
\begin{definition}
	For a Borel integrand $f : \cl{U} \times W \to \R$, we define the integrand $\tilde f : \cl U \times W$ given by 
	\[
		\tilde f(x,z) \coloneqq f(x,\mathbf p z),
	\]
	where $\mathbf p :W \to W$ is the canonical linear projection onto $W_\Acal$. 
	
	If $f : W \to \R$, we also write $\tilde f : W\to \R$ to denote the integrand
	\[
		\tilde f(z) \coloneqq f(\mathbf p z).
	\]
\end{definition}

\begin{proposition}\label{prop:properties}
	Let $f : W \to \R$ be a locally bounded Borel integrand. The following holds:
	\begin{enumerate}[(a)]
		\item $\Qcal_\Acal \tilde f = (\Qcal_\Acal f)\,\tilde{}$,
		\item if $f$ is $\Acal$-quasiconvex, then $\tilde f$ is $\Acal$-quasiconvex,
		\item if $f$ is $\Lambda_\Acal$-convex, then $\tilde f$ is $(\Lambda_\Acal \cup (W_\Acal)^\perp)$-convex,
		\item if $f$ is $\Lambda_\Acal$-convex with linear growth constant $M$. Then $\tilde f$ is globally Lipschitz with 
		\[
		|\tilde f(z_1) - \tilde f(z_2)| \le C|z_1 -  z_2| \qquad \forall z_1,z_2 \in W.
		\]
		for some constant $C = C(M,\Acal)$.
	\end{enumerate}
\end{proposition}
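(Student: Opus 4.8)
The plan is to reduce all four assertions to two structural observations and a one-dimensional convexity lemma. The first observation is the inclusion \eqref{eq:span}: every mean-value-zero $\Acal$-free periodic test field $w \in \Crm_\sharp^\infty(\Tbb^d;W)\cap\ker\Acal$ takes values in $W_\Acal$, so that the linear projection $\mathbf p$ restricts to the identity on such fields. The second is the trivial chain $\Lambda_\Acal \subseteq \spn\{\Lambda_\Acal\} = W_\Acal$, together with the fact that $(W_\Acal)^\perp = \ker\mathbf p$ and $W = W_\Acal \oplus (W_\Acal)^\perp$.

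For (a) I would compute directly: given $w \in \Crm_\sharp^\infty(\Tbb^d;W)\cap\ker\Acal$ we have $w(y)\in W_\Acal$ by \eqref{eq:span}, hence $\mathbf p(z+w(y)) = \mathbf p z + w(y)$ and therefore $\tilde f(z+w(y)) = f(\mathbf p z + w(y))$; taking the infimum over all admissible $w$ yields $\Qcal_\Acal \tilde f(z) = \Qcal_\Acal f(\mathbf p z) = (\Qcal_\Acal f)\,\tilde{}\,(z)$ for every $z\in W$, the identity holding in $\R\cup\{-\infty\}$. Assertion (b) is then immediate, since $f$ $\Acal$-quasiconvex means $\Qcal_\Acal f = f$, whence by (a) $\Qcal_\Acal \tilde f = \tilde f$, i.e. $\tilde f$ is $\Acal$-quasiconvex.

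For (c) I would check $\mathcal D$-convexity one direction at a time. If $e\in\Lambda_\Acal\subseteq W_\Acal$ then $\mathbf p e = e$, so $t\mapsto \tilde f(z+te) = f(\mathbf p z + te)$ is the restriction of $f$ to a line in a direction of $\Lambda_\Acal$, hence convex by hypothesis; if $e\in (W_\Acal)^\perp$ then $\mathbf p e = 0$, so $t\mapsto \tilde f(z+te) = f(\mathbf p z)$ is constant, in particular convex. Since $\Lambda_\Acal\cup(W_\Acal)^\perp$ is a balanced cone, this gives $(\Lambda_\Acal\cup(W_\Acal)^\perp)$-convexity of $\tilde f$.

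Part (d) is the only non-formal step, and the point I expect to require the most care. By (c), $\tilde f$ is convex along every direction in $\mathcal D := \Lambda_\Acal\cup(W_\Acal)^\perp$, and crucially $\mathcal D$ spans $W$ (because $\spn\{\Lambda_\Acal\} = W_\Acal$ and $W_\Acal\oplus(W_\Acal)^\perp = W$), so $\mathcal D$ contains a basis $v_1,\dots,v_{\dim W}$ of $W$ which can be fixed once and for all depending only on $\Acal$. For fixed $z$ and $i$, the scalar map $\psi(t) := \tilde f(z+tv_i)$ is convex and, by the linear-growth bound $|f(\frarg)|\le M(1+|\frarg|)$, satisfies $|\psi(t)| \le M(1+|z|) + M|v_i|\,|t|$ for all $t\in\R$; a one-dimensional argument (a convex function on the real line obeying such a bound has non-decreasing derivative bounded in modulus by $M|v_i|$, since a larger slope would eventually contradict the growth bound at $\pm\infty$) then yields the $z$-independent estimate $|\tilde f(z+tv_i)-\tilde f(z)| \le M|v_i|\,|t|$. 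Expanding an arbitrary increment $z_2-z_1$ in the basis $\{v_i\}$ and summing these estimates along the corresponding polygonal path gives $|\tilde f(z_1)-\tilde f(z_2)| \le C|z_1-z_2|$ with $C$ depending only on $M$ and the (fixed) basis, i.e. $C = C(M,\Acal)$. The main obstacle is essentially just this last bookkeeping: making the choice of spanning basis, and hence the constant $C$, depend on $\Acal$ alone and not on $f$; the rest is a direct unwinding of \eqref{eq:span}.
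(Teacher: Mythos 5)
Your proofs of (a), (b), (c) match the paper's argument: the key input is the inclusion~\eqref{eq:span}, the identity $\Qcal_\Acal h = h$ for $\Acal$-quasiconvex $h$, and the direction-by-direction check that $\mathbf p$ restricts to the identity on $\Lambda_\Acal$ and to zero on $(W_\Acal)^\perp$.

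Part (d) is where your route diverges slightly. The paper applies \cite[Lemma~2.5]{kirchheim2016on-rank-one-con} to the $\Dcal$-convex function $\tilde f$ (with $\Dcal$ a spanning balanced cone) to obtain the pointwise inequality $\tilde f(x+y) \le (\tilde f)^\#(x) + \tilde f(y)$ for $x \in \Dcal$, $y\in W$, then iterates this along a fixed spanning basis and uses $|(\tilde f)^\#(w_i)| \le M$ to close the estimate. You instead prove the needed one-directional Lipschitz bound from scratch: for $e\in\Dcal$ the convex scalar map $t\mapsto \tilde f(z+te)$ has two-sided linear growth with slope $M|e|$, and monotonicity of the difference quotients of a convex function forces $|\tilde f(z+te)-\tilde f(z)|\le M|e|\,|t|$ uniformly in $z$; summing along a polygonal path in a fixed $\Dcal$-basis gives the global Lipschitz bound. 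This is essentially the content of the Kirchheim--Kristensen lemma unpacked inline, and it correctly exploits the \emph{two-sided} growth (which is what the paper also uses implicitly when it bounds $|(\tilde f)^\#(w_i)|$). Your version is slightly more elementary and self-contained; the paper's citation is more economical and foreshadows later uses of that lemma (e.g.\ in Proposition~\ref{prop:dense}). One small bookkeeping point you should make explicit: after fixing a $\Dcal$-basis $\{v_1,\dots,v_s\}$ of $W$ depending only on $\Acal$, the coefficients in $z_2-z_1 = \sum_i \lambda_i v_i$ satisfy $\sum_i|\lambda_i|\lesssim_\Acal |z_1-z_2|$ by equivalence of norms; this is what makes the constant $C$ depend only on $M$ and $\Acal$.
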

\begin{proof}
	Property (a) is a direct consequence of~\eqref{eq:span} and the definition of $(\frarg)\,\tilde{}$\,. Property (b) follows directly from (a). Property (c) follows from Lemma~\ref{lem:cone_convex}, property (b) and the fact that $\tilde f$ is invariant on $W_\Acal$-directions. Finally, we prove (d). Up to a linear isomorphism we may assume that $\Dcal = \Lambda_\Acal \cup W_\Acal^\perp$ contains an orthonormal basis $\{w_1,\dots,w_s\}$ basis of $W$. Of course, the change of variables carries a constant in the desired Lipschitz bound, but that constant depends solely on $\Acal$. The difference between two points $z_1, z_2 \in W$ can be written as
	\[
	z_1 -  z_2 = \lambda_1 w_1 + \dots \lambda_s w_s, \quad |\lambda_1| + \dots + |\lambda_s| \lesssim |z_1 - z_2|,
	\] 
	where the constant of the last estimate depends solely on $s = s(\Acal)$. Property $(c)$ implies that $\tilde f$ is $\Dcal$-convex. Since moreover $\Dcal$ is a spanning set of directions of $W$, then~\cite[Lemma~2.5]{kirchheim2016on-rank-one-con} implies that
	\[
		\tilde f(x + y) \le (\tilde f)^\# + \tilde f(y) \quad \text{for all $x\in\Dcal$ and $y \in W$}.
	\]
	An iteration of this identity yields the upper bound 
	\[
	\tilde f(z_1) - \tilde f(z_2)\le \lambda_1(\tilde f)^\#(w_1) + \dots + \lambda_s(\tilde f)^\#(w_s).
	\]
	Since $|(\tilde f)^\#(w_i)| \le |f^\#(\mathbf p w_i)| \le M$, we may further estimate this difference by
	\[
	\tilde f(z_1) - \tilde f(z_2) \lesssim_s M \times |z_1 - z_2|.
	\]
	Reversing the roles of $z_1$ and $z_2$ gives the desired Lipschitz bounds.
\end{proof}
\begin{corollary}\label{cor:last}
	Let $\alpha \ge 0$ and let $p_1 \in \mathrm{Prob}(W)$, $p_2 \in \mathrm{Prob}(S_W)$ be two probability measures satisfying
	\[
		h(P_0) \le \dprb{h,p_1} + \alpha\dprb{h^\#,p_2}, \qquad P_0 \coloneqq \dprb{\id_W,p_1} + \alpha\dprb{\id_W,p_2} 
	\]
	for all $\Acal$-quasiconvex upper semicontinuous integrands $h : W \to \R$ with linear growth. Then,
	\[
		\supp(\delta_{-P_0} \star p_1) \subset W_\Acal
	\]
	and 
	\[
	\text{either $\alpha = 0$ \quad or \quad $\supp(p_2)\subset W_\Acal$.}
	\]
\end{corollary}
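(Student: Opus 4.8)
The plan is to test the Jensen-type hypothesis against a carefully chosen family of $\Acal$-quasiconvex integrands that only detect the component of $W$ orthogonal to $W_\Acal$. The crucial observation is that, by the inclusion~\eqref{eq:span}, every mean-zero $\Acal$-free periodic field $w \in \Crm^\infty_\sharp(\Tbb^d;W)\cap\ker\Acal$ takes values in $W_\Acal$; hence for any Borel function $g : (W_\Acal)^\perp \to \R$ the integrand $z \mapsto g\bigl((\id_W - \mathbf p)z\bigr)$ satisfies $g((\id_W - \mathbf p)(z + w(y))) = g((\id_W - \mathbf p)z)$ for every such $w$, and is therefore $\Acal$-quasiconvex (indeed with equality in the defining inequality). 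This furnishes a large supply of admissible test integrands whose $\Acal$-quasiconvexity is automatic and requires nothing beyond~\eqref{eq:span}.

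Concretely, for each unit vector $e \in (W_\Acal)^\perp$ I would take
\[
h_e(z) \coloneqq -\,\bigl|\,\langle e, z\rangle - \langle e, P_0\rangle\,\bigr|, \qquad z \in W.
\]
Since $e \perp W_\Acal$ we have $\langle e, z\rangle = \langle e, (\id_W - \mathbf p)z\rangle$, so $h_e$ factors through $\id_W - \mathbf p$; it is Lipschitz (in particular upper semicontinuous), of linear growth, and $\Acal$-quasiconvex by the previous paragraph. A direct computation gives $h_e(P_0) = 0$ and $h_e^\#(z) = -|\langle e, z\rangle|$. Inserting $h_e$ into the hypothesis and rearranging yields
\[
\int_W \bigl|\langle e, z - P_0\rangle\bigr| \dd p_1(z) \;+\; \alpha\int_{S_W}\bigl|\langle e, z\rangle\bigr| \dd p_2(z) \;\le\; 0.
\]
Both summands are nonnegative, hence both vanish: $\langle e, z - P_0\rangle = 0$ for $p_1$-a.e.\ $z$, and, whenever $\alpha > 0$, also $\langle e, z\rangle = 0$ for $p_2$-a.e.\ $z$.

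To conclude, I would let $e$ range over an orthonormal basis $e_1,\dots,e_m$ of $(W_\Acal)^\perp$ and intersect the finitely many full-measure sets so obtained. This gives $(\id_W - \mathbf p)(z - P_0) = 0$, i.e.\ $z - P_0 \in W_\Acal$, for $p_1$-a.e.\ $z$, which is exactly $\supp(\delta_{-P_0}\star p_1) \subset W_\Acal$; and, if $\alpha > 0$, it gives $(\id_W - \mathbf p)z = 0$, i.e.\ $z \in W_\Acal$, for $p_2$-a.e.\ $z$, that is $\supp(p_2) \subset W_\Acal$.

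The argument is short and I do not anticipate a genuine obstacle; the two points requiring care are (i) that $h_e$ really lies in the admissible class, which --- as explained --- reduces entirely to~\eqref{eq:span}, and (ii) the evaluation of the upper recession $h_e^\#$ together with the finiteness of the pairings $\dprb{h_e, p_1}$ and $\dprb{h_e^\#, p_2}$, which holds since $p_2$ is carried by the bounded set $S_W$ and $p_1$ has finite first moment (implicit in the very existence of $P_0$). One could alternatively test against the single integrand $z \mapsto -|(\id_W - \mathbf p)z - (\id_W - \mathbf p)P_0|$, but that only produces equality in a triangle inequality; resolving it into the linear functionals $h_e$ is precisely what upgrades this to the stated pointwise conclusions.
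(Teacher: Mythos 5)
Your proof is correct and rests on the same key idea as the paper's: any integrand that depends only on the $W_\Acal^\perp$-component of its argument is automatically $\Acal$-quasiconvex, thanks to~\eqref{eq:span} (equivalently Proposition~\ref{prop:properties}(a)). The paper's own proof runs the same test in two passes --- first with $g\in\Crm_c(W_\Acal^\perp)$ to pin down the push-forward $(\id_W-\mathbf p)[p_1]=\delta_{(\id_W-\mathbf p)P_0}$, then with positively $1$-homogeneous $g$ to handle $p_2$ --- whereas your single family $h_e(z)=-|\langle e,z-P_0\rangle|$ is centered at $P_0$, makes $h_e(P_0)=0$, and so extracts both nonnegativity conclusions from one inequality; a minor streamlining of the same argument.
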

\begin{proof}
	If $W_\Acal = W$, then we have nothing to show. Else, let $X \coloneqq W_\Acal^\perp$ and let $g \in E(X)$ be an arbitrary integrand. We also define $h \coloneqq 1_{W_\Acal} \otimes g \in \E(W)$. It follows from Proposition~\ref{prop:properties}(a) that
%
	$\Qcal_\Acal h = 1_{W_\Acal} \otimes g$.
	If we choose $g \in \Crm_c(X)$, then $h^\infty \equiv 0$ and the assumption on $p_1,p_2$ yields
	\[
		g(P_0 - \mathbf p P_0) = \dprb{g,(\id_W-\mathbf p)[p_1]} \quad \forall g \in \Crm_c(X),
	\]
	where $(\id_W - \mathbf p)[p_1]$ is the push-forward of $p_1$ with respect to $(\id_W - \mathbf p)$.
	Since $\Crm_c(X)$ separates $\mathrm{Prob}(X)$, this implies that $(\id_W- \mathbf p)[p_1] = \delta_{(\id_W - \mathbf p)P_0}$. In particular 
	\[
	(\id_W- \mathbf p) \left[\left(\delta_{-(\id_W - \mathbf p)P_0}\right) \star p_1\right]  = \delta_{-(\id_W - \mathbf p)P_0}\star (\id_W - \mathbf p)[p_1] = \delta_0.
	\] This proves that $\supp(\delta_{-(\id_W - \mathbf p)P_0} \star p_1) \subset W_\Acal$, and therefore also
	\[
		  \supp(\delta_{-P_0}\star p_1) \subset W_\Acal.
	\]
	A similar argument and the previous identity further imply 
	\[
		\alpha\,\dprb{g,(\id_W - \mathbf p)[p_2]} = 0\quad \text{for all positively $1$-homogeneous $g : X \to \R$.}
	\]
	In particular, testing with $g = |\frarg|_X$, we conclude that, either $\alpha = 0$, or $(\id_W - \mathbf p)[p_2] = \delta_0$. This proves that $\alpha \{\supp (p_2)\} \subset W_\Acal$, as desired.
\end{proof}

The next two propositions will be used to address some technical details involving the proof of Theorem~\ref{thm:char} and Remark~\ref{rem:mio}:
\begin{proposition}\label{prop:dense}
	Let $f \in \E(\Omega;W)$ and assume that there exists a dense set $D \subset \Omega$ such that 
	\[
	\Qcal_\Acal \tilde f(x,0) > - \infty \quad \text{for all $x \in D$.}
	\]
	Then 
	\[
	|\Qcal_\Acal \tilde f(x,z)| \le C(1 + |z|) \quad \text{for all $(x,z) \in \Omega \times W$}
	\]
	for some constant $C$ depending on $\Acal$ and $\|g\|_{\E(\Omega;W_\Acal)}$.
\end{proposition}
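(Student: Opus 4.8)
The plan is to trap $\Qcal_\Acal\tilde f$ between the obvious upper bound $\Qcal_\Acal\tilde f\le\tilde f$ and a lower bound coming from the $\Dcal$-convexity machinery of Section~\ref{sec:qc} combined with a compactness argument on the compact set $\cl\Omega$. Let $M$ be a linear growth constant for $f$, so $|f(x,z)|\le M(1+|z|)$ on $\Omega\times W$, with $M$ controlled by the $\E$-norm of $f$. Testing the infimum defining $\Qcal_\Acal$ with $w\equiv 0$ gives immediately $\Qcal_\Acal\tilde f(x,z)\le\tilde f(x,z)=f(x,\mathbf p z)\le M(1+|z|)$ for every $(x,z)$. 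For the lower bound, record two purely structural facts (no finiteness needed): by~\eqref{eq:span} every admissible field in the definition of $\Qcal_\Acal$ is $W_\Acal$-valued, so $\Qcal_\Acal\tilde f(x,\cdot)$ depends only on $\mathbf p z$ and is in particular constant along $W_\Acal^\perp$; and, by a layering argument superposing plane waves $\phi(x\cdot\xi)P$ with $P\in\ker\Abb(\xi)$, the slice $t\mapsto\Qcal_\Acal\tilde f(x,z+tP)$ is convex with values in $[-\infty,\infty)$ for every $P\in\Lambda_\Acal$. Hence $\Qcal_\Acal\tilde f(x,\cdot)$ is $\Dcal$-convex in the extended sense, $\Dcal:=\Lambda_\Acal\cup W_\Acal^\perp$ being a spanning cone. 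Since $\Qcal_\Acal\tilde f(x,\cdot)\le M(1+|\cdot|)<\infty$ everywhere, the midpoint inequality along each $\Dcal$-direction propagates the value $-\infty$ backwards along any $\Dcal$-polygonal path, and as $\Dcal$ spans $W$ this forces: finiteness at the single point $z=0$ implies $\Qcal_\Acal\tilde f(x,\cdot)$ finite on all of $W$. Thus for $x\in D$ the integrand $\Qcal_\Acal\tilde f(x,\cdot)=\widetilde{\Qcal_\Acal f}(x,\cdot)$ (Proposition~\ref{prop:properties}(a)) is real-valued, $\Lambda_\Acal$-convex (Lemma~\ref{lem:cone_convex}) and of linear growth, so Proposition~\ref{prop:properties}(d) yields
\[
|\Qcal_\Acal\tilde f(x,z_1)-\Qcal_\Acal\tilde f(x,z_2)|\le C_1|z_1-z_2|\qquad\text{for all }z_1,z_2\in W,
\]
with $C_1=C_1(\Acal,M)$, for every $x\in D$; in particular $\Qcal_\Acal\tilde f(x,z)\ge\Qcal_\Acal\tilde f(x,0)-C_1|z|$ there.

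It remains to bound $\Qcal_\Acal\tilde f(\frarg,0)$ from below uniformly on $\Omega$ and to propagate finiteness to every $x\in\Omega$; this is the heart of the matter, and it is here that the density of $D$ and the fact that $f\in\E(\Omega;W)$ — hence carries a uniform modulus of continuity $\omega$ valid up to $\partial\Omega$ — come in. Cover the compact set $\cl\Omega$ by finitely many balls $B_1,\dots,B_N$ of small radius and pick $x_i\in D\cap B_i\cap\Omega$ in each. For any admissible $w$ one has $|\int_{\Tbb^d}\tilde f(x,w)\dd y-\int_{\Tbb^d}\tilde f(x_i,w)\dd y|\le\omega(|x-x_i|)(1+\|w\|_{\Lrm^1(\Tbb^d)})$; the delicate point is that \emph{a priori} a near-optimal $w$ for $\Qcal_\Acal\tilde f(x,0)$ may have unbounded $\Lrm^1$-mass, so one first has to argue that, because $\Qcal_\Acal\tilde f(x_i,\cdot)$ is already finite and $C_1$-Lipschitz, the infimum defining $\Qcal_\Acal\tilde f(\frarg,0)$ on $B_i$ may be taken, up to $\eps$, over admissible fields with $\|w\|_{\Lrm^1(\Tbb^d)}\le R=R(\Acal,M)$ — e.g.\ by passing to the measure-valued/relaxed description of the $\Acal$-quasiconvex envelope of a linear-growth integrand and truncating the concentration part, in the spirit of the proof that $\Qcal_\Acal$ preserves $\E(\Omega;W)$. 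Granting this, the family $\{\,x\mapsto\int_{\Tbb^d}\tilde f(x,w)\dd y:\|w\|_{\Lrm^1}\le R\,\}$ is uniformly equicontinuous and, since $f\in\E(\Omega;W)$, each member extends continuously to $\cl\Omega$; hence $x\mapsto\Qcal_\Acal\tilde f(x,0)$, which on each $B_i$ is an infimum of such a family, is continuous on $\Omega$ and bounded on $\cl\Omega$. This gives $\Qcal_\Acal\tilde f(x,0)\ge-C_0$ on $\Omega$ with $C_0=C_0(\Acal,M)$, so $\Qcal_\Acal\tilde f(x,\cdot)$ is finite — hence $C_1$-Lipschitz — for \emph{every} $x\in\Omega$. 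Combining all the estimates, $-C_0-C_1|z|\le\Qcal_\Acal\tilde f(x,z)\le M(1+|z|)$ on $\Omega\times W$, i.e.\ $|\Qcal_\Acal\tilde f(x,z)|\le C(1+|z|)$ with $C=C(\Acal,M)$.

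The main obstacle is precisely the step of controlling the $\Lrm^1$-mass of near-optimal admissible fields — equivalently, showing that the $\Acal$-quasiconvex envelope at a fixed argument, once finite, is computed over a family of generating configurations with uniformly bounded concentration — so that the $x$-continuity built into membership of $\E(\Omega;W)$ becomes usable. Everything else is a routine combination of the convexity results of Section~\ref{sec:qc} with the compactness of $\cl\Omega$; note in particular that no boundary regularity of $\Omega$ is needed, only that $f$ and its $x$-modulus are controlled up to the closure, which is exactly what the definition of $\E(\Omega;W)$ provides.
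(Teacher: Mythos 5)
Your treatment of the first half --- the two-sided linear-growth bound on $\Qcal_\Acal\tilde f(x,\cdot)$ with constant $C=C(M,\Acal)$ at points $x\in D$ where the envelope is finite --- is sound and is essentially the paper's own route: test with $w=0$ for the upper bound, then appeal to $\Dcal$-convexity of the envelope (Proposition~\ref{prop:properties}(c), Lemma~\ref{lem:cone_convex}) and to the rank-one/separately-convex growth lemma cited from~\cite{kristensen1999} to obtain the lower bound and the uniform Lipschitz constant.

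The genuine gap is in the step that passes from $D$ to all of $\Omega$. You try to prove that $x\mapsto\Qcal_\Acal\tilde f(x,0)$ is continuous (even bounded on $\cl\Omega$) by exhibiting an equicontinuous family indexed by $\Lrm^1$-bounded competitors, and you yourself flag that this requires showing that near-optimal admissible fields in the cell problem can be taken with $\|w\|_{\Lrm^1(\Tbb^d)}\le R(\Acal,M)$. That is not established anywhere in the paper, nor is it a routine consequence of the framework: the only quantitative coercivity estimate available, Proposition~\ref{prop:coercive}, applies to the \emph{perturbed} integrand $\tilde f^\eps=\tilde f+\eps|\cdot|$ and produces the bound $C/\eps$, which degenerates as $\eps\to0$. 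Invoking ``the measure-valued/relaxed description$\dots$ and truncating the concentration part, in the spirit of the proof that $\Qcal_\Acal$ preserves $\E(\Omega;W)$'' is circular here, since what is at issue is precisely what structure $\Qcal_\Acal\tilde f$ has.

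The paper's argument avoids this obstacle entirely by never trying to bound competitors uniformly. It argues by contradiction: if $\Qcal_\Acal\tilde f(x,0)=-\infty$ at some $x\in\Omega\setminus D$, fix a large $L>0$ and pick a \emph{single} admissible field $w\in\Crm^\infty_\sharp(\Tbb^d;W)\cap\ker\Acal$ with $\int_Q\tilde f(x,w)\,\dd y<-L$. Being a fixed smooth periodic map, $w$ has finite $\Lrm^\infty$-norm, say $\|w\|_\infty<R$; since $f\in\E(\Omega;W)$, $f$ is uniformly continuous on the compact set $\cl{\Omega\times RB_W}$, so choosing $x_h\to x$ with $x_h\in D$ (this is where density of $D$ is used) gives $\int_Q\tilde f(x_h,w)\,\dd y\to\int_Q\tilde f(x,w)\,\dd y<-L$, hence $\limsup_h\Qcal_\Acal\tilde f(x_h,0)\le-L$. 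Taking $L>2C$ contradicts the bound from the first half. Comparing the two approaches: testing a family of envelopes against a \emph{fixed} competitor replaces ``uniform control of the minimizing configurations'' by ``uniform continuity in $x$ over a compact region'', which the definition of $\E(\Omega;W)$ gives for free. You should replace your continuity/covering step with this contradiction argument; your first half then closes the proof exactly as in the paper (run it once more with $D=\Omega$ to deduce the final uniform bound).
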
\begin{proof}
	Since $f \in \E(\Omega;W)$, there exists a constant $M = \|f\|_{\E(\Omega;W)}$ such that $|f(z)| \le M(1 + |z|)$. It follows from the definition of $\Acal$-quasiconvexity (testing with the field $w = 0$) that
	\[
	\Qcal_\Acal \tilde f(x,z) \le \int_Q f(x,\mathbf p z) \dd y \le M(1 + |z|) \qquad \forall (x,z) \in \Omega \times W.
	\]
	It follows from Proposition~\ref{prop:properties}(c) and a suitable version of~\cite[Lemma 2.5]{kristensen1999} that, if $x \in D$, then 
	\begin{equation}\label{eq:H}
		|\Qcal_\Acal \tilde f(x,z)| \le C(1 + |z|) \qquad \forall z \in W,
	\end{equation}
	where $C = C(M,\Acal)$. The cited result and its proof are originally stated for quasiconvex functions. However, a similar argument can be given for $\Dcal$-convex functions where $\Dcal$ is a spanning balanced cone. This shows that the restriction of $\Qcal_\Acal \tilde f$ on $(D \times W)$ has linear growth at infinity. We shall prove now that $\Qcal_\Acal \tilde f$ is finite for all $x \in \Omega$. Let us assume that there exists $x \in \Omega \setminus D$ with $\Qcal_\Acal \tilde f(x,0) = - \infty$. Let us fix $L > 0$ be a large real number. By our assumption on $x$, we may find  a  smooth field $w \in \Crm^\infty_\sharp(\Tbb^d;W) \cap \ker \Acal$ satisfying
	\[
	\int_Q \tilde f(x,w(y)) \dd y < -L.
	\]
	The density of $D$ allows us to find a sequence $\{x_h\} \subset D$ satisfying $x_h \to x$. We use once again the fact that $f \in \E(\Omega;W)$ to deduce that $f$ is uniformly continuous on $\cl{\Omega \times RB_W}$ where $R > \|w\|_\infty$. Hence, we may use a standard modulus of continuity argument to conclude that
	\[
	\limsup_{h \to \infty} \Qcal_\Acal \tilde f(x_h,0) \le \lim_{h \to \infty} \int_Q f(x_h,w(y)) \dd y = \int_Q f(x,w(y)) \dd y < -L.
	\]
	Letting $L > 2C$ we conclude that $|\Qcal_\Acal \tilde f(x_h,0)| > C$ for $h$ sufficiently large. This poses a contradiction to the bound~\eqref{eq:H}. Repeating the first step with $D = \Omega$, we find that
	\[
	|\Qcal_\Acal \tilde f(x,z)| \le C(1 + |z|) \qquad \forall (x,z) \in \Omega \times W,
	\] 
	This finishes the proof.
\end{proof}
\begin{proposition}\label{prop:coercive}
	Let $f : W \to \R$ be locally bounded and assume that $\Qcal_\Acal \tilde f$ is locally finite. 
	Fix $\eps \in (0,1)$, $\delta> 0$ and let $w\in \Crm_\sharp^\infty(\Tbb^d;W)$ be an $\Acal$-free field satisfying
	\[
	\Qcal_\Acal \tilde f^\eps(\xi) \ge \int_Q \tilde f^\eps(\xi + w(y)) \dd y - \delta,
	\]
	where $\tilde f^\eps \coloneqq \tilde f + \eps|\frarg|$.
	Then, 
	\[
	\|\xi + w\|_{\Lrm^1(Q)} \le \frac {C}{\eps}\Big(1 + |\xi| + \delta\Big),
	\]
	for some constant $C > 0$ depending on $\Acal$ and the linear growth constant of $f$.
\end{proposition}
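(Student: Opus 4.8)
The plan is to reinsert the near-minimizer $w$ back into the definition of the $\Acal$-quasiconvex envelope, isolate the coercive term $\eps\|\xi+w\|_{\Lrm^1(Q)}$, and dominate everything else by linear-growth estimates. Fix a linear-growth constant $M$ of $f$, i.e.\ $|f(z)|\le M(1+|z|)$, and recall that the projection $\mathbf p:W\to W_\Acal$ has operator norm bounded by a constant depending only on $\Acal$, so that $|\tilde f(z)|=|f(\mathbf p z)|\le C_\Acal M(1+|z|)$ on $W$.

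First I would bound the left-hand side: testing the definition of $\Qcal_\Acal\tilde f^\eps$ with the zero field and using $\eps<1$ gives $\Qcal_\Acal\tilde f^\eps(\xi)\le\tilde f^\eps(\xi)=\tilde f(\xi)+\eps|\xi|\le C_1(1+|\xi|)$ with $C_1=C_1(M,\Acal)$. Next, and this is the crucial point, since $w\in\Crm^\infty_\sharp(\Tbb^d;W)$ is $\Acal$-free it is an admissible competitor for $\Qcal_\Acal\tilde f(\xi)$, so $\int_Q\tilde f(\xi+w(y))\dd y\ge\Qcal_\Acal\tilde f(\xi)$; the hypothesis that $\Qcal_\Acal\tilde f$ is locally finite, together with the fact that $\Qcal_\Acal\tilde f$ is $(\Lambda_\Acal\cup W_\Acal^\perp)$-convex — a spanning balanced cone of directions — and is bounded above by $\tilde f\le C_\Acal M(1+|\cdot|)$, places us exactly in the setting of the proof of Proposition~\ref{prop:dense}, so that $|\Qcal_\Acal\tilde f(z)|\le C_2(1+|z|)$ with $C_2=C_2(M,\Acal)$, and in particular $\int_Q\tilde f(\xi+w)\ge -C_2(1+|\xi|)$. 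Finally I would unwind the hypothesis: since $\tilde f^\eps=\tilde f+\eps|\frarg|$, integrating over $Q$ gives
\[
\int_Q\tilde f(\xi+w(y))\dd y+\eps\,\|\xi+w\|_{\Lrm^1(Q)}=\int_Q\tilde f^\eps(\xi+w(y))\dd y\le\Qcal_\Acal\tilde f^\eps(\xi)+\delta,
\]
whence $\eps\,\|\xi+w\|_{\Lrm^1(Q)}\le\Qcal_\Acal\tilde f^\eps(\xi)+\delta-\int_Q\tilde f(\xi+w(y))\dd y\le(C_1+C_2)(1+|\xi|)+\delta$; dividing by $\eps$ yields the claim with $C=C_1+C_2$, which depends only on $\Acal$ and $M$.

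The only genuinely non-routine ingredient is the linear lower bound $\Qcal_\Acal\tilde f(z)\ge -C(1+|z|)$ extracted from the bare local finiteness of $\Qcal_\Acal\tilde f$: this is where the spanning cone $\Lambda_\Acal\cup W_\Acal^\perp$ and the quantitative convexity estimates of \cite[Lemma~2.5]{kristensen1999} (cf.\ \cite[Lemma~2.5]{kirchheim2016on-rank-one-con}) are essential, and where one must check that $\Qcal_\Acal\tilde f$ really is $(\Lambda_\Acal\cup W_\Acal^\perp)$-convex — via simple laminates along wave-cone directions and the invariance of $\tilde f$ (hence of $\Qcal_\Acal\tilde f$, by Proposition~\ref{prop:properties}(a)) along $W_\Acal^\perp$. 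Alternatively one can obtain the same bound by transferring the Lipschitz estimate of Proposition~\ref{prop:properties}(d) from $\Qcal_\Acal f$ restricted to $W_\Acal$ to $\Qcal_\Acal\tilde f$ on all of $W$. Every other step is a one-line manipulation of the definition of the quasiconvex envelope.
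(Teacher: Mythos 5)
Your proof is correct and follows essentially the same route as the paper's: rearrange the hypothesis to isolate $\eps\|\xi+w\|_{\Lrm^1(Q)}$, lower-bound $\int_Q\tilde f(\xi+w)$ by $\Qcal_\Acal\tilde f(\xi)$ (since $w$ is an admissible competitor), and then invoke the linear-growth bound on $\Qcal_\Acal\tilde f$ from Proposition~\ref{prop:dense}. The only cosmetic difference is that you bound $\Qcal_\Acal\tilde f^\eps(\xi)$ from above directly by $\tilde f^\eps(\xi)$ (testing with $w=0$), whereas the paper reuses the linear-growth bound for $\Qcal_\Acal\tilde f^\eps$ via the inequality $\Qcal_\Acal\tilde f^\eps\ge\Qcal_\Acal\tilde f$; both are one-liners leading to the same estimate.
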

\begin{proof}
	Since $\Qcal_\Acal \tilde f$ is finite, then it has linear growth with a constant $C$ that depends solely on $\Acal$ and the linear growth constant of $f$ (cf. Lemma~\ref{prop:dense}). The same holds for $\tilde f^\eps$ since $\Qcal_\Acal \tilde f^\eps \ge \Qcal_\Acal \tilde f$ up to taking $C + 1$ instead. Using the assumption we get
	\begin{align*}
		\eps \|\xi + w\|_{\Lrm^1(Q)} & \le \Qcal_\Acal \tilde f^\eps(\xi) - \int_Q \tilde f(\xi + w(y)) \dd y + \delta \\
		& \le \Qcal_\Acal \tilde f^\eps(\xi) - \Qcal_\Acal \tilde f(\xi) \dd y + \delta \le 2C(1 + |\xi|) + \delta
	\end{align*}
	The conclusion follows directly from this estimate.
\end{proof}
\begin{proposition}\label{prop:trans}
	Let $f \in \E(\Omega;W_\Acal)$ be such that $\Qcal_\Acal f(x,\frarg) > -\infty$ for all $x \in \Omega$. Fix $\eps \in (0,1)$. Then, there exists a modulus of continuity $\omega : [0,\infty) \to [0,\infty)$ such that
	\[
	|\Qcal_\Acal \tilde f^\eps(x,A) - \Qcal_\Acal \tilde f^\eps(y,A)| \le \omega(|x - y|)(1 + |A|)
	\]
	for all $x,y \in \Omega$ and $A \in W$.
	The modulus of continuity depends on $\eps,\Acal$, the linear growth constant of $f$ and the modulus of continuity of $f$.
\end{proposition}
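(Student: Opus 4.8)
The plan is to run the standard comparison argument at the level of competitors for the $\Acal$-quasiconvex envelope, using Proposition~\ref{prop:coercive} to control the $\Lrm^1$-size of near-optimal fields and the $x$-equicontinuity of $f$ coming from $f\in\E(\Omega;W_\Acal)$. Concretely, I would first record the two inputs. Since $f\in\E(\Omega;W_\Acal)$, equation~\eqref{eq:mamo} and the remark following it provide a modulus of continuity $\omega_f$ (depending only on the uniform continuity of $Sf$) with $|f(x,z)-f(y,z)|\le\omega_f(|x-y|)(1+|z|)$ for all $x,y\in\Omega$ and $z\in W_\Acal$; since $\Omega$ is bounded, $\omega_f$ is bounded on $[0,\diam\Omega]$. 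Second, the hypothesis $\Qcal_\Acal f(x,\sbullet)>-\infty$ for every $x\in\Omega$ together with Proposition~\ref{prop:properties}(a) gives $\Qcal_\Acal\tilde f(x,A)=\Qcal_\Acal f(x,\mathbf p A)$, and Proposition~\ref{prop:dense} (applied with $D=\Omega$) yields the uniform bound $|\Qcal_\Acal f(x,v)|\le C(1+|v|)$ on $\Omega\times W_\Acal$, with $C=C(\Acal,M)$ and $M$ the linear growth constant of $f$. In particular $\Qcal_\Acal\tilde f$ is finite; testing with the zero field gives $\Qcal_\Acal\tilde f^\eps(x,A)\le \tilde f(x,A)+\eps|A|\le M(1+|A|)+\eps|A|$, so $\Qcal_\Acal\tilde f^\eps$ is finite with linear growth as well (as $\Qcal_\Acal\tilde f^\eps\ge\Qcal_\Acal\tilde f$).

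Next I would fix $x,y\in\Omega$, $A\in W$ and $\delta>0$. Because $\Qcal_\Acal\tilde f^\eps(x,A)$ is finite, one may pick $w\in\Crm^\infty_\sharp(\Tbb^d;W)\cap\ker\Acal$ with $\int_Q\tilde f^\eps(x,A+w(z))\dd z\le\Qcal_\Acal\tilde f^\eps(x,A)+\delta$. Applying Proposition~\ref{prop:coercive} to $f(x,\sbullet)$ (whose linear growth constant is $\le M$, uniformly in $x$) gives $\|A+w\|_{\Lrm^1(Q)}\le\frac{C}{\eps}(1+|A|+\delta)$. By~\eqref{eq:span} the competitor $w$ is $W_\Acal$-valued, so $\mathbf p A+w=\mathbf p(A+w)$ and $\|\mathbf p A+w\|_{\Lrm^1(Q)}\le\|\mathbf p\|\,\|A+w\|_{\Lrm^1(Q)}$. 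Using $w$ as a competitor for $\Qcal_\Acal\tilde f^\eps(y,A)$ and subtracting, the $\eps|\sbullet|$ terms cancel, leaving
\[
\Qcal_\Acal\tilde f^\eps(y,A)-\Qcal_\Acal\tilde f^\eps(x,A)\;\le\;\int_Q\bigl(f(y,\mathbf p A+w(z))-f(x,\mathbf p A+w(z))\bigr)\dd z+\delta,
\]
which by the modulus estimate for $f$ is at most $\omega_f(|x-y|)\bigl(1+\|\mathbf p\|\tfrac{C}{\eps}(1+|A|+\delta)\bigr)+\delta$. Letting $\delta\to0^+$ and then exchanging the roles of $x$ and $y$ yields the claim with $\omega(t):=(1+\|\mathbf p\|C/\eps)\,\omega_f(t)$, a modulus of continuity depending only on $\eps$, $\Acal$, the linear growth constant of $f$, and the modulus of continuity of $f$.

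I do not expect a genuine obstacle here; the two points that need care are (i) ensuring $\Qcal_\Acal\tilde f^\eps$ is finite \emph{before} invoking the existence of near-optimal competitors — handled by Propositions~\ref{prop:properties}(a) and~\ref{prop:dense} — and (ii) checking that the constant produced by Proposition~\ref{prop:coercive} is uniform in the base point $x$, which holds because it depends only on $\Acal$ and the uniform linear growth constant $M$ of $f\in\E(\Omega;W_\Acal)$. The only bookkeeping subtlety is that the competitor $w$ takes values in $W_\Acal$ by~\eqref{eq:span}, so substituting $\mathbf p A$ for $A$ inside $f$ is legitimate and costs only the operator norm of $\mathbf p$.
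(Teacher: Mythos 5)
Your proof is correct and follows essentially the same route as the paper: pick a near-optimal competitor $w$ for $\Qcal_\Acal\tilde f^\eps(x,\sbullet)$, control $\|A+w\|_{\Lrm^1(Q)}$ via Proposition~\ref{prop:coercive}, transfer $w$ as a test field at the base point $y$, and convert the base-point change into an error term using the modulus of continuity of $f$, then let $\delta\to0^+$ and symmetrize. The only difference is cosmetic bookkeeping (tracking $\|\mathbf p\|$ explicitly, which the paper absorbs into the constant).
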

\begin{proof}
	We begin with two observations. First, that $\Qcal_\Acal \tilde f(x,\frarg)$ is finite implies that it has linear growth and that is globally Lipschitz with constants that depend solely on $\Acal$ and the linear growth constant of $f$ (cf. Propositions~\ref{prop:properties} and~\ref{prop:dense}). Now, let $\delta > 0$ and let $w_x \in \Crm^\infty_\sharp(\Tbb^d;W_\Acal)\cap \ker \Acal$ be such that (denoting $H \coloneqq \Qcal_\Acal \tilde f^\eps(x,\frarg)$)
	\[
	H(\xi) \ge \int_Q \tilde f^\eps (\xi + w_x(y)) \dd y - \delta. 
	\]
	The previous proposition yields that $\|\xi + w\|_{\Lrm^1(Q)} \le \eps^{-1}C(1 + |\xi| + \delta)$. By definition, we get
	\begin{align*}
		H(\xi) & \ge \int \tilde f^\eps(x,\xi + w(y)) \dd y - \delta \\
		&  = \int \tilde f^\eps(y,\xi + w(y)) \dd y - \delta + R(x,y)\\
		& \ge \Qcal_\Acal \tilde f^\eps(y,\xi) - \delta + R(x,y),
	\end{align*}
	where 
	\[
	|R(x,y)| \le \omega(|x-y|)\|w\|_{\Lrm^1(Q)} \le \frac{\tilde\omega(|x-y|)}{\eps}\Big(1 + |\xi| + \delta\Big)
	\]
	The desired bound follows by letting $\delta \to 0^+$ and exchanging the roles of $x,y$.
\end{proof}

\subsection{Sobolev spaces}\label{sec:sob}In order to continue our discussion, we need to recall some facts of the theory of general Sobolev spaces. The following definitions and background results about function spaces and the Fourier transform can be found in the monographs of \textsc{Adams}~\cite[Section~1]{adamsbook} and \textsc{Stein}~\cite[Section~VI.5]{steinbook}, as well as the full compendium of definitions and results contained in the book of \textsc{Triebel}~\cite{triebelbook}.  

Recall that $\Tbb^d \cong \R^d / \Zbb^d$ denotes the $d$-dimensional flat torus.
Let $\ell \in \Nbb_0$ and let $1 < p < \infty$.
The Sobolev space $\Wrm^{\ell,p}(\Tbb^d)$ is the collection of $\Zbb^d$-periodic functions $f$ all of whose distributional derivatives $\partial^\alpha f$ with $0 \le |\alpha| \le \ell$ belong to $\Lrm^p_\loc(\R^d)$. The norm of $\Wrm^{\ell,p}(\Tbb^d)$ is  
\[
	\|f\|_{ \Wrm^{\ell,p}(\Tbb^d)} \coloneqq \bigg( \sum_{|\alpha|\le \ell} \int_{\Tbb^d} |D^\alpha f|\bigg)^{\frac1p}.
\]
\begin{remark}
	$\Wrm^{0,p}(\Tbb^d) = \Lrm^p(\Tbb^d)$.
\end{remark}
Since the torus is a compact manifold, we also have $\Wrm^{\ell,p}(\Tbb^d) = \Wrm^{\ell,p}_0(\Tbb^d)$. Calder\'on showed (see~\cite[Thm~1.2.3]{adamsbook}) the equivalence $\Wrm^{\ell,p}(\Tbb^d) = \Lrm^{\ell,p}(\Tbb^d)$ between the classical Sobolev spaces and  the Bessel potential spaces, which are defined as
\[
\Lrm^{s,p}(\Tbb^d) \coloneqq \set{f \in \Dcal'(\Tbb^d)}{ \|f\|_{\Lrm^{s,p}} = \|\Ffrak^{-1}\big[{(1 + |\xi|^2)^{\frac s2} \widehat f \, \big]}\|_{\Lrm^p} < \infty}, \quad s \in \R,
\]
where $\Ffrak$ and $(\widehat{\frarg})$ denote the Fourier transform on periodic maps (see the next section).
We shall henceforth make indistinguishable use of  $\|\frarg\|_{\Wrm^{-\ell,p}}$ and  $\|\frarg\|_{\Lrm^{-\ell,p}}$ as norms of $\Wrm^{-\ell,p}$. A standard Hahn-Banach argument (see for instance~\cite[Prop. 9.20]{BrezisBook} for the case $\ell = 1$) shows that $u \in \Wrm^{-\ell,p}(\Tbb^d)$ if and only if there exists a family $\{f_\alpha\}_{0 \le |\alpha| \le k} \subset \Lrm^{p'}(\Tbb^d)$ such that
\[
u[v] = \sum_{0 \le |\alpha| \le \ell} \int_{\Tbb^d} f_\alpha\partial^{\alpha} v \quad \forall  v \in \Wrm^{\ell,p}(\Tbb^d),
\]
and
\[
	\|u\|_{\Wrm^{-\ell,p}(\Tbb^d)} = \max_{0 \le |\alpha| \le \ell} \|f_\alpha\|_{\Lrm^{p'}(\Tbb^d)}.
\]
Here $p' = p/(p-1)$. 
If $\{\rho_\eps\}_{\eps>0}$ is a family of standard mollifiers at scale $\eps > 0$, then the representation above implies that
\[
\|u \star \rho_\eps - u\|_{\Wrm^{-\ell,p}(\Tbb^d)} = \max_{0 \le |\alpha| \le \ell} \|f_\alpha \star \rho_\eps - f_\alpha\|_{\Lrm^{p'}(\Tbb^d)} \to 0 \qquad \text{as $\eps \to 0^+$}.
\]
\begin{remark}\label{rem:dense} This shows that
\[
	\text{$\Crm^\infty(\Tbb^d)$ is dense in $\Wrm^{-\ell,p}(\Tbb^d)$}
\] 
under standard mollification.
\end{remark}

Crucial to our theory are the following direct consequences of Morrey's embedding theorem (see Corollary~9.14 in Sec.~9.3 and Remark~20 in Section~9.4 of~\cite{BrezisBook}):
\begin{theorem}
	Let $p > d$ and let $U \subset \R^d$ be an open set or $U = \Tbb^d$. Then
	\[
	\Wrm^{1,p}_0(U) \embed \Crm^{0,\alpha}(U)  \cap \Crm_0(\cl U), \qquad \alpha = 1 - \frac{d}{p}\,.
	\]
\end{theorem}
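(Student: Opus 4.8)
The plan is to reduce everything to the classical Morrey inequality on $\R^d$: for $p > d$ every $\phi \in \Crm^\infty_c(\R^d)$ satisfies $\|\phi\|_{\Crm^{0,\alpha}(\R^d)} \le C(d,p)\|\phi\|_{\Wrm^{1,p}(\R^d)}$ with $\alpha = 1 - d/p$ (see~\cite[Sec.~9.3]{BrezisBook},~\cite[Sec.~VI.5]{steinbook}). First I would treat an open set $U \subseteq \R^d$. Since $\Wrm^{1,p}_0(U)$ is by definition the $\Wrm^{1,p}$-closure of $\Crm^\infty_c(U)$, it suffices to estimate $\phi \in \Crm^\infty_c(U)$: extending $\phi$ by zero gives $\bar\phi \in \Crm^\infty_c(\R^d)$ with $\|\bar\phi\|_{\Wrm^{1,p}(\R^d)} = \|\phi\|_{\Wrm^{1,p}(U)}$, whence $\|\phi\|_{\Crm^{0,\alpha}(U)} \le \|\bar\phi\|_{\Crm^{0,\alpha}(\R^d)} \le C\|\phi\|_{\Wrm^{1,p}(U)}$. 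For general $u \in \Wrm^{1,p}_0(U)$, pick $\phi_j \to u$ in $\Wrm^{1,p}$; the zero-extensions $\bar\phi_j$ are then Cauchy in $\Crm^{0,\alpha}(\R^d)$ and converge uniformly to some $\bar u \in \Crm^{0,\alpha}(\R^d)$ which agrees a.e.\ on $U$ with $u$. Each $\bar\phi_j$ is compactly supported in $U$, so the uniform limit $\bar u$ vanishes on $\partial U$ (and at infinity), i.e.\ $\bar u|_{\cl U} \in \Crm_0(\cl U)$; passing to the limit in the estimate above yields $\|u\|_{\Crm^{0,\alpha}(\cl U)} \le C\|u\|_{\Wrm^{1,p}_0(U)}$, which is the asserted embedding.

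For the torus, $\Tbb^d$ has no boundary, so $\Wrm^{1,p}(\Tbb^d) = \Wrm^{1,p}_0(\Tbb^d)$ and $\Crm_0(\Tbb^d) = \Crm(\Tbb^d)$, and the claim reduces to the Hölder bound $\|u\|_{\Crm^{0,\alpha}(\Tbb^d)} \lesssim \|u\|_{\Wrm^{1,p}(\Tbb^d)}$. I would obtain this by localization: fix a finite cover of $\Tbb^d$ by coordinate balls with a subordinate smooth partition of unity $\{\chi_i\}$, apply the Euclidean case to each $\chi_i u$ (using $\|\chi_i u\|_{\Wrm^{1,p}} \lesssim \|u\|_{\Wrm^{1,p}}$), and sum the resulting Hölder estimates; alternatively, lift $u$ to a $\Zbb^d$-periodic function on $\R^d$ and apply the Morrey inequality on a slightly enlarged fundamental cube, using periodicity to control the seminorm across cell faces.

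There is essentially no serious obstacle, since the statement is classical; the only points needing a line of care are (i) that the $\Wrm^{1,p}_0$-limit coincides a.e.\ with the uniform limit of the extensions, which is immediate because both $\Lrm^p$- and uniform convergence force the same a.e.\ limit, and (ii) that the limit lies in $\Crm_0(\cl U)$ and not merely in $\Crm^{0,\alpha}(\cl U)$, which is exactly the statement that a uniform limit of functions in $\Crm_c(U)$ belongs to $\Crm_0(\cl U)$ by definition of that space.
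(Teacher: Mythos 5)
Your proof is correct. The paper itself does not prove this theorem but simply cites it as a direct consequence of Morrey's embedding (Brezis, Cor.~9.14 and Remark~20, Sec.~9.3--9.4), and your argument — zero-extension of $\Crm^\infty_c(U)$ test functions to $\R^d$, the Euclidean Morrey estimate, and a density/Cauchy-sequence passage to $\Wrm^{1,p}_0(U)$, with the torus handled by a partition of unity or periodization — is exactly the standard argument those references supply.
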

\begin{corollary}\label{cor:compact_embedding} Let $U \subset \R^d$ be an open and bounded set or $U = \Tbb^d$. Then
	\[
	\Mcal_b(U) \cembed \Wrm^{-1,q}(U) \quad \text{for all $1 < q  < {d}/(d-1)$.}
	\]
	%
	%
\end{corollary}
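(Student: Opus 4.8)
\textbf{Proof proposal for Corollary~\ref{cor:compact_embedding}.} The plan is to combine the Morrey embedding stated just above with the Rellich--Kondrachov compactness theorem and a duality argument. First I would recall that, by the preceding theorem applied with any exponent $p > d$, one has the continuous embedding $\Wrm^{1,p}_0(U) \embed \Crm^{0,\alpha}(U) \cap \Crm_0(\cl U)$ with $\alpha = 1 - d/p$; moreover, on the bounded set $U$ (or on $\Tbb^d$, which is compact), the inclusion $\Crm^{0,\alpha}(\cl U) \embed \Crm_0(\cl U)$ is \emph{compact} by the Arzel\`a--Ascoli theorem, since $\Crm^{0,\alpha}$-bounded families are uniformly bounded and equicontinuous. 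Composing, $\Wrm^{1,p}_0(U) \cembed \Crm_0(\cl U)$ compactly for every $p > d$.

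Next I would dualize. Fix $p > d$ and let $q = p'$ be its conjugate exponent, so that $1 < q < d/(d-1)$ precisely when $p > d$ (and every $q$ in that range arises this way). The adjoint of a compact operator is compact, so taking Banach-space adjoints of the compact inclusion $\iota \colon \Wrm^{1,p}_0(U) \cembed \Crm_0(\cl U)$ yields a compact map $\iota^* \colon \Crm_0(\cl U)^* \to (\Wrm^{1,p}_0(U))^*$. By the Riesz representation theorem $\Crm_0(\cl U)^* = \Mcal_b(\cl U) \supset \Mcal_b(U)$, and by definition $(\Wrm^{1,p}_0(U))^* = \Wrm^{-1,q}(U)$. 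One checks that under these identifications $\iota^*$ is exactly the natural inclusion $\Mcal_b(U) \hookrightarrow \Wrm^{-1,q}(U)$: indeed for $\mu \in \Mcal_b(U)$ and $v \in \Crm^\infty_c(U)$ one has $\langle \iota^*\mu, v\rangle = \int_{\cl U} v \dd\mu = \int_U v \dd\mu$, which is the canonical action of the distribution $\mu$ on test functions, and $\Crm^\infty_c(U)$ is dense in $\Wrm^{1,p}_0(U)$. (In the torus case one uses $\Crm^\infty(\Tbb^d)$ dense in $\Wrm^{1,p}(\Tbb^d) = \Wrm^{1,p}_0(\Tbb^d)$.) Hence the inclusion $\Mcal_b(U) \hookrightarrow \Wrm^{-1,q}(U)$ is compact, which is the claim for that particular $q$; since $p > d$ was arbitrary, the statement holds for all $1 < q < d/(d-1)$.

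The only genuinely delicate point is the identification of the adjoint $\iota^*$ with the \emph{canonical} inclusion of measures into $\Wrm^{-1,q}$ — one must make sure the distributional action of a Radon measure and the functional produced by $\iota^*$ agree, which is why the density of $\Crm^\infty_c(U)$ in $\Wrm^{1,p}_0(U)$ is invoked; everything else is a direct citation of Morrey's embedding, Arzel\`a--Ascoli, and Schauder's theorem on adjoints of compact operators. A slightly more hands-on alternative, avoiding abstract adjoints, is to argue sequentially: given $\mu_j \rightharpoonup^* 0$ bounded in $\Mcal_b(U)$, for any $v$ in the unit ball of $\Wrm^{1,p}_0(U)$ one has $|\langle \mu_j, v\rangle| \le \|v\|_{\Crm_0}\,\|\mu_j\|_{\Mcal}$; approximating $v$ uniformly by a finite net using the compactness $\Wrm^{1,p}_0 \cembed \Crm_0$ and using pointwise convergence $\mu_j$ against each fixed continuous function, one concludes $\sup_{\|v\|\le 1}|\langle \mu_j, v\rangle| \to 0$, i.e. $\mu_j \to 0$ in $\Wrm^{-1,q}(U)$. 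Either route is short; I would present the adjoint-of-compact-operator version for brevity.
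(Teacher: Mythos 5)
Your proposal is correct and follows essentially the same route as the paper: Morrey's embedding together with Arzel\`a--Ascoli gives the compact embedding $\Wrm^{1,q'}_0(U) \cembed \Crm_0(\cl U)$ (with $q' > d$), and then Schauder's theorem on adjoints of compact operators (Brezis, Theorem~6.4, which the paper cites) yields the conclusion. The only difference is that you spell out the identification of the adjoint map $\iota^*$ with the canonical inclusion of $\Mcal_b(U)$ into $\Wrm^{-1,q}(U)$ via density of $\Crm_c^\infty(U)$, a step the paper leaves implicit.
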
 
\begin{proof} Notice that $q' > d$. Then, Morrey's embedding and  Ascoli–Arzelà’s theorem convey the compact embedding $\Wrm^{1,q'}_0(U) \cembed \Crm_0(U)$. 
	Since  these are Banach spaces, the assertion follows directly from~\cite[Theorem~6.4]{BrezisBook}. 
\end{proof} 
\begin{remark}
	Notice that it is not necessary to require that $U$ is Lipschitz or a domain with any type of regularity.
\end{remark}

\section{Analysis of constant rank operators}\label{sec:cr} Let us recall that our main assumption is that $\Acal$ is a linear operator of integer order $k$, from $W$ to $X$, that satisfies the constant rank condition
\begin{equation}\label{eq:cr2}
	\forall \xi \in \R^d - \{0\}, \qquad \rank \Abb(\xi) = const.
\end{equation}
In this section we shall assume that $\Acal$ is a non-trivial operator, i.e., $k \ge  1$ for otherwise all the results are trivially satisfied.
The aim of this section is to give a simple extension of the well-known $\Lrm^p$-multiplier projections for constant rank operators established by \textsc{Fonseca} \& \textsc{M\"uller} in~\cite{fonseca1999mathcal-a-quasi}. 
 The Fourier transform acts on periodic measures by the formula
\[
\widehat \mu(\xi) = \Ffrak u(\xi) \coloneqq \int_{\Tbb^d}  \mathrm{e}^{-2\pi \mathrm{i} x \cdot \xi} \dd \mu(x), \quad \mu \in \M(\Tbb^d;W).
\]  
Smooth periodic functions are represented by $\Ffrak^{-1}$ through the trigonometric sum
\[
u(x) =   \sum_{\xi \in \Zbb^d} \widehat u(\xi) \, \mathrm{e}^{2\pi \mathrm{i} x \cdot \xi}.
\]
The choice to primarily work with Fourier series  lies in the following characterization for constant rank operators due to \textsc{Raita}~\cite[Thm.~1]{raitua2019potentials} and its direct implication on periodic maps (see Lemma~\ref{lem:raita} below): Let $\Acal$ be an operator  from $W$ to $X$ as in~\eqref{eq:A}. Then $\Acal$ satisfies the constant rank condition if and only if then there exists a constant rank operator $\Bcal$  from  $V$ to $W$ such that 
\begin{equation}\label{eq:exact}
	\im \Bbb(\xi) = \ker \Abb(\xi) \quad \text{for all $\xi \in \R^d \setminus \{0\}$.}
\end{equation}
For the reminder of this section $\Acal$ and $\Bcal$ will be assumed to satisfy the exactness relation~\eqref{eq:exact}, we call $\Bcal$ an associated potential to $\Acal$ (we call $\Acal$ an associated annihilator of $\Bcal$).\footnote{The class of operators $\Bcal$ satisfying~\eqref{eq:exact} may have more than element.} \textsc{Raita} showed that every $\Acal$-free periodic field  is the $\Bcal$-gradient of a suitable potential. The following is a version for measures of the original statement~\cite[Lemma~5]{raitua2019potentials}:  

\begin{lemma}\label{lem:raita}
	Let $\mu \in \Mcal(\Tbb^d;W)$. Then $\mu$ satisfies  
	\[
	\Acal \mu = 0 \; \text{in $\Dcal(\Tbb^d;X)$} \quad \text{and} \quad \mu(\Tbb^d) = 0
	\]
	if and only if there exists a potential $u \in \Mcal(\Tbb^d;V)$ such that
	\[
	\mu = \Bcal u.
	\] 
\end{lemma}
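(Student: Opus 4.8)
The statement is essentially a Fourier-analytic characterization of zero-average $\Acal$-free periodic measures as $\Bcal$-gradients, and the plan is to deduce the measure-valued version from the known function/distribution version in \cite{raitua2019potentials} by a mollification and compactness argument. The \enquote{if} direction is elementary: if $\mu = \Bcal u$ for $u \in \Mcal(\Tbb^d;V)$, then for any test form $\phi \in \Crm^\infty(\Tbb^d;X)$ one has $\dprb{\Acal\mu,\phi} = \dprb{\Acal\Bcal u,\phi} = \dprb{u,\Bcal^*\Acal^*\phi}$, and since $\im\Bbb(\xi)\subseteq\ker\Abb(\xi)$ for $\xi\neq 0$ the composition $\Acal\Bcal$ annihilates on every nonzero Fourier mode; the zeroth mode is killed because $\Bcal$ is a homogeneous operator of positive order, so $\widehat{\Bcal u}(0) = \Bbb(0)\widehat u(0) = 0$. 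Hence $\Acal\mu=0$ and $\mu(\Tbb^d)=\widehat\mu(0)=0$.

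For the \enquote{only if} direction, first I would let $\{\rho_\eps\}_{\eps>0}$ be a family of standard mollifiers on $\Tbb^d$ and set $\mu_\eps := \mu\star\rho_\eps \in \Crm^\infty(\Tbb^d;W)$. Mollification commutes with the constant-coefficient operator $\Acal$ and with taking averages, so $\Acal\mu_\eps = 0$ and $\int_{\Tbb^d}\mu_\eps = 0$ for every $\eps$. By Raita's result \cite[Lemma~5]{raitua2019potentials} (the smooth/periodic case), there exists a potential $u_\eps \in \Crm^\infty(\Tbb^d;V)$ with $\Bcal u_\eps = \mu_\eps$; since adding a constant vector to $u_\eps$ does not affect $\Bcal u_\eps$ (again because $\Bcal$ has positive order), we may normalize $\int_{\Tbb^d} u_\eps = 0$. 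The crucial point is then a uniform a priori estimate: because $\Bcal$ satisfies the constant rank property, the Poincaré-type inequality $\|u_\eps - \pi_\Bcal u_\eps\|_{\Lrm^1}\lesssim\|\Bcal u_\eps\|_{\Wrm^{-k,1}}$ — equivalently, in the scale used elsewhere in the paper, the Fonseca--M\"uller/Calder\'on--Zygmund bound applied to $\Bcal$ — together with $\ker\Bcal$ being finite-dimensional on the torus (it consists of trigonometric polynomials with frequencies in a fixed bounded set, in fact the constants if $\Bcal$ is elliptic, and in general a finite-dimensional space) gives a bound
\[
\|u_\eps\|_{\Wrm^{k-1,q}(\Tbb^d)} \,\le\, C\,\|\Bcal u_\eps\|_{\Wrm^{-1,q}(\Tbb^d)} \,\le\, C\,|\mu_\eps|(\Tbb^d) \,\le\, C\,|\mu|(\Tbb^d)
\]
for a suitable $q \in (1, d/(d-1))$, using Corollary~\ref{cor:compact_embedding} to pass from the total variation norm of $\mu_\eps$ to a negative Sobolev norm uniformly in $\eps$. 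In particular $\{u_\eps\}$ is bounded in $\Wrm^{k-1,q}(\Tbb^d;V)$, hence (by Rellich--Kondrachov, or directly weak-$*$ compactness of measures when $k=1$) precompact in a topology strong enough to pass to the limit; extract a subsequence $u_{\eps_j} \rightharpoonup u$ in $\Wrm^{k-1,q}$ with $u \in \Mcal(\Tbb^d;V)$ after reinterpreting $u$ as a measure (when $k=1$, $u\in\Lrm^q\subset\Mcal$; when $k>1$ one already has more regularity).

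Finally I would check that $\Bcal u = \mu$ by passing to the limit in the distributional identity $\Bcal u_{\eps_j} = \mu_{\eps_j}$: the left side converges to $\Bcal u$ in $\Dcal'(\Tbb^d;W)$ because $u_{\eps_j}\to u$ in $\Dcal'$ and $\Bcal$ is continuous for the distributional topology, while the right side converges to $\mu$ since $\mu_\eps = \mu\star\rho_\eps \toweakstar \mu$; uniqueness of distributional limits then gives $\Bcal u = \mu$ as measures (both sides are measures, hence the identity holds in $\Mcal(\Tbb^d;W)$). I expect the main obstacle to be the uniform estimate step: one must ensure that the constant-rank Poincaré/Fonseca--M\"uller bound for $\Bcal$ is available in the endpoint-adjacent range $q<d/(d-1)$ and is genuinely uniform in $\eps$ (this is where the constant rank hypothesis on $\Bcal$ is indispensable, cf. \cite{guerra}), and that the kernel $\ker\Bcal$ on $\Tbb^d$ — which need not be just the constants for non-elliptic $\Bcal$ — is handled correctly when normalizing $u_\eps$; one cleanly sidesteps this by projecting off $\ker\Bcal$ via the $\Lrm^2$-projection $\pi_\Bcal$ rather than merely subtracting the average.
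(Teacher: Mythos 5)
Your proof is correct in substance, and one should note that the paper itself states Lemma~\ref{lem:raita} without a proof, deferring to \cite[Lemma~5]{raitua2019potentials}; so there is no in-paper argument to compare against, and what you have written fills in a genuine gap.

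That said, two remarks. First, the assertion that $\ker\Bcal$ is finite-dimensional on the torus is false in general (for $\Bcal = \mathrm{curl}$, say, it contains all gradients). It is fortunate that you already propose the correct fix — replace ``subtract the average'' by the frequency-wise projection onto $(\ker\Bbb(\xi))^\perp$, i.e.\ replace $u_\eps$ by its $\Bcal$-representative $(u_\eps)_\Bbb = \Ffrak^{-1}(\Bbb^\dagger\widehat{\mu_\eps})$ in the paper's notation — and with that replacement the a~priori bound is exactly Lemma~\ref{lem:3.2} applied to $\Bcal$ with $\ell = k_\Bbb - 1$, plus the embedding $\Mcal_b(\Tbb^d) \embed \Wrm^{-1,q}(\Tbb^d)$ of Corollary~\ref{cor:compact_embedding}; so the estimate is genuinely uniform in $\eps$, as you hoped.

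Second, the mollification-and-compactness detour can be removed entirely. Define $u := \Ffrak^{-1}\bigl(\Bbb^\dagger\,\widehat\mu\,\bigr)$ directly. The multiplier $|\xi|^{k_\Bbb}\Bbb(\xi)^\dagger$ is smooth on $\R^d\setminus\{0\}$, degree-$0$ homogeneous (constant rank), hence a Mihlin $(\Lrm^q,\Lrm^q)$-multiplier on $\Tbb^d$; combined with $\|\mu\|_{\Wrm^{-k_\Bbb,q}(\Tbb^d)}\lesssim \|\mu\|_{\Wrm^{-1,q}(\Tbb^d)}\lesssim |\mu|(\Tbb^d)$, this gives $u\in\Lrm^q(\Tbb^d;V)\subset\Mcal(\Tbb^d;V)$ without any limit argument (the density of smooth maps in $\Wrm^{-k_\Bbb,q}$ under mollification, cf.\ Remark~\ref{rem:dense}, is what makes the multiplier theorem apply, so the mollification you perform is already absorbed there). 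And $\Bcal u = \Ffrak^{-1}(\Bbb\Bbb^\dagger\widehat\mu) = \mu$ because $\Bbb(\xi)\Bbb(\xi)^\dagger$ is the orthogonal projection onto $\im\Bbb(\xi) = \ker\Abb(\xi)$, which contains $\widehat\mu(\xi)$ for every $\xi\neq 0$ precisely because $\mu$ is $\Acal$-free, and because $\widehat\mu(0)=0$ by hypothesis. Your \enquote{if} direction is fine as stated. So the argument is accepted, but you should correct the remark about $\ker\Bcal$ and may, if you wish, streamline the \enquote{only if} direction as above.
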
  


\subsection{$\Acal$-representatives} 
Denote by $\pi(\xi): W\to W$ the orthogonal projection from $W$ to $(\ker \Abb(\xi))^\perp$ for all $\xi \in \R^d - \{0\}$. A classical result of Schulenberger and Wilcox~\cite{wilcox1,wilcox2} states that if~\eqref{eq:cr} is verified, then the map $\xi \mapsto \pi(\xi)$ is an analytic map on $\R^d - \{0\}$, homogeneous of degree 0.   The Mihlin multiplier theorem implies that $\pi$ defines an $(\Lrm^p,\Lrm^p)$ multiplier on $\R^d$ for all $1 < p < \infty$, and standard \emph{multiplier transference methods} imply that if we set $\tilde \pi(\xi) = \pi(\xi)$, for $\xi \neq 0$, and $\tilde \pi(0) = 0$, then $\{\tilde\pi(\xi)\}_{\xi \in \Zbb^d}$ defines an $(\Lrm^p,\Lrm^p)$-multiplier on $\Tbb^d$ via the assignment (see Theorem~3.8, Corollary~3.16 and its remark below in~\cite{steinweiss} for further details):
\[
u_\Abb \coloneqq \Ffrak^{-1}(\tilde \pi \widehat u), \qquad u \in \Crm^\infty(\Tbb^d;W).
\]
%
By construction 
\begin{gather}
	\Abb \widehat {u_\Abb} = \Abb \widehat u \quad \Longrightarrow \quad \Acal u_\Abb = \Acal u. \label{eq:represent}\\
		\tilde \pi(0) = 0 \quad \Longrightarrow \quad \int_{\Tbb^d} u_\Abb = 0\label{eq:avg}.
\end{gather}

\subsubsection{Sobolev estimates} 
It is well-known that~\eqref{eq:cr} implies  the map $\xi \mapsto \Abb(\xi)^\dagger$ belongs to $\Crm^\infty(\R^d \setminus\{0\};\mathrm{Lin}(X^*,W))$ and is homogeneous of degree $-k$. Here, $M^\dagger$ denotes the Moore-Penrose inverse of $M$, which satisfies the fundamental algebraic identity $M^\dagger M = \mathrm{Proj}_{(\ker M)^\perp}$. 
Partying from this identity and using that $\widehat{\Acal u}(0) = \Abb(\xi)\widehat u(0) = 0$ for all $u \in \Crm^{\infty}(\Tbb^d;W)$, one finds that
\begin{equation}\label{eq:au}
	u_\Abb = \Ffrak^{-1}(\tilde \pi \widehat u) = \Ffrak^{-1}(\Abb(\frarg)^\dagger \widehat{\Acal u}),  \qquad u \in \Crm^\infty(\Tbb^d;W).
\end{equation}
The advantage of this perspective, is that it allows one to define $u_\Abb$ in terms of $\Acal u$ rather than $u$ itself.\footnote{The fact that $u_\Abb$ can be expressed as $\Fcal^{-1}(m \widehat{\Acal u})$ for some homogeneous multiplier $m$ of degree $(-k)$ goes back to the seminal work of \textsc{Fonseca \& M\"uller}~\cite{fonseca1999mathcal-a-quasi} on $\Acal$-free measures. The idea of exploiting the representation $m = \Abb^\dagger$ appeared in the work of \textsc{Gustafson}~\cite{gustafson} and more recently in the work of \textsc{Raita}~\cite{raitua2019potentials}.} 
Recalling the seminal ideas of Fonseca and M\"uller~\cite{fonseca1999mathcal-a-quasi}, we can exploit the representation in~\eqref{eq:au} to deduce Sobolev estimates on $u_\Abb$ directly from the regularity of $\Acal u$, as one would do for elliptic operators  (see also the exposition in~\cite{raictua2018mathrm}). In order to proceed with this task let us define the auxiliary spaces
\[
	\Wcal^{\ell,p}(\Tbb^d) \coloneqq \set{u \in \Wrm^{-1,p}(\Tbb^d;W)}{\Acal u \in \Wrm^{-k+\ell,p}(\Tbb^d)},
\]
where $\ell \in [0,k]$ is a positive integer and $1 < p < \infty$. These are Banach spaces of distributions when endowed with the natural norm $\|u\|_{\Wrm^{-1,p}} + \|\Acal u\|_{\Wrm^{-k+\ell,p}}$. Next, we show that the $\Acal$-representative operator can be extended to an operator with Sobolev-type properties on  $\Wcal^{\ell,p}(\Tbb^d)$:

\begin{lemma}\label{lem:3.2} Let $1 < p < \infty$ and let $\ell \in [0,k]$ be a positive integer. There exists a continuous linear map  $T : \Wcal^{\ell,p}(\Tbb^d)\to\Wrm^{\ell,p}(\Tbb^d)$ with the following properties:
	\begin{enumerate}[1.]
		\item $T[u] = u_\Abb$ for all $u \in \Crm^\infty(\Tbb^d;W)$,
		\item there exists a constant $C(p,\ell,\Acal)$ such that 
		\[
			\|T[u]\|_{\Wrm^{\ell,p}(\Tbb^d)} \le C \|\Acal u\|_{\Wrm^{-k+\ell,p}(\Tbb^d)},
		\]
		\item $\Acal (T[u]) = \Acal u$ in the sense of distributions on $\Tbb^d$, and
		\item $\int_{\Tbb^d} T [u] = 0$.
	\end{enumerate}
Moreover, $T$  is well-defined with respect to the inclusions 
\[
	\Wcal^{\ell,p}(\Tbb^d) \embed \Wcal^{\ell',p'}(\Tbb^d), \qquad 0 \le \ell' \le \ell, \quad 1 \le p' \le p < \infty,
\] 
in the sense that $T = T(\ell',p')$ is an extension of $T= T(\ell,p)$.

\end{lemma}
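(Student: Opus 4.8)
The plan is to construct $T$ explicitly on the Fourier side out of the Moore--Penrose multiplier $\xi\mapsto\mathbb A(\xi)^\dagger$, and then to read off all four listed properties, together with the compatibility claim, from the algebraic identities of the pseudo-inverse and from~\eqref{eq:au}. Concretely, put $m(\xi):=\mathbb A(\xi)^\dagger$ for $\xi\neq0$ and $m(0):=0$; under the constant rank hypothesis $m$ is smooth on $\R^d\setminus\{0\}$ and positively $(-k)$-homogeneous, as recalled above. For a periodic distribution $g$ set $S_m[g]:=\Ffrak^{-1}(m\,\widehat g)$, which is well defined since $\{m(\xi)\}_{\xi\in\Zbb^d}$ grows at most polynomially. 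The whole lemma rests on the single estimate
\[
\|S_m[g]\|_{\Wrm^{\ell,p}(\Tbb^d)}\le C(p,\ell,\Acal)\,\|g\|_{\Wrm^{-k+\ell,p}(\Tbb^d)},\qquad 1<p<\infty,\ \ell\in[0,k].
\]

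To prove this I would invoke Calder\'on's identification $\Wrm^{s,p}(\Tbb^d)=\Lrm^{s,p}(\Tbb^d)$ (extended to $s\le0$ by duality) and the Bessel potentials $J^s=\Ffrak^{-1}(1+|\xi|^2)^{s/2}\Ffrak$, reducing the estimate to the $\Lrm^p(\Tbb^d)$-boundedness of $J^{\ell}S_mJ^{k-\ell}$, whose symbol $\xi\mapsto(1+|\xi|^2)^{k/2}m(\xi)$ does not depend on $\ell$ --- so one bound handles every admissible $\ell$. Fix $\chi\in\Crm^\infty(\R^d)$ with $\chi\equiv0$ on $B_{1/2}$ and $\chi\equiv1$ off $B_1$, and put $\phi(\xi):=\chi(\xi)(1+|\xi|^2)^{k/2}m(\xi)$. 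Then $\phi$ is smooth and bounded and $|\partial^\gamma\phi(\xi)|\lesssim|\xi|^{-|\gamma|}$ for all $\xi\neq0$ and $|\gamma|\le\floor{d/2}+1$: on $\{|\xi|\ge1\}$ this is the Leibniz product of the $(-k)$-homogeneity bounds for $m$ with the order-$k$ symbol bounds for $(1+|\xi|^2)^{k/2}$, on the annulus $\tfrac12\le|\xi|\le1$ it is trivial by compactness, and on $B_{1/2}$ the function vanishes. Hence $\phi$ satisfies the Mihlin condition on $\R^d$. Since $\chi\equiv1$ on $\Zbb^d\setminus\{0\}$ while $\chi(0)=0$, the sequence $\{\phi(\xi)\}_{\xi\in\Zbb^d}$ is exactly the symbol of $J^{\ell}S_mJ^{k-\ell}$, so the multiplier transference theorem (the same one used above to define $u_\Abb$; see~\cite{steinweiss}) yields the desired $\Lrm^p(\Tbb^d)$-bound, with constant controlled by the Mihlin constant of $\phi$, hence by $\Acal$, $p$ and $d$ only.

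With this in hand, define $T[u]:=S_m[\Acal u]=\Ffrak^{-1}\big(m(\xi)\widehat{\Acal u}(\xi)\big)$ for $u\in\Wcal^{\ell,p}(\Tbb^d)$. Since $\Acal u\in\Wrm^{-k+\ell,p}(\Tbb^d;X)$ by definition of $\Wcal^{\ell,p}$, the estimate gives $T[u]\in\Wrm^{\ell,p}(\Tbb^d;W)$ and $\|T[u]\|_{\Wrm^{\ell,p}}\le C\|\Acal u\|_{\Wrm^{-k+\ell,p}}$, which is property~2 and, a fortiori, the continuity of $T$; linearity is clear. Property~1 is precisely~\eqref{eq:au}: for $u\in\Crm^\infty(\Tbb^d;W)$ one has $T[u]=\Ffrak^{-1}(\Abb(\frarg)^\dagger\widehat{\Acal u})=u_\Abb$ (the value of $m$ at $\xi=0$ is immaterial since $\widehat{\Acal u}(0)=\mathbb A(0)\widehat u(0)=0$). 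Property~4 reads $\int_{\Tbb^d}T[u]=\widehat{T[u]}(0)=m(0)\widehat{\Acal u}(0)=0$. For property~3 I would compare Fourier coefficients and use the Penrose identity $\mathbb A(\xi)\mathbb A(\xi)^\dagger\mathbb A(\xi)=\mathbb A(\xi)$:
\[
\widehat{\Acal(T[u])}(\xi)=\mathbb A(\xi)\widehat{T[u]}(\xi)=\mathbb A(\xi)\mathbb A(\xi)^\dagger\mathbb A(\xi)\widehat u(\xi)=\mathbb A(\xi)\widehat u(\xi)=\widehat{\Acal u}(\xi)\qquad(\xi\neq0),
\]
with both sides vanishing at $\xi=0$, whence $\Acal(T[u])=\Acal u$ in $\Dcal'(\Tbb^d;X)$.

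For the compatibility statement I would observe that the defining formula $T[u]=\Ffrak^{-1}(m\widehat{\Acal u})$ makes no reference to $(\ell,p)$, so it assigns the same element of $\Dcal'(\Tbb^d;W)$ to $u$ whichever of the nested spaces $\Wcal^{\ell,p}(\Tbb^d)$ one regards $u$ as lying in; combined with the inclusions $\Wcal^{\ell,p}(\Tbb^d)\embed\Wcal^{\ell',p'}(\Tbb^d)$ for $0\le\ell'\le\ell$ and $1\le p'\le p<\infty$ (valid because $\Tbb^d$ has finite measure, so $\Lrm^{p}(\Tbb^d)\embed\Lrm^{p'}(\Tbb^d)$, together with monotonicity of the Sobolev scales), this gives that $T(\ell',p')$ restricted to $\Wcal^{\ell,p}(\Tbb^d)$ equals $T(\ell,p)$. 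The one genuinely technical point is the multiplier estimate of the second paragraph; everything else is bookkeeping with the Moore--Penrose algebra and~\eqref{eq:au}. Even that point is shallow: its content is only that the constant rank property makes $\mathbb A(\xi)^\dagger$ smooth and $(-k)$-homogeneous, after which a cutoff near the origin turns $(1+|\xi|^2)^{k/2}\mathbb A(\xi)^\dagger$ into a genuine Mihlin multiplier on $\R^d$ with unchanged values on $\Zbb^d$, and the periodic estimate follows by transference just as in the construction of $u_\Abb$.
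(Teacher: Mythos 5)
Your proof is correct and rests on the same pillars as the paper's: the Moore--Penrose multiplier $\Abb(\xi)^\dagger$, Mihlin's theorem, and multiplier transference from $\R^d$ to $\Tbb^d$. The one meaningful point of difference is in the bookkeeping of the $(L^p,L^p)$-estimate. The paper fixes a multi-index $\alpha$ with $|\alpha|=\ell$, writes $\widehat{\partial^\alpha u_\Abb}=\tilde m\,\big(|\xi|^{\ell-k}\widehat{\Acal u}\big)$ with the degree-zero homogeneous multiplier $m(\xi)=(2\pi\mathrm i)^k\xi^\alpha|\xi|^{k-\ell}\Abb(\xi)^\dagger$, applies Mihlin and transference to each such $m$ separately, uses the equivalence between $\|\frarg\|_{\Wrm^{-s,p}}$ and $\|\frarg\|_{\dot\Wrm^{-s,p}}$ on mean-zero periodic distributions, and finishes by summing over $|\alpha|=\ell$ and invoking Poincar\'e. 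You instead conjugate once by Bessel potentials $J^\ell(\cdot)J^{k-\ell}$, which collapses the whole family $\{\ell\in[0,k]\}$ to the single symbol $(1+|\xi|^2)^{k/2}\Abb(\xi)^\dagger$ (cut off near the origin), independent of $\ell$. This is tidier: one Mihlin check instead of one per multi-index, and no Poincar\'e step. The trade-off is that you lean on Calder\'on's identification $\Wrm^{s,p}=\Lrm^{s,p}$ (including its extension to negative $s$) somewhat more directly than the paper needs to, but the paper already invokes that identification, so no new hypotheses enter. Your treatment of properties (1), (3), (4) via the Penrose identity $\Abb\Abb^\dagger\Abb=\Abb$ on Fourier coefficients, and your compatibility argument from the $(\ell,p)$-independence of the defining formula, match the paper's remarks (\eqref{eq:represent}--\eqref{eq:avg} and the density observation) in substance while being a touch more explicit.
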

\begin{proof} Let us define $T$ on smooth maps as:
	\[
		T[u] \coloneqq \Ffrak^{-1}(\Abb(\frarg)^\dagger \widehat{\Acal u}),
	\]
	so that
	property (1) follows from~\eqref{eq:au}. In light of Remark~\ref{rem:dense}, in order to prove that $T$ extends to $\Wcal^{\ell,p}(\Tbb^d)$ with linear bound as in (2), it suffices to prove (2) for $u \in \Crm^\infty(\Tbb^d;W)$. Notice that once (2) has been established, it also suffices to verify that (3)-(4) hold for smooth maps; properties (3)-(4) for smooth maps follow from~\eqref{eq:represent}-\eqref{eq:avg}.

%
%
We shall therefore focus in proving (2) for smooth maps. Fix an integer $\ell \in [0,k]$ and consider the multiplier
\[
\xi \mapsto m(\xi) = (2\pi\mathrm i)^k\xi^\alpha|\xi|^{k-\ell}\Abb(\xi)^\dagger, \qquad \xi \in \R^d - \{0\},
\] 
where $\alpha \in \Nbb_0^d$ be a multi-index with $|\alpha| = \ell$. Consider the family $\{\tilde m(\xi)\}_{\xi \in \Zbb^d}$ defined by the rule  $\tilde m(\xi) = m(\xi)$ for all $\xi \in \Zbb^d - \{0\}$ and $\tilde m(0)= 0$. Partial differentiation and the properties of the Fourier transform yield
\begin{gather}
\widehat{\partial^\alpha u_\Abb} = (2\pi\mathrm i)^k(\frarg)^\alpha\Abb(\frarg)^\dagger \widehat{\Acal u} = \tilde m \,(|\frarg|^{\ell - k}\widehat {\Acal u}),
\end{gather}
for all $u \in \Crm^\infty(\Tbb^d;W)$. Hence, inverting the Fourier transform at both sides of the equation gives
\[
\partial^\alpha u_\Abb = \Ffrak^{-1}\big(\tilde m \Ffrak\,\big(\Ffrak^{-1}\big(|\xi|^{\ell - k}\widehat{\Acal u}\,\big)\big)\big).
\]
We readily verify that $m$ is homogeneous of degree zero, analytic on $\R^d \setminus \{0\}$. Then, in light of the transference of multipliers discussed above, the Mihlin multiplier theorem implies that the assignment $f \mapsto \Ffrak^{-1}( \tilde m \widehat {f})$
extends to an $(\Lrm^p,\Lrm^p)$-multiplier on $\Tbb^d$. In particular,
\begin{equation}\label{eq:bound}
	\begin{split}
	\|\partial^\alpha u_\Abb\|_{\Lrm^p(\Tbb^d)} 
	&  \le C_{\alpha,p} \left\|\Ffrak^{-1}\left(|\xi|^{\ell - k}\widehat{\Acal u}\right)\right\|_{\Lrm^p(\Tbb^d)} \\
	&  = C_{\alpha,p} \|\Acal u\|_{\Wrm^{-k + \ell}(\Tbb^d)}.
\end{split}
\end{equation}
Here, in passing to the last equality we have used that the Mihlin multiplier theorem implies that the norms 
\[
	\|\sigma\|_{\dot\Wrm^{-s,p}} \coloneqq \left\|\Ffrak^{-1}\left(|\xi|^{-s}\widehat{\sigma}\right)\right\|_{\Lrm^p(\Tbb^d)}
	\]
	and
	\[
	\|\sigma\|_{\Wrm^{-s,p}} \coloneqq \left\|\Ffrak^{-1}\left([1 + |\xi|^2]^{-\frac{s}{2}}\widehat{\sigma}\right)\right\|_{\Lrm^p(\Tbb^d)}
\] 
are equivalent on $\Crm^\infty_\sharp(\Tbb^d) \coloneqq \setn{\sigma \in \Crm^\infty(\Tbb^d)}{\widehat \sigma(0) = 0}$.
Running through all multi-indexes $|\alpha| = \ell$ and using Poincar\'e's inequality for periodic mean-zero functions yields the sought assertion.\end{proof}

Corollary~\ref{cor:compact_embedding} and Lemma~\ref{lem:3.2} allow us to extend the notion of $\Acal$-representative to certain subspaces of measures:


\begin{definition} Let $\ell \in [0,k]$ be an integer and let $1 < q < d/(d-1)$. If $\mu \in \Mcal(\Tbb^d;W)$ is a measure with $\Acal \mu \in \Wrm^{-k+\ell,q}(\Tbb^d)$, then by Corollary~\ref{cor:compact_embedding} we may define the $\Acal$-representative of $\mu$ as
	\[
		\mu_\Abb \coloneqq T[\mu],
	\]
	where $T$ is the linear map from 
	Lemma~\ref{lem:3.2}. 
	
	Notice that $\mu_\Abb$ is well-defined regardless of the choice of $q$, it has mean-value zero, it satisfies $\Acal \mu_\Abb = \Acal \mu$ in $\Dcal'(\Tbb;X)$ and 
	\[
		\|\mu_\Abb\|_{\Wrm^{\ell,q}(\Tbb^d)} \le C \|\Acal \mu\|_{\Wrm^{-k+\ell,q}(\Tbb^d)}
	\]
	for some constant $C = C(q,\ell,\Acal)$.
\end{definition}

		\subsection{Localization estimates} We close this section with a useful observation for estimates concerning the localization with cut-off functions. Let $\phi \in \Crm^\infty_c(\R^d)$. The commutator of $\Acal $ on $\phi$ is the linear partial differential operator 
		\[
			[\Abb,\phi] \coloneqq \Acal(\phi \,\frarg) - \phi \Acal ,
		\]
		where $\phi$ acts as a multiplication operator. It acts on distributions $\eta \in \Dcal'(\R^d;W)$ as $[\Acal,\phi](\eta) = \Acal(\phi\eta) - \phi \Acal \eta$. Due to the Leibniz differentiation rule, 
		$[\Abb,\phi]$ is a partial differential operator of order $(k - 1)$ from $W$ to $X$, with smooth coefficients depending solely on the coefficients of the principal symbol $\Abb$ and the first $k$ derivatives of $\phi$. 	In particular, if $\mu \in \M(\R^d;W)$ satisfies $\Acal \mu \in \Wrm_\loc^{-k,p}(\R^d)$, then by Corollary~\ref{cor:compact_embedding} we get  
		 \begin{equation}\label{eq:com1}
		 	\Acal(\phi \mu) = \phi \Acal \mu  + [\Abb,\phi](\mu) \in \Wrm_\loc^{-k,p}(\R^d),
		 \end{equation}
		 and
		 \begin{equation}\label{eq:com2}
		 	\|\Acal(\phi \mu)\|_{\Wrm^{-k,p}(K)} \lesssim \|\phi\|_{k,\infty(K)} \big(|\mu|(K) + \|\Acal \mu\|_{\Wrm^{-k,p}(K)}\big) 
		\end{equation}
		{for all $\phi \in \Crm^\infty(\R^d)$ with $\supp(\phi) \subset K \Subset \R^d$.}

\section{Proof of the approximation theorems}

\subsection{Proof of Theorem~\ref{lem:app}} Let us recall that we are given an $\Acal$-free measure $\mu\in \Mcal(\Omega;W)$, and we aim to find a sequence of $\Acal$-free measures that area-strict converges to $\mu$. We may, without loss of generality assume that $\Omega \subset Q$. 
	
	\textit{Step~1. An asymptotically $\Acal$-free converging recovery sequence.} Let $1 < q < d/(d-1)$. First, we show that there exists a sequence $\{\mu_j\} \subset \Crm^\infty(\Omega;W)$ such that 
	\[
		\Acal \mu_j \to 0 \; \text{in $\Wrm^{-k,q}(\Omega)$ \quad and \quad 
			$\mu_j$ area-strictly converges to $\mu$ on $\Omega$}.
	\]
	We give a variant of the construction given in Step~2 of~\cite[Sec.~5.1]{arroyo-rabasa2017lower-semiconti}:
	Let $\{\varphi_i\}_{i\in \mathbb \N} \subset \Crm^\infty_c(\Omega)$ be a locally finite 
	partition of unity of $\Omega$. For a measure or function $\sigma$ on $\Omega$, we set
	\[
	\sigma_{i} \coloneqq \phi_i \sigma,\; \sigma_{i,\eps} := \varphi_i(\sigma \star \rho_\eps),
	\] 
	where $\rho_\eps = \eps^{-d}\rho(\frarg/\eps)$ is a standard radial mollifier at scale $\eps > 0$.
		Let us begin with a few observations:
	\begin{enumerate}[(a)]
		\item Every $\mu_{i,\eps}$ is a compactly supported on $\supp(\phi_i) \Subset \Omega \subset Q$. Therefore we may naturally consider each $\mu_{i,\eps}$ as an element of $\Crm^\infty(\Tbb^d;W)$.
		%
		%
		\item The $\Acal$-free constraint on $\mu$ implies that $\Acal \mu_{i,\eps} = [\Abb,\phi_i](\mu_\eps)$ for all $i \in \Nbb_0$, where we recall that $[\Abb,\phi_i]$ is a linear operator of order $(k-1)$.
		\item As $\{\phi_i\}_{i \in \Nbb}$ is a locally finite partition, we can take linear operators inside and outside arbitrary sums subjected to it. In particular, 
		\[
		\sum_{i = 1}^\infty \Acal \mu_i = \Acal  
		\bigg(\sum_{i = 1}^\infty \mu_i \bigg) = \Acal \mu = 0. 
		\]
	\end{enumerate}
	By standard measure theoretic arguments we get that
	\begin{align}
	\mu_{i,\eps}\Leb^d &\toweakstar \mu_{i} &\text{in $\Mcal(\Tbb^d;W)$ \quad as $\eps \to 0^+$},\label{eq:41}\\
	(\ac\mu)_{i,\eps} &\to (\ac\mu)_{i} &\text{in $\Lrm^1(\Tbb^d;W)$ \quad as $\eps \to 0^+$},\\
		\|\phi_i \star \rho_\eps - \phi_i\|_\infty &\to 0 &\text{as $\eps \to 0^+$},\\
	\Acal \mu_{i,\eps} &\to  \Acal\mu_i &\text{in $\Wrm^{-k,q}(\Tbb^d)$ \quad as $\eps \to 0^+$},\label{eq:42}
\end{align}
where in establishing~\eqref{eq:42} we have used that~\eqref{eq:41} implies 
$\mu_{i,\eps} \to \mu_{i,0}$ in $\Wrm^{-1,q}(\Tbb^d)$ (see Corollary~\ref{cor:compact_embedding}) so that $[\Abb,\phi_{i}](\mu_\eps -\mu_0) \to 0$ in $\Wrm^{-k,q}(\Tbb^d)$. Then, In light of~\eqref{eq:41}-\eqref{eq:42}, for every $i \in \Nbb$ we may choose $0 < \eps_j(i)< \dist(\supp \phi_i,\partial\Omega)$ such that (writing  $\tilde \sigma_{i,j} \coloneqq \sigma_{i,\eps_j(i)}$) 
\begin{align*}
	d_\star(\tilde\mu_{i,j},\mu_i)_{\Tbb^d} &\le \frac{1}{2^i\times j}, \\
	\|\tilde{(\ac\mu)}_{i,j} - (\ac\mu)_i\|_{\Lrm^1(\Tbb^d)} &\le\frac{1}{2^i\times j}, \\
		\|\phi_i \star \rho_{\eps_i(j)} - \phi_i\|_\infty &\le\frac{1}{2^i\times j} \times \min\left\{1,\frac{1}{B_{\eps_i(j)}(\supp \phi_i)}\right\},\\
	\|\Acal (\tilde \mu_{i,j} - \mu_i)\|_{\Wrm^{-k,q}(\Tbb^d)} &\le \frac{1}{2^i\times j}.
\end{align*}
Now, for a measure or function $\sigma$ on $\Omega$ let us define 
	\[
		\tilde\sigma_j \coloneqq \sum_{i = 1}^\infty \tilde \sigma_{i,j} \, \Leb^d, \qquad  j \in \Nbb.
	\]
	By construction we have
	\[
		d_\star(\tilde\mu_j,\mu)_\Omega \le \sum_{i = 1}^\infty d_\star(\tilde \mu_{i,j},\mu_i)_{\Tbb^d} \le  \frac{1}{j},
	\]
	which shows the first condition, that indeed $\tilde \mu_j \toweakstar \mu$ on $\Omega$. In particular, the sequential  lower semicontinuity of the area functional gives the lower bound
	\[
		\area{\mu,\Omega} \le \liminf_{j \to \infty} \area{\tilde\mu_j,\Omega}.
	\]
	Next, we prove the upper bound: 
	We use that $\sqrt{1 + |w|} + |z| \ge \sqrt{1 + |w + z|}$ for all $w,z \in W$ to deduce that (recall that $\Omega \subset Q$)
	\begin{align*}
		\area{\tilde \mu_j,\Omega} &\le \area{\ac\mu,\Omega} + |\tilde{(\mu^s)}_j|(\Omega) + \|\tilde{(\ac\mu)}_j - \ac\mu\|_{\Lrm^1(\Omega)}\\
		& \le \area{\ac\mu,\Omega} + |\tilde{(\mu^s)}_j|(\Omega) + \frac 1j  \\
		&  \le \area{\ac\mu,\Omega} + |\mu^s|(\Omega) + \frac{2}{j},
	\end{align*}
where the last inequality follows from the intermediate step 
\begin{align*}
	|\tilde{(\mu^s)}_{i,j}|(\Omega) & = \dpr{|\mu^s\star \rho_{\eps_j(i)}|,\phi_i} \\
	& \le \dpr{|\mu^s|,\phi_i \star \rho_{\eps_j(i)}} \\
	& = \dpr{|\mu^s|,\phi_i} + \dpr{|\mu^s|,\phi_i \star \rho_{\eps_j(i)} - \phi} \\
	& \le |\phi_i\mu^s|(\Omega) + \frac{1}{2^i\times j}.
\end{align*}
Here, the passing to the second inequality follows from Jensen's inequality and the radial symmetry of $\rho_\eps$.
Hence, we conclude that
\[
	\limsup_{j \to\infty} \area{\tilde \mu_j,\Omega} \le \area{\mu,\Omega}. 
\] 
This, together with the lower bound and the convergence $\tilde \mu_j \toweakstar \mu$ imply that $\tilde \mu_j$ converges area-strictly to $\mu$ on $\Omega$. Lastly, we use the triangle inequality and the embedding $\Wrm^{k,q'}_0(\Omega) \embed \Wrm^{k,q'}(\Tbb^d)$ to find that
\[
	\|\Acal(\tilde \mu_j - \mu)\|_{\Wrm^{-k,q}(\Omega)} \le \sum_{i = 1}^\infty \|\Acal(\tilde \mu_{i,j} - \mu_i)\|_{\Wrm^{-k,q}(\Tbb^d)} \le \frac{1}{j},
\]
which shows that indeed $\Acal \tilde \mu_j \to \Acal \mu$ strongly in $\Wrm^{-k,q}(\Omega)$.


%

	\textit{Step~2. Construction of the $\A$-free sequence.}   Let us fix $i \in \Nbb$.  We write $v_{i,j} \coloneqq (\tilde \mu_{i,j})_\Abb$ and $v_i \coloneqq (\mu_i)_\Abb$.
	Then, in light of the estimates from Lemma~\ref{lem:3.2} and the triangle inequality we get
	\begin{equation}\label{eq:VV}
		\|v_{i,j} - v_{i}\|_{\Lrm^{q}(\Omega)} \le C \|\Acal (\tilde \mu_{i,j} - \mu_i)\|_{\Wrm^{-k,q}(\Omega)} \le \frac{C}{2^i\times j}.
	\end{equation}
	This proves that $v_{i,j} \to v_i$ in $\Lrm^{q}(\Omega)$. 
	Now, let us look at the translations $(\tilde \mu_{i,j} - v_{i,j})\Leb^d \in \M(\Tbb^d;W)$. These are $\A$-free by construction, which lead us to  define the following candidate for an $\A$-free recovery sequence:
	\[
	u_j \coloneqq \sum_{i = 1}^\infty  (\tilde\mu_{i,j} - v_{i,j} + v_i) \, \Leb^d, \quad j \in \Nbb.
	\]
	
	\emph{Claim~1.} Each $u_j$ is $\A$-free. Indeed,
%
	\[
	\Acal  u_j = \sum_{i = 1}^\infty  \Acal v_i = \sum_{i = 1}^\infty  \Acal(\phi_i \mu) = \Acal \mu = 0 \quad \text{in the sense of distributions on $\Omega$.}
	\]

%

	\emph{Claim~2.} The sequence $\{u_j \Leb^d\}$ area-strict converges to $\mu$. Since $\{\tilde \mu_j \, \Leb^d\}$ already area-converges to $\mu$, it suffices to show that 
	$\tilde \mu_j$ and $u_j$ are asymptotically $\Lrm^1$-close to each other (this is sufficient to ensure the asymptotic closeness of the area functional, which has a uniformly Lipschitz integrand). This is easily verified  since
	\begin{align*}
	\left\|\tilde\mu_j -  u_j\right\|_{\Lrm^1(\Omega)} & \lesssim_q 
	\sum_{i = 1}^\infty \|v_{i,j} - v_i\|_{\Lrm^q(\Omega)} \\
	& \stackrel{\eqref{eq:VV}}\lesssim  \frac{1}{j}\,.
	\end{align*}
	This proves the second claim, which finishes the proof.  
\qed

\subsection{Proof of Theorem~\ref{lem:app2}} Let us recall that we are given $u \in \Mcal(\Omega;V)$ with $\Bcal u \in \Mcal(\Omega;W)$. We want to show there exists a uniformly bounded sequence $\{u_j\}\subset \Crm^\infty(\Omega;V)$ such that
\[
\text{$\Bcal u_j \, \Leb^d$ area-strictly converges to $\Bcal u$ on $\Omega$.}
\]
\begin{proof} First we need to establish Sobolev estimates for the $\Bcal$-representatives of localizations of an arbitrary potential $w \in \Mcal(\Omega;V)$. 
	
	In all that follows we may assume that $\Omega \Subset \Tbb^d$. As in previous arguments, we shall indistinguishably identify compactly supported functions on $\Omega$ with their periodic extensions on $\Tbb^d$.
	Let $\Omega' \subset \Omega_{k_\Bbb-1} \Subset \dots \Subset \Omega_1 \Subset \Omega_0$ be a nested family of equidistant Lipschitz open sets. By standard methods, we may find cut-off functions $\psi_1,\dots,\psi_k \in \Crm_c^\infty(\Omega;[0,1])$ satisfying
	\[
		1_{\Omega_{r+1}} \le \psi_r \le 1_{\Omega_r}, \quad \|D^k \phi_r\|_\infty \lesssim \dist(\Omega',\partial\Omega) \qquad \forall\; 0 \le r \le k_\Bbb-1.
 	\]
 	Let $w \in \Mcal(\Omega;V)$ be such that $\Bcal w \in \Mcal(\Omega;W)$.
 	The main advantage of the potential framework is that we may localize inside the PDE: we define a sequence of smooth functions by setting $\sigma_{r} \coloneqq \psi_r w$. Computing the $\Bcal$-gradient we find that, for $1 \le r \le {k_\Bbb} -1 $ it holds
 	\[
 		\Bcal \sigma_{r} = \psi_r (\Bcal w) + [\Bbb,\psi_r](w) = \psi_0(\Bcal \sigma_{r-1}) + [\Bbb,\psi_r](\sigma_{r-1}).
 	\]
 	The idea now is to deduce Sobolev estimates for their $\Bcal$-representatives following a standard bootstrapping argument: Since $\Bcal$ satisfies the constant rank condition, we may define $w_{r} \coloneqq (\sigma_{r})_\Bbb$.
 	Then, by Lemma~\ref{lem:3.2}, we deduce the a priori estimates
 	\begin{align*}
 		\|w_{r}\|_{\Wrm^{r,q}(\Tbb^d)} & \le C \big(\|\psi_r\Bcal w\|_{\Wrm^{-{k_\Bbb} + r,q}(\Tbb^d)} + \|[\Bbb,\psi_r](\sigma_{r-1})\|_{\Wrm^{-{k_\Bbb} + r,q}(\Tbb^d)} \big) \\
 		& \le C \big(\|\Bcal w\|_{\Wrm^{-1,q}(\Tbb^d)}  + \|\sigma_{r-1}\|_{\Wrm^{r-1,q}(\Tbb^d)}\big),
 	\end{align*}
 where the constant $C$ may change from line to line and depends solely on $\dist(\Omega',\Omega)$, $\Bbb$ and $q$. Iteration of this bounds until the $({k_\Bbb}-1)$\textsuperscript{th} step yields 
 \[
 	\|w_{k_\Bbb-1}\|_{\Wrm^{r,q}(\Tbb^d)} \le C \Big(\|\Bcal w\|_{\Wrm^{-1,q}(\Tbb^d)}  + \|w\|_{\Wrm^{-1,q}(\Tbb^d)}\Big)
 \]
 for yet another constant $C = C(q,\Bbb,\dist(\Omega',\partial \Omega))$. With these estimates in hand, the approximation argument follows almost the same lines as the one used in the proof of Theorem~\ref{lem:app}. Therefore, we shall only give a sketch of the proof: Let $\{\phi_i\} \subset \Crm^\infty_c(\Omega)$ be a locally finite partition of unity, and assume that $\phi_i = (\psi_i)_{k_\Bbb -1}$ so that we may apply the previous estimates on localizations of the form $\phi_i w$.
\begin{enumerate}[1.]
	\item Set $w_{i,j} \coloneqq (\phi_i(u \star \rho_{\eps_j}))_\Bbb$ and $w_i \coloneqq (\phi_i u)_\Bbb$. Then, first applying the bootstrapping argument above with $w = u$, and subsequently with $w = u \star \rho_{\eps_j} - u$, conveys the estimate 
	\begin{align*}
		\|w_{i,j} - w_i\|_{\Wrm^{k_\Bbb-1,q}(\Tbb^d)} & \le C \Big(\|\Bcal (u \star \rho_{\eps_j}) - \Bcal u\|_{\Wrm^{-1,q}(\Tbb^d)} \\
		& \qquad + \|u \star \rho_{\eps_j} - u\|_{\Wrm^{-1,q}(\Tbb^d)}\Big) \to 0 \quad \text{as $\eps \to^+ 0$}.
	\end{align*}
Here $C = C(q,\phi_i,\Bbb)$. 
	\item Similarly to the previous proof, we define 
	\[
		u_j \coloneqq \sum_{i = 1}^\infty w_{i,j(i)} \in \Crm^\infty(\Omega;V).
	\]
	where, for each $i \in \Nbb$, $\eps_{j(i)} > 0$ is chosen so that
	\begin{align*}
		d_\star(\Bcal w_{i,j(i)},\Bcal w_i) & \le \frac{1}{2^i \times j}, \\
		\|\phi_i(\ac\Bcal u \star \rho_{\eps_{j(i)}} - \ac\Bcal u)\|_{\Lrm^1(\Tbb^d)} & \le \frac{1}{2^i \times j},\\
		\|w_{i,j(i)} - w_i \|_{\Wrm^{k_\Bbb-1,q}(\Tbb^d)} & \le \frac 1{C(\Bbb,\phi_i)} \times \frac{1}{2^i \times j},
	\end{align*}
where $C(\Bbb,\phi_i) \coloneqq \max\{1,\|\Bbb\|_{\infty}(\Sbb^{d-1}) + \|\phi_i\|_{\Wrm^{k_\Bbb,\infty}}^{k_\Bbb}\}$. 
	\item It follows that 
	\[
		d_\star(\Bcal u_j,\Bcal u) \le \sum_{i} d_\star (\Bcal w_{i,j(i)},\Bcal w_i) \le \frac{1}{j}.
	\]
	This proves the convergence $\Bcal w_j \toweakstar \Bcal w$ in $\Mcal(\Omega;W)$. Moreover, the convexity of the area functional implies the lower bound
    \[
    \area{\Bcal u,\Omega} \le \liminf_{j \to \infty}\area{\Bcal u_j,\Omega}.
    \]
	\item We decompose $\Bcal u_j =  \ac\Ibf_j    + \Ibf^s_j +  \Ibf\Ibf_j$, where
	\[
		\Ibf_j^\sigma \coloneqq \sum_{i = 1}^\infty \phi_i (\Bcal^\sigma u \star \rho_{\eps_{j(i)}}), \quad \Ibf\Ibf_j \coloneqq \sum_{i=1}^\infty [\Bbb,\phi_i](w_{i,j(i)}), \quad \sigma = \mathrm{ac},s.
	\]
	Young's inequality implies $|\Ibf^s_j|(\Omega) \le |\Bcal^s u|(\Omega)$, and from the estimates of Step~2 we deduce that
	\[
		  \ac\Ibf_j\to \ac\Bcal u \; \text{in $\Lrm^1(\Omega)$} \quad \text{and} \quad \Ibf\Ibf_j \to \Ibf\Ibf \coloneqq \sum_{i=1}^\infty [\Bbb,\phi_i](w_i) \; \text{in $\Lrm^q(\Omega)$}.
	\]
	Thus, the inequality $\sqrt{1 + |z + z'|} \le \sqrt{1 + |z|} + |z'|$ implies the upper bound
	\begin{align*}
		\area{\Bcal u_j,\Omega} & = \area{\ac\Ibf_j  + \Ibf^s_j + \Ibf\Ibf_j,\Omega} \\
		& \lesssim_q \area{\ac\Bcal u,\Omega} + |\Ibf^s_j|(\Omega)\\
		& \qquad + \|\ac\Ibf_j - \ac\Bcal u\|_{\Lrm^1(\Omega)} + \|\Ibf\Ibf_j - \Ibf\Ibf\|_{\Lrm^q(\Omega)} \\
		& \le \area{\Bcal u,\Omega} + \BigO\bigg(\frac{1}{j}\bigg).
	\end{align*}
This proves that $\limsup_{j \to \infty} \area{\Bcal u_j,\Omega} \le \area{\Bcal u,\Omega}$. We thus conclude that $\lim_{j \to \infty} \area{\Bcal u_j,\Omega} = \area{\Bcal u,\Omega}$ as desired. 
\end{enumerate}
This finishes the proof.
\end{proof}

\section{Helmholtz decomposition of generating sequences}\label{sec:dec}

In this section $\Acal$ and $\Bcal$ are constant rank operators from $W$ to $X$ and $V$
 to $W$, of respective orders $k$ and $k_\Bbb$. When we work in the $\Acal$-free context, $\Bcal$ will denote an associated potential of $\Acal$, which was discussed in the previous section (cf.~\eqref{eq:exact}) and for which Lemma~\ref{lem:raita} holds.  In all that follows $U \subset \R^d$ is an open set and we assume that 
\[
	1 <  q < \frac{d}{d-1}.
\]
The following lemma establishes that oscillations and concentrations generated along $\A$-free sequences are, in fact, only carried by $\Bcal$-gradients: 
%
%
%

\begin{lemma}\label{lem:Helm}
	Let $\bm \nu = (\nu,\lambda,\nu^\infty) \in \Y_\loc(U,W)$ be a locally bounded 
	$\A$-free Young measure and let $\Omega' \Subset U$ be a Lipschitz open subset 
	with $\lambda(\partial \Omega') = 0$.
	Then, on $\Omega'$, the barycenter measure $[\bm \nu]$ can be decomposed into the $\Bcal$-gradient of a potential $u \in \Wrm^{k_\Bbb-1,q}(\Omega')$ an $\Acal$-free   field $v \in \Lrm^{q}(\Omega')$, i.e.,  
	\[
		[\bm \nu] \mres \Omega' \, = \, \Bcal u \, +  \, v\,\Leb^d, \qquad \Acal v = 0 \; \text{on $\Omega'$}.
	\]
	Moreover, there exists a sequence $\{u_h\} \subset \Wrm^{k_\Bbb-1,q}(\Omega')$ satisfying   
	\begin{align*}
			u_h & \equiv u  \quad\; \text{on a neighborhood of $\partial \Omega'$},\\
			u_h &\to u \quad \, \text{in $\Wrm^{k_\Bbb -1,q}(\Omega')$, and}\\
			\Bcal u_h \, + \, v \Leb^d \, &\toY \, \bm \nu \quad \text{on $\Omega'$}. 
	\end{align*}
Lastly, if there exists $u_0 \in W^{k_\Bbb -1,q}(\Omega')$ such that $[\bm \nu] \mres \Omega' = \Bcal u_0$, then $v$ may be chosen to be the zero function.
\end{lemma}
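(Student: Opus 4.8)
The strategy is to reduce the statement to periodic potentials via the localization-and-blow-up machinery already used in Theorem~\ref{lem:app}, and then invoke the Fonseca--M\"uller-type estimates of Lemma~\ref{lem:3.2} together with the exactness relation~\eqref{eq:exact}. First I would fix a Lipschitz open set $\Omega'\Subset U$ with $\lambda(\partial\Omega')=0$ and a generating sequence $\{\mu_h\}\subset\Mcal(U;W)$ for $\bm\nu$ with $\Acal\mu_h\to0$ in $\Wrm^{-k,q}_\loc(U)$. Localizing with a cut-off $\phi\in\Crm_c^\infty(U)$ equal to $1$ on a neighbourhood of $\cl{\Omega'}$, and using the commutator bound~\eqref{eq:com2}, we get $\Acal(\phi\mu_h)\to 0$ in $\Wrm^{-k,q}(\Tbb^d)$ after identifying $\phi\mu_h$ with a periodic measure (as $\supp\phi$ is compact). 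Then I would apply Lemma~\ref{lem:3.2}: set $v_h\coloneqq(\phi\mu_h)_\Abb=T[\phi\mu_h]$, which is mean-zero, satisfies $\Acal v_h=\Acal(\phi\mu_h)$, and obeys $\|v_h\|_{\Lrm^q(\Tbb^d)}\le C\|\Acal(\phi\mu_h)\|_{\Wrm^{-k,q}(\Tbb^d)}\to0$ (so actually $v_h\to0$ in $\Lrm^q$). Consequently $\phi\mu_h-v_h\Leb^d$ is exactly $\Acal$-free, has vanishing mean, and by Raita's Lemma~\ref{lem:raita} equals $\Bcal w_h$ for some periodic potential $w_h\in\Mcal(\Tbb^d;V)$.

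The next step is to pass these identities to the barycenter. Since $v_h\to0$ in $\Lrm^q\hookrightarrow\Lrm^1$ and $\phi\mu_h\toY\bm\nu\mres\Omega'$ (by~\eqref{eq:loc_y}), Proposition~\ref{prop:Lpclose} gives $\Bcal w_h=\phi\mu_h-v_h\Leb^d\toY\bm\nu\mres\Omega'$ on $\Omega'$; in particular $\Bcal w_h\toweakstar[\bm\nu]\mres\Omega'$ as measures. To produce the claimed decomposition of $[\bm\nu]\mres\Omega'$, I would apply the a priori bounds of Lemma~\ref{lem:3.2} (with $\ell=k_\Bbb-1$ for the potential operator $\Bcal$, in the spirit of the bootstrapping in the proof of Theorem~\ref{lem:app2}) to control $\|w_h\|_{\Wrm^{k_\Bbb-1,q}}$ in terms of $\|\Bcal w_h\|_{\Wrm^{-1,q}}$, which is uniformly bounded because $\{\Bcal w_h\}$ is a uniformly bounded sequence of measures and $\Mcal_b\cembed\Wrm^{-1,q}$ by Corollary~\ref{cor:compact_embedding}. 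After normalizing the free constants of $w_h$ we extract a subsequence with $w_h\toweak u$ in $\Wrm^{k_\Bbb-1,q}(\Omega')$ and (by the compact embedding) $w_h\to u$ in $\Wrm^{k_\Bbb-2,q}$ or at least $\Bcal w_h\toweak\Bcal u$ in $\Lrm^q$-weak; comparing weak-$*$ limits of measures identifies $\Bcal u+0\cdot\Leb^d$ — wait, more precisely one writes $[\bm\nu]\mres\Omega'=\Bcal u + v\Leb^d$ where $v$ is the $\Lrm^q$-weak limit of $v_h$, which is $0$ in the present case; but to get the \emph{general} decomposition with a genuinely nonzero $\Acal$-free $v$ one instead does not force $v_h\to0$: one keeps $v_h$ as the $\Acal$-representative of the \emph{full} sequence $\mu_h$ (not of $\phi\mu_h$) whose $\Acal$-image need only converge, and extracts a weak limit $v\in\Lrm^q$ with $\Acal v=0$. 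For the \emph{final} statement specifically, the hypothesis $[\bm\nu]\mres\Omega'=\Bcal u_0$ for some $u_0\in\Wrm^{k_\Bbb-1,q}(\Omega')$ means that in the decomposition $[\bm\nu]\mres\Omega'=\Bcal u+v\Leb^d$ obtained above we have $\Bcal(u-u_0)=v\Leb^d$ with $v\in\Lrm^q$ and $\Acal v=0$; I would then argue $v$ must vanish: applying Lemma~\ref{lem:3.2} once more, $v\Leb^d=\Bcal(u-u_0)$ is both an $\Acal$-free $\Lrm^q$ function and a $\Bcal$-gradient, so replacing $u$ by $u_0$ directly realizes $[\bm\nu]\mres\Omega'=\Bcal u_0$ with $v\equiv0$. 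Concretely, one simply re-runs the construction of the approximating sequence starting from the decomposition $[\bm\nu]\mres\Omega'=\Bcal u_0$ (i.e. with $v=0$ built in), which is the assertion to be proved.

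To finish I would exhibit the recovery sequence $\{u_h\}$ with the three listed properties. Take $u_h\coloneqq w_h$ (or $u_h\coloneqq w_h$ modified by adding the fixed potential of $v$ when $v\ne0$); the convergence $u_h\to u$ in $\Wrm^{k_\Bbb-1,q}(\Omega')$ follows from the a priori bounds and the identification of the weak limit as above, upgraded to strong convergence exactly as in Step~2 of the proof of Theorem~\ref{lem:app} — namely by noting that the difference $w_h-u$ is the $\Bcal$-representative of a sequence whose $\Bcal$-image tends to zero in the relevant negative Sobolev norm, hence tends to zero strongly in $\Wrm^{k_\Bbb-1,q}$ by Lemma~\ref{lem:3.2}. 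For the boundary-matching property $u_h\equiv u$ near $\partial\Omega'$, I would insert an intermediate cut-off: choose $\chi\in\Crm_c^\infty(\Omega')$ with $\chi\equiv1$ on a slightly smaller set and set $u_h\coloneqq \chi w_h+(1-\chi)u$ on a buffer annulus, then check (again via~\eqref{eq:com2} and Lemma~\ref{lem:3.2}) that $\Bcal u_h+v\Leb^d$ still generates $\bm\nu\mres\Omega'$ because the correction is supported in a region whose measure is controlled and which carries no concentration of $\lambda$. The \emph{main obstacle} is this last gluing: making the boundary values of $u_h$ coincide with $u$ while preserving area/Young-measure generation requires that the cut-off error $[\Bbb,\chi](w_h-u)$ tends to zero in $\Lrm^1(\Omega')$, which in turn relies on the uniform $\Wrm^{k_\Bbb-1,q}$ bound for $w_h-u$ and on $\lambda(\partial\Omega')=0$ so that no mass is lost in the transition layer; organizing these estimates so that the buffer can be taken arbitrarily thin is the delicate point, everything else being a routine application of Lemma~\ref{lem:3.2}, Proposition~\ref{prop:Lpclose}, and Corollary~\ref{cor:compact_embedding}.
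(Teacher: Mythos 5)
Your overall frame---localize with a compactly supported cut-off $\phi$ equal to $1$ near $\cl{\Omega'}$, identify $\phi\mu_h$ with a periodic measure, take an $\Acal$-representative, and invoke Raita's exactness Lemma~\ref{lem:raita} plus the Sobolev bounds of Lemma~\ref{lem:3.2}---is the same as the paper's. But there is a concrete error in the very first quantitative step, and it propagates into the self-correction you attempt afterwards.

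You assert that $\Acal(\phi\mu_h)\to 0$ in $\Wrm^{-k,q}(\Tbb^d)$, and hence (by Lemma~\ref{lem:3.2}) that $v_h=(\phi\mu_h)_\Abb\to 0$ in $\Lrm^q$. This is false. The Leibniz expansion~\eqref{eq:com1} gives $\Acal(\phi\mu_h)=\phi\,\Acal\mu_h+[\Abb,\phi](\mu_h)$, and while the first summand tends to zero, the commutator term does \emph{not}: it converges to $[\Abb,\phi](\mu)$, which is a nonzero distribution of order $\le k-1$ in general. Consequently the $\Acal$-representative $(\phi\mu_h)_\Abb$ converges in $\Lrm^q$ to the nonzero limit $w_0\coloneqq(\phi\mu)_\Abb$, not to zero. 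Your conclusion that the absolutely continuous correction vanishes is therefore unfounded. The attempted repair---``keep $v_h$ as the $\Acal$-representative of the \emph{full} sequence $\mu_h$''---is not viable: $\mu_h$ need not be compactly supported, so it cannot be viewed as a periodic measure and the multiplier construction behind $(\frarg)_\Abb$ (Lemma~\ref{lem:3.2}) is unavailable.

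The missing ingredient, which is exactly what the paper uses, is a \emph{support} observation rather than a vanishing one. Since $\phi\equiv1$ on a neighbourhood of $\cl{\Omega'}$, the coefficients of the commutator $[\Abb,\phi]$ vanish there, hence $\supp\big([\Abb,\phi](\mu)\big)\subset\supp(D\phi)$ is disjoint from $\cl{\Omega'}$. This means $\Acal w_0 = [\Abb,\phi](\mu)$ is supported away from $\Omega'$, so \emph{on $\Omega'$} the field $w_0$ is genuinely $\Acal$-free and, once the mean is added back, gives the required $v\in\Lrm^q(\Omega')\cap\ker\Acal$. The measure $\phi\mu_h-\bar\sigma_h\Leb^d-w_h\Leb^d$ is then exactly $\Acal$-free and mean-zero on $\Tbb^d$, Lemma~\ref{lem:raita} supplies periodic potentials $\tilde u_h$, and Lemma~\ref{lem:3.2} applied to $\Bcal$ promotes the $\Wrm^{-1,q}$-convergence of these measures to strong $\Wrm^{k_\Bbb-1,q}$-convergence of the potentials; that is how the strong convergence of $u_h\to u$ is actually obtained, not by the weak-compactness-plus-identification route you sketch. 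Your description of the boundary adjustment and of the last assertion (re-defining $U_h=u_h+u_0-u$) is conceptually on track, although the estimate showing that the boundary cut-off error does not create boundary concentration needs the $\Wrm^{k_\Bbb-1,q}$ rate obtained above, which your argument doesn't supply.
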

\begin{proof}
		Let $\phi \in \Crm^\infty_c(\Omega;[0,1])$ be a cut-off function satisfying $\Omega' \subset \{\phi \equiv 1\}$. Without loss of generality we may assume that $\supp (\phi) \subset Q$. Let $\{\mu_j\}$ be a sequence of measures generating $\bm \nu$ and satisfying $\Acal \mu_j \to 0$ in $\Wrm^{-k,q}(\supp \phi)$. 
 We define a sequence of compactly supported measures on $U$ by setting
		$\sigma_j \coloneqq \phi\mu_j$ and $\sigma_0 \coloneqq \phi \mu$. Using the trivial extension by zero, we may regard each measure $\sigma_j$ as an element of $\M(\Tbb^d;W)$. For a function $\tau$ on the torus we write $\cl \tau \coloneqq \int_{\Tbb^d} \tau$ (if $\tau$ is a measure, we set $\cl \tau \coloneqq \tau(\Tbb^d)$) to denote its mean-value. Next, define the sequence $\{w_j\} \subset \Lrm^q(\Tbb^d;W)$ of mean-value zero maps as
		\[
			w_j \coloneqq (\sigma_j)_\Abb, \qquad j \in \Nbb_0. 
		\]
	Indeed, thanks to Lemma~\ref{lem:3.2} and  Corollary~\ref{cor:compact_embedding}  we obtain  
	\begin{align*}
		\Acal w_j = \phi \Acal \mu_j + [\Abb,\phi] (\mu_j) \quad & \Longrightarrow \quad f_j \coloneqq \Acal w_j \in \Wrm^{-k,q}(\Tbb^d) \\
		&  \Longrightarrow \quad w_j \in \Lrm^q(\Tbb^d). 
	\end{align*}
		Since $\mu_j \toweakstar \mu$ in $\Mcal(U;W)$, it holds   
		\[
			f_j = \phi \Acal \mu_j + [\Abb,\phi](\mu_j) \to [\Abb,\phi](\mu) = f_0 \quad \text{in $\Wrm^{-k,q}(\Tbb^d)$}.
		\]
		Indeed, $\Acal \mu_j \to 0$ in $\Wrm^{-k,q}(\supp \phi)$, while the convergence involving the commutator follows from the fact that (cf. Corollary~\ref{cor:compact_embedding})  $\mu_j \to \mu$ $\Wrm^{-1,q}(U)$  and that $[\Abb,\phi]$ is an operator of order at most $(k-1)$. 
%
	Hence, it follows from the estimates in Lemma~\ref{lem:3.2} that 
	\[
	\|w_j - w_0\|_{\Lrm^{q}(\Tbb^d)} \le C_q \|f_j - f_0 \|_{\Wrm^{-k,q}(\Tbb^d)} \to 0.
	\]
	This allows us to define an asymptotically $\Lrm^q$-close sequence to $\sigma_j$ by setting  
		\begin{align*}
			\tilde \sigma_j  & \phantom{:}= z_j + (w_0 + \cl{\sigma_j})\Leb^d\\
			& \coloneqq (\sigma_j - \cl{\sigma_j}\Leb^d - w_j\Leb^d) + (w_0 + \cl{\sigma_j})\Leb^d, \qquad j \in \Nbb_0.
		\end{align*}
		By construction $\{z_j\}_{j \in \Nbb} \subset \Mcal(\Tbb^d;W)$ is a sequence mean-value zero measures satisfying $z_j \toweakstar z_0$ in $\Mcal(\Tbb^d;W)$, and hence also $z_j \to z_0$ in $\Wrm^{-1,q}(\Tbb^d)$.
		Moreover, $\{z_j\}_{j \in \Nbb}$ is a sequence of $\Acal$-free measures since $\Acal w_j = \Acal \sigma_j$. 
		
		Next, we exploit the potential property of mean-value zero $\Acal$-free functions on the torus. Proposition~\ref{lem:raita} yields potentials $\tilde u_j  \subset \Lrm^q(\Tbb^d;V)$ satisfying $\Bcal \tilde u_j = z_j$ for all $j \in  \Nbb_0$, each of which we may assume to be given by its own $\Bcal$-representative, i.e.,  $\tilde u_j = (\tilde u_j)_\Bbb$. Applying once more the estimates of Lemma~\ref{lem:3.2} (for $\Bcal$ instead of $\Acal$), we find that
%
	\begin{equation}\label{eqeq}
		\|\tilde u_j - \tilde u_0\|_{\Wrm^{k_\Bbb - 1,q}(\Tbb^d)} \le   C \|z_j - z_0\|_{\Wrm^{-1,q}(\Tbb^d)} \to 0.
	\end{equation}
Since $\supp(f_0) \subset \Omega \setminus \cl{\Omega'}$, then $w_0$ is an $\A$-free measure on $\Omega'$. We readily check, setting 
	\[
		\tilde U_j \coloneqq \tilde u_j|_{\Omega'} \in \Wrm^{k_\Bbb-1,q}(\Omega'), \quad v \coloneqq (w_0 + \cl{\sigma_0})|_{\Omega'} \in \Lrm^q(\Omega'),
	\] 
	that 
	\[
		\Bcal \tilde U_0 + v\Leb^d  \equiv \sigma_0 \equiv \mu \quad \text{as measures in $\M(\Omega';W)$.}
	\]
	This proves the first assertion on the decomposition of the barycenter $\mu$ on $\Omega'$. 
Moreover, since $\lambda(\partial \Omega') = 0$, we obtain  
\begin{align*}
	\Bcal \tilde U_j +  (w_j  + \cl {\sigma_j}) \, \Leb^d \equiv \mu_j   \, \toY \, \bm \nu  \quad \text{on $\Omega'$.}
\end{align*}
Therefore, using that $w_j \to w_0$ strongly in $\Lrm^q(\Tbb^d)$, it follows from Proposition~\ref{prop:Lpclose}  that 
\begin{equation}\label{eq:eqY}
	\Bcal \tilde U_j +  v \, \Leb^d   \, \toY \, \bm \nu  \quad \text{on $\Omega'$.}
\end{equation}
	We are left to see that we can adjust the boundary of $\{\tilde U_j\}_{j \in \Nbb}$ to match the values of $u \coloneqq \tilde U_0$ near $\partial \Omega'$. 
	For a positive real $t > 0$ we define  $\Omega'_t = \set{x \in \Omega'}{\dist(x,\partial \Omega') > t}$. Fix $\phi_t \in \Crm_c^\infty(\Omega';[0,1])$ to be a cut-off of $\Omega'_{2t}$ with $\phi \equiv 0$ on $\Omega'_t$, and such that $\|\phi_t\|_{k_\Bbb,\infty} \lesssim t^{-k_\Bbb}$. Let $\delta_h \searrow 0$ be an infinitesimal sequence of positive reals. We define a sequence with $u$-boundary values by setting 
	\begin{equation}\label{eq:bry}
		u_{h,j} \coloneqq \phi_{\delta_h} [\tilde U_j - u] + u, \quad u_{h,j} \equiv u \; \text{on $\Omega' \setminus \Omega'_{\delta_h}$.} 
	\end{equation}
	Fix $h$. Since $\tilde u_j \to \tilde u_0$ in $\Wrm^{k_\Bbb - 1,q}(\Omega')$, there exists $j = j(h)$ such that 
	\[
		\|u_{h,n} -  u\|_{\Wrm^{k_\Bbb-1,q}(\Omega')} \le \frac{(\delta_h)^{k_\Bbb}}{h} \quad \text{for all $n \ge j(h)$}.
	\] 
	In particular, setting $u_h \coloneqq u_{h,j(h)}$, we can estimate the total variation of $\Bcal u_h$ as
	\begin{align*}
		|\Bcal u_h|(\Omega') & \lesssim_{\Bbb} \|\phi_{\delta_h}\|_{k_\Bbb,\infty} \cdot \| u_{h,j(h)} -  u\|_{\Wrm^{k_\Bbb-1,1}(\Omega')}  + |\Bcal  u|(\Omega')\\
		& \lesssim \frac 1h  + |\Bcal u|(\Omega').
	\end{align*} 
	Notice that this not only implies that $\{\Bcal u_h\}$ is uniformly bounded, but also that the
	 sequence does not concentrate mass on the boundary $\partial \Omega'$. Therefore, up to extracting a subsequence (which we will not relabel), the sequence generates a Young measure on $\Omega'$ which does not carry mass into the boundary, i.e., 
	\[
		\Bcal u_h + v \, \Leb^d \, \toY \, \bm \sigma \quad \text{on $\Omega'$}, \qquad \lambda_{{\bm \sigma}}(\partial \Omega') = 0,
	\]
%
On the other hand, our construction gives the equivalence of measures $\Bcal u_h = \Bcal \tilde U_{j(h)}$  when these are restricted to the set $\Omega'_{2\delta_h}$. Since $\delta_h \searrow 0$, we deduce from~\eqref{eq:eqY}-\eqref{eq:bry} that $\bm \sigma \equiv \bm \nu$ on $\Omega'$, and therefore
	\[
		\Bcal u_h + v \, \Leb^d \, \toY \, \bm \nu \quad \text{on $\Omega'$},
	\]
	with $u_h \equiv u$ on a neighborhood of $\partial \Omega'$. 
	
	The last statement follows by noticing that $v = \Bcal(u_0 -  u
	)$ and hence we may simply re-define the sequence of potentials as $U_h \coloneqq u_h + u_0 -  u$. This finishes the proof of the lemma.\end{proof}

	The proof of following lemma follows by verbatim from the first step of the proof of the lemma above:
	
	\begin{lemma}\label{lem:essential} 
	Let $\{\mu_j\}	\subset \Mcal(U;W)$ be a sequence of $\Acal$-free measures satisfying 
	\[
		\mu_j \toweakstar \mu \; \text{in $\Mcal(U;W)$}
	\]
	Then, for a bounded open subset $\Omega' \subset U$, there exist $u \in \Wrm^{k_\Bbb-1,q}(\Omega')$ and $v \in \Lrm^q(\Omega')$ such that
	\[
		\mu \mres \Omega' = \Bcal u + v \, \Leb^d.
	\]
	Moreover, there exist sequences $\{u_j\} \subset \Wrm^{k_\Bbb-1,q}(\Omega')$ and $\{v_j\} \subset \Lrm^q(\Omega')$ such that 
	\[
		\mu_j \mres \Omega' = \Bcal u_j + v_j \, \Leb^d 
	\]
	and
	\begin{align*}
			\Bcal u_j &\toweakstar \Bcal u &&\text{in $\Mcal(\Omega';W)$},\\ 
			u_j & \to u &&\text{in $\Wrm^{k_\Bbb-1,q}(\Omega')$},\\
			v_j & \to v &&\text{in $\Lrm^q(\Omega')$}.
		\end{align*}
	\end{lemma}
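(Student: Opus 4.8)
The statement to be proved is Lemma~\ref{lem:essential}, and the excerpt already tells us that it \emph{follows verbatim from the first step of the proof of Lemma~\ref{lem:Helm}}. So my plan is simply to reproduce that construction, stripped of the Young-measure bookkeeping and of the boundary-adjustment step (which are the parts of Lemma~\ref{lem:Helm} not needed here). First I would fix a bounded open $\Omega'\Subset U$ and choose a cut-off $\phi\in\Crm^\infty_c(U;[0,1])$ with $\Omega'\subset\{\phi\equiv 1\}$ and $\supp\phi\Subset Q$ (after a harmless rescaling so that $\supp\phi$ fits inside the unit cube). Set $\sigma_j\coloneqq\phi\mu_j$ and $\sigma_0\coloneqq\phi\mu$, regarded via trivial extension as elements of $\Mcal(\Tbb^d;W)$, and write $\cl\tau$ for the mean value on $\Tbb^d$.

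\textbf{Key steps.} (1) Define $w_j\coloneqq(\sigma_j)_\Abb$, the $\Acal$-representative from the definition following Lemma~\ref{lem:3.2}; since $\Acal w_j=\phi\,\Acal\mu_j+[\Abb,\phi](\mu_j)$ and $\Acal\mu_j=0$, the commutator formula~\eqref{eq:com1} together with $\mu_j\in\Mcal(\Tbb^d;W)$ and Corollary~\ref{cor:compact_embedding} give $\Acal w_j=[\Abb,\phi](\mu_j)\in\Wrm^{-k,q}(\Tbb^d)$ and hence $w_j\in\Lrm^q(\Tbb^d)$. (2) Because $\mu_j\toweakstar\mu$ in $\Mcal(U;W)$ we get $\mu_j\to\mu$ in $\Wrm^{-1,q}_\loc(U)$ (Corollary~\ref{cor:compact_embedding}), and since $[\Abb,\phi]$ has order $\le k-1$, $\Acal w_j=[\Abb,\phi](\mu_j)\to[\Abb,\phi](\mu)=\Acal w_0$ in $\Wrm^{-k,q}(\Tbb^d)$; the estimate of Lemma~\ref{lem:3.2}(2) then yields $\|w_j-w_0\|_{\Lrm^q(\Tbb^d)}\le C_q\|\Acal(w_j-w_0)\|_{\Wrm^{-k,q}(\Tbb^d)}\to 0$. (3) Form the mean-zero $\Acal$-free measures
\[
z_j\coloneqq\sigma_j-\cl{\sigma_j}\,\Leb^d-w_j\,\Leb^d,\qquad j\in\Nbb_0,
\]
which satisfy $\Acal z_j=\Acal\sigma_j-\Acal w_j=0$ and $\cl{z_j}=0$; moreover $z_j\toweakstar z_0$ in $\Mcal(\Tbb^d;W)$, hence $z_j\to z_0$ in $\Wrm^{-1,q}(\Tbb^d)$. (4) Apply Lemma~\ref{lem:raita} to obtain potentials $\tilde u_j\in\Mcal(\Tbb^d;V)$ with $\Bcal\tilde u_j=z_j$; take the $\Bcal$-representatives $\tilde u_j=(\tilde u_j)_\Bbb$ and invoke Lemma~\ref{lem:3.2} for $\Bcal$ (with $\ell=k_\Bbb-1$) to get $\tilde u_j\in\Wrm^{k_\Bbb-1,q}(\Tbb^d)$ and $\|\tilde u_j-\tilde u_0\|_{\Wrm^{k_\Bbb-1,q}(\Tbb^d)}\le C\|z_j-z_0\|_{\Wrm^{-1,q}(\Tbb^d)}\to 0$. (5) Restrict to $\Omega'$: set $u_j\coloneqq\tilde u_j|_{\Omega'}$ and $v_j\coloneqq(w_j+\cl{\sigma_j})|_{\Omega'}\in\Lrm^q(\Omega')$. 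On $\Omega'$ one has $\phi\equiv1$, so $\mu_j|_{\Omega'}=\sigma_j|_{\Omega'}=(z_j+\cl{\sigma_j}\Leb^d+w_j\Leb^d)|_{\Omega'}=\Bcal u_j+v_j\,\Leb^d$, and likewise $\mu|_{\Omega'}=\Bcal u+v\,\Leb^d$ with $u\coloneqq\tilde u_0|_{\Omega'}$, $v\coloneqq(w_0+\cl{\sigma_0})|_{\Omega'}$. The convergences $u_j\to u$ in $\Wrm^{k_\Bbb-1,q}(\Omega')$ and $v_j\to v$ in $\Lrm^q(\Omega')$ follow from steps (2) and (4); the weak-$*$ convergence $\Bcal u_j\toweakstar\Bcal u$ in $\Mcal(\Omega';W)$ follows from $\Bcal u_j=z_j|_{\Omega'}\toweakstar z_0|_{\Omega'}=\Bcal u$.

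\textbf{Main obstacle.} There is no deep difficulty here; the whole content is bundled into the machinery of Section~\ref{sec:cr}. The one point that needs a little care is the passage from weak-$*$ convergence of measures to strong $\Wrm^{-1,q}$ convergence on the torus: this uses the compact embedding $\Mcal_b(\Tbb^d)\cembed\Wrm^{-1,q}(\Tbb^d)$ of Corollary~\ref{cor:compact_embedding} (valid for $1<q<d/(d-1)$, which is exactly the standing hypothesis), applied to the compactly supported extensions $\sigma_j-\sigma_0$ and $z_j-z_0$; one must check that the whole sequence converges (not merely a subsequence), which holds because the weak-$*$ limit is $\sigma_0$ resp.\ $z_0$ and the embedding is compact, so every subsequence has a further subsequence converging strongly to the same limit. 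The second mild point is bookkeeping: ensuring $\supp\phi$ can be placed inside $Q$ — this is achieved by translating and dilating, which changes the constants in Lemma~\ref{lem:3.2} only by factors depending on $\Omega'$, $\phi$, $\Acal$, $\Bcal$ and $q$, as claimed.
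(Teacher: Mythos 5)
Your proof is correct and faithfully reproduces the first step of the proof of Lemma~\ref{lem:Helm}, which is exactly what the paper says Lemma~\ref{lem:essential} reduces to: the cut-off localization onto the torus, the $\Acal$-representative $w_j=(\phi\mu_j)_\Abb$ with its $\Lrm^q$-estimate from Lemma~\ref{lem:3.2}, the mean-zero $\Acal$-free remainder $z_j$ passed to a $\Bcal$-potential via Lemma~\ref{lem:raita}, and the final restriction to $\Omega'$ where $\phi\equiv 1$. The handling of the full-sequence (rather than subsequence) strong convergence in $\Wrm^{-1,q}$ via compactness plus uniqueness of the weak-$*$ limit is exactly the point that needed to be spelled out.
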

 	The following two results show that tangent $\Acal$-free Young measures and $\Bcal$-gradient Young measures differ only by a constant shift:	
 	\begin{corollary}[decomposition of blow-up sequences]\label{cor:helm} Let $\bm \nu = (\nu,\lambda,\nu^\infty) \in \Y(\Omega;W)$ be an $\A$-free measure and let $\bm \sigma \in \Tan(\bm \nu,x)$ be a tangent Young measure. Then, for every Lipschitz domain $\omega \Subset \R^d$ with $\lambda(\partial\omega) = 0$, there exist a potential $u \in \Wrm^{k_\Bbb-1,q}(\omega)$ and a vector $z \in W$ such that
 		\begin{gather*}
 		\Bcal u + z \, \Leb^d = [\bm \sigma] \; \text{as measures on $\omega$.}
 		\end{gather*} 
 		Moreover, there exists a sequence $\{u_h\} \subset \Wrm^{k_\Bbb-1,1}(\omega)$ satisfying  
 		\begin{align*}
 			u_h &\to u  &&\text{in $\Wrm^{k_\Bbb -1,q}(\omega')$},\\
 			u_h &\equiv u  &&\text{on a neighborhood of $\partial \omega'$},\\
 			\Bcal u_h + z \, \Leb^d \, &\toY \, \bm \sigma   &&\text{on $\omega$}.
 		\end{align*}
 	Furthermore, if $x \in \Omega$ is a singular point of $\bm \nu$ or if there exists $u_0 \in \Mcal(\Omega;V)$ such that $[\bm \nu] = \Bcal u_0$, then $z = 0 \in W$.  
 	\end{corollary}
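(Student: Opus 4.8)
The statement is Corollary~\ref{cor:helm}, which is announced as following from Lemma~\ref{lem:Helm} by a blow-up argument. The plan is to fix a Lipschitz domain $\omega \Subset \R^d$ with $\lambda(\partial\omega) = 0$ and to apply Lemma~\ref{lem:Helm} not to $\bm\nu$ itself but to the tangent Young measure $\bm\sigma \in \Tan(\bm\nu,x)$, after observing that $\bm\sigma$ is a \emph{locally bounded} generalized $\Acal$-free Young measure. First I would recall why this holds: by~\eqref{eq:tyoung}, $\bm\sigma$ is the weak-$*$ limit of the rescaled Young measures $c_m\,\Trm_{x,r_m}[\bm\nu]$, and since $\bm\nu$ is generated by a sequence $\{\mu_j\}$ with $\Acal\mu_j \to 0$ in $\Wrm^{-k,q}$, a diagonal argument together with the scaling behaviour of the $\Wrm^{-k,q}$-norm under $\Trm_{x,r_m}$ (pushing forward an $\Acal$-free constraint preserves being asymptotically $\Acal$-free, because $\Acal$ is homogeneous and the rescaling is a dilation) produces a generating sequence $\{\tilde\mu_m\} \subset \Mcal(\R^d;W)$ for $\bm\sigma$ with $\Acal\tilde\mu_m \to 0$ in $\Wrm^{-k,q}_\loc(\R^d)$. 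Hence $\bm\sigma \in \Y_\loc(\R^d;W)$ is a locally bounded $\Acal$-free Young measure in the sense of the definition given just before Proposition~\ref{prop:Lpclose}.

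Second, I would apply Lemma~\ref{lem:Helm} with $U = \R^d$, $\bm\nu$ replaced by $\bm\sigma$, and $\Omega' = \omega$ (which is Lipschitz, compactly contained in $\R^d$, with $\lambda_{\bm\sigma}(\partial\omega) = 0$ by construction of the blow-up, cf.~\eqref{eq:goodbu2}). This directly yields the decomposition $[\bm\sigma]\mres\omega = \Bcal u + v\,\Leb^d$ with $u \in \Wrm^{k_\Bbb-1,q}(\omega)$, $\Acal v = 0$ on $\omega$, together with a sequence $\{u_h\} \subset \Wrm^{k_\Bbb-1,q}(\omega)$ satisfying $u_h \equiv u$ near $\partial\omega$, $u_h \to u$ in $\Wrm^{k_\Bbb-1,q}(\omega)$, and $\Bcal u_h + v\,\Leb^d \toY \bm\sigma$ on $\omega$. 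The remaining point is to upgrade the $\Acal$-free field $v$ to a \emph{constant} vector $z \in W$. Here I would use the localization structure~\eqref{eq:tangenty}: at $(\Leb^d + \lambda^s)$-a.e.\ $x$, the tangent Young measure is homogeneous, $\bm\sigma = (\nu_x,\tau,\nu_x^\infty)$ with $\tau \in \Tan(\lambda,x)$, so its barycenter is $[\bm\sigma] = \dprb{\id_W,\nu_x}\,\Leb^d + \dprb{\id_W,\nu_x^\infty}\,\tau$, which on the absolutely continuous side is the \emph{constant} vector $\ac{[\bm\sigma]} \equiv \dprb{\id_W,\nu_x}$ (a genuinely constant element of $W$ since $\nu_x$ does not depend on the blow-up variable). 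Thus the Lebesgue decomposition of $[\bm\sigma]$ already has constant density; since $v\,\Leb^d$ is the absolutely continuous part of $[\bm\sigma]\mres\omega$ coming from the $\Acal$-free summand and $\Bcal u$ contributes $\ac{\Bcal u}\,\Leb^d$ to the a.c.\ part, I would set $z \coloneqq \ac{[\bm\sigma]} - \ac{\Bcal u} \in W$ modulo a harmless redefinition of $u$; more cleanly, applying the \enquote{moreover} clause of Lemma~\ref{lem:Helm} to the constant field $z\,\Leb^d$ (which is trivially $\Acal$-free and, being constant, equals $\Bcal(\text{linear potential})$ only in trivial cases, so one keeps it separate) gives exactly the asserted form $\Bcal u + z\,\Leb^d = [\bm\sigma]$ on $\omega$ with the corresponding sequence $\{u_h\}$ and $\Bcal u_h + z\,\Leb^d \toY \bm\sigma$.

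Third, for the final sentence I would treat the two special cases. If $x$ is a singular point of $\bm\nu$, then by Proposition~\ref{prop: localization} (and~\eqref{eq:chinga}) the tangent Young measure can be taken with $\sigma_y = \delta_0$ a.e.\ and with vanishing absolutely continuous barycenter, so $\ac{[\bm\sigma]} = 0$ and hence $z = 0$; the a.c.\ part of $[\bm\sigma]\mres\omega$ is then purely $\Bcal u$, and the decomposition reduces to $[\bm\sigma] = \Bcal u$. If instead there exists $u_0 \in \Mcal(\Omega;V)$ with $[\bm\nu] = \Bcal u_0$, then blowing up: $c_m\,\Trm_{x,r_m}[\Bcal u_0] = \Bcal(c_m r_m^{-k_\Bbb}\Trm_{x,r_m}[u_0])$, so $[\bm\sigma]$ is again a $\Bcal$-gradient of a measure-valued potential on $\R^d$; restricting to $\omega$ and invoking the last assertion of Lemma~\ref{lem:Helm} (which says that if $[\bm\sigma]\mres\omega = \Bcal u_0'$ for some $u_0' \in \Wrm^{k_\Bbb-1,q}(\omega)$, then $v$, hence $z$, may be taken to be zero) gives $z = 0$.

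\textbf{Main obstacle.} The technical heart is the first step: verifying that the blow-up procedure preserves the \enquote{asymptotically $\Acal$-free} property in the correct negative Sobolev space, i.e., producing a generating sequence $\{\tilde\mu_m\}$ for $\bm\sigma$ on $\R^d$ with $\Acal\tilde\mu_m \to 0$ in $\Wrm^{-k,q}_\loc$. One must carefully interleave the two limits (the generating index $j$ for $\bm\nu$ and the blow-up scale $r_m$), control the behaviour of $\|\Acal\mu_j\|_{\Wrm^{-k,q}}$ under the dilation $\Trm_{x,r_m}$ and the normalization constants $c_m$ from~\eqref{eq:tangent1}, and use the metrizability of bounded sets of $\Ebf(U;W)^*$ (Lemma~\ref{lem:separation}) together with the compactness Lemma~\ref{lem:YM_compact} to extract a single diagonal sequence. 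The identification of $v$ with a constant via the homogeneity~\eqref{eq:tangenty} of tangent Young measures is comparatively routine once the first step is in place.
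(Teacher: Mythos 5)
Your proposal diverges substantially from the paper's proof, and there is a genuine gap at the step where you pass from the $\Acal$-free field coming out of the Helmholtz decomposition to the asserted \emph{constant} vector $z$.

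The paper does not apply Lemma~\ref{lem:Helm} to the tangent object $\bm\sigma$. Instead, it applies the locality property~\eqref{eq:loc_y} and Lemma~\ref{lem:Helm} to $\bm\nu$ itself so that, without loss of generality, $\bm\nu$ is generated by a sequence of the explicit form $\mu_j = \Bcal u_j + v\,\Leb^d$ with a \emph{fixed} $\Acal$-free $v \in \Lrm^1(\Omega;W)$ (independent of $j$). Only then does it blow up. At a regular $\Leb^d$-Lebesgue point $x$ of $v$, the strong $\Lrm^1_\loc$-convergence~\eqref{eq:l1t} gives that $c\,r_j^{-d}\,\Trm_{x,r_j}[v\,\Leb^d]$ converges to the \emph{constant} $v(x)\,\Leb^d$; this is exactly what produces a constant $z = v(x)$, and it is crucial that $v$ lives at the original scale so that the Lebesgue point machinery can be invoked. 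At a singular point the normalization by $\lambda^s(Q_{r_j}(x))$ makes the blow-up of $v\,\Leb^d$ vanish (that is~\eqref{eq:chinga}), and $z = 0$ falls out for free.

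In contrast, you apply Lemma~\ref{lem:Helm} to $\bm\sigma$ on the Lipschitz domain $\omega$, obtaining $[\bm\sigma]\mres\omega = \Bcal u + v'\,\Leb^d$ with $v' \in \Lrm^q(\omega)$ an $\Acal$-free \emph{function}. Nothing in Lemma~\ref{lem:Helm} makes $v'$ constant. You observe correctly that $\ac{[\bm\sigma]}$ is constant at regular points (by the homogeneity~\eqref{eq:tangenty}), but this only yields $v' + \ac{\Bcal u} = \mathrm{const}$. To deduce the corollary you would then need to show that $(v' - z)\,\Leb^d$ is a $\Bcal$-gradient of a $\Wrm^{k_\Bbb-1,q}(\omega)$ potential for some constant $z$, which amounts to re-running the inner torus/Raita potential construction of Lemma~\ref{lem:Helm} on an $\Lrm^q$ field whose mean is not under control on $\omega$. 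This is not a \enquote{harmless redefinition of $u$}; it is a nontrivial step you leave unaddressed. Similarly, in the case $[\bm\nu] = \Bcal u_0$ you blow up $u_0 \in \Mcal(\Omega;V)$ and invoke the last clause of Lemma~\ref{lem:Helm}, but that clause requires the potential to lie in $\Wrm^{k_\Bbb-1,q}(\omega)$, which the blown-up measure $u_0$ does not obviously satisfy; the paper instead shows $z \in W_\Acal = \spn\{\Irm_\Bcal\}$ via~\eqref{eq:span} and then constructs an explicit polynomial primitive $u_z$ with $\Bcal u_z = z$ so that the constant can be absorbed into the potential. I would suggest you reorganize your argument along the paper's lines: decompose at the $\bm\nu$ level before blowing up, and let the Lebesgue point theorem do the work of producing the constant.
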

\begin{proof}  The locality property~\eqref{eq:loc_y} of Young measures and the local decomposition of generating sequences given in Lemma~\ref{lem:Helm} imply that it is enough to show the assertion when
	\[
		\mu_j = \Bcal u_j + v\, \Leb^d \toY \bm \nu \; \text{on $\Omega$,} \qquad v \in \Lrm^1(\Omega;W), \; \Acal v = 0.
	\] 
	We consider two cases: when $x \in \Omega$ is a regular or a singular point of $\lambda$. 
	
	\textit{Regular points:} Every tangent Young measure $\bm \sigma \in \Tan(\bm \nu,x)$ is generated by a sequence of the form
	\begin{equation}\label{eq:o1}
		\frac{c}{r^d_j} \cdot \Trm_{x,r_j}[\Bcal u_j + v\,\Leb^d] \toY \bm \sigma \; \text{in $\Y_\loc(\R^d,W)$.}
	\end{equation}
	Recall from~\eqref{eq:l1t} that 
	\begin{equation}\label{eq:o2}
		\frac{c}{r^d_j} \cdot \Trm_{x,r_j}[v \, \Leb^d] \to v(x) \, \Leb^d \; \text{strongly in $\Lrm^1_\loc(\R^d,W)$.}
	\end{equation}
	Hence, from the linearity of the push-forward and the compactness of Young measures, it follows that (here we use that $\lambda(\partial \omega) = 0$). 
	\[
		\frac{c}{r^d_j} \cdot \Trm_{x,r_j}[\Bcal u_j + v(x) \,\Leb^d] \toY \bm \sigma \; \text{in $\Y(\omega,W)$.}
	\]
	The assertion follows by taking $z = v(x)$. If $[\bm \nu] = \Bcal u_0$, then a localization argument and~\eqref{eq:span} imply that
	\[
		z \in  \spn \left\{\bigcup_{\xi \in \R^d} \im \Bbb(\xi) \right\} = W_\Acal.
	\] 
	In particular, there exist $\xi_i,\dots,\xi_r \in \Sbb^{d-1}$ and $a_1,\dots,a_r \in V$ such that
	\[
		z = \Bbb(\xi_1)[a_1] + \cdots + \Bbb(\xi_r)[a_r].
	\]
	This allows us to construct a smooth primitive of $z$ as follows: Let $\eta(t) = t^{k_\Bbb}/{k_\Bbb}!$ and define
	\[
		u_z(x) \coloneqq a_1 \eta(x\cdot \xi_1) + \cdots + a_r \eta(x \cdot \xi_r)\in \Crm^\infty(\R^d;V).
	\]
	By construction satisfies we have 
	\[
		\Bcal (a_h \eta(x\cdot \xi_h)) = \sum_{ |\alpha| = k_{\Bbb}} \xi_h^\alpha B_\alpha[a_h] = \Bbb(\xi_h)[a_h] \quad \text{for all $h = 1,\dots,r$,}
	\]
	which implies that $\Bcal u_z = z$. Therefore, by Lemma~\ref{lem:Helm}, $z$ can be taken to be the zero constant. 
	
	\textit{Singular points:} This proof is easier since instead of~\eqref{eq:o1}-\eqref{eq:o2} we have  
	\begin{equation}\label{eq:o3}
	\frac{c}{\lambda^s(Q_{r_j}(x))} \cdot \Trm_{x,r_j}[\Bcal u_j + v\,\Leb^d] \toY \bm \sigma \; \text{in $\Y_\loc(\R^d,W)$.}
	\end{equation}
Recall however from~\eqref{eq:chinga} that 
	\begin{equation}\label{eq:o4}
	\frac{c}{\lambda^s(Q_{r_j}(x))} \cdot \Trm_{x,r_j}[v \, \Leb^d] \to 0 \;\; \text{strongly in $\Lrm^1_\loc(\R^d,W)$.}
	\end{equation}
	Therefore, using the same arguments as before (with different normalization constants) yields $z = 0$. 
	This completes the proof. \end{proof}

    \begin{corollary}\label{cor:shift} If $\bm \nu \in \Y(\Omega,W)$ is an $\A$-free Young measure, then 
	\[
		\Tan(\bm \nu,x) \subset \mathrm{Shifts}_{W}\Big\{\Bcal\Ybf_\loc(\R^d)\Big\} \quad \text{for $\Leb^d$ almost every $x \in \Omega$,}
	\]
	and
   	\[
    		\Tan(\bm \nu,x) \subset \Bcal\Ybf_\loc(\R^d) \quad \text{for $\lambda^s$ almost every $x \in \Omega$.}
    \]
    If moreover, there exists $u_0 \in \Mcal(\Omega;W)$ such that $[\bm \nu] = \Bcal u_0$, then
    \[
    	\Tan(\bm \nu,x) \subset \Bcal\Ybf_\loc(\R^d) \quad \text{for $(\Leb^d + \lambda^s)$ almost every $x \in \Omega$.}
    \]
    \end{corollary}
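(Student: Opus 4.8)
The plan is to read off the statement from Corollary~\ref{cor:helm} by running its conclusion over an exhaustion of $\R^d$ and diagonalising. Fix a point $x$ at which $\Tan(\bm\nu,x)\neq\varnothing$; by Proposition~\ref{prop: localization regular} this is $(\Leb^d+\lambda^s)$-a.e.\ $x$. Let $\bm\sigma\in\Tan(\bm\nu,x)$. Choose bounded Lipschitz open sets $\omega_1\Subset\omega_2\Subset\cdots$ with $\bigcup_n\omega_n=\R^d$ and $\lambda(\partial\omega_n)=0$ for all $n$ (only countably many concentric radii are charged by the locally finite measure $\lambda$, so such an exhaustion exists). Applying Corollary~\ref{cor:helm} on each $\omega_n$ yields $z_n\in W$ and $u^{(n)}\in\Wrm^{k_\Bbb-1,q}(\omega_n)$ with $[\bm\sigma]=\Bcal u^{(n)}+z_n\,\Leb^d$ on $\omega_n$, together with a sequence $\{u^{(n)}_h\}_h$ such that $u^{(n)}_h\equiv u^{(n)}$ near $\partial\omega_n$, $u^{(n)}_h\to u^{(n)}$ in $\Wrm^{k_\Bbb-1,q}(\omega_n)$, and $\Bcal u^{(n)}_h+z_n\,\Leb^d\toY\bm\sigma$ on $\omega_n$.

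Next I would normalise the shift constant. For $m\le n$, comparing the barycenter decompositions on $\omega_m$ gives $\Bcal(u^{(n)}-u^{(m)})=(z_m-z_n)\,\Leb^d$ on $\omega_m$, so $z_m-z_n\in\spn\{\Irm_\Bcal\}=W_\Acal$; as in the proof of Corollary~\ref{cor:helm}, such a constant is $\Bcal$ of an explicit polynomial $p_{m,n}\in\Crm^\infty(\R^d;V)$. Setting $z:=z_1$ and replacing $u^{(n)}_h$ by $u^{(n)}_h+p_{1,n}$ (which changes neither the boundary-matching property nor the stated convergences) we may assume $z_n\equiv z$ for all $n$, where $z$ depends only on $\bm\nu$ and $x$. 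By Definition~\ref{def:shift}, subtracting the constant $z$ from a generating sequence shifts the oscillation part by $-z$, so $\Bcal u^{(n)}_h\toY\Gamma_z[\bm\sigma]$ on $\omega_n$; in particular $\Gamma_z[\bm\sigma]\mres\omega_n\in\Bcal\Ybf(\omega_n)$ for every $n$. Using the metrizability of weak-$*$ convergence on bounded subsets of $\Ebf(\,\cdot\,;W)^*$ (the Remark after Lemma~\ref{lem:separation}) and the fact that each $u^{(n)}_h$ agrees with the fixed $u^{(n)}$ near $\partial\omega_n$ — which lets one freeze the potentials near the boundary and extend/glue them consistently across the $\omega_n$ — a diagonal extraction produces a single sequence $\{u_j\}\subset\Mcal(\R^d;V)$ with $\Bcal u_j\toY\Gamma_z[\bm\sigma]$ on every $\omega_n$, hence on every Lipschitz $U'\Subset\R^d$ with $\lambda_{\bm\sigma}(\partial U')=0$; that is, $\Gamma_z[\bm\sigma]\in\Bcal\Ybf_\loc(\R^d)$. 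Since $\Gamma_{-z}\circ\Gamma_z=\mathrm{id}$ on Young measures, $\bm\sigma=\Gamma_{-z}[\Gamma_z[\bm\sigma]]\in\mathrm{Shifts}_W\{\Bcal\Ybf_\loc(\R^d)\}$, which is the first inclusion (the only restriction on $x$ being that it be a Lebesgue point of the Radon--Nikod\'ym density of $[\bm\nu]$, so that $z$ is well defined, hence it holds $\Leb^d$-a.e.).

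For the remaining two inclusions I would invoke the last assertion of Corollary~\ref{cor:helm}. At $\lambda^s$-a.e.\ $x$ the point $x$ is a singular point of $\bm\nu$ (Proposition~\ref{prop: localization}), so the normalisation above gives $z=0$, whence $\Gamma_z[\bm\sigma]=\bm\sigma$ and the construction shows $\bm\sigma\in\Bcal\Ybf_\loc(\R^d)$ directly; this is the second inclusion. If moreover $[\bm\nu]=\Bcal u_0$ for some $u_0\in\Mcal(\Omega;V)$, then Corollary~\ref{cor:helm} forces $z=0$ at regular points as well, so the same argument applies at $(\Leb^d+\lambda^s)$-a.e.\ $x$ and yields the third inclusion. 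The genuinely delicate step is the diagonalisation in the second paragraph: one must upgrade "generated on each $\omega_n$ by an $n$-dependent sequence" to "generated on all of $\R^d$ by one sequence", and it is precisely the boundary-matching clause of Corollary~\ref{cor:helm}, combined with the absorption of the $W_\Acal$-valued discrepancies $z_m-z_n$ into polynomial potentials, that makes this gluing go through; everything else is routine.
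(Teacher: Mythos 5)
Your proposal is correct and follows the same route the paper takes implicitly: the paper states Corollary~\ref{cor:shift} with no proof precisely because it is meant to be read off from Corollary~\ref{cor:helm} together with the definition of the shift $\Gamma_v$. The one thing you add is the explicit normalisation of the constants $z_n$ (by absorbing $z_m-z_n\in W_\Acal$ into polynomial potentials, exactly as in the proof of Corollary~\ref{cor:helm}) and the diagonalisation over a Lipschitz exhaustion with $\lambda$-null boundaries in order to exhibit a single sequence generating on every admissible $U'\Subset\R^d$; this is careful and valid, though in the paper's own proof of \ref{cor:helm} the constant $z=v(x)$ is already independent of $\omega$, so the normalisation is superfluous and the gluing step is all that is genuinely needed to upgrade the ``per-$\omega$'' conclusion of \ref{cor:helm} to membership in $\Bcal\Ybf_\loc(\R^d)$.
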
 

	 \section{Proof of the local characterizations}

\subsection{Proof of Theorem~\ref{thm:local}}
	\textit{Necessity.} This is straightforward from the definition of $\A$-free Young measure, a blow-up, and a diagonalization argument. For further details we refer the reader to~\cite[Section~2.8]{arroyo-rabasa2017lower-semiconti}.
	
	\textit{Sufficiency.}  Let $\{\psi_p \otimes h_p\}_{p \in \Nbb} \subset \E(\Omega;W)$ be the countable family from Lemma~\ref{lem:separation} which separates $\Y(\Omega;W)$.  	 
Let $\bm \nu = (\nu,\lambda,\nu^\infty) \in \Y(\Omega;W)$ as in the assumptions of Theorem~\ref{thm:local} and let us  write $\mu = [\bm \nu]$ to denote the barycenter of $\bm \nu$. Consider also the positive measure
\[
	\Lambda \coloneqq  \Leb^d  + \lambda^s \in \M^+(\cl\Omega). 
\]
It follows from the main assumption, that there exists a full $\Lambda$-measure set $B \subset \Omega$ with the following property: at every $x \in B$ there exists a tangent Young measure $\bm \sigma = (\nu_x,\kappa,\nu^\infty_x) 
\in \Tan(\bm{\nu},x)$ satisfying~\eqref{eq:goodbu2} and (without carrying the $x$-dependence on several of the following elements)
\begin{gather}\label{eq:s1}
\ddprB{ \phi \otimes h,\bm \nu^{(r_j)}} \coloneqq {c_j}\ddprB{ (\phi \circ \Trm_{x,r_j})\otimes h,\bm \nu} \to  \ddprB{\phi \otimes h, \bm\sigma} \qquad \text{$r_j \searrow 0$,}\\
c_j^{-1}(x) = \ddprb{|\frarg|,\bm \nu \mres Q_{r_j}(x)}, \quad \lambda(\cl {Q_{r_j}(x)}) = \lambda(Q_{r_j}(x)) >  0.
\end{gather}
In what follows we shall simply write $c_{r_j} = c_{r_j}(x)$ when no possible confusion arises. Particular consequences of the convergence above are the following:  at every $x \in B$  we can find a blow-up sequence  
\begin{equation}\label{eq:preiss1}
	c_j \cdot \Trm_{x,r_j} [\lambda] \toweakstar \kappa, \qquad \kappa(\cl Q) = \kappa(Q) = 1,
\end{equation}
and (composing with the identity map $\id_W$) also
	\begin{equation}\label{eq:s2}
		\gamma_j \coloneqq c_j \cdot \Trm_{x,r_j} [\mu] \; \toweakstar \; [\bm \sigma], \quad |[\bm \sigma]|(Q) \le 1.
	\end{equation}
Applying Lemma~\ref{lem:essential} on the sequence $\gamma_j$ and the sets $U = \R^d$ and $\Omega' = Q$, which yields  (cf. Corollary~\ref{cor:shift})
\begin{align}
	\label{eq:cro1}	\gamma_j \mres Q &= \Bcal u^{(r_j)} + v^{(r_j)} \, \Leb^d\\
	[\bm \sigma] \mres Q &= \Bcal u + z \Leb^d, \quad z \in W. \nonumber
\end{align}
where $\{u^{(r_j)}\} \subset \Wrm^{k_\Bbb-1,1}(Q)$ and $\{v^{(r_j)}\} \subset \Lrm^1(Q;W)$ are such that
\begin{align}
	v^{(r_j)} & \to z && \text{in $\Lrm^1(Q;W)$}, 
	\quad \Acal v^{(r_j)} = 0 \; \text{on $Q$,}\\
	u^{(r_j)} &\to u  &&\text{in $\Wrm^{k_\Bbb -1,q}(Q)$}.
\end{align}

\textit{Step~1. Construction of a disjoint cover of $B$.} Fix $m \in \Nbb$ and let $\phi \in \Crm(\cl Q)$. At every $x \in \Omega$ we define $\rho_m(x)$ as the supremum over all radii $0 < r_{j}(x) \le \frac{1}{m}$  (where $r_j(x) \searrow 0$ is the sequence from the previous step at a given $x \in B$) such that
\begin{align}
\label{eq:p1}	\left|\ddprB{ \phi \otimes h_p,\bm \nu^{(r_j)}} -  \ddprB{\phi \otimes h_p, \bm\sigma}\right| &\le \frac{1}{m} \quad \forall\; p \le m,\\
\label{eq:p2}	|\gamma_m|(Q) &\le 2,\\
\label{eq:p3}				\|u^{(r_j)} - u\|_{\Wrm^{k_\Bbb -1,1}(Q)} &\le \frac{1}{m^{k_\Bbb+1}},\\
\label{eq:p4}				\|v^{(r_j)} - z\|_{\Lrm^q(Q)} &\le \frac{1}{m}.
\end{align}
Next, define the cover (of open cubes) with centers in $B$ given by
\[
\Qcal_m \coloneqq \set{Q_{r_j}(x) \subset \Omega}{x \in B, r_j(x) \le \rho_m(x)}.
\]
Notice that, since $\rho_m(x) > 0$ exists for all $x \in B$, then $\Qcal_m$ is a fine cover of $B$ and hence we may apply Besicovitch's Covering Theorem, with the measure $\Lambda$, to find a disjoint sub-cover $\Ocal_m = \{Q_{x,m}\}$, where each $Q_{x,m}$ is of the form $Q_{R_m}(x)$ for some 
\[
R_m = R_m(x) \coloneqq r_{j(m)}(x) \le \rho_m(x) \le \frac 1m
\]
and
\begin{equation}\label{eq:pizza3}
\Lambda(\Omega \setminus O_m) = 0,	\quad \text{where} \; O_m \coloneqq \bigcup_{Q_x \in \Ocal_m} Q_x.
\end{equation}

	\textit{Step~2. An adjusted generating sequence  of $\bm \sigma$.}  Let $x \in B$ be fixed and let 
 	$\bm \sigma= \bm \sigma(x)$ be the $\A$-free tangent Young measure from the beginning of the proof. Now, we apply Corollary~\ref{cor:helm} to find a sequence $\{w_h\} \subset \Wrm^{k_\Bbb-1,q}(Q)$ satisfying 
 	\begin{align*}
 	w_h & \to u && \text{in $\Wrm^{k_\Bbb-1,1}(Q)$},\\
 	\tilde \gamma_h \coloneqq  \Bcal w_h + z \, \Leb^d \, &\toY \, \bm \sigma && \text{in $\Y(Q;W)$}. 
 	\end{align*}
%
 	Since it will be of use later, let $H(m) \in \Nbb$ be sufficiently large so that 
 	\begin{align}
 	\label{eq:t1} |\Bcal w_{h}|(Q) &\le 2, && \text{for all $h \ge H(m)$}\\
 	\|w_h - u\|_{\Wrm^{k_\Bbb -1,q}(Q)} &\le \frac{1}{m^{k_\Bbb +1}}, && \text{for all $h \ge H(m)$}. \label{eq:t2}
 	\end{align}
 	We also consider $\eta_m,\phi_m \in \Crm^\infty(\cl{Q};[0,1])$ be two  cut-off functions (with disjoint support) that satisfy the following properties: 
	\[
		\mathbbm 1_{Q_{1 - \frac 4m}} \le \phi_m \le \mathbbm 1_{Q_{1 - \frac 3m}} \quad \text{and} \quad \|\phi_m\|_{k_\Bbb,\infty} \lesssim m^{k_\Bbb},
	\]	
	\[
		 \mathbbm 1_{Q_{1 - \frac 2m}} \le \mathbbm 1_{Q} - \eta_m \le  \mathbbm 1_{Q_{1 - \frac 1m}} \quad \text{and} \quad \|\eta_m\|_{k_\Bbb,\infty} \lesssim m^{k_\Bbb}.
	\]
	 
	\textit{Step~3. Boundary adjustment for generating sequences of $\bm \sigma(x)$.} The next step is to define an $\A$-free sequence generating $\bm \sigma = \bm \sigma(x)$ on $Q$, which also has a blow-up of $\mu$ as boundary values.  This should allow us to freely glue each of this approximations together while keeping the $\A$-free constraint 
	
	Fix $m \in \Nbb$ and let $Q_{x,m} \in \Ocal_m$. We begin by constructing a sequence on $Q$, which we shall later translate to $Q_{x,m} \in \Ocal_m$. Bearing in mind all the $x$-dependencies that we have omitted in the previous steps, define the $\A$-free sequence  
	\begin{align*}
		q_{h,m} \, & \coloneqq \,  \overbrace{\Bcal\big(\phi_m(w_{h} - u)\big) + 
			\Bcal u + z \, \Leb^d}^{\text{generating sequence of $\bm \sigma$}} \\
				 & \qquad  + \; \overbrace{\Bcal\big(\eta_m(u^{(R_m)} - u)\big) + (v^{(R_m)} - z) \, \Leb^d }^\text{boundary adjustment to match $\gamma_{j(m)}$} \\
					 & \, = \, [\Bbb, \phi_m](w_h - u) \,  + z \, \Leb^d + \, \phi_m  \Bcal w_h \, \\ 
					 & \qquad + \,  (1 - \phi_m - \eta_m) \Bcal u   \, + \, [\Bbb, \eta_m](u^{(R_m)} - u) \,  \\
					 & \qquad \qquad + \, \eta_m  \Bcal u^{(R_m)} \, + \, (v^{(R_m)} - z) \, \Leb^d.
	\end{align*}
	Here, let us recall that the commutator $[\Bbb,\chi] \coloneqq \Bcal (\chi \,\frarg) - \chi\Bcal$ is a differential operator of order at most $k_\Bbb -1$ (with coefficients involving the coefficients of $\Bbb$ and the derivatives of $\chi$ of order less or equal than $k_\Bbb$). By this token, if $h \ge H(m)$, we may estimate the total variation of $q_{h,m}$ as
	\begin{align*}
		|q_{h,m}|(Q) & \lesssim_{q,\Bbb} \|\phi_m\|_{k,\infty} \cdot \|w_h - u\|_{\Wrm^{k_\Bbb -1,q}(Q)} + |\Bcal w_h|(Q) +  \\ 
					   & \qquad + \|\eta_m\|_{k,\infty} \cdot \|u^{(R_m)} - u\|_{\Wrm^{k_\Bbb -1,q}(Q)}  + |\Bcal u + z\, \Leb^d|(Q)  \\
					   & \qquad \qquad + |\Bcal u^{(R_m)}|(Q) + \|v^{(R_m)} - z\|_{\Lrm^q(Q)} \\
					   & \!\!\!\!\!\!\!\!\!\!\!\stackrel{\eqref{eq:p3},\eqref{eq:p4},\eqref{eq:t1},\eqref{eq:t2}}\lesssim \frac 3m +  5|\gamma_m|(Q); 
	\end{align*} 
	whence it is established that $(q_{h,m})_{h \ge h(m)}$ is uniformly bounded in $\M(Q;W)$. In fact, we get that $\limsup_{m \to \infty} |q_m|(Q \setminus Q_{1 - \frac 1m}) = 0$; this follows from the property $\kappa (\partial Q) = 0$. Therefore, passing to further subsequence of the $h$'s if necessary (not relabeled), we may assume that 
	\[
		q_{h,m} \; \toY \; \bm \tilde {\bm \sigma} \; \text{in $\Y(Q;W)$} \quad \text{as $h \to \infty$}, \qquad \lambda_{\tilde{ \bm\sigma}}(\partial Q) = 0.
	\]
	On the other hand, observe that $q_{h,m} \mres (Q_{1 - \frac 4m})  = \tilde \gamma_{h(m)} + (v^{R_m} - z)$ and hence, by Lemma~\ref{prop:Lpclose} and the locality of Young measures, it must hold $\tilde{ \bm\sigma} \equiv \bm \sigma$ in $\Y(Q_{1 - \frac 4m};W)$. Since this holds for all $h \in \Nbb$ and neither $\tilde{\bm \sigma}$ or $\bm \sigma$ charge the boundary $\partial Q$, it follows  that 
	\begin{gather}
		\label{eq:u1}	q_{h,m} \; \toY \; \bm {\bm \sigma} \; \text{in $\Y(Q_{1 - \frac 1m};W)$ as $h \to\infty$},\\
		\label{eq:u2} q_{h,m} \equiv v^{(R_m)} \, \Leb^d  + \Bcal u^{(R_m)} \equiv \gamma_{j(m)}  \quad \text{as measures on $Q \setminus Q_{1 - \frac 1m}$}.
		\end{gather}
	In particular, the uniform bound above and~\eqref{eq:preiss1} ensure that we may find another subsequence $h(m) \ge H(m)$ satisfying
	\begin{equation}\label{eq:v1}
		\left|\ddprB{\mathbbm 1_{Q} \otimes h_p, \bm \delta_{q_{h(m),m}}} - \ddprB{\mathbbm 1_{Q} \otimes h_p, \bm \sigma }\right| \le \frac{1}{m} \quad \text{for all $p \le m$}.
	\end{equation}

%

	\textit{Step~4.~Gluing together and generating $\bm \nu$.} So far, we have constructed generating sequences for \emph{specific} tangent Young measures of $\bm \nu$ on every $x$ where there is a cube $Q_{x,m} \subset \Ocal_m$.  
	The rest of the proof can be summarized in the following two steps: First, we construct an $\A$-free sequences by gluing together the $Q \to Q_{x,m}$ push-forwards of each $q_{h(m),m}$. Second, we show the new global sequence is uniformly bounded.
	
	
	
	\textit{Step~5a. Gluing the generating sequences.} For $x \in \R^d$ and $r>0$ we define the map $\Grm_{x,r} (y) = (\Trm_{x,r})^{-1} = x + ry$, which is defined for all $y \in \R^d$. Fix a cube $Q_{x,m}$ in $\Ocal_m$ and define an $\A$-free measure there by setting 
	\begin{equation}\label{eq:pott}
		U_m  \coloneqq C_m^{-1} \cdot \Grm_{x,R_m}  [q_{h(m),m}] \in \M(Q_{x,m};W), \qquad C_m \coloneqq c_{j(m)}.
	\end{equation}
	Notice  that
	\begin{align*}
	U_m  &\equiv C_m^{-1} \cdot  \Grm_{x,R_m} [\gamma_m] \\ 
						&\equiv C_m^{-1} \cdot  C_m (\Trm_{x,R_m} \circ \Grm_{x,R_m}) [\mu] \\
						& \equiv \mu \quad \text{as measures on $Q_{x,m} \setminus Q_{x,m}'$,}
	\end{align*}
where $Q_{x,m}'$ is the concentric sub-cube of $Q_{x,m}$ given by $z + (1 - \frac 1m)(Q_{x,m} - x)$.
	Therefore, the measure defined as 
	\[
		\tau_m(\mathrm dy) \coloneqq 
		\begin{cases} 
			U_m(\mathrm dy) & \text{if $y \in Q_{x,m}$}  \\
			\mu(\mathrm dy) & \text{if $y \in \Omega \setminus O_m$}
		\end{cases}
	\]
	is well-defined in $\Omega$. Moreover, it is also  $\A$-free on $\Omega$ (cf.~\eqref{eq:u2}) and its total variation in $\Omega$ can be controlled as follows (recall that the push-forward is mass preserving)
	\begin{align*}
		|\tau_m|(\Omega) & \le \sum_{Q_{x,m} \in \Ocal_m} |U_m|(Q_{x,m}) + |\mu|(\Omega \setminus O_m) \\
					  & \!\!\stackrel{\eqref{eq:pizza3}}\le \sum_{Q_{x,m} 
					  	\in \Ocal_m} C_m^{-1} |q_{h(m),m}|(Q) \\
					  & \!\!\stackrel{\eqref{eq:pizza3}}\le \sum_{Q_{x,m} \in \Ocal_m} C_m^{-1} \cdot \bigg( \frac 3m + 5|\gamma_m|(Q)\bigg)  \\
					  & \le 13 \cdot \sum_{Q_{x,m} \in \Ocal_m} \ddprB{1 + |\frarg|,\bm\nu \mres Q_{x,m}}  \\
					  & \lesssim   \ddprB{1 + |\frarg|,\bm\nu} < \infty.
	\end{align*}
	
	\textit{Step~4b. The new $\A$-free sequence generates $\bm \nu$.} This last step consists of checking that $\bm \nu$ is indeed an $\A$-free Young measure in $\Omega$. In light of the previous steps, it suffices to check that $\tau_m$ generates $\bm \nu$ in $\Omega$. 
	First, we estimate how close $U_m$ is from generating $\bm \nu$ on $Q_{x,m}$. Fix $\phi \in \Crm(\cl \Omega)$. Every cube $Q_{x,m} \in \Ocal_m$ has diameter at most $\sqrt{d}m^{-1}$ and therefore there exists a modulus of continuity (depending solely on $\phi$) such that $\|\phi(x) - \phi\|_\infty(Q_{x,m}) \le \omega(m^{-1})$ for all $Q_{x,m} \in \Ocal_m$; the same bound holds for any dilation of $\phi$ on the corresponding dilation of $Q_{x,m}$.   
	
	Let $p \in \Nbb$ and let $M_{p}$ to be the linear growth constant of $h_p$. We define 
	\[
		\delta(m) \coloneqq \ddprb{ 1 + |\frarg|,\bm \nu} \Big(15 M_p \omega(m^{-1})   + 2{\|\phi\|_\infty}{m^{-1}} \Big)   \searrow 0 \qquad \text{as $m \to \infty$}.
	\]
	Let $m \ge p$. Regarding $U_m$ as an element of $\M(\Omega;W)$ through the trivial extension by zero, we obtain the estimate
	\begin{align*}
		\big|\ddprb{\phi \otimes  h_p,&\bm \delta_{U_m}}   - \ddprb{\phi  \otimes h_p, \bm \nu \mres Q_{x,m}}\big| \\
								       & = C_m^{-1} \, \left|\ddprb{(\phi \circ \Grm_{x,R_m}) \otimes h_p, \bm \delta_{q_{h(m),m}} } - C_m\ddprb{\phi  \otimes h_p, \bm \nu \mres Q_{x,m}}\right| \\
								  & \le  C_m^{-1} \, \big|\ddprb{(\phi \circ \Grm_{x,R_m})(0) \otimes h_p, \bm \delta_{q_{h(m),m}}}  - C_m\ddprb{\phi  \otimes h_p, \bm \nu\mres Q_{x,m}}\big| \\
								  & \qquad  + C_m^{-1} \omega(m^{-1}) \cdot M_p [ \Leb^d(Q) + |q_{h(m),m}|(Q)] \\
								  & \stackrel{\eqref{eq:v1}}\le C_m^{-1} \Big( \left|\dprb{(\phi \circ \Grm_{x,R_m})(0) \otimes h_p, \bm \sigma}  - C_m\ddprb{\phi  \otimes h_p, \bm \nu\mres Q_{x,m}}\right| \\
								  & \qquad  + {\|\phi\|_\infty}{m^{-1}} +  \omega(m^{-1}) \cdot M_p [ \Leb^d(Q) + |q_{h(m),m}|(Q)] \Big)\\
								  & \stackrel{\eqref{eq:p1}}\le C_m^{-1} \Big( \left|\ddprb{(\phi \circ \Grm_{x,R_m})(0) \otimes h_p, \bm \nu^{(R_m)}}  - C_m\ddprb{\phi  \otimes h_p, \bm \nu\mres Q_{x,m}}\right|  \\
								  & \qquad  + 2{\|\phi\|_\infty}{m^{-1}} +  \omega(m^{-1}) \cdot M_p [ \Leb^d(Q) + |q_{h(m),m}|(Q)] \Big)\\
								  & \le C_m^{-1} \bigg( \left|\ddprb{(\phi  \circ \Grm_{x,R_m}) \otimes h_p, \bm \nu^{(R_m)}} - C_m \ddprb{\phi  \otimes h_p, \bm \nu\mres Q_{x,m}}\right|  \\
								  &  + 2{\|\phi\|_\infty}{m^{-1}}  +  \omega(m^{-1}) \cdot  M_p\big[\Leb^d(Q) + |q_{h(m),m}|(Q) \\
								  & \qquad +  C_m\ddprb{ 1 + |\frarg|,\bm \nu \mres Q_{x,m} }\big] \Big) \\
								  & \le     2{\|\phi\|_\infty}{m^{-1}}\ddprb{|\frarg|,\bm \nu \mres Q_{x,m}} + \omega(m^{-1}) \Big( 15 M_p \ddprb{ 1 + |\frarg|,\bm \nu \mres Q_{x,m} } \Big).
	\end{align*}
	Therefore, adding up these estimates for each cube $Q_{x,m}$ on $\Qcal_m$  yields
	\begin{align*}
		|\ddprb{\phi \otimes h_p, \bm \delta_{\tau_m}}  & - \ddprb{\phi \otimes h_p, \bm \nu}  |  \\
									  & \le \sum_{Q_{x,m} \in \Qcal_m} |\ddprb{\phi \otimes h_p, \bm \delta_{U_m}}  -  \ddprb{\phi \otimes h_p, \bm \nu \mres Q_{x,m}}| + |\mu|(O_m)\\ 
				                      & \stackrel{\eqref{eq:pizza3}}\le \delta(m).
	\end{align*}
	This shows that $\ddprb{\phi \otimes h_p, \bm \delta_{\tau_m}} \to \ddprb{\phi \otimes h_p, \bm \nu}$ as $m \to \infty$, and, in particular, this holds for $\phi = \psi_p$ for any $p \in \Nbb$. 
	
	\proofstep{Conclusion.} Since the family $\{\psi_p \otimes h_p\}_{p \in \Nbb}$ separates $\E(\Omega;W)$, we conclude that the (uniformly bounded) sequence of $\Acal$-free measures $\{\tau_m\}$ generates $\bm \nu$, i.e., 
	\[
		\Acal \tau_m = 0 \quad \text{and} \quad \tau_m \, \toY \, \bm \nu \; \text{on $\Omega$}. 		
	\] 
	This finishes the proof.\qed

\subsection{Proof of Corollary~\ref{cor:pure}} 
If $\bm \nu \in \Y_\Acal(\Omega)$ is such that $\lambda(\partial \Omega)$, then from Theorem~\ref{thm:local} we may assume that there exist tangent $\Acal$-free measures of $\bm \nu$ at $(\Leb^d + \lambda)$-a.e. in $\Omega$. The proof follows from the sufficiency part of the proof above, in particular from Step~4. The \emph{recovery sequence} $\tau_m$ constructed there is $\Acal$-free and it also generates $\bm \nu$. \qed

\subsection{Proof of Theorem~\ref{thm:local2}}\label{sec:si} The necessity follows from Theorem~\ref{thm:local} and the fact that,  if $\Acal$ is an annihilator of $\Bcal$, then
\begin{enumerate}[(a)]
	\item $\Acal$-quasiconvexity is equivalent to $\Bcal$-gradient quasiconvexity for locally bounded integrands,
	\item $\Bcal\!\Y(\Omega) \subset \Y_\Acal(\Omega)$.
\end{enumerate}
\proofstep{Sufficiency.} Due to a small clash of notation, we re-write assumption (i) as 
	\[
\Bcal u_0 = \dprb{\id,\nu}   \, \Leb^d  \, + \, \dprb{\id,\nu^\infty}\, \lambda\,, \quad u_0 \in \Mcal(\Omega;V).
\] 
We will show that there exists a sequence $\{V_m\} \subset \Mcal(\Omega;V)$ with $\{\Bcal V_m\} \subset \Mcal(\Omega;W)$ that generates $\bm \nu$. This will be deduced from the constructions contained in the proof of the sufficiency of Theorem~\ref{thm:local}, but first we need to recall the following fact of blow-downs: if $w \in \Mcal(Q;V)$ and $\Bcal w \in \Mcal(Q;W)$, then the blow-down of $\Bcal w$, centered at $x$ and at scale $r$, is given by
\[
\Grm_{r,x} [\Bcal w] = \Bcal w^{r,x}, \quad \text{where \; } w^{r,x}(y) \coloneqq r^{-k_\Bbb} \, \Grm_{r,x}[w].
\]
Notice that $w^{r,x} \in \Mcal(Q_r;V)$ and $\Bcal w^{r,x} \in \Mcal(Q;W)$. Moreover, the mass preserving property of push-forwards gives
\begin{equation}\label{eq:bd}
|\Bcal w^{r,x}|(Q_{r,x}) = |\Bcal w|(Q) 
\end{equation}
We are now ready to prove the assertion. 
Let us recall from (b) that $\bm \nu$ satisfies the sufficiency assumptions of Theorem~\ref{thm:local} and hence we may apply the elements contained in its proof to $\bm \nu$. In particular, the sequence $\{\gamma_j\}$ (introduced in~\eqref{eq:cro1}) has elements of the form $\Bcal u^{(r_j)} + v^{(r_j)}$. However, since by assumption $[\bm \nu] = \Bcal u_0$, Corollary~\ref{cor:shift} says that we may assume $z = 0$ in~\eqref{eq:cro1}.
In particular, keeping the notation of the previous proof, the sequence $\{q_{h,m}\}$ (defined in Step~3) is a sequence of $\Bcal$-gradients. Indeed, since $z = 0$ it follows that $v^{(R_m)} = C_m \Trm_{x,R_m}[\Bcal u_0 \mres Q_{x,m}] - \Bcal u$ and by the (inverse) property of blow-downs we get
\begin{align*}
	q_{h,m} &  = 	\Bcal(\phi_m (w_{h} - u)) + \Bcal u + \Bcal (\eta_m(u^{(R_m)} - u)) \\
	& \qquad + \Bcal (C_m\,\Trm_{x,R_m}[(R_m)^{k_\Bbb}u_0 \mres Q_{x,m}]) - \Bcal u^{(R_m)}.
\end{align*}
It follows that the sequence $\{\tau_m\} \subset \Mcal(\Omega;W)$ (defined in in p. 44), which generates our Young measure $\bm \nu$, has  the form
\[
\tau_m(\mathrm dy) \coloneqq 
\begin{cases} 
	U_m^x(\mathrm dy) & \text{if $y \in Q_{x,m}$}  \\
	\Bcal u_0(\mathrm dy) & \text{if $y \in \Omega \setminus O_m$}
\end{cases}
\]
where, according to~\eqref{eq:pott} and ignoring the $x$-dependence, the $U_m$ are defined as
	\[
	U_m  \coloneqq C_m^{-1}q_{h(m),m}.
	\]
In particular, the identity for blow-downs above implies that $U_m = \Bcal W_m$, where $W_m$ is the Radon measure given by
\begin{align*}\label{eq:derivation}
	 W_m & \coloneqq C_m^{-1}(R_m)^{-k_\Bbb}\,\Grm_{R_m} [\phi_m (w_{h(m)} - u) + u + \eta_m(u^{(R_m)} - u) - u^{(R_m)}]\\
	& \qquad + [u_0 \mres Q_{m}] .
\end{align*}
By construction we have $W_m \equiv  u_0$ (as measures) on a neighborhood of $\partial Q_{x,m}$. This compatibility across the partition ensures that 
\[
	V_m \coloneqq \begin{cases} 
	W_{m}(\mathrm dy) & \text{if $y \in Q_{x,m}$}  \\
		u_0(\mathrm dy) & \text{if $y \in \Omega \setminus O_m$}
	\end{cases}, \quad m \in \Nbb,
\] 
defines a sequence of Radon measures in $\Mcal(\Omega;V)$, with $\Bcal V_m \in \Mcal(\Omega;W)$ and such that
\[
	\Bcal V_m = \tau_m \toY \bm \nu \quad \text{on $\Omega$}.
\]
This finishes the proof.\qed

%
%

	\section{Proof of the dual characterizations}\label{sec:main}

\subsection{The convexity of $\Y_{\Acal,0}(\mu,\Omega)$}
Let $\mu \in \M(\Omega;W)$ be an $\A$-free measure. We define the set 
\[
	\Y_{\Acal,0}(\mu,\Omega) \coloneqq \setB{\bm \nu \in \Y_\Acal(\Omega)}{\lambda(\partial \Omega) = 0, [\bm \nu] = \mu}.
\]

The proof of the following proposition is contained in Lemma~5.3 of~\cite{baia2013}. There the authors state their main results under additional assumptions. However, the proof of this specific proposition makes not use of such assumptions and can be worked out by verbatim in our context. 

\begin{proposition}\label{prop:closed}
	The set $\Y_{\Acal,0}(\mu,\Omega)$ is weak-$*$ closed in $\E(\Omega;W)^*$.
\end{proposition}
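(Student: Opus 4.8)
The plan is to show that $\Y_{\Acal,0}(\mu,\Omega)$ is weak-$*$ sequentially closed in $\Ebf(\Omega;W)^*$, which by the metrizability of bounded subsets of $\Ebf(\Omega;W)^*$ (see the remark after Lemma~\ref{lem:separation}) and the uniform bounds forced by the barycenter constraint is equivalent to weak-$*$ closedness. So let $\{\bm \nu_n\} \subset \Y_{\Acal,0}(\mu,\Omega)$ with $\bm \nu_n \toweakstar \bm \nu$ in $\Ebf(\Omega;W)^*$, and the goal is to prove $\bm \nu \in \Y_{\Acal,0}(\mu,\Omega)$. The first observation is that testing against integrands $f(x,z) = \phi(x)\,\ell(z)$ with $\phi \in \Crm_c(\Omega)$ and $\ell \in W^*$ linear (so $f^\infty = f$) gives $[\bm \nu_n] \toweakstar [\bm \nu]$ in $\Mcal(\Omega;W)$; since $[\bm \nu_n] = \mu$ for every $n$, we get $[\bm \nu] = \mu$, so in particular $\Acal [\bm \nu] = 0$ in the sense of distributions on $\Omega$. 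Next, testing with $f(x,z) = \phi(x)|z|$ and using Lemma~\ref{lem:YM_compact} together with the fact that $\ddprb{1+|\frarg|,\bm \nu_n}$ is uniformly bounded (again a consequence of $[\bm \nu_n]=\mu$ plus the oscillation part being controlled, or simply from weak-$*$ convergence), one obtains that $\bm \nu$ is a genuine element of $\Y(\Omega;W)$ and that $\lambda_{\bm \nu}(\partial \Omega) = 0$: indeed $\lambda_{\bm \nu_n}(\partial \Omega) = 0$ for all $n$, and the limit measure $\lambda_{\bm \nu}$ cannot charge $\partial \Omega$ because, using $\Leb^d(\partial\Omega)=0$ and a standard cut-off argument localizing near $\partial\Omega$ where the total variations of the $\bm\nu_n$ are uniformly small, the mass escaping to the boundary is controlled.

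The substantive step is to show that the limit $\bm \nu$ is still $\Acal$-free, i.e. generated by a sequence with $\|\Acal \mu_j\|_{\Wrm^{-k,q}(\Omega)} \to 0$. Here I would invoke the characterization from Theorem~\ref{thm:local} (equivalently Theorem~\ref{thm:char}): it suffices to check that condition (ii) of Theorem~\ref{thm:char} — the Jensen inequality against all $\Acal$-quasiconvex upper-semicontinuous integrands $h$ with linear growth — passes to the limit, together with condition (iii) at singular points, since condition (i) has already been verified above via $[\bm\nu]=\mu$ and $\Acal\mu=0$. For condition (ii): each $\bm\nu_n$ satisfies $h(\ac\mu(x)) \le \dprb{h,(\nu_n)_x} + \dprb{h^\#,(\nu_n^\infty)_x}\,\ac\lambda_n(x)$ for $\Leb^d$-a.e.\ $x$. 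The passage to the limit is not pointwise-in-$x$ but must be done in integrated form: integrate against an arbitrary $\phi \in \Crm_c(\Omega)$, $\phi \ge 0$, to get $\int \phi(x) h(\ac\mu(x))\dd x \le \ddprb{\phi \otimes h, \bm\nu_n}$ (using $h^\# = h^\infty$ along the directions that matter, or more carefully upper-semicontinuity of $h^\#$), then use weak-$*$ convergence $\ddprb{\phi\otimes h,\bm\nu_n}\to \ddprb{\phi\otimes h,\bm\nu}$ — which requires $h$ (hence $\phi\otimes h$) to lie in $\Ebf(\Omega;W)$, true when $h$ is continuous with linear growth and admits a strong recession function; for merely upper-semicontinuous $h$ one approximates from above by such integrands and uses monotone/dominated convergence. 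Having the inequality in integrated form against all $\phi\ge 0$ recovers the pointwise inequality for $\Leb^d$-a.e.\ $x$. For condition (iii), the support condition $\supp((\nu_n^\infty)_x) \subset W_\Acal$ for $\lambda_n^s$-a.e.\ $x$ transfers to $\bm\nu$ by the same duality argument applied to integrands of the form $\phi \otimes h$ with $h = 1_{W_\Acal}\otimes g$, $g \in \Crm_c(W_\Acal^\perp)$ (cf.\ the proof of Corollary~\ref{cor:last}): the relevant recession moments vanish in the limit.

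The main obstacle I anticipate is the interplay between the \emph{oscillation part} and the \emph{concentration part} when passing to the limit in the Jensen inequality, specifically that the role of $\ac\lambda_n$ (the absolutely continuous part of the barrier measure $\lambda_n$) need not converge in any strong sense and can convert into concentration or diffuse back — so one cannot separately pass to the limit in $\dprb{h,(\nu_n)_x}$ and $\dprb{h^\#,(\nu_n^\infty)_x}\ac\lambda_n(x)$. The clean way around this is precisely to keep everything inside the single bilinear pairing $\ddprb{\phi\otimes h, \bm\nu_n}$, never splitting it, and to exploit that $h^\#$ (the \emph{upper} recession function, which always exists and is upper-semicontinuous and $1$-homogeneous) is exactly the object that makes $f = \phi\otimes h$ — or its upper-semicontinuous envelope — behave well under the area-type duality; the use of $h^\#$ rather than $h^\infty$ in the statement of Theorem~\ref{thm:char}(ii) is what makes this limiting procedure robust. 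Once the pairing is treated as a single weak-$*$-continuous functional, the rest is the routine recovery of pointwise a.e.\ inequalities from their integrated versions over a dense countable family of test functions $\{\psi_p\otimes h_p\}$ as in Lemma~\ref{lem:separation}. As the excerpt notes, the detailed verification is that of Lemma~5.3 in~\cite{baia2013}, whose argument is insensitive to their extra structural hypotheses and carries over verbatim.
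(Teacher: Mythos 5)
The central step of your proposal --- showing the limit $\bm\nu$ is $\Acal$-free by verifying conditions (ii)--(iii) and then invoking the sufficiency direction of Theorem~\ref{thm:char} (equivalently Theorem~\ref{thm:local}) --- is circular within the paper's logical architecture. Theorem~\ref{thm:char} is deduced from Propositions~\ref{prop:singular} and~\ref{prop:regularr}, and each of those proofs opens with a Hahn--Banach separation against the sets $\Y_\Acal^\sing(\mu)$, respectively $\Y^\reg_\Acal(P_0)$; their weak-$*$ closedness is drawn explicitly from Proposition~\ref{prop:closed} together with Theorem~\ref{thm:convex}. So you would be invoking a result whose proof already presupposes the statement to be proved. (Using the necessity direction of the Jensen inequalities is legitimate, since that rests on the earlier work \cite{arroyo-rabasa2017lower-semiconti}; using the sufficiency direction is not.)

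The paper's own proof avoids $\Acal$-quasiconvexity entirely: it cites Lemma~5.3 of \cite{baia2013}, which is a direct diagonal argument. Given $\bm\nu_n\toweakstar\bm\nu$ with each $\bm\nu_n$ generated by $\{\mu_j^n\}_j$ with $\|\Acal\mu_j^n\|_{\Wrm^{-k,q}(\Omega)}\to 0$, one uses the metrizability of bounded subsets of $\E(\Omega;W)^*$ and the uniform bound on $\ddprb{1+|\frarg|,\bm\nu_n}$ (forced by $[\bm\nu_n]=\mu$) to extract, for each $n$, an index $j(n)$ with $d_\star(\bm\delta_{\mu_{j(n)}^n},\bm\nu_n)<1/n$ and $\|\Acal\mu_{j(n)}^n\|_{\Wrm^{-k,q}}<1/n$; the diagonal sequence $\{\mu_{j(n)}^n\}_n$ then generates $\bm\nu$ and has vanishing $\Acal$-defect, while the preserved barycenter constraint ensures $\lambda_{\bm\nu}(\partial\Omega)=0$. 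Your first observations (transfer of the barycenter and of the boundary condition) are sound and belong in that argument; only the substantive step needs to be replaced by this extraction. A secondary issue worth flagging even in isolation: recovering the pointwise inequality (ii) from the integrated pairings $\ddprb{\phi\otimes h,\bm\nu_n}$ hides a mismatch between the full $\lambda$ appearing in the pairing and the $\ac\lambda(x)$ appearing in (ii), so the passage from integrated to pointwise would additionally need (iii) and a structure-theorem argument --- but this is moot given the circularity above.
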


The main in step towards the proof of the characterization result Theorem~\ref{thm:char} rests in showing the following convexity property. Once this is established the proof of Theorem~\ref{thm:char} follows by relaxation argument and the geometric version of Hahn--Banach's Theorem argument. 

\begin{theorem}\label{thm:convex} The set $\Y_{\Acal,0}(\mu,\Omega)$ is a convex set.
\end{theorem}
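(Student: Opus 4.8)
\textbf{Proof strategy for Theorem~\ref{thm:convex}.} The plan is to show that if $\bm\nu^0,\bm\nu^1 \in \Y_{\Acal,0}(\mu,\Omega)$ and $\theta \in (0,1)$, then the convex combination $\bm\nu^\theta \coloneqq \theta\bm\nu^1 + (1-\theta)\bm\nu^0$ — which is again an element of $\Y_0(\Omega;W)$ with barycenter $[\bm\nu^\theta] = \mu$ — is generated by a sequence of $\Acal$-free measures. The key idea is to produce this generating sequence \emph{locally}, patch by patch, exploiting the fact that $\bm\nu^0$ and $\bm\nu^1$ are each already generated by $\Acal$-free measures; the constant rank hypothesis then allows the Helmholtz-type decomposition of Section~\ref{sec:dec} to glue the two families together without destroying the PDE constraint.

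First I would reduce to the local characterization. By Theorem~\ref{thm:local} (applied to $\bm\nu^0$ and $\bm\nu^1$) and Corollary~\ref{cor:shift}, at $(\Leb^d + \lambda^s)$-a.e.\ $x$ each of $\bm\nu^0,\bm\nu^1$ has a tangent Young measure lying (up to a constant shift belonging to $W$, resp.\ lying in $\Bcal\Ybf_\loc$ at singular points) in the $\Bcal$-gradient class. Since tangent cones behave well under convex combinations — more precisely, for $\Leb^d$-a.e.\ $x$ one has $\Tan(\bm\nu^\theta,x) \ni \theta\bm\sigma^1 + (1-\theta)\bm\sigma^0$ for suitable $\bm\sigma^i \in \Tan(\bm\nu^i,x)$ (using Proposition~\ref{prop: localization regular}, the tangent measures at a regular point are the homogeneous Young measures built from the fibres $\nu_x^i,\nu_x^{\infty,i}$) — it suffices by Theorem~\ref{thm:local} to verify condition (ii) there: namely that the convex combination of two tangent $\Acal$-free Young measures, whose barycenters are both multiples of a common constant, is again a (locally bounded) $\Acal$-free Young measure on every nice subdomain. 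This is exactly where I would carry out the explicit construction.

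The core step: fix a Lipschitz cube $Q$, and let $\bm\sigma^0,\bm\sigma^1 \in \Y_\Acal(Q)$ be given, with $[\bm\sigma^i] = \Bcal w_i + z_i\Leb^d$ as furnished by Lemma~\ref{lem:essential}/Corollary~\ref{cor:helm}. By Lemma~\ref{lem:Helm} each $\bm\sigma^i$ is generated by a sequence $\Bcal u_j^i + v_i\Leb^d$ with $u_j^i \equiv u_i$ near $\partial Q$ and $u_j^i \to u_i$ in $\Wrm^{k_\Bbb-1,q}(Q)$. I would then subdivide $Q$ into a large number $N$ of congruent subcubes, declaring roughly $\theta N$ of them ``type $1$'' and $(1-\theta)N$ ``type $0$'', and on each subcube $Q_\ell$ place the appropriately rescaled (blow-down) copy of the type-$i$ generating sequence, \emph{adjusting the boundary values} of the local potential to agree with a fixed affine $\Bcal$-primitive of the common barycenter near $\partial Q_\ell$ — this is precisely the boundary-matching trick from Step~3 of the proof of Theorem~\ref{thm:local}, using the commutator estimates~\eqref{eq:com1}--\eqref{eq:com2}. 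Because all patches share the same boundary data, the glued object is a globally well-defined $\Acal$-free measure on $Q$; letting $N \to \infty$ and diagonalizing, the law of large numbers at the level of the (weak-$*$ measurable) fibres shows that the limiting Young measure generated is $\theta\bm\sigma^1 + (1-\theta)\bm\sigma^0$. A total-variation bound as in Step~5a of the proof of Theorem~\ref{thm:local}, together with Proposition~\ref{prop:closed} (which guarantees $\Y_{\Acal,0}(\mu,\Omega)$ is weak-$*$ closed, hence compactness arguments stay inside the class), controls the masses uniformly.

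The main obstacle I anticipate is the \emph{constant-shift bookkeeping} at singular points versus regular points: at a regular point the two tangent barycenters are multiples of $\ac\mu(x)$ only up to shifts $z_0,z_1 \in W$ that need not lie in $W_\Acal$, so one must verify that the shifted convex combination still admits a valid $\Acal$-free generating sequence — this is where the $\Lrm^1$-shift formalism (Definition~\ref{def:shift}) and Corollary~\ref{cor:last} must be invoked to confirm that the relevant projections onto $W_\Acal$ are compatible, and that the non-$W_\Acal$ components are genuinely rigid (deterministic) so that they survive the convex combination unchanged. At singular points Corollary~\ref{cor:helm} kills the shift ($z=0$), so there the gluing is cleaner. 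Handling the measurability of the selections $x \mapsto (\bm\sigma^0(x),\bm\sigma^1(x))$ and the uniformity of the moduli of continuity across the Besicovitch cover (so that $\delta(m) \to 0$ as in the cited proof) is routine but must be done carefully; I would lift it verbatim from the proof of Theorem~\ref{thm:local}.
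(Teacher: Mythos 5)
Your high-level strategy --- Besicovitch covering, local generation via the Helmholtz decomposition of Section~\ref{sec:dec}, and gluing with $\mu$-boundary matching through the commutator estimates --- coincides with the paper's, and your identification of Corollary~\ref{cor:last} and the $\Lrm^1$-shift formalism as the right mechanism for $W \neq W_\Acal$ at regular points is on target. However, the pivot through condition (ii) of Theorem~\ref{thm:local} and the ``$N$-subcube law of large numbers'' construction hide the actual technical core of the proof, and as written your plan fails on a large class of singular points.

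There are two linked problems. First, the reduction to tangent cones is not as clean at singular points as you assert: $\lambda_0^s$ and $\lambda_1^s$ are arbitrary singular measures whose relative densities fluctuate, so the tangent cone of $\bm\nu^\theta$ along $\lambda_\theta^s = \theta\lambda_1^s + (1-\theta)\lambda_0^s$ is \emph{not} obtained by simply mixing the concentration fibres with weights $(\theta,1-\theta)$, and blow-up sequences realising $\Tan(\lambda_0^s,x)$ and $\Tan(\lambda_1^s,x)$ need not align. The paper addresses this by blowing up the auxiliary measure $\Lambda \coloneqq \lambda_0^s + \lambda_1^s$ and stratifying singular points into the sets $G_0,G_1,G_\infty$ according to the Besicovitch density $\di\lambda_1^s/\di\Lambda$; your proposal has no analogue. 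Second --- and fatally --- the plan to tile $Q$ into $N$ congruent subcubes and assign a $\theta$-fraction ``type $1$'' presumes that the underlying tangent measure $\tau \in \Tan(\Lambda,x)$ spreads its mass roughly evenly across the tiling, so that a $\theta$-fraction of tiles picks up a $\theta$-fraction of $\tau$. This fails precisely when $\delta_0$ is the \emph{only} available tangent of $\Lambda$ at $x$ (the set $\Scal$ in Step~1a of the paper's proof): all the singular mass then sits in a single tile, and no spatial partition of $Q$ can realise the $\theta:(1-\theta)$ mixture. The paper therefore splits singular points into $\Rcal$, where a non-atomic tangent exists and Lemma~\ref{lem:cubo}/Lemma~\ref{lem:shrink} produce inner Lipschitz sets of \emph{exactly} prescribed relative $\Lambda$-mass $\theta$, versus $\Scal$, and on $\Scal$-centered cubes it abandons spatial separation entirely, instead superposing the two generating \emph{potentials} after translating them in opposite directions $\pm s_x e_1$ so that the two concentrations land in disjoint sub-regions while the $\theta_i$-weights enter multiplicatively (Step~4b and Figure~\ref{fig:delta}). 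That translate-and-superpose device is the genuinely new idea in the proof of Theorem~\ref{thm:convex}; your proposal does not contain it and would need it to close the gap.
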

\begin{proof}
Fix $0 < \theta < 1$ and let 
\[
\bm \nu_1 = (\nu_1,\lambda_1,\nu^\infty_1), \; \bm \nu_2 =(\nu_2,\lambda_2,\nu^\infty_2) \in \Y_{\Acal,0}(\mu,\Omega).
\]
We also write $\bm \nu_\theta \coloneqq \theta \bm \nu_1 + (1 - \theta)\bm \nu_2 \in \E(\Omega;W)^*$. 
Our goal is to show that $\bm \nu_\theta$ is an $\A$-free Young measure on $\Omega$. To show this we will construct a sequence of $\A$-free measures om $\Omega$ which generate the functional  $\bm \nu_\theta$.  

Since this will be a fairly long and technical proof we will begin by describing a brief program of the proof. The foundation of our proof lies in a careful inspection of the infinitesimal qualitative behavior of points $x \in \Omega$ with respect to our Young measures $\bm \nu_1, \bm \nu_2$. The qualitative understanding of the set of tangent Young measures of $\bm \nu_i$ ($i = 1,2$) at a given $x \in \Omega$ will be decisive in the choice of construction of an $\A$-free recovery sequence for $\bm \nu_\theta$ about that point. Once \emph{every} point and their local constructions are established, the idea is to use Besicovitch's covering theorem to build a partition of $\Omega$ into disjoint tiles, each of which retrieves the infinitesimal properties of $\bm \nu_i$ and hence the recovery sequences of $\bm \nu_\theta$ about their center points. The one but last step is to glue the aforementioned $\A$-free recovery sequences from each tile into a globally $\A$-free sequence which generates an arbitrarily close a piece-wise constant approximation of $\bm \nu_\theta$. The conclusion of the argument then follows from a diagonalization argument between the larger scale of piece-wise constant approximations of $\bm \nu_{\theta}$ where we glue the recovery sequences,  and the smaller scale where the corresponding recovery sequences are effectively constructed.

\textit{\textit{Step~1. Qualitative analysis of points.}} 

Since we are trying to capture the fine properties of $\bm \nu_1$ and $\bm \nu_2$ simultaneously, it will be convenient to define the measure $\Lambda \coloneqq \lambda_1^s + \lambda^s_2$, which is a suitable substitute candidate to keep track of the interactions between singular points of $\lambda_1$ and $\lambda_2$.     
We start by distinguishing regular points and singular points. It follows from the Radon--Nykod\'ym theorem that at $(\Leb^d + \Lambda)$-almost every $x \in \Omega$ one of the following properties hold: either 
\begin{equation}
x \in \reg(\Omega) \coloneqq \setBB{x \in \Omega}{	\frac{\dd \Lambda}{\dd \Leb^d}(x) = \lim_{r \todown 0} \frac{\Lambda(Q_r(x))}{(2r)^d}= 0}
\end{equation}
is a regular point, or
\begin{equation}
x \in \sing(\Omega) \coloneqq \setBB{x \in \Omega}{		\frac{\dd \Leb^d}{\dd \Lambda}(x) = \lim_{r \todown 0} \frac{(2r)^d}{\Lambda(Q_r(x))} = 0}
\end{equation}
is a singular point. 
Throughout this proof we shall call points with the first property (which holds $\Leb^d$-almost everywhere) regular points, and points satisfying the second property (which holds $\Lambda$-almost everywhere) will be called singular points; we shall only consider points $x \in \Omega$ that are either regular or singular points. In addition, we may assume without any loss of generality that the limits
\[
	\frac{\dd \lambda_i^s}{\dd \Lambda}(x) = \lim_{r \todown 0}\frac{\lambda_i^s(Q_r(x)}{\Lambda(Q_r(x))}, \quad i = \{1,2\},
\]
exist at every singular point $x \in \Omega$. Next, we further partition $\sing(\Omega)$ into sets which render precise information about the size relation between $\lambda_1$ and $\lambda_2$. More precisely, we split $\sing(\Omega)$ into sets $G_0 \cup G_1 \cup G_\infty \cup N$, where
\begin{gather*}
G_0 \coloneqq \setBB{x \in \Omega}{\frac{\dd \lambda_1^s}{\dd \Lambda}(x) = 0}, \\
G_1 \coloneqq \setBB{x \in \Omega}{\frac{\dd \lambda_1^s}{\dd \Lambda}(x) \in (0,1) }, \\
G_\infty \coloneqq \setBB{x \in \Omega}{\frac{\dd \lambda_1^s}{\dd \Lambda}(x) = 1},
\end{gather*}
and $\Lambda(N) = 0$.  
If we set
\[
	g_1 = \mathbbm 1_{G_1 \cup G_\infty}\cdot \frac{\dd \lambda_1^s}{\dd \lambda_2^s} \quad \text{and} \quad 	g_2 = \mathbbm 1_{G_0 \cup G_1} \cdot \frac{\dd \lambda_2^s}{\dd \lambda_1^s},
\]
then, up to modifying $N$, we may assume that $g_1, g_2$ are $\Lambda$-measurably continuous and
\[
	\begin{cases}
		 x \in G_0 & \Longrightarrow \quad g_1(x) = \displaystyle \lim_{r \todown 0} \frac{\lambda_1^s(Q_r(x))}{\lambda^s_2(Q_r(x))} = 0, \\[10pt]
		x \in G_1 & \Longrightarrow \quad  g_1(x) = g_2(x)^{-1} = \displaystyle\lim_{r \todown 0} \frac{\lambda_1^s(Q_r(x))}{\lambda^s_2(Q_r(x))} \in (0,\infty),	\\[10pt]
		x \in G_\infty & \Longrightarrow \quad g_2(x) = \displaystyle \lim_{r \todown 0} \frac{\lambda_2^s(Q_r(x))}{\lambda^s_1(Q_r(x))} = 0.	
	\end{cases}
\]

\textit{\textit{Step~1a. Tangential properties of singular points.}} So far we have separated regular and singular points, and the latter by their weights with respect to  $\lambda_1$ and $\lambda_2$. The next step is to separate points in $\sing(\Omega)$ with respect to the qualitative behavior of $\Tan(\Lambda,x)$.
\begin{enumerate}
	\item If there exists a tangent measure $\tau \in \Tan(\Lambda,x)$ which does not charge points, i.e., 
	\[
		\text{$\tau(\{y\}) = 0$ for all $y \in \R^d$},
	\] then
	we write $x \in \Rcal$. 
	Every $x \in \Rcal$ has the following property (see Corollary~\ref{cor:theta}): if  $\Theta \in (0,1)$, $g$ is a $\Lambda$-measurable map, and $x$ is a $\Lambda$-Lebesgue point of $g$, then there exist (a) a sequence of infinitesimal radii $r_h \todown 0$ and (b) a sequence of open Lipschitz sets $D_h \subset Q_{r_h}$ satisfying
	\[
		\Lambda(x + \partial D_h) = 0, \qquad \lim_{h \to \infty} \frac{\Lambda(x + D_h)}{\Lambda(Q_{r_h}(x))} = \Theta,
	\]
	and
	\[
		\lim_{h \to \infty}  \aveint{x + D_h}{} |g - g(x)| \dd \Lambda = 0.
	\]
	In particular, if $x \in G_1$, then 
	\[
		\lim_{h \to \infty} \frac{\Lambda(x + D_h)}{\Lambda(Q_{r_h}(x))} = \lim_{h \to \infty}    \frac{\lambda^s_1(x + D_h)}{  \lambda^s_1(Q_{r_h}(x))} = \lim_{h \to \infty}    \frac{\lambda^s_2(x + D_h)}{\lambda^s_2(Q_{r_h}(x))} = \Theta.
	\]
	
	\item If otherwise~(1) does not hold for any tangent measure of $\Lambda$ at $x$, we write $x \in \Scal$. It follows from Lemma~\ref{lem:tantan} and the fact that \emph{blow-ups of blow-ups} are blow-ups (see Theorem~2.12 in \cite{preiss1987geometry-of-mea})  that
	\[
		x \in \Scal \quad \Longrightarrow \quad \delta_0 \in \Tan(\Lambda,x). 
	\]
\end{enumerate}

{\textit{Step~1b. Selection of points with Lebesgue-type properties.}} We now turn to the selection of points which later shall be the centers of the tile partitions. As usual let $\{f_{p,q}\}_{p,q \in \Nbb} \subset \E(\Omega;W)$ be the family from Lemma~\ref{lem:separation} which separates points in $\E(\Omega;W)^*$.

Up to removing a set of $\Leb^d$-measure zero, we may assume that every $x \in \reg(\Omega)$ is a Lebesgue point of the maps
\[
\Big\{x \mapsto \dprb{f_{p,q} , \nu_i}_x + \dprb{f_{p,q}^\infty  , \nu^\infty_i}_x\, \ac\lambda_i(x)\Big\} \qquad i = 1,2; \quad p,q \in \Nbb.
\]
About singular points $x \in \sing(\Omega)$, we shall be more careful and set $B^\infty_i \subset \sing(\Omega)$ to be the set of $\lambda_i^s$-Lebesgue points of the family of maps
\[
\Big\{ x \mapsto \dprb{f_{p,q}^\infty , \nu_i}_x\Big\}  \qquad i = 1,2; \quad p,q \in \Nbb.
\]
Each $B_i^\infty$ has full $\lambda^s_i$-full measure on $\Omega$ and hence $B_1^\infty \cup B_2^\infty$ has full $\Lambda$-measure on $\Omega$. Therefore, in what follows there will be no loss of generality in assuming that $\sing(\Omega) = B_1^\infty \cup  B_2^\infty$; this union may not be disjoint.\\

{\textit{Step~2. Building a partition of cubes with good fine properties.}} Let $m \in \Nbb$, in this step we will address the construction of a full $\Lambda$-measure partition of $\Omega$ with $\BigO(m^{-1})$-asymptotic approximation Lebesgue-type properties. To begin, let us define a fine cover of $L \coloneqq \reg(\Omega) \cup \sing(\Omega)$. 
At every $x \in L$ we define 
\[
	\rho_m(x) \coloneqq \sup\setBB{0 \le r \le \frac 1m}{\text{$r$ satisfies the $(\Pcal_m(x))$ property}}.
\]
A radius $r$ is said to satisfy $(\Pcal_m(x))$ provided the following continuity properties hold for $i = 1,2$ and all indexes $p,q \le m$:

If $x \in \reg(\Omega)$, then 
\begin{gather}
\label{eq:regular}
\frac{\Lambda(Q_r(x))}{(2r)^d} \le \frac 1m, \\
\label{eq:but1}
\aveint{Q_r(x)}{} \big|\dprb{f_{p,q}, \nu_i}_y - \dprb{f_{p,q}, \nu_i}_x\big| \dd y \le \frac 1m, \\
\label{eq:butt1}\aveint{Q_r(x)}{} \big|\dprb{f^\infty_{p,q}, \nu^\infty_i}_y \cdot \ac\lambda_i(y) - \dprb{f^\infty_{p,q}, \nu^\infty_i}_x \cdot \ac\lambda_i(x)\big|  \dd y \le \frac 1m.
\end{gather}

 If $x \in \sing(\Omega)$, then
\begin{gather}
\label{eq:singular}
\int_{Q_r(x)} \dprb{1 + |\frarg|,\nu}_y  \dd y + \int_{Q_r(x)} \ac\lambda_i(y)  \dd  y \;  \le \; \frac 1m \cdot {\Lambda(Q_r(x))},\\
\label{eq:but2}
\aveint{Q_r(x)}{} \big|\dprb{f^\infty_{p,q} , \nu^\infty_i}_y - \dprb{f^\infty_{p,q} , \nu^\infty_i}_x\big| \dd \lambda_i^s(y) \le \frac 1m, \quad  \; x \in B_i^\infty,\\
\label{eq:11} \frac {\lambda_1^s(Q_r(x))}{\Lambda(Q_r(x))} \le \frac 1m   \quad \text{if $x \in G_0$,} \\
\label{eq:13} \frac{\lambda^s_2(Q_r(x))}{\Lambda(Q_r(x)) } \le \frac 1m \quad \text{if $x \in G_\infty$.} 
\end{gather}

If $x \in \Rcal \cap G_1$, then we require
\begin{equation}
\label{eq:12} 
\left| \frac{\lambda_i(x + D_r)}{\lambda_i(Q_{r}(x))} - \Theta \right|  \le \frac 1m, \quad \Lambda(x + \partial D_r) = 0.
\end{equation}
If $x \in G_0$ or $x \in G_\infty$, then we can only find $D_r$ satisfying~\eqref{eq:12} for $\lambda_2$ and $\lambda_1$ respectively. 

Lastly, if $x \in \Scal \cap G_1$, then
\begin{equation}\label{eq:14}
 \frac{\lambda_i(A_r)}{\lambda_i(Q_r(x))} \le \frac 1m, 
\end{equation}
where 
\[
	A_r \coloneqq Q_r(x) \setminus \cl{Q_{s_r}(x)}, \qquad \Lambda(\partial A_r) = 0, \quad\frac {s_r}{r} \le \frac 1m.
\]
Moreover, $s_r$ can be chosen sufficiently small so that  
\begin{equation}\label{eq:15}
\|\Delta_{\pm s_r}   u - u\|_{\Wrm^{k_\Bbb -1,1}(Q_x)}  \le \frac{\Lambda(Q_r(x))}{m}, 
\end{equation}
where $\mu \mres Q_r(x) =  \Bcal u + v \Leb^d$ is the decomposition provided by Lemma~\ref{lem:Helm} for $\mu$ on $Q_r(x)$. Here we have used the short notation  
\[
\Delta_{\pm h } w \coloneqq w(\frarg \pm h e_1), \quad s_x \coloneqq s_{r}(x),
\]
for the translations of a function  $w$. 

Now, this is indeed a large amount of smallness conditions to keep track, but they are all fundamental if one wishes to avoid (trivial) partitions which do not reflect the behavior of $\bm \nu_1, \bm \nu_2$ appropriately.

\textbf{Claim~1.} $\rho_m(x) > 0$ for all $x \in L$. 

\proofstep{Proof of Claim~1.} Most of the properties are easy to check: Properties~\eqref{eq:regular}-\eqref{eq:butt1} and ~\eqref{eq:singular}-\eqref{eq:13} follow directly from the construction and the Lebesgue properties discussed in Step~1b. Property~\eqref{eq:12} is a consequence of Step~1a(1). We focus in showing~\eqref{eq:14}-\eqref{eq:15} which will follow from the fact that $\delta_0 \in \Tan(\Lambda,x)$. Indeed, in this case we may a  sequence of infinitesimal radii $r_j \todown 0$ such that
\[
	\gamma_j \coloneqq \frac{1}{\Lambda(Q_{r_j}(x))} \cdot \Trm_{x,r_j}[\Lambda] \toweakstar \delta_0 \quad \text{locally in $\M(\R^d)$}. 
\] 
 Then, by the strict convergence of the blow-up sequence we deduce that 
\[
	\lim_{j \to \infty} \Lambda(Q_{sr_j}) = \lim_{j \to \infty} \gamma_j(Q_{sr_j}(x)) = \lim_{j \to \infty} \frac{\Lambda(Q_{sr_j}(x))}{\Lambda(Q_{r_j}(x))} = 1 \quad \forall \; s \in (0,1).
\]
In particular, since $x \in G_1$, we conclude that
\[
	\lim_{j \to \infty} \frac{\Lambda(Q_{sr_j}(x))}{\Lambda(Q_{r_j}(x))} = \lim_{j \to \infty} \frac{\lambda_i(Q_{sr_j}(x))}{\lambda_i(Q_{r_j}(x))} = 1, \qquad i = 1,2.
\] 
Choosing $s \le \frac 1m$ in a way that $s_{r_j} \coloneqq s r_j$ satisfies the required properties for $A_{r_j}$ and $Q_{r_j}(x)$ (this can be done by slightly modifying each $r_j$ in the blow-up sequence), we exhibit an infinitesimal sequence $r_j$ (and their associated $s_{r_j}$) satisfying~\eqref{eq:14}-\eqref{eq:15}. 

This proves the claim.\qed

In particular, the cover 
\[
	\Qcal_m \coloneqq \setB{Q_r(x)}{ x \in L, \, 0 < r \le \rho_m(x) \text{ with $\Lambda(\partial Q_r(x)) = 0$}}
\] 
conforms a fine cover of $L$ to which we may apply Besicovitch's Covering Theorem: 
There exists a sub-cover $\Ocal_m \subset \Qcal_m$ of disjoint cubes satisfying
\begin{equation}\label{eq:but3}
	\Lambda( \Omega \setminus O_m) = 0 \quad \text{and} \quad \Lambda(\partial Q_x) = 0 \quad \text{for all $Q_x \in \Ocal_m$}.
\end{equation}
Here, we have set $O_m \coloneqq \cup_{Q_x \in \Ocal_m} Q_x$. 

{\textit{Step~3. Piece-wise homogeneous approximations of $\bm \nu_i$.}} The idea behind defining $\Ocal_m$ is to construct a \emph{piece-wise homogeneous} approximation  of $\bm \nu_1, \bm \nu_2$ of order $\frac 1m$ as follows: 
Fix $i \in \{1,2\}$ and define, through duality, a sequence of functionals $\{{\bm \nu^{(m)}_i}\}$ in $\E(\Omega,W)^*$ acting as 
\begin{align*}
\ddprB{f,{\bm \nu^{(m)}_i}}& \coloneqq \sum_{\substack{x \in \reg(\Omega) \\ Q_x \in \Ocal_m}} 
\bigg(\int_{Q_x} \dprb{f , \nu_i}_x \dd y \\ 
& \qquad + \int_{Q_x} \dprb{f^\infty , \nu_i^\infty}_x \, \ac\lambda_i(x)  \dd y \bigg) \\ 
& \qquad + \sum_{\substack{x \in  B_i^\infty \\ Q_x \in \Ocal_m}} \int_{Q_x} \dprb{f^\infty , \nu_i^\infty}_x  \dd \lambda_i^s(y).
\end{align*}
The fact that these functionals are in fact Young measures  follows directly from~\eqref{eq:but3}, the weak-$*$ measurability properties of $\bm \nu_1$ and $\bm \nu_2$, and the fact that simple Borel maps are measurable with respect to any Radon measure.  

\textbf{Claim~2.} As $m \to \infty$ it holds that
\[
	\bm \nu_i^{(m)} \toweakstar \bm \nu_i \; \text{in $\E(\Omega,W)^*$},\quad i = 1,2.
\]
\proofstep{Proof of Claim~2.} Let $p,q \in \Nbb$ (we shall simply write $f = f_{p,q}$). 
First, we show that 
\begin{equation}\label{eq:claim1}
\begin{split}
\lim_{m \to \infty}	\bigg|& \int_{\Omega}  \dprb{f,\nu_i}  \dd \Leb^d + \int_{\Omega} \dprb{f^\infty,\nu^\infty_i} \dd (\ac\lambda_i\Leb^d) \\ & - \sum_{\substack{Q_x \in \Ocal_m\\x \in \reg(\Omega)}} \int_{Q_x}  \dprb{f,\nu_i}_x \dd y  + \int_{Q_r(x)} \ac\lambda_i(x) \dprb{f^\infty , \nu_i}_x \dd y \bigg| = 0.
\end{split}
\end{equation}
We consider $p,q \le m \in \Nbb$. We may estimate (cf.~\eqref{eq:but3}) the difference of the integrals above by the sum of the two  non-negative quantities
\[
	I_m \coloneqq \bigg|\sum_{\substack{Q_x \in \Ocal_m\\x \in \sing(\Omega)}} \int_{Q_x} \dprb{f,\nu_i} \dd \Leb^d + \int_{\Omega} \dprb{f^\infty,\nu^\infty_i} \dd (\ac\lambda_i\Leb^d)\bigg|
\]
and
\begin{align*}
	II_m & \coloneqq \sum_{\substack{Q_x \in \Ocal_m\\x \in \reg(\Omega)}} \bigg|\int_{Q_x} [\dprb{f,\nu_i}_y - \dprb{f,\nu_i}_x] \dd y  \\
		 & \qquad  + \int_{Q_r(x)} [\dprb{f^\infty,\nu^\infty_i}_y \, \ac\lambda_i(y) - \dprb{f^\infty , \nu_i}_x \, \ac\lambda_i(x)] \dd y \bigg|.
\end{align*}
Using~\eqref{eq:singular} and the linear growth of $|f| \le M_f(1 + |\frarg|)$ we obtain 
\begin{align*}
	I_m & \le [1 + M_f]\sum_{\substack{Q_x \in \Ocal_m\\x \in \sing(\Omega)}} \int_{Q_x} \dprb{1 + |\frarg| , \nu_i} \dd \Leb^d(y) + \int_{Q_r(x)} \ac\lambda_i  \dd \Leb^d(y) \\
	  & \le \frac 1m \sum_{\substack{Q_x \in \Ocal_m\\x \in \sing(\Omega)}} \Lambda(Q_x) \le \frac 1m \Lambda(\Omega).
\end{align*}
It follows that $\lim_{m \to \infty} I_m = 0$. 

On the other hand, we use~\eqref{eq:but1}-\eqref{eq:butt1} to bound $II_m$ as 
\begin{align*}
II_m & \le\sum_{\substack{Q_x \in \Ocal_m\\x \in \reg(\Omega)}} \bigg( \int_{Q_x}  |\dprb{f , \nu_i }_y - \dprb{f,\nu_i}_x| \dd y   \\
	 & 	\qquad +  \int_{Q_x}  | \dprb{f^\infty,\nu^\infty_i}_y \, \ac\lambda_i(y) - \dprb{f^\infty,\nu^\infty_i}_x \cdot \ac\lambda_i(x)|  \dd y \bigg)\\
	 & \le  \frac 2m  \sum_{\substack{Q_x \in \Ocal_m\\x \in \reg(\Omega)}} \Leb^d(Q_x)  \le \frac 2m \Leb^d(\Omega).
\end{align*}
This shows that $\lim_{m \to \infty} II_m = 0$, whence~\eqref{eq:claim1} follows. 

To prove the claim we are left to show that
\[
\lim_{m \to \infty}	\bigg|	\sum_{\substack{Q_x \in \Ocal_m\\x \in B_i^\infty}}\int_{Q_x}   [\dprb{f^\infty , \nu^\infty_i} - \dprb{f^\infty,\nu^\infty_i}_x]  \dd \lambda_i^s  \bigg| = 0.
\]
We may estimate the integrand above, for fixed $m \in \Nbb$,  by
\begin{align*}
	\sum_{\substack{Q_x \in \Ocal_m\\x \in B_i^\infty}}\int_{Q_x} &  |\dprb{f^\infty , \nu^\infty_i} - \dprb{f^\infty,\nu^\infty_i}_x|  \dd \lambda_i^s \\
	&  \stackrel{\eqref{eq:but2}}\le \frac 1m	\sum_{\substack{Q_x \in \Ocal_m\\x \in B_i^\infty}}  \Lambda (Q_x)   
	  \le  \frac 1m \Lambda(\Omega). 
\end{align*}
Since $\{f_{p,q}\}$ separates $\E(\Omega;W)^*$, this proves Claim~2.\qed\\
%
%
%

{\textit{Step~4. Construction of a global $\A$-free recovery sequence.}} Let us fix $m \in \Nbb$. 
Next, we define  candidate recovery sequences for $\bm{\nu}_\theta$ on $Q_x \in \Ocal_m$. This will be done depending on whether $x$ belongs to $\Rcal$ or $\Scal$ where these sets are the ones defined in Step~1a.\\

{\textit{Step~4a. Cubes $Q_x \in \Ocal_m$ centered at $x \in \Rcal \cup \reg(\Omega)$.}}  We recall from step~1a and~\eqref{eq:12} that, if $x \in \Rcal$, then there are 
 open Lipschitz sets $D_x \subset Q_x \Subset \Omega$ satisfying 
\begin{equation}\label{eq:4aR}
	\Lambda(\partial D_x) = 0,
\end{equation}
\begin{equation}\label{eq:4aR2}
	\bigg|\frac{\lambda_i(D_x)}{\lambda_i(Q_r(x))} - \theta \bigg| \le \frac 1m  \quad i = 1,2, \quad \text{whenever $x \in G_1$,}
\end{equation}
\begin{equation}\label{eq:4aR3}
\bigg|\frac{\lambda_2(D_x)}{\lambda_2(Q_r(x))} - \theta \bigg| \le \frac 1m  \quad \text{whenever $x \in G_0$,}
\end{equation}
and
\begin{equation}\label{eq:4aR4}
\bigg|\frac{\lambda_1(D_x)}{\lambda_1(Q_r(x))} - \theta \bigg| \le \frac 1m  \quad \text{whenever $x \in G_\infty$.}
\end{equation}
On the other hand, since $\bm \nu_1, \bm \nu_2$ are $\A$-free Young measures on $\Omega$, we may apply Lemma~\ref{lem:Helm} to find sequences (to avoid adding unnecessary notation, we will omit the $x$-dependence of these sequences) of $\A$-free measures $\{u_j\} \subset \M(D_x;W)$ and $\{v_j\} \subset \M(Q_x \setminus \cl{D_x};W)$ satisfying
\begin{gather}
 \label{eq:4a1}u_j \equiv \mu \; \text{on a neighborhood of $\partial D_x$} \quad \text{and} \quad
u_j \toY \bm \nu_1 \, \text{on $D_x$}, \\
  \label{eq:4a2}	v_j \equiv \mu \; \text{on a neighborhood of $\partial (Q_x \setminus \cl{D_x})$} \quad \text{and} \quad
v_j \toY \bm \nu_2 \, \text{on  $(Q_x \setminus \cl{D_x})$}.
\end{gather}

The same construction applies with for $x \in \reg(\Omega)$ with the exception that we require $D_x \subset Q_x$ to satisfy
\begin{equation}\label{eq:4ar}
	\frac{\Leb^d(D_x)}{\Leb^d(Q_r(x))} = \theta  \quad \text{and} \quad \Leb^d(\partial D_x) = 0.
\end{equation}
It follows from the uniformity of the Lebesgue measure that this can always be achieved for some open Lipschitz $D_x \subset Q_x$; in this case the set $D_x$ can be chosen to be a strip of width $\theta$ or an open concentric cube of $Q_x$ of side $\theta^\frac{1}{d}$.

In what follows we shall write
\[
Q_x^1 = D_x \quad \text{and} \quad Q_x^2 =  Q_x \setminus \cl{D_x}.
\] 
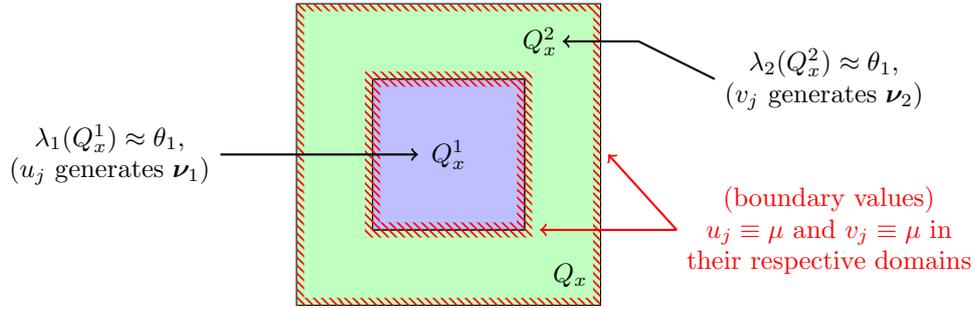
\begin{figure}[h]
	\begin{tikzpicture}
	\draw[fill=green!50,fill opacity=0.5] (0,0) rectangle (4,4); 
	\draw[fill=white] (1,1) rectangle (3,3);
	\draw[fill=blue!50,fill opacity=0.5] (1,1) rectangle (3,3);
	\fill [even odd rule,pattern=custom north west lines,hatchspread=3pt,hatchthickness=0.6pt,hatchcolor=red] (1.1,1.1) rectangle (2.9,2.9) (0.9,0.9) rectangle (3.1,3.1);
	\fill [even odd rule,pattern=custom north west lines,hatchspread=3pt,hatchthickness=0.6pt,hatchcolor=red] (0.1,0.1) rectangle (3.9,3.9) (0,0) rectangle (4,4);
	\draw[<-,thick] (3.5,3.5) -- (4.5,3.5) -- (5.5,3) node[right,align=center]	{$\lambda_2(Q_x^2) \approx \theta_1$, \\ ($v_j$ generates $\bm \nu_2$)};
	\draw[<-,thick] (1.6,2) -- (-1,2) node[left,align=center] {$\lambda_1(Q_x^1) \approx \theta_1$, \\ ($u_j$ generates $\bm \nu_1$)};
	\draw[<-,thick,red] (3.2,1) -- (5,1) node[right,align=center] { (boundary values) \\$u_j \equiv \mu$ and $v_j \equiv \mu$ in \\ their respective domains};
	\draw[<-,thick,red] (4.1,2) -- (5,1);
	\draw (3.2,3.5) node {$Q_x^2$};
	\draw (2,2)  node {$Q_x^1$};
	\draw (3.6,.4)  node {$Q_x$};
	\end{tikzpicture}
	\caption{Qualitative sketch of the construction when there exists a tangent measure $\tau \in \Tan(\Lambda,x)$ which does not charge points (cf. Step~1a).}
\end{figure}

Notice that by construction the measures 
\[
	w_j = w_j^x \coloneqq \mathbbm 1_{Q_x^1} \, u_j + \mathbbm 1_{Q_x^2} \, v_j
\]
are $\A$-free on $Q_x$ for all $j \in \Nbb$. Moreover, the $w_j$'s can be extended by $\mu$ outside $Q_x$ and particular this extension preserves the $\A$-free constraint. Moreover, in virtue of~\eqref{eq:4a1}-\eqref{eq:4a2} and the locality of the weak-$*$ convergence of Young measures it holds that
\begin{equation}\label{eq:w}
	w_j  \, \toY \, \bm \nu_1 \mres \cl{Q_x^1} \,  + \, \bm \nu_2 \mres \cl{Q_x^1} \, \stackrel{\eqref{eq:4aR},\eqref{eq:4ar}}= \, \bm \nu_1 \mres {Q_x^1} \, + \, \bm \nu_2 \mres {Q_x^2} \quad \text{in $\Y_\Acal(Q_x)$.}
\end{equation}
Therefore, upon re-adjusting the sequence $\{w_j\}$ we may assume that 
\begin{equation}\label{eq:boundw}
	\sup_{j \in \Nbb} |w_j|(Q_x) \le 2\Big( \ddprB{|\frarg|, \bm \nu_1 \mres Q_x} + \ddprB{|\frarg|, \bm \nu_2 \mres Q_x} \Big).  
\end{equation}

\textit{{Step~4b. Cubes $Q_x \in \Ocal_m$ centered at $x \in \Scal$.}} 

The constructions in these cubes will be completely different and it will consist of separating the generating sequences of $\bm \nu_1, \bm \nu_2$ locally. 
Once again, by Lemma~\ref{lem:Helm}, we may find sequences of potentials $\{u_j\}, \{w_j \}\subset \Wrm^{k_\Bbb-1,q}(Q_x)$ such that 
\begin{gather*}
\text{$u_j,w_j \equiv u$ on a neighborhood of $\partial Q_x$},\\
u_j,w_j \to u \quad \text{in $\Wrm^{k_\Bbb -1,1}(Q_x)$},
\end{gather*}
where $\mu \mres Q_x = \Bcal u + v \, \Leb^d$ for some $v \in \Lrm^1(Q_x;W)$. Moreover,
\[
\Bcal u_j \, + \, v \Leb^d \, \toY \, \bm \nu_1 \quad \text{in $\Y(Q_x;W)$}, 
\]
and
\[
\Bcal w_j \, + \, v \Leb^d \, \toY \, \bm \nu_2 \quad \text{in $\Y(Q_x;W)$},
\]
Now, let $\phi$ be a cut-off function satisfying (here $Q_x = Q_r(x)$)
\[
	\mathbbm 1_{Q_{r/2}(x)}  \le \phi  \le \mathbbm1_{Q_{3r/4}(x)}, \qquad \|\phi\|_{k,\infty} \lesssim r^{-dk}.
\] 

Due to the $\Lrm^p$-continuity of the translations, we may choose $n_1 = n_1(m) \in \Nbb$ to be sufficiently large so that 
\begin{equation}\label{eq:trans}
\begin{split}
	  \|u_j - u\|_{\Wrm^{k-1,1}(Q_x)}    + \|w_j - u\|_{\Wrm^{k-1,1}(Q_x)}  
	\end{split}
	\le \frac {r^{dk}}{m} \cdot \frac {\Lambda(Q_x)}{m}
\end{equation}
for all $j \ge n_1$.

We are now in position to define our recovery sequence  candidate for $\bm \nu_\theta$ on $Q_x$ by setting
\[
	q_j = q_j^x \coloneqq \Bbb(\phi[\theta_1 \Delta_{-s_x} u_j + \theta_2 \Delta_{s_x} w_j - u]) + \Bcal u + v, \quad j \in \Nbb,
\]

The purpose of this sequence is to shift $u_j$ and $w_j$ apart from each other, while preserving the $\mu$-boundary conditions near $\partial Q_x$ (see Figure~\ref{fig:delta} below). Clearly, $\{q_j\}$ is a sequence of $\A$-free measures on $Q_x$ with $q_j \equiv \mu$ on a neighborhood of $\partial Q_x$ and $q_j \approx \theta_1 u_j + \theta_2 v_j$ on $Q_{r/2}(x)$. Notice that this construction differs from the previous one (when $x \in \Rcal$) in the sense that the \emph{$\theta_i$-weights} are incorporated by simple multiplication. In general, this construction is too naive to work. However, in this case, it  works because we have $\Lambda \approx \delta_0$ in $Q_x$.   
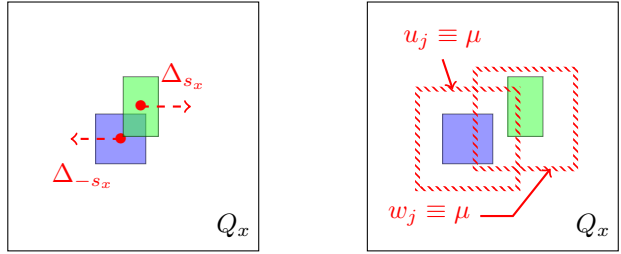
\begin{figure}[h]
	\begin{center}
		\begin{tikzpicture}[scale=0.33] 
		\draw (0,0) rectangle (10,10);
		\draw[fill=blue!80,opacity=0.5] (3.5,3.5) rectangle (5.5,5.5);
		\draw[->,color=red,dashed,thick] (4.5,4.5) -- (2.5,4.5);
		\draw (4.5, 4.5) node[color=red] {\textbullet};
		\draw (3,3) node[color=red] {$\Delta_{-s_x}$};
		\draw[fill=green!80,opacity=0.5] (4.6,4.6) rectangle (6,7);
		\draw[->,color=red,dashed,thick] (5.3,5.8) -- (7.3,5.8);
		\draw (5.3,5.8) node[color=red] {\textbullet};
		\draw (7,7) node[color=red] {$\Delta_{s_x}$};
		\draw (9,1) node {$Q_x$};
		\end{tikzpicture}
		\qquad \qquad 
		\begin{tikzpicture}[scale=0.33] 
		\draw (0,0) rectangle (10,10);
		\draw[fill=blue!80,opacity=0.5] (3,3.5) rectangle (5,5.5);
		\draw[fill=green!80,opacity=0.5] (5.6,4.6) rectangle (7,7);
		\fill [even odd rule,pattern=custom north west lines,hatchspread=3pt,hatchthickness=0.6pt,hatchcolor=red] (2.1,2.6) rectangle (5.9,6.4) (1.9,2.4) rectangle (6.1,6.6);
		\draw (9,1) node {$Q_x$};
		\fill [even odd rule,pattern=custom north west lines,hatchspread=3pt,hatchthickness=0.6pt,hatchcolor=red] (4.4,3.4) rectangle (8.2,7.2) (4.2,3.2) rectangle (8.4,7.4);
		
		\draw[<-,thick,red] (3.4,6.5) -- (3,7.7) node[above,align=center] {$u_j \equiv \mu$};
		\draw[<-,thick,red] (7.3,3.3) -- (5.8,1.4) -- (4.5,1.4)
		node[left,align=center]{$w_j \equiv \mu$};
		\end{tikzpicture}
	\end{center}
	\caption{Qualitative representation of the construction when $\delta_0 \in \Tan(\Lambda,x)$. The blue (green) area  represents  the region  where \emph{most} of the mass of $\lambda_1$ (of $\lambda_2$) is concentrated. }
	\label{fig:delta}
\end{figure}

 Let us fix $j \in \Nbb$. Writing $u = \theta_1 u + \theta_2 u$ and adding a zero, we may express $q_j$ as 
\begin{align*}
	q_j & = \theta_1 \, \phi \cdot \Delta_{-s_x} \Bbb (u_j - u) \\
		& \quad + \theta_2 \, \phi \cdot \Delta_{s_x}  \Bbb (w_j - u)  \\
		& \qquad + \theta_1 \, \Delta_{-s_x} [\Bbb,\phi](u_j - u)  \\
		& \quad\qquad +  \theta_2 \, \Delta_{s_x} [\Bbb,\phi] (w_j - u)  \\ 
		& \qquad \qquad  \theta_1\, [\Bbb,\phi](\Delta_{-s_x}u - u) + \theta_2\,[\Bbb,\phi](\Delta_{s_x}u - u) \\ 
		& \quad \qquad \qquad + \phi\cdot \Bbb\big(\theta_1 \Delta_{-s_x}  u + \theta_2 \Delta_{s_x}  u - u \big) + \mu,
\end{align*}
where as usual the commutator $[\Bbb,\chi] = \Bcal(\phi \frarg) - \phi\Bcal$ is a linear  operator of order at most $(k_\Bbb -1)$ and whose coefficients depend solely on $\|\phi\|_{k,\infty}$ and the principal symbol $\Bbb$.  
We obtain the following estimate for the total variation  of $q_j$:  
\begin{align*}
	|q_j|(Q_x)  & \lesssim_{\Bbb} |\Bcal u_j|(Q_x) + |\Bcal w_j|(Q_x) + |\mu|(Q_x)   \\
				& \quad + \|\phi\|_{k,\infty} \Big(\|u_j - u\|_{\Wrm^{k-1,1}(Q_x)} + \|w_j - u\|_{\Wrm^{k-1,1}(Q_x)}   \\ 
				& \quad +  \|\Delta_{-s_x} u - u\|_{\Wrm^{k-1,1}(Q_x)} + \|\Delta_{s_x} u - u\|_{\Wrm^{k-1,1}(Q_x)}\Big) \\
				& \stackrel{\eqref{eq:15},\eqref{eq:trans}}\lesssim |\Bcal u_j|(Q_x) + |\Bcal w_j|(Q_x) + |\mu|(Q_x)  + \frac {\Lambda(Q_x)}{m} \quad \forall \; j \ge n_1.
\end{align*}
In particular, upon re-adjusting the sequence $\{q_j\}$ we may assume that 
\begin{equation}\label{eq:boundq}
	\sup_{j \in \Nbb} |q_j|(Q_x) \lesssim_{\Bbb}  |\mu|(Q_x)  + \frac {\Lambda(Q_x)}{m},
\end{equation}
and   $q_j \toY\bm \sigma^x \in \Y_\Acal(Q_x)$. 

Observe that if $f = f_{p,q}$ with $p,q \le m$, then
\begin{align*}
	\Big|\theta_1 \cdot \ddprB{f,\bm \nu_1 \mres Q_x} &  - \theta_1 \cdot  \ddprB{f,\bm \nu_1 \mres Q_{s_x}(x)} \Big| \le M_f \ddprB{1 + |\frarg|,\bm \nu_1 \mres A_x} \stackrel{\eqref{eq:singular},\eqref{eq:14}}\le \frac 2m \Lambda(Q_x). \\
\end{align*}
Hence, there exists $n_1 \le n_2 = n_2(m) \in \Nbb$ such that (with $\gamma_j^1 = v \, \Leb^d + \Bcal u_j$) 
\begin{align*}
	\ddprB{f,\bm \delta_{\theta_1 \gamma_j^1} \mres Q_x}  & = \int_{Q_x} \dprb{f,\theta_1 \ac{(\gamma_j^1)}} \, \Leb^d + \int_{Q_x} \theta_1 \dprb{f^\infty,(\gamma_j^1)^s} \dd \lambda \\
											    & \stackrel{\eqref{eq:singular}}=   \int_{Q_{s_x}(x)} \theta_1 \dprb{f^\infty,(\gamma_j^1)^s} \dd \lambda + M_f \cdot  \BigO(m^{-1}) \cdot \Lambda(Q_x) \\
											    &  \stackrel{\eqref{eq:singular}}= \theta_1 \cdot \ddprB{f,\bm \delta_{\theta_1 \gamma_j^1} \mres Q_{s_x}(x)} + 2 M_f \cdot \BigO(m^{-1}) \cdot \Lambda(Q_x) 
\end{align*}
for all   $j \ge n_2$. An analogous estimate holds for $\bm \nu_2$, $w_j$, and $\theta_2$. Let us set $R_x \coloneqq Q_x \setminus \big((Q_{s_x}(x) - s_xe_1) \cup  (Q_{s_x}(x) + s_xe_1) \big)$. Then, by the definition of $q_j$, a similar argument combined with the right translations $\pm s_x e_1$ yield (for $j \ge n_2$)
\begin{gather*}
	 \ddprB{f,\bm \delta_{q_j} \mres R_x} =   \BigO(m^{-1}) [M_f \cdot  \Lambda(Q_x)], \\
	 \ddprB{f,\bm \delta_{q_j} \mres Q_x \setminus \cl{R_x}} = \theta_1 \cdot\ddprB{f,\bm \delta_{\theta_1 \gamma_j^1}} + \theta_2 \cdot\ddprB{f,\bm \delta_{\theta_1 \gamma_j^2}} +  \BigO(m^{-1}) [M_f \cdot  \Lambda(Q_x)].
\end{gather*}
%
Combining these estimates we obtain upon re-adjusting the sequence of $j$'s (recall that we had written $f = f_{p,q}$)
\begin{equation}\label{eq:S_1}
\begin{split}
	 \ddprB{f_{p,q},\bm \sigma^x \mres Q_x} & = \lim_{j \to\infty} \ddprB{f_{p,q},\bm \delta_{q_{j}} \mres Q_x} \\  
	 &  = \ddprB{f_{p,q},\bm \nu_\theta \mres Q_x} +  \BigO(m^{-1}) [M_f \cdot  \Lambda(Q_x)]
	 \end{split}
\end{equation}
whenever $p,q \le m$.\\ 

%

\textit{{Step~4c. Gluing the local recovery sequences.}}

Every cube  $Q_x \in \Ocal_m$ is centered at some $x \in L$ and since 
\[
	  \reg(\Omega) \cup \Rcal \cup \Scal = \reg(\Omega) \cup \sing(\Omega) = L,
\]
the constructions in Steps~4a and~4b indeed cover all possible scenarios which can present. The next task is to glue the recovery sequences together to obtain an $\A$-free global recovery sequence of the $\BigO(m^{-1})$-approximation of $\bm \nu_\theta$. For each $m \in \Nbb$, let us define the sequence  
 
\[
w^{(m)}_j(\mathrm dy)  \coloneqq \begin{cases}
w_j^x(\mathrm dy) & \text{if $x \in \Rcal \cup \reg(\Omega)$} \\
q_{j}^x(\mathrm dy) & \text{if $x \in \Scal$} \\
\mu(\mathrm dy) & \text{elsewhere}
\end{cases}, \qquad y \in \Omega.
\]
Notice that by construction each $w_j^{(m)}$ is $\A$-free since each $w_j^x$ and $q_j^x$ is $\A$-free on $Q_x$ and has $\mu$-boundary values in an open neighborhood of $\partial Q_x$. \\

%
%

\textit{{Step~4d. Generation of the $(m^{-1})$-approximations of $\bm \nu_\theta$.}} Appealing to the locality of weak-$*$ convergence of Young measures, we  show next that if $p,q \le m$, then (as $j \to \infty$)
\begin{equation}\label{eq:fmt}
\begin{split}
	\lim_{j \to \infty} \ddprB{f_{p,q},\bm \delta_{w_j^{(m)}}} \, & =  \, \ddprB{f_{p,q},\bm \nu^{(m)}_\theta} \\
	& \quad  + \BigO(m^{-1})  \cdot M_{f_{p,q}}   \cdot  \Lambda(\Omega), 
\end{split} 
\end{equation}
where $\bm \nu^{(m)}_\theta$ is the Young measure which acts on $f \in \E(\Omega,W)$ by the representation formula
\begin{align*}
\ddprB{f,&\bm \nu^{(m)}_\theta}  = \ddprB{f,\bm\delta_\mu}_{(O_m)^c} \\
& +  \sum_{i = 1,2} \Bigg(\sum_{\substack{Q_x \in \Ocal_m\\x \in \Rcal \cup \reg(\Omega)}} \ddprB{f,\bm \nu_i \mres Q_x^i}   + \sum_{\substack{Q_x \in \Ocal_m\\x \in \Scal  }} \theta_i \cdot \ddprB{f,\bm \nu_i \mres Q_x}\Bigg).
\end{align*}
Later, in the next step, we will show these Young measures are indeed $\BigO(m^{-1})$-approximations of $\bm \nu_\theta$. This, together with a diagonalization argument with~\eqref{eq:fmt} will imply  that $\bm \nu_{\theta} \in \Y_\Acal^{0,\mu}(\Omega)$. 

First, we show that the sequence $\{w^{(m)}_j\}_{j,m} \subset \M(\Omega,W)$ has uniformly bounded total variation on $\Omega$. There is no loss of generality in assuming that $f_{1,1} = \mathbbm 1_{\Omega} \otimes |\frarg|$, and therefore  
\begin{align*}
	|w^{(m)}_j|(\Omega) & \stackrel{\eqref{eq:boundw},\eqref{eq:boundq}}\le 
	\sum_{\substack{Q_x \in \Ocal_m\\x \in \Rcal \cup \reg(\Omega)}} 2\Big( \ddprB{|\frarg|, \bm \nu_1 \mres Q_x} + \ddprB{|\frarg|, \bm \nu_2 \mres Q_x} \Big) \\ 
	& 	\qquad + \sum_{\substack{Q_x \in \Ocal_m\\x \in \Scal}} \Big( |\mu|(Q_x)  + \frac {\Lambda(Q_x)}{m} \Big) \\ 
	& \le 3 \Big(\ddprB{\mathbbm 1_{\Omega} \otimes|\frarg|,\bm \nu_1} +\ddprB{\mathbbm 1_{\Omega} \otimes|\frarg|,\bm \nu_2} \Big) + \frac 1m \Lambda(\Omega) + |\mu|(\Omega).
\end{align*}
	This shows 
	\begin{equation}\label{eq:tvb}
		\sup_{m \in \Nbb} \bigg(\sup_{j \in \Nbb}|w^{(m)}_j|(\Omega) \bigg)  \le \sup_{m \in \Nbb} C(m) < \infty,
	\end{equation}
	as desired. 
	
	Since $(w_j^m)_j$ is uniformly bounded on $\Omega$, the desired limit approximation in~\eqref{eq:fmt} follows from 1) the locality of weak-$*$ convergence of Young measures, 2) the generation properties~\eqref{eq:4a1}-\eqref{eq:4a2} for points in $\reg(\Omega) \cup \Rcal$, 3) the generation property at singular points in $\Scal$ the ~\eqref{eq:S_1}, and 4) the fact that $\Ocal_m$ is a full $(\Leb^d + \Lambda)$-partition of $\Omega$.\\ 
%
%
%
%
%

\textit{{Step~5. The sequence $\bm \nu^{(m)}_\theta$ approximates $\bm \nu_\theta$.}} Next we show that
\[
	\lim_{m \to \infty} \ddprB{f_{p,q},\bm \nu_\theta^{(m)}} =  \ddprB{f_{p,q},\bm \nu_\theta} \quad \text{for all $p,q \in \Nbb$.}
\]
Accordingly, fix $p,q \in \Nbb$ and choose $m \ge p,q$. Let us, for the sake of simplicity, write $f = f_{p,q}$ and $f^\infty = f_{p,q}^\infty$. Due to the high amount of terms and estimates involving this argument, let us write  
\begin{align*}
	\ddprB{f,\bm \nu^{(m)}_\theta } & = I_1 + I_2 + II + III_1 + III_2 + IV + V_1 + V_2, 
\end{align*}
where each term contains partial sums subjected to a decomposition of the mesh $\Ocal_m$ in the following way: 
\begin{enumerate}[(a)]
	\item the cubes $Q_x$ around regular points $x \in \reg(\Omega)$. For $i \in \{1,2\}$, the corresponding partial sum is given by 
\begin{align*}
I_i & \coloneqq \sum_{\substack{Q_x \in \Ocal_m\\x \in \reg(\Omega)}} \bigg( \int_{Q_x^i} \Big( \dprb{f,\nu_i} + \dprb{f^\infty , \nu_i} \cdot \ac \lambda_i \Big) \dd \Leb^d  + \int_{Q_x^i} \dprb{f^\infty,\nu^\infty_i} \dd \lambda^s_i \bigg) \\
  & \stackrel{\eqref{eq:regular}-\eqref{eq:butt1}}= \sum_{\substack{x \in \reg(\Omega)\\Q_x \in \Ocal_m}} \Big( \dprb{f,\nu_i}_x  + \dprb{f^\infty,\nu^\infty_i}_x \ac \lambda_i(x) \Big) \cdot \Leb^d(Q_x^i)  
  \\ & \qquad + \sum_{\substack{x \in\reg(\Omega)\\Q_x \in \Ocal_m}} \BigO(m^{-1}) \cdot (\mathrm{Lip}(f)  +1) \cdot \Leb^d(Q_x^i) \\
   &\stackrel{\eqref{eq:4ar}}= \theta_i \cdot \sum_{\substack{x \in \reg(\Omega)\\Q_x \in \Ocal_m}}
   \int_{Q_x} \Big( \dprb{f,\nu_i}_x + \dprb{f^\infty , \nu_1}_x \ac \lambda_i(x) \Big) \dd \Leb^d \\
  & \qquad +  \BigO(m^{-1}) \cdot (\mathrm{Lip}(f)  +1)  \cdot  \Leb^d(\Omega) \\
     &\stackrel{\eqref{eq:but1}-\eqref{eq:butt1}}= \theta_i   \sum_{\substack{Q_x \in \Ocal_m\\x \in \reg(\Omega)}}
    \ddprB{f,\bm \nu_i \mres Q_x} \\
    & \qquad   +  \BigO(m^{-1}) \cdot (\mathrm{Lip}(f)  +2)  \cdot    \Leb^d(\Omega):
\end{align*}
\item and now we cover the singular set $\sing(\Omega)$, starting with the cubes around singular points $x \in \Rcal \cap G_0$ 
\begin{align*}
II & \coloneqq \sum_{\substack{Q_x \in \Ocal_m\\ x \in \Rcal \cap G_0}} \int_{Q_x^1} \dprb{f,\nu_1}  \dd \Leb^d + \int_{Q_x^1} \dprb{f^\infty , \nu^\infty_1} \dd  \lambda_1     \\
& \quad + \sum_{\substack{Q_x \in \Ocal_m\\ x \in \Rcal \cap G_0}} \int_{Q_x^1} \dprb{f,\nu_2}  \dd \Leb^d +  \int_{Q_x^2} \dprb{f^\infty , \nu^\infty_2} \dd  \lambda_2  \\
& \stackrel{\eqref{eq:singular}-\eqref{eq:11}}= \sum_{\substack{Q_x \in \Ocal_m\\ x \in \Rcal \cap G_0 \cap B_1^\infty}} \bigg(  \dprb{f^\infty , \nu^\infty_1}_x \cdot  \lambda_1^s(Q_x^1) + M_f \cdot \BigO(m^{-1}) \cdot \Lambda(Q_x^2) \bigg) \\
 & \qquad + \sum_{\substack{Q_x \in \Ocal_m\\ x \in \Rcal \cap G_0 \cap B_2^\infty}} \bigg(  \dprb{f^\infty , \nu^\infty_2}_x \cdot  \lambda_2^s(Q_x^2) + M_f \cdot \BigO(m^{-1}) \cdot \Lambda(Q_x^1) \bigg) \\  
& \stackrel{\eqref{eq:13},\eqref{eq:4aR3}} = \theta_2 \cdot \sum_{\substack{Q_x \in \Ocal_m\\ x \in \Rcal \cap G_0 \cap B_2^\infty}}
\int_{Q_x}   \dprb{f^\infty , \nu^\infty_2}_x \dd \lambda_2^s
\\ & \qquad    +  \BigO(m^{-1}) \cdot   (M_f + \mathrm{Lip}(f)) \cdot \Lambda(\Omega).
\end{align*}
In the first equality we have strongly used the $\Lambda$-Lebesgue property for the sets $Q_x^i$ which is justified in Step~1a(1); the precise statement is contained in Corollary~\ref{cor:theta}. The same argument will be used in the estimates (c) and (d) below;
\item passing to points   $x \in \Rcal \cap G_1$ (in this case $x \in B_1^\infty \cap B_2^\infty$). For $i = 1,2$ the partial sum reads
\begin{align*}
III_i & \coloneqq \sum_{\substack{Q_x \in \Ocal_m\\x \in \Rcal \cap G_1}}  \int_{Q_x^i} \dprb{f,\nu_i}  \dd \Leb^d + \int_{Q_x^i} \dprb{f^\infty,\nu^\infty_i} \dd  \lambda_i^s \\
& \stackrel{\eqref{eq:singular}-\eqref{eq:but2}}= \sum_{\substack{Q_x \in \Ocal_m\\x \in \Rcal \cap G_1 \cap B_i^\infty}} \bigg( \dprb{f^\infty,\nu^\infty_i}_x \cdot  \lambda_i^s(Q_x^i)  
 + M_f \cdot \BigO(m^{-1}) \cdot \Lambda(Q_x) \bigg) \\
& \qquad + \sum_{\substack{Q_x \in \Ocal_m\\x \in \Rcal \cap G_1 \cap B_i^\infty}} \BigO(m^{-1}) \cdot \lambda_i(Q_x^i) \\
& \stackrel{\eqref{eq:4aR2}}= \theta_i \cdot \sum_{\substack{Q_x \in \Ocal_m\\x \in \Rcal \cap G_1 \cap B_i^\infty}}
\int_{Q_x}  \dprb{f^\infty,\nu^\infty_i}_x \dd \lambda_i^s\\
& \qquad +  \BigO(m^{-1})  \cdot  ( M_f + \mathrm{Lip}(f) + 1)  \Lambda(Q_x);
\end{align*}
\item and to finally cover $\Rcal$, the singular points   $x \in \Rcal \cap G_\infty$: an analogous estimate to the one derived in (b) gives
\begin{align*}
IV & \coloneqq \sum_{\substack{Q_x \in \Ocal_m\\ x \in \Rcal \cap G_\infty}} \int_{Q_x^1} \dprb{f,\nu_1}  \dd \Leb^d + \int_{Q_x^1} \dprb{f^\infty , \nu^\infty_1} \dd  \lambda_1     \\
& \quad + \sum_{\substack{Q_x \in \Ocal_m\\ x \in \Rcal \cap G_\infty}} \int_{Q_x^1} \dprb{f,\nu_2}  \dd \Leb^d +  \int_{Q_x^2}  \dprb{f^\infty , \nu^\infty_2} \dd  \lambda_2  \\
& \stackrel{\eqref{eq:4aR4}} = \theta_1 \cdot \sum_{\substack{Q_x \in \Ocal_m\\ x \in \Rcal \cap G_\infty \cap B_2^\infty}}
\int_{Q_x}   \dprb{f^\infty , \nu^\infty_1}_x \dd \lambda_1^s
\\ & \qquad    +  \BigO(m^{-1}) \cdot   (M_f + \mathrm{Lip}(f)) \cdot \Lambda(\Omega).
\end{align*}
\item Lastly, the cubes with centers $x \in \Scal$ which by definition are simply given by
\begin{align*}
	V_i & \coloneqq \sum_{\substack{Q_x \in \Ocal_m\\x \in \Scal}}  \ddprB{f,\bm \nu_\theta \mres Q_x}.  \\
\end{align*}
\end{enumerate}

Since the singular set can be split into the disjoint union  
\begin{align*}
	\sing(\Omega) & = \Rcal \cup \Scal \\
	& = ( \Rcal \cap G_0 \cap B_2^\infty) \cup  ( \Rcal \cap G_1 \cap B_1^\infty \cap B_2^\infty) \cup( \Rcal \cap G_\infty \cap B_1^\infty) \cup \Scal,
\end{align*}
and since every possible cube $Q_x \in \Ocal_m$ is centered at one (and only one) of the previous four sets, we deduce from inspecting the terms $I_i,II,III_i,IV, V_i$  that 
\begin{align*}
	\ddprB{f,\bm \nu^{(m)}_\theta} & = \sum_{i=1,2}  \theta_i \bigg(  \sum_{\substack{Q_x \in \Ocal_m\\x \in \reg(\Omega)}} 
	\Big(\int_{Q_x} \dprb{f,\nu_i}_x \dd \,\Leb^d(y)  + \int_{Q_x} \dprb{f^\infty,\nu^\infty_i}_x \, \ac\lambda_i(x)  \dd \,\Leb^d(y)  \\ 
	& \qquad +  \sum_{\substack{Q_x \in \Ocal_m\\x \in ((\sing(\Omega) \cap B_i^\infty) \setminus \Scal)}} \int_{Q_x} \dprb{f^\infty,\nu^\infty_i}_x  \dd \lambda_i^s(y)  \bigg) + \sum_{\substack{Q_x \in \Ocal_m\\x \in \Scal}}
	\ddprB{f,\bm \nu_\theta \mres Q_x}   \\
	&  \qquad + \BigO(m^{-1}) \cdot \Big(M_f + \mathrm{Lip}(f) + 2\Big) \cdot \Big(\Lambda(\Omega) + \Leb^d(\Omega)\Big) \\
	& = \theta_1 \cdot \ddprB{f,\bm \nu_1^{(m)}} + \theta_2 \cdot \ddprB{f,\bm \nu_2^{(m)}} \\
	& \qquad + \BigO(m^{-1}) \cdot \Big(M_f + \mathrm{Lip}(f) + 2\Big) \cdot \Big(\Lambda(\Omega) + \Leb^d(\Omega)\Big).
\end{align*}

\textit{{Conclusion.}}
Let us recall  that 
\[
\sup_{m \in \Nbb} \bigg(\sup_{j \in \Nbb}|w^{(m)}_j|(\Omega) \bigg) \le \sup_{m \in \Nbb} C(m) < \infty, 
\]
where $C(m)$ is the constant from~\eqref{eq:tvb}. Returning to the estimate~\eqref{eq:fmt}, we may then, by a diagonalization argument, define a sequence of $\A$-free measures 
\[
w^{(m)} \coloneqq w_{j(m)}^{(m)} \in \M(\Omega;W),
\] 
satisfying (cf. Claim~2 and Step~5)
\begin{align*}
\lim_{m \to \infty} \ddprB{f,\bm \delta_{w^{(m)}}}  & =  \lim_{m \to \infty}	\ddprB{f,\bm \nu_\theta^{(m)}}\\
& =  \theta_1 	\lim_{m \to \infty}	 \ddprB{f,\bm \nu_1^{(m)}} + \theta_2 	\lim_{m \to \infty}	 \ddprB{f,\bm \nu_2^{(m)}} \\
& =  \theta_1 	\ddprB{f,\bm \nu_1} + \theta_2 	 \ddprB{f,\bm \nu_2} \\ 
& =  \ddprB{f,\bm \nu_\theta} \quad \text{for all $f \in \{f_{p,q}\}_{p,q \in \Nbb}$.}
\end{align*}
Moreover, it follows from the compactness of Young measures and the separation Lemma~\ref{lem:separation} that the convergence above implies  (this may involve passing to a subsequence)
\[
w^{(m)} \, \toY \, \bm \nu_\theta \quad \text{on $\Omega$}.
\]
%
%
This finishes the proof.
 \end{proof}

\subsection{Characterization of singular Young measures} In this section we establish a criterion for a family $\Y^\sing(Q) \subset \Y_0(Q;W)$ to belong to $\Y_\Acal(Q)$. This family mimics the properties of singular tangent Young measures, and therefore this criterion will serve as a preparation for the proof of Theorem~\ref{thm:char}. 
\begin{remark}It is worthwhile to mention that our construction departs from the approach presented  in~\cite{kristensen2010characterizatio,de-philippis2016on-the-structur}. There, the authors are able to work with a family $\Y^\sing(P_0)$ of Young measures that are \emph{one-directional}.  This, however, relies on the rigidity that  gradients and symmetric gradients possess. In turn, this simplifies enormously the proof of the convexity of $\Acal$-free young measures at the level of tangent Young measures; it also allows for the creation of \emph{artificial concentrations} (cf.~\cite[Lemma 3.5]{de-philippis2017characterizatio}), which is crucial for the separation argument. It is precisely for this reason that the convexity of $\Y_{\Acal,0}(\mu,\Omega)$ had to be conceived globally rather than at the level of tangent $\Acal$-free Young measures. 
\end{remark}

Let us turn to the heart of the matter. Let $P_0 \in W$ and let 
	\[
		\Y^\sing(P_0)  \coloneqq \set{(\delta_0,\lambda,\sigma^\infty) \in \Y_0(Q;W)}{\sigma_y^\infty = \sigma_0^\infty \, \text{$\lambda$-a.e.}, \dprb{\id_W,\sigma_0} = P_0}. 
	\]
We shall prove the following:
\begin{proposition}\label{prop:singular}
	Let $\bm \nu = (\delta_0,\lambda,\nu_0^\infty) \in \Y^\sing(P_0)$ for some $P_0 \in \Lambda_\Acal$. Assume that
	\[
		\Acal[\bm \nu] = 0 \quad \text{in the sense of distributions on $Q$}
	\]
	and further assume that
	\[
		\supp(\nu_0^\infty) \subset W_\Acal,
	\]
	then $\bm \nu \in \Y_\Acal(Q)$.
\end{proposition}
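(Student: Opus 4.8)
Write $\mu\coloneqq[\bm\nu]=\langle\id_W,\delta_0\rangle\Leb^d+\langle\id_W,\nu^\infty_0\rangle\lambda=P_0\lambda$, which is $\Acal$-free on $Q$ by hypothesis and does not charge $\partial Q$ since $\bm\nu\in\Y_0(Q;W)$. The key structural observation is that \emph{every} Young measure of the form $(\delta_0,\lambda,\sigma^\infty)$ with $\langle\id_W,\sigma^\infty\rangle=P_0$ has the same barycenter $P_0\lambda$; hence all candidates live in the single set $\Y_{\Acal,0}(P_0\lambda,Q)$, which is convex by Theorem~\ref{thm:convex} and weak-$*$ closed by Proposition~\ref{prop:closed}. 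The strategy is therefore: (i) reduce, using convexity, closedness and a density argument, to profiles of a very special form; (ii) realize those profiles by explicit plane-wave constructions built on the constant-rank machinery of Section~\ref{sec:cr}. As a first simplification I would also invoke Theorem~\ref{thm:local} together with Propositions~\ref{prop: localization regular}--\ref{prop: localization}: since the profiles $\nu,\nu^\infty$ of $\bm\nu$ are constant, its tangent Young measures are of the form $(\delta_0,\kappa,\nu^\infty_0)$ with $\kappa$ a multiple of $\Leb^d$ at $\Leb^d$-a.e.\ point and $\kappa\in\Tan(\lambda^s,x)$ at $\lambda^s$-a.e.\ point, so it suffices to treat $\lambda$ of such homogeneous form.

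\textbf{Building blocks.} The elementary Young measure $\bm\delta_{P_0\lambda}$ is generated by the constant sequence $\mu_j\equiv P_0\lambda$, which is exactly $\Acal$-free; thus $\bm\delta_{P_0\lambda}\in\Y_{\Acal,0}(P_0\lambda,Q)$. For a single direction $B\in\Lambda_\Acal\setminus\{0\}$, fix $\xi_B$ with $\Abb(\xi_B)B=0$ and a smooth $1$-periodic profile $\psi$ with $\int_0^1\psi=0$ concentrating its mass on a set of vanishing measure; then for a fixed cut-off $\chi$ the fields $x\mapsto M\,\chi(x)\,B\,\psi\!\big(M(x-x_0)\cdot\xi_B\big)$ are $\Acal$-free plane waves whose total variation stays bounded, whose $\Wrm^{-1,q}$-norm tends to $0$ by the compact embedding $\Mcal_b\embed\Wrm^{-1,q}$ (Corollary~\ref{cor:compact_embedding}), and which are asymptotically $\Acal$-free after multiplication by $\chi$ by the commutator estimate~\eqref{eq:com2}; here the normalization $\int\psi=0$ is what produces the decisive gain in powers of $M$. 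Superposing such bumps along a grid of hyperplanes, with densities adjusted to $\lambda$, and correcting the mean values through the potential structure of mean-zero $\Acal$-free fields on the torus (Lemma~\ref{lem:raita}), equivalently through the $\Bcal$-decomposition of Lemma~\ref{lem:Helm}, I would realize every Young measure $(\delta_0,\lambda,\sigma^\infty)$ whose profile $\sigma^\infty$ is carried by $\Lambda_\Acal$-directions.

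\textbf{From the wave cone to $W_\Acal$.} To reach a general atom $A\in W_\Acal\setminus\Lambda_\Acal$ of an admissible profile one exploits $W_\Acal=\spn\{\Lambda_\Acal\}$ to write $A$ through finitely many wave-cone vectors, and then builds a finite-order lamination whose successive states differ by elements of $\Lambda_\Acal$, so that each interface can be regularized by the plane-wave construction of the previous step without destroying the PDE constraint; the $\Wrm^{k,q}$-type bounds of Lemma~\ref{lem:3.2} for $\Acal$-representatives (and for $\Bcal$) are used to keep the interpolations under control at every scale. Taking weak-$*$ limits of finite convex combinations of such laminated profiles, intersected with the affine constraint $\langle\id_W,\sigma^\infty\rangle=P_0$, should exhaust all admissible $\nu^\infty_0$. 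Crucially, no compatibility condition can obstruct this: by the Kirchheim--Kristensen rigidity for positively $1$-homogeneous $\Lambda_\Acal$-convex functions (Proposition~\ref{prop:properties}(d) and~\cite{kirchheim2016on-rank-one-con}), the restriction to $W_\Acal$ of the upper recession function of any $\Acal$-quasiconvex integrand is convex, so the Jensen-type inequality between the barycenter $P_0\in\Lambda_\Acal$ and $\nu^\infty_0$ holds automatically, and the only genuine hypotheses are exactly $\Acal(P_0\lambda)=0$ and $\supp\nu^\infty_0\subset W_\Acal$.

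\textbf{Main obstacle.} The delicate point is precisely the passage from wave-cone directions to arbitrary directions of $W_\Acal$: a naive superposition of plane waves only ever produces concentration profiles supported on $\Lambda_\Acal$-directions, because slabs transverse to distinct frequencies meet in sets of negligible measure, and pushing the amplitudes $M_j\to\infty$ while forcing $\|\Acal\mu_j\|_{\Wrm^{-k,q}}\to 0$ is a tight budget. Resolving this requires a genuine iteration at a hierarchy of scales in which, at each stage, the growth of the commutator terms is balanced against the amplitude with the help of the constant-rank Sobolev estimates of Section~\ref{sec:cr}; it is also the reason the convexity of $\Y_{\Acal,0}(\mu,\Omega)$ had to be established globally (Theorem~\ref{thm:convex}) rather than only for tangent Young measures, since the global convex structure is what lets one assemble the hard profiles as limits of convex combinations of the simpler plane-wave blocks. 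Everything else—the reductions and the plane-wave estimates—is routine given the tools already developed.
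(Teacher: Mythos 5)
Your proposal takes a genuinely different route from the paper, and unfortunately it has a gap that the paper's route is specifically designed to circumvent.

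The paper proves Proposition~\ref{prop:singular} by \emph{duality}: it introduces the set $\Y^\sing_\Acal(\mu)$ of $\Acal$-free Young measures with barycenter $\mu=P_0\lambda$ and supports in $W_\Acal$, notes it is non-empty, convex (Theorem~\ref{thm:convex}) and weak-$*$ closed (Proposition~\ref{prop:closed}), and then runs a Hahn--Banach separation: if $\bm\nu\notin\Y^\sing_\Acal(\mu)$, there is a separating integrand $f_H$, and the relaxation theorem (Theorem~\ref{thm:mio}) supplies a recovery sequence whose generated Young measure $\bm\theta\in\Y^\sing_\Acal(\mu)$ attains the $\Acal$-quasiconvex envelope energy; combined with the Jensen inequality for $\bm\nu$ (which follows from $P_0\in\Lambda_\Acal$, $\supp\nu^\infty_0\subset W_\Acal$, and the Kirchheim--Kristensen $\Lambda_\Acal$-convexity via Proposition~\ref{prop:properties}), this forces $\ddprb{f_H,\bm\nu}\ge s_H$, a contradiction. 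Crucially, one never needs to realize the \emph{particular} profile $\nu^\infty_0$ by an explicit sequence; one only needs, for each test integrand, a sequence nearly attaining the relaxed energy, which is far weaker.

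Your approach, by contrast, is constructive: build plane waves along $\Lambda_\Acal$-directions, assemble them into laminations, and exhaust arbitrary $\nu^\infty_0$ supported on $W_\Acal$ by convex combinations and weak-$*$ closure. This is where the gap lies. First-order laminations along $\Lambda_\Acal$-connected states produce profiles supported on $\Lambda_\Acal$, not on $W_\Acal$, and the claim that the closed convex hull of such profiles exhausts all $\nu^\infty_0$ with $\dpr{\id,\nu^\infty_0}=P_0$ and $\supp\nu^\infty_0\subset W_\Acal$ is exactly what needs proof — and it is \emph{equivalent} to the Hahn--Banach separation you never set up: showing a closed convex set is all of another amounts to showing no integrand separates them. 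Your invocation of Kirchheim--Kristensen to assert ``no compatibility condition can obstruct this'' confirms the Jensen inequality holds — a necessary condition — but does not by itself deliver realizability; in the paper, that inequality is what yields the contradiction \emph{inside} the separation argument, not a self-standing realizability certificate. You correctly flag the passage from $\Lambda_\Acal$ to $W_\Acal$ as ``the main obstacle,'' but the hierarchy-of-scales iteration you gesture at is never spelled out, and the paper's architecture is precisely the device that avoids having to spell it out. In short: you have identified the right ingredients (convexity, closedness, Proposition~\ref{prop:properties}(d), Theorem~\ref{thm:mio}) but not the organizing mechanism — geometric Hahn--Banach — that turns them into a proof.
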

\begin{proof} Let us write $\mu \coloneqq [\bm \nu] = P_0\lambda$. The idea is to find a suitable convex subset $\Y_\Acal^\sing(\mu) \subset \Y_\Acal(Q) \subset \E(Q;W)^*$ that contains $\bm \nu$. The choice of $\Y_\Acal^\sing(\mu)$ is of course not unique, and is part of the problem in turn. 
 We shall work with the following set:
\begin{definition} Let $\Y^\sing_\Acal(\mu)$ be the set of $\Acal$-free Young measures $\bm \sigma \in \Y_0(Q;W)$ satisfying the following properties:
	\begin{enumerate}[(a)]
		\item $[\bm \sigma] = \mu$,
		\item $\supp(\sigma_y) \subset W_\Acal$ for $\Leb^d$-almost every $y \in Q$, 
		\item $\supp(\sigma_y^\infty) \subset W_\Acal$ for $\lambda_{\bm \sigma}$-almost every $y \in Q$.
	\end{enumerate} 
Notice that $\Y_\Acal^\sing(\mu)$ is non-empty since it contains $\bm \delta_{\mu}$.
\end{definition}

%

%
%
\begin{remark}
Since properties (a)-(c) are convex properties, Proposition~\ref{prop:closed} and Theorem~\ref{thm:convex} imply that $\Y_\Acal^\sing(\mu)$ is a non-empty weak-$*$ closed convex subset of $\E(Q;W)^*$. 
\end{remark}

\proofstep{Step~1. The separation property.} Let us recall that, for every weak-$*$ closed affine half-space $H \subset \E(Q;W)^*$, there exists an integrand $f_H \in \E(Q;W)$ such that 
\[
H = \set{\ell \in \E(Q;W_\Acal)^*}{\ell(f_H) \ge s_H > - \infty}.
\]
From the geometric version of Hahn--Banach's it follows that, either $\bm \nu \in \Y_\Acal^\sing(\mu)$, or there exists $f_H \in \E(Q;W_\Acal)$ such that
\begin{equation}\label{eq:separation}
	\ddprB{f_H,\bm \nu} < s_H \le \ddprB{f_H,\bm \sigma} \qquad \text{for all $\bm \sigma \in \Y_\Acal^\sing(\mu)$.}
\end{equation}
Since the former case is precisely what we want to prove, let us henceforth assume that the $f_H$ strictly separates $\Y_\Acal^\sing(\mu)$ and $\bm \nu$. By assuming this, we shall reach a contradiction.
\proofstep{Step~2. Separation with $\tilde f_H$.} Let $\tilde f_H \in \E(Q;W)$ be the integrand defined as $\tilde f(x,z) = f(x,\mathbf p z)$, where $\mathbf p: W \to W$ is the canonical linear projection onto $W_\Acal$. Notice that properties (b)-(c) and the properties of $\Y^\sing(Q)$ we get
\begin{equation}\label{eq:integrandos}
		\ddprB{\tilde f_H,\bm{\tilde \sigma}} = \ddprB{f_H,\bm{\tilde \sigma}} \quad \text{for all $\bm{\tilde \sigma} \in \Y_\Acal^\sing(\mu) \cup \{\bm \nu\}$.}
\end{equation}

\proofstep{Step~2. Boundedness of $\Qcal_{\Acal}\tilde f_H$.} The following version of~\cite[Lemma~5.5]{baia2013} states that the separation property with $\Y_\Acal^\sing(\mu)$ conveys the finiteness of the $\Acal$-quasiconvex envelope of $\tilde f_H$: 

\begin{lemma}\label{lem:below}
	Let $f \in \E(Q; W)$ be such that $\ddprb{\tilde f,\bm \sigma} \ge s > -\infty$ for all $\bm \sigma \in \Y_\Acal^\sing(\mu)$. Then, there exists a dense set $D \subset Q$ such that, for every $y \in D$, it holds $\Qcal_\Acal \tilde f(y,0) > -\infty$.
\end{lemma}
The proof follows by verbatim from the proof of~\cite[Lemma 5.5]{baia2013}. The only difference is that, there, it is assumed that $W = W_\Acal$. However, this is of little importance in our setting due to properties (b) and (c), the properties of $\tilde f$ (cf. Section~\ref{sec:qc}) and property~\eqref{eq:span}.

\proofstep{Step~3. The contradiction: $\bm \nu \in H$.} 
For an integrand $f$ and $\eps > 0$, we define $f^\eps \coloneqq f + \eps |\frarg|$. 
By construction, property (A) from Remark~\ref{rem:mio} is trivially satisfied for the integrand $f = (\tilde f_H)^\eps$ and the domain $\Omega = Q$.  The finiteness of $\Qcal_\Acal \tilde f$ of $D$ and Propositions~\ref{prop:properties}, \ref{prop:dense} and~\ref{prop:trans} imply that $(\tilde f_H)^\eps$ also satisfies property (B) from Remark~\ref{rem:mio}. Lastly, we recall that $|\mu|(\partial Q) = P_0|\lambda|(\partial Q) = 0$, which implies that (C) in Remark~\ref{rem:mio} is also satisfied. With these considerations in mind, we may apply Theorem~\ref{thm:mio} to find a recovery sequence $\{u_j\} \subset  \Lrm^1(Q;W)$ satisfying 
\[
u_j \, \Leb^d \toweakstar \mu \; \text{in $\M(Q;W)$}, \quad \Acal u_j   \to 0 \; \text{in $\Wrm^{-k,q}(Q)$},
\] 
and attaining the so-called upper-bound property  
\begin{equation}\label{eq:mame2}
	\begin{split}
\limsup_{j \to\infty} \int_Q   (\tilde f_H)^\eps(u_j) \dd y &  \le \int_Q Q_{\A}  (\tilde f_H)^\eps(y,\ac \mu(y)) \dd y \\
& \qquad + \int_Q (Q_{\A}  (\tilde f_H)^\eps)^\#(y,g_\mu(y)) \dd |\mu^s|(y).
\end{split}
\end{equation}
Passing to a further subsequence if necessary, we may assume  that
\[
u_j \, \Leb^d \toY \bm \theta \qquad \text{for some $\bm \theta \in \Y_{\Acal}(Q) \cap \Y_0(Q;W)$.}
\] 

\proofstep{Claim. $\bm \sigma \in \Y_\Acal^\sing(\mu)$}.

 By construction, $[\bm \theta] = \wslim u_j =\mu$,  and hence property (a) is satisfied. The theory developed in Section~\ref{sec:dec} implies that, on strictly contained sub-cubes $Q_t \subset Q$, we may assume it holds
 \[
 	P_0\lambda \mres Q_t = z_0 \mres Q_t + v \Leb^d \mres Q_t, \quad 
 u_j \mres Q_t = z_j \mres Q_t + v\, \Leb^d \mres Q_t,
 \]
 where the elements of the sequence $\{z_j\}_{\Nbb_0}$ are mean-value zero $\Acal$-free  measures in $\Mcal(\Tbb^d;W)$ and $v \in \Lrm^q(\Tbb^d;W)$. Property~\eqref{eq:span} and a standard mollification argument gives $\{z_j\}_{\Nbb_0} \in \Mcal(\Tbb^d;W_\Acal)$. In particular, the expression for $(P_0 \lambda)\mres Q_t$ and the assumption $P_0 \in \Lambda_\Acal$ gives $v \Leb^d \mres Q_t \in \Lrm^q(Q_t;W_\Acal)$, whence
 \[
 	u_j \mres Q_t \in \Mcal(Q_t;W_\Acal) \quad \forall j \in \Nbb.
 \] 
 Since $Q_t \subset Q$ was arbitrarily chosen, this proves that $\bm \theta \in \Y_0(Q;W_\Acal)$ and therefore properties (b)-(c) hold. This proves that $\bm \theta \in \Y_\Acal^\sing(\mu)$ and the claim is proved.



For the ease of notation, let us write $f \coloneqq (\tilde f_H)^\eps$ for the next calculation. We use the main assumptions $P_0 \in \Lambda_\Acal$ and $\supp (\nu_{0}^\infty) \subset W_\Acal$, together with Proposition~\ref{prop:properties} (recall that $\Qcal_\Acal f(x,\frarg)$ is $(\Lambda_\Acal \cup W_\Acal^\perp)$-convex) and Remark~\ref{rem:kk} to find that
\begin{equation}\label{eq:mame1}
\begin{split}
\ddprB{f,\bm \nu}  
& \ge \int_{Q} \Big(\dprb{\Qcal_{\Acal}   f(y,\frarg),\delta_0} + \dprb{\Qcal_{\Acal} f(y,\frarg)^\#,\nu_{0}^\infty} \ac\lambda(y)\Big) \dd y \\ 
& \qquad +  \int_Q \dprb{\Qcal_\Acal  f(y,\frarg)^\#,\nu_{0}^\infty} \dd \lambda^s(y)\\
& \ge \int \Qcal_\Acal f(y,\ac\mu(u))) \dd y  + \int_Q (\Qcal_\Acal f)^\#(y,P_0) \dd\lambda^s(y)\\ 
& \ge  \int_{Q} \Qcal_{\Acal}  f(y,\ac\mu(y)) \dd y + \int_{Q} (\Qcal_{\Acal}  f)^\#(y,g_\mu)\dd |\mu^s|(y).
\end{split}
\end{equation}
Here, in the one but last inequality we have dealt with the absolutely continuous part with the aid of the following property: if $\Dcal \subset W$ is a spanning cone of directions, then every $\Dcal$-convex function $g : W \to \R$ with linear growth at infinity satisfies (see for instance Lemma~2.5 in~\cite{kirchheim2016on-rank-one-con})
\[
 	\text{$g(z + P) \le g(z) + g^\infty(P)$ \quad for all $z \in W$ and $P \in \Dcal$.}
\] 
 Hence, by combining~\eqref{eq:mame2}-\eqref{eq:mame1}   we conclude that
\begin{align*} 
\ddprB{(\tilde f_H)^\eps,\bm \nu}  & \ge \lim_{j \to\infty} \int_Q   (\tilde f_H)^\eps(u_j) \dd y = \ddprB{(\tilde f_H)^\eps,\bm \theta} \ge \ddprB{\tilde f_H,\bm \theta} \ge s_H.
\end{align*}
Letting $\eps \to 0^+$, we conclude from~\eqref{eq:integrandos} that $\ddprb{f_H,\bm \nu} = \ddprb{\tilde f_H,\bm \nu} \ge s_H$, which contradicts~\eqref{eq:separation}. This proves that $\bm \nu$ must indeed belong to $\Y_\Acal^\sing(\mu)$. This completes the proof of the proposition.\end{proof}

\subsection{Characterization of regular Young measures} Now, we prove the analog of Proposition~\ref{prop:singular} for regular tangent Young measures. The proof follows the ideas of the original proof in~\cite{fonseca1999mathcal-a-quasi}, but it requires some minor changes to deal with the fact that $W_\Acal$ and $W$ may not coincide.

Let $P_0 \in W$ and consider the family of \emph{homogeneous Young measures} defined as
	\[
		\Y^\reg(P_0) \coloneqq \set{(\sigma_0,\alpha\Leb^d,\sigma_0^\infty)}{\alpha \ge 0, P_0 = \dpr{\id,\sigma_0} + \alpha \dpr{\id_W,\sigma_0^\infty}}.
	\]
Here, the sub-index \enquote{$0$} in $\sigma_0$ and $\sigma_0^\infty$ denotes that $(\sigma_0)_y = \sigma_0$ and $(\sigma_0^\infty)_y = \sigma_0$ for all $y \in Q$.
This family mimics the properties of regular tangent measures (cf. Proposition~\ref{prop: localization regular}). As one would expect, for regular points it is property (ii) from Theorem~\ref{thm:char} that plays a fundamental role in the characterization of regular blow-ups:
\begin{proposition}\label{prop:regularr}
	Let $\bm \nu = (\nu_0,\alpha,\nu_0^\infty)\in \Y^\reg(P_0)$ and assume that 
\begin{equation}\label{eq:please}
	h(P_0) \le \dprb{h,\eta_0} + \alpha \dprb{h^\#,\eta_0^\infty} 
\end{equation}
for all upper-semicontinuous $\Acal$-quasiconvex integrands $h:W \to \R$ with linear growth at infinity. Then
\[
	\bm \nu \in \Y_\Acal(Q).
\]
\end{proposition}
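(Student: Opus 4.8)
The plan is to mimic the Hahn--Banach separation scheme already used in the proof of Proposition~\ref{prop:singular}, now adapted to the regular family $\Y^\reg(P_0)$. Write $\mu \coloneqq [\bm \nu] = P_0 \, \Leb^d \mres Q$, which is trivially $\Acal$-free (it is constant). Introduce the candidate convex set
\[
\Y^\reg_\Acal(P_0) \coloneqq \setB{\bm \sigma \in \Y_\Acal(Q) \cap \Y_0(Q;W)}{[\bm \sigma] = P_0 \, \Leb^d, \ \supp(\sigma_y) \subset P_0 + W_\Acal \ \text{a.e.}, \ \supp(\sigma^\infty_y)\subset W_\Acal \ \lambda_{\bm\sigma}\text{-a.e.}}.
\]
By Corollary~\ref{cor:last} (applied fibrewise, using $\dprb{\id_W,\sigma_y} = P_0$) these support conditions are automatically satisfied by \emph{any} $\Acal$-free Young measure with barycenter $P_0\Leb^d$, so in fact $\Y^\reg_\Acal(P_0) = \Y_{\Acal,0}(P_0\Leb^d,Q)$; in particular it is non-empty ($\bm\delta_{P_0} \in \Y^\reg_\Acal(P_0)$), weak-$*$ closed by Proposition~\ref{prop:closed}, and convex by Theorem~\ref{thm:convex}. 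Since $\lambda(\partial Q) = \alpha\Leb^d(\partial Q) = 0$, the hypothesis $\bm\nu \in \Y_0(Q;W)$ is in force.

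The argument is by contradiction. Suppose $\bm \nu \notin \Y^\reg_\Acal(P_0)$. By the geometric Hahn--Banach theorem on $\E(Q;W)^*$ there is $f_H \in \E(Q;W)$ and $s_H \in \R$ with $\ddprB{f_H,\bm\nu} < s_H \le \ddprB{f_H,\bm\sigma}$ for all $\bm\sigma \in \Y^\reg_\Acal(P_0)$. Replace $f_H$ by $\tilde f_H(x,z) \coloneqq f_H(x,\mathbf p z)$: since every member of $\Y^\reg_\Acal(P_0) \cup \{\bm\nu\}$ has $\sigma$-supports inside $P_0 + W_\Acal$ and $\sigma^\infty$-supports inside $W_\Acal$ (for $\bm\nu$ this is part of the definition of $\Y^\reg(P_0)$ together with an $\Lrm^1$-shift by $P_0$; cf. the paragraph before~\eqref{eq:integrandos} in the singular case), the duality pairings are unchanged, so $\tilde f_H$ strictly separates as well. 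Next, as in Step~2 of the proof of Proposition~\ref{prop:singular}, invoke the analog of~\cite[Lemma~5.5]{baia2013}: separation from $\Y^\reg_\Acal(P_0)$ forces $\Qcal_\Acal \tilde f_H(x,0) > -\infty$ on a dense set $D \subset Q$, hence by Propositions~\ref{prop:properties}, \ref{prop:dense} and~\ref{prop:trans} the envelope $\Qcal_\Acal (\tilde f_H)^\eps$ is finite, of linear growth and $x$-continuous for each $\eps > 0$, where $(\tilde f_H)^\eps \coloneqq \tilde f_H + \eps|\frarg|$.

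Now apply the upper-bound (relaxation) theorem quoted as Theorem~\ref{thm:mio} to $(\tilde f_H)^\eps$ on $Q$ with the \emph{constant} target measure $\mu = P_0\Leb^d$: properties (A), (B), (C) of Remark~\ref{rem:mio} hold exactly as in the singular case (here $\mu^s = 0$, so only the absolutely continuous term survives), producing $\{u_j\} \subset \Lrm^1(Q;W)$ with $u_j\Leb^d \toweakstar P_0\Leb^d$, $\Acal u_j \to 0$ in $\Wrm^{-k,q}(Q)$, and
\[
\limsup_{j\to\infty} \int_Q (\tilde f_H)^\eps(y,u_j(y)) \dd y \le \int_Q \Qcal_\Acal (\tilde f_H)^\eps(y,P_0) \dd y.
\]
Pass to a subsequence with $u_j\Leb^d \toY \bm\theta \in \Y_\Acal(Q)\cap\Y_0(Q;W)$; then $[\bm\theta] = P_0\Leb^d$, and by the Helmholtz decomposition of Section~\ref{sec:dec} (exactly the computation made in the ``Claim'' of the proof of Proposition~\ref{prop:singular}, using $\im \Bbb(\xi) \subset W_\Acal$ and that the $\Lrm^q$-remainder $v$ inherits values in $W_\Acal$ since $P_0 \in W_\Acal$ — note $P_0 \in \Lambda_\Acal$ is available if one restricts to that case, but in general one uses $\supp(\nu_0) - P_0 \subset W_\Acal$ to still land in $W_\Acal$ after shifting) one gets $\bm\theta \in \Y^\reg_\Acal(P_0)$, hence $\ddprB{\tilde f_H,\bm\theta}\ge s_H$. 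On the other hand, Jensen's inequality~\eqref{eq:please} applied to the upper-semicontinuous $\Acal$-quasiconvex integrand $z \mapsto \Qcal_\Acal(\tilde f_H)^\eps(y,z)$ (its $\Acal$-quasiconvexity is standard; its upper semicontinuity and $\Lambda_\Acal$-convexity come from Lemma~\ref{lem:cone_convex} and Proposition~\ref{prop:properties}), together with $(\Qcal_\Acal(\tilde f_H)^\eps)(y,P_0) = \Qcal_\Acal(\tilde f_H)^\eps(y,P_0)$, gives
\[
\ddprB{(\tilde f_H)^\eps,\bm\nu} \ge \int_Q \Big(\dprb{\Qcal_\Acal(\tilde f_H)^\eps(y,\frarg),\nu_0} + \alpha\dprb{(\Qcal_\Acal(\tilde f_H)^\eps(y,\frarg))^\#,\nu_0^\infty}\Big)\dd y \ge \int_Q \Qcal_\Acal(\tilde f_H)^\eps(y,P_0)\dd y \ge \ddprB{(\tilde f_H)^\eps,\bm\theta} \ge s_H.
\]
Letting $\eps \to 0^+$ and using $\ddprB{f_H,\bm\nu} = \ddprB{\tilde f_H,\bm\nu}$ contradicts the strict separation. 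Therefore $\bm\nu \in \Y^\reg_\Acal(P_0) \subset \Y_\Acal(Q)$, as claimed.

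I expect the main obstacle to be the bookkeeping around $W_\Acal \ne W$ in two places: first, checking that $\tilde f_H$ still separates (i.e.\ that all relevant $\sigma$- and $\sigma^\infty$-supports, after the $P_0$-shift, lie in $W_\Acal$, which needs Corollary~\ref{cor:last} and the defining properties of $\Y^\reg(P_0)$), and second, verifying $\bm\theta \in \Y^\reg_\Acal(P_0)$ via the Helmholtz splitting — one must make sure the $\Lrm^q$-part of the decomposition genuinely takes values in $W_\Acal$, which is where the placement of $P_0$ relative to $\Lambda_\Acal$ enters. Modulo these points the proof is a faithful transcription of the singular case with $\mu^s$ absent and the singular Jensen inequality replaced by the regular one~\eqref{eq:please}.
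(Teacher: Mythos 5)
Your approach transplants the full machinery of the singular case (Proposition~\ref{prop:singular}, with $\Y^\sing_\Acal(\mu)$, Lemma~\ref{lem:below}, and the relaxation Theorem~\ref{thm:mio}) into the regular setting, whereas the paper deliberately uses a much lighter route here: it first reduces to $x$-independent integrands via $f_{\hom}$, then obtains $\Qcal_\Acal\tilde f_H(0)\ge s_H$ directly from the homogenization argument (testing separation against the homogeneous Young measures generated by $y\mapsto w(jy)$), with no relaxation theorem or density lemma. Modulo the gap below, yours would give the statement too, but the $x$-dependence you carry forces you to invoke an analogue of Lemma~\ref{lem:below} with $\Y^\sing_\Acal(\mu)$ replaced by your set $\Y_{\Acal,0}(P_0\Leb^d,Q)$, and that analogue is not stated or proved in the paper; dropping the $f_{\hom}$ reduction thus creates work that the paper's approach avoids entirely.

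The genuine gap is the assertion that replacing $f_H$ by $\tilde f_H(x,z)=f_H(x,\mathbf p z)$ leaves the duality pairings unchanged. For $\bm\sigma\in K\cup\{\bm\nu\}$ the oscillation measures $\sigma_y$ are supported in $P_0+W_\Acal$, not in $W_\Acal$, and on that affine slice one has $\tilde f_H(x,P_0+w)=f_H(x,\mathbf p P_0+w)$, which equals $f_H(x,P_0+w)$ only when $\mathbf p P_0=P_0$, i.e.\ $P_0\in W_\Acal$. In the regular setting $P_0$ is the Lebesgue density of the barycenter and there is no reason for it to lie in $W_\Acal$ (unlike in the singular case, where $P_0\in\Lambda_\Acal\subset W_\Acal$ by the structure theorem for $\Acal$-free measures). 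This is precisely the point the paper flags at the start of its Step~3: one cannot argue with $\tilde f_H$ as in the singular case, and the fix is to apply the $\Lrm^1$-shift $\Gamma_{P_0}$ (Definition~\ref{def:shift}) to move the whole problem to $P_0=0$ \emph{before} projecting onto $W_\Acal$, using that $\Gamma_{P_0}$ is a bijection between $\Y^\reg_\Acal(P_0)$ and $\Y^\reg_\Acal(0)$ (Corollary~\ref{cor:shift}) and that $\Acal$-quasiconvexity is translation invariant so~\eqref{eq:please} transfers. You note this as a possible obstacle in your closing paragraph, but your argument as written does not perform the shift in the separation step, so the inequality $\ddprb{\tilde f_H,\bm\sigma}\ge s_H$ is unjustified for $\bm\sigma\in K$, and hence the contradiction at the end is not obtained.
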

\begin{proof}The proof is considerably simpler than the one at singular points since, for the separation argument, it will suffice to consider the following family of homogeneous Young measures:
	\[
		\Y^\reg_\Acal(P_0) \coloneqq \Y^\reg(P_0) \cap \Y_\Acal(Q).
	\]
The first step is to verify that $\Y^\reg_\Acal(P_0)$ is non-empty. Indeed, $\Acal(P_0 \Leb^d) = 0$ and therefore the elementary (purely oscillatory) homogeneous Young measure $(\delta_{P_0},0,\frarg)$ belongs to $\Y^\reg_\Acal(P_0)$. 
	\begin{remark}
		The properties that define $\Y^\reg_\Acal(P_0)$ are convex properties. Therefore, Proposition~\ref{prop:closed} and Theorem~\ref{thm:convex} imply that $\Y^\reg_\Acal(P_0)$ is a non-empty weak-$*$ closed convex subset of $\E(Q;W)^*$. 
	\end{remark}

\proofstep{Step 1. The separation property.}  By the geometric version of Hahn-Banach's theorem it holds that, either $\bm \nu \in \Y^\reg_\Acal(P_0)$, or  there exists an affine weak-$*$ closed half-plane $H$ that strictly separates $\bm \nu$ from $\Y^\reg_\Acal(P_0)$, i.e., there exists $f_H \in \E(Q;W)$ such that 
\begin{equation}\label{eq:separation3}
	\ddprB{f_H,\bm \nu} < s_H \le \ddprB{f_H,\bm \sigma} \qquad \text{for all $\bm \sigma \in \Y^\reg_\Acal(P_0)$.}
\end{equation}
We shall henceforth assume that $f_H$ strictly separates $\bm \nu$ and $\Y^\sing_\Acal(P_0)$.

\proofstep{Step 2. Separation with elements of $\E^\reg(W)$.} Consider the class
\[
\E^\reg(W) \coloneqq \setB{\mathbbm 1_Q \otimes h}{ h \in \E(W)} \subset \E(Q;W).
\]
For an integrand $f \in \E(Q;W)$, let us consider the homogenized integrand $f_{\hom} \in \E^\reg(W)$ defined as
\[
	f_{\hom}(y,z) \coloneqq \int_Q f(\xi,z) \dd \xi, \qquad \forall (y,z) \in Q\times W.
\]
It is not hard to see that that $(\frarg)^\infty$ commutes with $(\frarg)_{\hom}$ for such integrands. Indeed, for every $(\xi,z) \in Q \times W$,
\[
	f_{t,z}(\xi)\coloneqq \frac{f(\xi,tz)}{t} \to f^\infty(\xi,z) \quad \text{uniformly on $Q$, as $t \to \infty$}. 
\]
In particular, 
\[
	(f_{\hom})^\infty(z) = \lim_{t \to \infty}  \int_Q f_{t,z}(\xi) \dd \xi  = \int_Q f^\infty (\xi,z) \dd x = (f^\infty)_{\hom}(z).
\]
Fubini's theorem gives
\begin{equation*}
	\begin{split}
	\ddprB{f,\bm{\tilde\sigma}} & = \dpr{f,\tilde \sigma_0} + \dprb{(f^\infty)_{\hom},\tilde\sigma_0^\infty}  \\
	& = \dpr{f,\tilde \sigma_0} + \dprb{(f_{\hom})^{\infty},\tilde\sigma_0^\infty}  = \ddprB{f_{\hom},\bm{\tilde\sigma}}
\end{split}
\qquad \text{for all $\bm{\tilde \sigma} \in \Y^\reg_\Acal(P_0) \cup \{\bm\nu\}$.}
\end{equation*}
This tells us there is no loss of generality in assuming that $f_H \in \E^\reg(W)$.  
	
%
%
%
\proofstep{Step~3. Reduction to the case $P_0 = 0$ and separation with $\tilde f_H$.} At regular points, we may no longer use the property $P_0  \in \Lambda_{\Acal}$. This prevents us to argue in the same way as for singular points, that it indeed suffices to work with $\tilde f_H$. There is turnaround to this issue by using the shifts introduced in Definition~\ref{def:shift}: 
	\begin{remark}
	By Corollary~\ref{cor:shift}, the shifted Young measure $\Gamma_{P_0}[\bm \sigma]$ is an homogeneous $\A$-free Young measure with  barycenter zero if and only if $\bm \sigma$ is an homogeneous $\A$-free Young measure  with barycenter $P_0$.
	\end{remark} This remark implies that
	\[
		\mathrm{Shifts}_{P_0}\{\Y^\reg_\Acal(P_0)\} = \Y^\reg_\Acal(0).
	\]
	A Young measure $(\sigma_0,\beta\Leb^d,\sigma_0^\infty)$ in the latter set satisfies the crucial property that $\supp(\sigma_0)$ (and $\supp( \sigma_0^\infty)$) are contained in $W_\Acal$ for $\Leb^d$-a.e. ($\beta\Leb^d$-a.e.) $y \in Q$; this follows from the same argument used to prove the claim that $\bm \theta \in \Y(Q;W_\Acal)$ in the singular points' case (here one uses that $0 \in \Lambda_\Acal$). On the other hand, Corollary~\ref{cor:last} implies that $\Gamma_{P_0}[\bm\nu] \subset W_\Acal$ and also that, either $\alpha = 0$, or $\supp(\nu_0^\infty) \subset W_\Acal$. In particular, we get
	\[
	\ddprB{f_H,\bm{\tilde\sigma}} = \ddprB{g_H,\Gamma_{P_0}[\bm{\tilde\sigma}]} = \ddprB{\tilde g_H,\Gamma_{P_0}[\bm{\tilde\sigma}]} \quad \text{for all $\bm{\tilde \sigma} \in \Y^\reg_\Acal(P_0) \cup \{\bm \nu\}$}.
	\]	
	Therefore, there is no loss of generality in assuming that $P_0 = 0$ and $f_H = \tilde f_H$.
	

\proofstep{Step~3. The contradiction: $\bm \nu \in H$.} The argument is essentially the same as the one for singular points, which relies on the upper bound from the relaxation argument. The relaxation argument, however, is nowhere near as involved as the one contained in Theorem~\ref{thm:mio}. First, we show that $\Qcal_\Acal \tilde f_H$ is finite with lower bound $s_H$. Let $w \in \Crm^\infty_\sharp(\Tbb^d;W_\Acal) \cap \ker \Acal$. 
By a well-known homogenization argument (see for instance~\cite[Proposition~2.8]{fonseca1999mathcal-a-quasi}) the sequence $w_j(y) \coloneqq w(jy)$ generates the homogeneous purely oscillatory Young measure $\bm \theta = (\cl{\delta_w},0,\frarg)$, where
\[
	\dprb{\cl{\delta_w},h} \coloneqq \int_Q h(w(y)) \dd y \quad \text{for all $h \in C(W)$, \quad $w_j \toweakstar 0$ in $\Lrm^\infty$.}   
\] 
In particular $\bm \theta \in \Y^\reg_\Acal(0)$, and hence
\[
	\int_Q \tilde f_H(w(y)) \dd y = \ddprB{\tilde f_H,\bm \theta} \ge s_H > - \infty.
\]
Taking the infimum over all $w$, we conclude that $\Qcal_\Acal \tilde f_H (0) \ge s_H$, as desired.
%
%
%
Now, we are in position to use~\eqref{eq:please}  with $h = \Qcal_\Acal \tilde f_H$ to get (recall that $\Qcal_\Acal \tilde f_H$ is globally Lipschitz, see Propositions~\ref{prop:properties} and~\ref{prop:dense}):
\begin{equation}\label{eq:mame}
	\begin{split}
		\ddprB{\tilde f_H,\bm \nu}  
		& \ge \int_{Q} \Big(\dprb{\Qcal_{\Acal}   \tilde f_H,\nu_{0}} +  \alpha\dprb{(Q_{\A} \tilde f_H)^\#,\nu_{0}^\infty}\Big) \dd y \\
		& \ge \int_Q \Qcal_\Acal \tilde f_H(0)\dd y \ge s_H.
	\end{split}
\end{equation} 
This poses a contradiction to inequality~\eqref{eq:separation3}.
This proves that $\bm\nu \in \Y^\reg_\Acal(0)$, as desired. \end{proof}
%

\subsection{Proof of Theorem~\ref{thm:char}}

\emph{Necessity.} Property (i) is obvious since, by definition, an $\A$-free measure is generated by a uniformly bounded sequence of (asymptotically) $\A$-free measures, therefore its barycenter is an $\Acal$-free measure. Properties (ii)-(iii) have been established in~\cite{arroyo-rabasa2017lower-semiconti}: condition (ii) is contained in Proposition~3.1, and condition (iii) in Lemma~3.2; condition (iii') is contained in Proposition~3.3.

\emph{Sufficiency.} 
The theory discussed in Section~\ref{sec:tan} implies that at $(\Leb^d + \lambda^s)$-almost every $x_0 \in \Omega$ there exists a regular or a singular tangent measure $\bm \nu_0 \in \Tan(\bm \nu,x_0)$. The idea is to show that each of these tangent measures are in fact locally $\Acal$-free Young measures and argue by the local characterization from Theorem~\ref{thm:local}. By the dilation properties of tangent Young measures, it suffices to show the following:
\begin{proposition}
	Let $\bm \nu \in \Y(\Omega)$ satisfy properties (i)-(iii) in Theorem~\ref{thm:char}. Then, 
	\[
	\set{\bm \sigma \mres Q}{\bm \sigma \in \Tan(\bm \nu,x)}  \cap \Y_{\Acal}(Q)
	\]
	for $(\Leb^d + \lambda^s)$-almost every $x \in \Omega$.
\end{proposition}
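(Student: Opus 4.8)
The plan is to reduce the displayed assertion — which I read as: for $(\Leb^d+\lambda^s)$-a.e.\ $x$ there exists $\bm\sigma\in\Tan(\bm\nu,x)$ with $\bm\sigma\mres Q\in\Y_\Acal(Q)$ — to the two separation results already proved, namely Proposition~\ref{prop:regularr} at regular points and Proposition~\ref{prop:singular} at singular points, by feeding them the data of hypotheses (i)--(iii) of Theorem~\ref{thm:char} through the localization principles for Young measures. Once this is done, Theorem~\ref{thm:local} closes the circle. Throughout I write $\mu\coloneqq[\bm\nu]$, which is $\Acal$-free by (i).

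First I would handle regular points. By Proposition~\ref{prop: localization regular}, at $\Leb^d$-a.e.\ $x_0$ there is a homogeneous regular tangent Young measure $\bm\sigma\in\Tan(\bm\nu,x_0)$ of the form $(\nu_{x_0},\ac\lambda(x_0)\Leb^d,\nu^\infty_{x_0})$; restricting to Lebesgue points of $\ac\mu$ and of the first moments of $\bm\nu$, hypothesis (i) identifies the barycenter as $P_0=\ac\mu(x_0)=\dprb{\id_W,\nu_{x_0}}+\ac\lambda(x_0)\dprb{\id_W,\nu^\infty_{x_0}}$, so that $\bm\sigma\mres Q\in\Y^\reg(P_0)$. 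Evaluating the Jensen inequality (ii) at such an $x_0$ gives exactly the hypothesis of Proposition~\ref{prop:regularr}, which then yields $\bm\sigma\mres Q\in\Y_\Acal(Q)$; note also $\lambda_{\bm\sigma}(\partial Q)=\ac\lambda(x_0)\Leb^d(\partial Q)=0$.

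The singular points are where the work lies. Here I would invoke Proposition~\ref{prop: localization} to obtain, at $\lambda^s$-a.e.\ $x_0$, a homogeneous singular tangent Young measure $\bm\sigma=(\delta_0,\gamma,\nu^\infty_{x_0})\in\Tan(\bm\nu,x_0)$ with $\gamma\in\Tan(\lambda^s,x_0)$, $\gamma(Q)=1$ and $\gamma(\partial Q)=0$; then $\bm\sigma\mres Q\in\Y^\sing(P_0)$ with $P_0=\dprb{\id_W,\nu^\infty_{x_0}}$ and $[\bm\sigma\mres Q]=P_0\,(\gamma\mres Q)$. I then need to verify the three hypotheses of Proposition~\ref{prop:singular}: (a)~$\supp(\nu^\infty_{x_0})\subset W_\Acal$ is immediate from (iii); (b)~$\Acal([\bm\sigma\mres Q])=0$ because $\Acal$ is $k$-homogeneous, so $\Acal(\Trm_{x_0,r}[\mu])=0$ for every blow-up rescaling (testing against a dilated test function turns it into $r^k\langle\Acal\mu,\,\cdot\,\rangle=0$), and this property passes to weak-$*$ limits, whence $\Acal[\bm\sigma]=0$ on $\R^d$ since $[\bm\sigma]\in\Tan([\nu],x_0)=\Tan(\mu,x_0)$; (c)~$P_0\in\Lambda_\Acal$, which I would deduce from the structure theorem for $\Acal$-free measures~\cite[Theorem~1.1]{de-philippis2016on-the-structur}: $\tfrac{\di\mu}{\di|\mu|}\in\Lambda_\Acal$ at $|\mu^s|$-a.e.\ point, and since $\mu^s=\dprb{\id_W,\nu^\infty}\lambda^s$, $\Lambda_\Acal$ is a cone, and $|\mu^s|$ is mutually absolutely continuous with $\lambda^s$ on $\{\,|\dprb{\id_W,\nu^\infty}|>0\,\}$, this forces $\dprb{\id_W,\nu^\infty_{x_0}}\in\Lambda_\Acal$ for $\lambda^s$-a.e.\ such $x_0$ (while on the complement $P_0=0\in\Lambda_\Acal$ trivially). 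Proposition~\ref{prop:singular} then delivers $\bm\sigma\mres Q\in\Y_\Acal(Q)$.

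The main obstacle I anticipate is the singular bookkeeping: keeping the two mutually singular decompositions ($\lambda=\ac\lambda\Leb^d+\lambda^s$ and the Radon--Nykod\'ym--Lebesgue decomposition of $\mu$) aligned, selecting a blow-up that simultaneously respects homogeneity, $\gamma(\partial Q)=0$, and the Lebesgue-point selection for the moment maps of $\nu^\infty$, and correctly applying the structure theorem on the set where the concentration barycenter vanishes as opposed to where it does not. A minor final point is that each $\bm\sigma$ produced above is homogeneous, so ``$\bm\sigma\mres Q\in\Y_\Acal(Q)$'' upgrades by scaling and a covering/gluing argument (in the spirit of Lemma~\ref{lem:Helm} and the proof of Theorem~\ref{thm:local}) to ``$\bm\sigma\mres U\in\Y_\Acal(U)$'' for every open Lipschitz $U\Subset\R^d$ with $\lambda_{\bm\sigma}(\partial U)=0$; this is precisely condition (ii) of Theorem~\ref{thm:local}, which then yields the sufficiency direction in Theorem~\ref{thm:char}.
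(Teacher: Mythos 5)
Your proof is correct and takes essentially the same route as the paper: invoke the regular/singular localization principles (Propositions~\ref{prop: localization regular}, \ref{prop: localization}) to produce homogeneous tangent Young measures, identify them as elements of $\Y^\reg(\ac\mu(x_0))$ resp.\ $\Y^\sing(P_0)$ using hypothesis (i) and the structure theorem, and then feed hypotheses (ii)/(iii) into Propositions~\ref{prop:regularr} and~\ref{prop:singular}. Your explicit handling of the case $\dprb{\id_W,\nu^\infty_{x_0}}=0$ (where $P_0=0\in\Lambda_\Acal$ trivially, and the polar $g_\mu$ is not defined $\lambda^s$-a.e.) is a small refinement over the paper's phrasing ``$\Y^\sing(g_\mu(x_0))$'', but the underlying argument is identical.
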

\begin{proof}[Proof of the proposition] As in the statement of Theorem~\ref{thm:char}, let us write $[\bm \nu] = \mu$.
	According to Proposition~\ref{prop: localization} and~\cite[Theorem 1.1]{de-philippis2016on-the-structur} we may find a full $\lambda^s$-measure set $S \subset \Omega$ such that, if $x_0 \in S$, then there exists $\bm \nu_0 \in \Tan(\bm \nu,x_0) \cap
	\Y^\sing(g_\mu(x_0))$ satisfying the assumptions of Proposition~\ref{prop:singular} (here we are using that $\bm \nu$ satisfies (i)-(iii)). On the other hand, Proposition~\ref{prop: localization regular} yields a full $\Leb^d$-measure set $U \subset \Omega$ with the property that, if $x_0 \in U$, then there exists $\bm \nu_0 \in \Tan(\bm\nu,x_0) \cap \Y^\reg(\ac\mu(x_0))$ satisfying the assumptions of Proposition~\ref{prop:regularr} (here we are using that $\bm \nu$ satisfies (i)-(ii)). The sought assertion then follows from the conclusions of Propositions~\ref{prop:singular} and~\ref{prop:regularr}. This proves the proposition.
\end{proof}
Returning to the proof of Theorem~\ref{thm:char} we observe that since $\bm \nu \in \Y_0(\Omega;W)$ and by assumption (iii) also $\Acal \mu = 0$ in the sense of distributions on $\Omega$, then Theorem~\ref{thm:local} implies that $\bm \nu \in \Y_\Acal(\Omega)$. This proves the sufficiency.

The proof is complete.\qed

\subsection{Proof of Theorem~\ref{thm:char2}} The necessity is a direct consequence of  Theorem~\ref{thm:char} and (a)-(b) in Section~\ref{sec:si}. To prove the sufficiency of (i)-(iii) in Theorem~\ref{thm:char2}, notice that the same (a)-(b) and the sufficiency of Theorem~\ref{thm:char} imply that $\bm \nu \in \Y_\Acal(\Omega)$. Now, we make use of Theorem~\ref{thm:char2}(i) and the last statement of Corollary~\ref{cor:shift}, to deduce that every tangent measure of $\bm \nu$ is a tangent $\Bcal$-gradient Young measure. Therefore, the local characterization Theorem~\ref{thm:local2} implies that $\bm \nu \in \Bcal\!\Y(\Omega)$, as desired. \qed

\appendix

\section{An auxiliary relaxation result}

\begin{theorem}\label{thm:mio}
	Let $f : \Omega \times W\to [0,\infty)$ be a continuous integrand  that is Lipschitz in its second argument, uniformly over the $x$-variable. Assume also that $f$ has linear growth at infinity and is such that there exists a modulus of continuity $\omega$ satisfying
	\begin{equation}\label{eq:mod}
		|f(x,z) - f(y,z)| \le \omega(|x -y|)(1 + |z|) \quad \text{for all $x,y \in \Omega, \; z \in W$.}
	\end{equation}
	Further suppose that the strong recession function $f^\infty$ exists. Then, for the functional 
	\[
	I_f(u) \coloneqq \int_\Omega f(x,u(u)) \dd x, \quad u \in \Lrm^1(\Omega;W),
	\]
	the weak-$*$ (sequential) lower semicontinuous envelope defined by
	\[
	\cl{\Gcal}[\mu] \coloneqq 
	\inf\setB{\liminf_{j \to \infty}  I_f(u_j)}{\{u_j\} \subset \Lrm^1(\Omega;W), u_j \, \Leb^d \toweakstar \mu, \|\Acal u_j\|_{\Wrm^{-k,q}}\to 0},
	\]
	where $\mu \in \M(\cl\Omega;W)$ is an $\Acal$-free measure with and $1 < q < \frac{d}{d-1}$, is given by 
	\[
	\cl{\Gcal}[\mu] = \int_\Omega \Qcal_\Acal f(x,\ac \mu(x)) \dd x + \int_\Omega (\Qcal_\Acal f)^\#(x,g_\mu)\dd |\mu^s|.
	\]
\end{theorem}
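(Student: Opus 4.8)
\textbf{Proof plan for Theorem~\ref{thm:mio}.}

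The plan is to establish the two inequalities $\cl{\Gcal}[\mu] \le \int_\Omega \Qcal_\Acal f(x,\ac\mu(x))\dd x + \int_\Omega (\Qcal_\Acal f)^\#(x,g_\mu)\dd|\mu^s|$ (the \emph{upper bound}) and $\cl\Gcal[\mu] \ge \int_\Omega \Qcal_\Acal f(x,\ac\mu)\dd x + \int_\Omega(\Qcal_\Acal f)^\#(x,g_\mu)\dd|\mu^s|$ (the \emph{lower bound}), working always with $\Acal$-free (or asymptotically $\Acal$-free) competitors in $\Lrm^1(\Omega;W)$. Since the recession function $(\Qcal_\Acal f)^\#$ appearing on the right is the \emph{upper} recession function of the quasiconvex envelope, some care is needed: the strong recession function $f^\infty$ exists by hypothesis, but $\Qcal_\Acal f$ need not have a strong recession function, which is precisely why $(\frarg)^\#$ is the correct object (this mirrors the role of $h^\#$ in Theorem~\ref{thm:char}).

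For the \emph{lower bound} I would start from an arbitrary admissible sequence $\{u_j\}\subset\Lrm^1(\Omega;W)$ with $u_j\Leb^d\toweakstar\mu$ and $\|\Acal u_j\|_{\Wrm^{-k,q}}\to 0$; passing to a subsequence, $u_j\Leb^d\toY\bm\nu$ for some generalized $\Acal$-free Young measure $\bm\nu\in\Y_\Acal(\Omega)$ with $[\bm\nu]=\mu$ (using Lemma~\ref{lem:YM_compact} after truncating $f$-growth, together with the definition of $\Y_\Acal$). Then $\liminf_j I_f(u_j)\ge \ddprB{f,\bm\nu}=\int_\Omega\dprb{f,\nu_x}\dd x+\int_{\cl\Omega}\dprb{f^\infty,\nu_x^\infty}\dd\lambda$. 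Now I apply the Jensen-type inequalities that characterize $\Acal$-free Young measures: at $\Leb^d$-a.e.\ $x$, Theorem~\ref{thm:char}(ii) with the $\Acal$-quasiconvex integrand $h=\Qcal_\Acal f(x,\frarg)$ gives $\Qcal_\Acal f(x,\ac\mu(x))\le \dprb{\Qcal_\Acal f(x,\frarg),\nu_x}+\dprb{(\Qcal_\Acal f(x,\frarg))^\#,\nu_x^\infty}\ac\lambda(x)\le\dprb{f(x,\frarg),\nu_x}+\dprb{f^\infty(x,\frarg),\nu_x^\infty}\ac\lambda(x)$, where the last step uses $\Qcal_\Acal f\le f$ and $(\Qcal_\Acal f)^\#\le f^\infty$ (the latter because $f^\infty$ exists and $\Qcal_\Acal f\le f$). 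At $\lambda^s$-a.e.\ $x$, Theorem~\ref{thm:char}(iii') (equivalently the support condition (iii)) gives $(\Qcal_\Acal f)^\#(x,g_\mu(x))\le\dprb{(\Qcal_\Acal f(x,\frarg))^\#,\nu_x^\infty}\le\dprb{f^\infty(x,\frarg),\nu_x^\infty}$. Integrating both pointwise inequalities against $\Leb^d$ and $\lambda^s$ respectively and adding, together with $\lambda\ge\ac\lambda\Leb^d+\lambda^s$ and $f^\infty\ge 0$, yields $\ddprB{f,\bm\nu}\ge\int_\Omega\Qcal_\Acal f(x,\ac\mu)\dd x+\int_\Omega(\Qcal_\Acal f)^\#(x,g_\mu)\dd|\mu^s|$, i.e.\ the lower bound; here I also use that $\lambda(\partial\Omega)$ may be assumed zero by restricting attention to $\Omega$ (or that $f^\infty\ge 0$ makes any boundary contribution only help).

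For the \emph{upper bound} — which I expect to be the technical core — the strategy is: first handle the purely absolutely continuous datum $\ac\mu\,\Leb^d$ by a standard $\Acal$-quasiconvexity construction (for each $x$ in a fine partition, insert a near-optimal mean-zero $\Acal$-free periodic corrector realizing $\Qcal_\Acal f(x,\ac\mu(x))$ at small scale; the constant-rank framework and the Fonseca–M\"uller / $\Acal$-representative estimates of Section~\ref{sec:cr} allow these correctors to be glued while keeping the $\Wrm^{-k,q}$-norm of $\Acal$ of the sum small, as in the proof of Theorem~\ref{lem:app}, Step~1). Next, for the singular part $g_\mu|\mu^s|$, I would use Theorem~\ref{lem:app}: since $\mu$ is $\Acal$-free there is a sequence of $\Acal$-free $w_j\in\Lrm^1(\Omega;W)$ with $w_j\Leb^d\toweakstar\mu$ and $\area{w_j\Leb^d,\Omega}\to\area{\mu,\Omega}$, i.e.\ $w_j\Leb^d\toY\bm\delta_\mu$. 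Applying Reshetnyak-type continuity (\cite[Theorem~5]{kristensen2010characterizatio}) to the \emph{quasiconvexified} integrand is not directly available because $\Qcal_\Acal f$ need not be continuous with strong recession function; instead I would approximate $\Qcal_\Acal f$ from above by integrands $g_n\in\E(\Omega;W)$ with $g_n\searrow\Qcal_\Acal f$ pointwise and $g_n^\infty\to(\Qcal_\Acal f)^\#$ (possible since $(\Qcal_\Acal f)^\#$ is upper semicontinuous and positively $1$-homogeneous — this is where the Lipschitz-in-$z$ and linear-growth hypotheses, and Propositions~\ref{prop:properties}, \ref{prop:dense}, \ref{prop:trans}, guarantee the requisite uniform control). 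For each fixed $n$, $g_n$ is a valid Reshetnyak integrand, so $I_{g_n}(w_j)\to\int_\Omega g_n(x,\ac\mu)\dd x+\int_\Omega g_n^\infty(x,g_\mu)\dd|\mu^s|$ along the area-convergent sequence; a diagonal argument in $n$ then brings this down to the quasiconvex envelope, while a further diagonalization combining the a.c.-corrector construction with the area-strict recovery sequence (using that the two operations commute up to $\Lrm^1$-small errors, by Proposition~\ref{prop:Lpclose}) produces a single admissible sequence attaining the claimed value. The key obstacle, as noted, is the absence of a strong recession function for $\Qcal_\Acal f$: reconciling the upper-bound construction (which naturally produces $f^\infty$ or $g_n^\infty$) with the target $(\Qcal_\Acal f)^\#$ requires the monotone approximation $g_n\searrow\Qcal_\Acal f$ together with the fact (from \cite{kirchheim2016on-rank-one-con}, invoked via Remark~\ref{rem:kk}) that on $W_\Acal$ the relevant $\Lambda_\Acal$-convex positively homogeneous functions are well-behaved, so that no concentration energy is lost or spuriously created in the limit.
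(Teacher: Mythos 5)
The paper does not give a self-contained proof of Theorem~\ref{thm:mio}: it is imported from~\cite{arroyo-rabasa2017lower-semiconti}, and Remark~\ref{rem:mio} only sketches the seven steps needed to weaken the hypotheses to (a)--(c). Your plan fleshes this out into an actual argument, and on the upper bound it follows the same route as the remark's sketch (cell-problem correctors on a fine partition, gluing via the Fonseca--M\"uller projection and the localization estimates, passage to general $\mu$ via the area-strict density of Theorem~\ref{lem:app}, then Reshetnyak). The one step you make genuinely more explicit is the monotone approximation $g_n\searrow\Qcal_\Acal f$ with $g_n\in\E(\Omega;W)$ and $g_n^\infty\to(\Qcal_\Acal f)^\#$; this is exactly what the remark's bare appeal to Reshetnyak's theorem glosses over, since $\Qcal_\Acal f$ need not have a strong recession function. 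This step is legitimate once one knows $S(\Qcal_\Acal f)$ extends upper-semicontinuously to $\cl{\Omega\times B_W}$, which follows from the Lipschitz/linear-growth control of Propositions~\ref{prop:properties}(d), \ref{prop:dense}, \ref{prop:trans}; it would be worth spelling out.

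Your lower bound takes a noticeably different route from the paper. You pass to a generated $\Acal$-free Young measure and then invoke the Jensen inequalities (ii) and (iii') of Theorem~\ref{thm:char} with $h=\Qcal_\Acal f(x,\frarg)$, combined with $\Qcal_\Acal f\le f$ and $(\Qcal_\Acal f)^\#\le f^\infty$. The argument is correct, but there is a logical caveat you must state: Theorem~\ref{thm:mio} is used in the proof of Proposition~\ref{prop:singular}, which in turn enters the \emph{sufficiency} direction of Theorem~\ref{thm:char}, so invoking Theorem~\ref{thm:char} wholesale here would be circular. What you actually need is only the \emph{necessity} direction of (ii)--(iii'), which the paper imports independently of Theorem~\ref{thm:mio} from~\cite[Prop.~3.1, Lem.~3.2, Prop.~3.3]{arroyo-rabasa2017lower-semiconti}. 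The paper's own lower-bound step (item~7 of Remark~\ref{rem:mio}) short-circuits this by citing the lower-semicontinuity theorem~\cite[Theorem~1.2(i)]{arroyo-rabasa2017lower-semiconti} directly, which is shorter and makes the non-circularity manifest; your Young-measure detour is correct but should be phrased so as to isolate the necessity direction. Finally, the way you describe the upper bound suggests handling the absolutely continuous and singular parts of $\mu$ by separate constructions; in fact the area-strict density Theorem~\ref{lem:app} approximates the \emph{whole} $\mu$ by absolutely-continuous $\Acal$-free fields at once, so after the smooth a.c. case and the Reshetnyak limit there is nothing left to do for $\mu^s$ separately -- you may want to restructure that passage accordingly.
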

\begin{remark}[on the assumptions of Theorem~\ref{thm:mio}]\label{rem:mio} The assumptions on Theorem~\ref{thm:mio} can be relaxed to the following set of assumptions:
	\begin{enumerate}[(a)]
		\item $f \in \E(\Omega \times W)$,
		\item  $\Qcal_{\Acal}f(x,\frarg)$ is globally Lipschitz, uniformly in $x \in \Omega$, and satisfies
		\[
		|\Qcal_{\Acal}f(x,z) - \Qcal_{\Acal}f(y,z)| \le \omega(|x -y|)(1 + |z|) \quad \text{for all $x,y \in \Omega, \; z \in W$.}
		\]
		\item the relaxation $\cl \Gcal$ is restricted to weak-$*$ limit measures satisfying $|\mu|(\partial \Omega) = 0$.
	\end{enumerate}
	Let us briefly comment on how this is done. The proof of~\ref{thm:mio} contained in~\cite{arroyo-rabasa2017lower-semiconti} mainly consists in proving the upper-bound inequality
	\[
	\cl \Gcal[\mu] \le \int_\Omega \Qcal_\Acal f(x,\ac \mu(x)) \dd x + \int_\Omega (\Qcal_\Acal f)^\#(x,\mu^s).
	\] 
	\begin{enumerate}[1.]
		\item First, one proves the relaxation for $\mu = u \Leb^d$ for some $u \in \Crm(\Omega) \cap \Lrm^1(\Omega)$, and for the translation $f_\eps = f + \eps |\frarg|$, where $\eps > 0$ is a small real number. One approximates $u$ by constants $a_Q = u(x_Q)$ on small cubes $Q \subset \Omega$ centered at $x_Q$. The original argument crucially uses the $z$-Lipschitz continuity of $f$ to approximate
		\[
		\int_\Omega f_\eps(x,u) \approx \sum_{Q} \int_Q f_\eps(x_Q,a_Q).
		\] 
		However, the argument also works for $f \in \E(\Omega;W)$ using that $f$ is uniformly continuous on compact subsets of $\Omega \times W$.
		\item At each cube $Q$, a standard homogenization argument is performed with the cell-problem approximation of the definition of $\Acal$-quasiconvexity:
		\[
		\int_Q f_\eps(x_Q,a_Q + w_{j,Q}) \to \int_Q \Qcal_\Acal f_\eps(x_Q,a_Q),
		\]
		where $\{w_{j,Q}\} \subset \Crm^\infty_c(Q;W)\cap \ker \Acal$ are mean-value zero functions on $Q$. In~\cite{arroyo-rabasa2017lower-semiconti}, the fact that $f \ge 0$ is crucial  to show that $\{w_{j,Q}\}$ is $\Lrm^1$-uniformly bounded, therefore also weak-$*$ pre-compact However, (b) is enough to show this (cf. Proposition~\ref{prop:coercive}).
		\item Then, one glues the sequences $w_{j,Q}$ into a global sequence $\{w_j\} \subset \Lrm^1(\Omega)$ satisfying 
		\[
		w_j\Leb^d \toweakstar u \Leb^d \; \text{in $\Mcal(\Omega;W)$} \quad \text{and} \quad \Acal w_j \to 0 \; \text{in $\Wrm^{-k,q}(\Omega)$},
		\]
		so that
		\[
		\cl{\Gcal}[u\Leb^d] \le \lim_{j \to \infty} \int_\Omega f_\eps(x,u + w_j) \approx \sum_Q \int_Q \Qcal_\Acal f_\eps(x_Q,a_Q).
		\] 
		\item The last step consists of showing that one can glue back together the piece-wise constant relaxed energies:
		\[
		\sum_Q \int_Q \Qcal_\Acal f_\eps(x_Q,a_Q) \approx \int_\Omega \Qcal_\Acal f_\eps(x,u).
		\]
		The cited proof relies on the modulus of continuity (b) ---which is proven for $f^\eps$ when $f \ge 0$--- and the fact that, since $f$ is Lipschitz, so it is $\Qcal_\Acal f$ (and their $\eps$-translations as well). With our assumptions, the argument follows by using property (b) instead.
		\item The previous steps proves that 
		\[
		\cl{\Gcal}[u\Leb^d] \le \lim_{j \to \infty} \int_\Omega f(x,u + w_j) \approx  \int_\Omega \Qcal_\Acal f_\eps(x,u).
		\]
		The upper bound inequality then follows by letting $\eps \to 0^+$. 
		\item The upper bound inequality for general $\mu$  follows from  
		Theorem~\ref{lem:app}, Remark~\ref{rem:reg} and Reshetnyak's continuity theorem (cf.~\eqref{eq:resh}).
		\item The lower bound inequality is addressed in~\cite[Theorem~1.2(i)]{arroyo-rabasa2017lower-semiconti}  under the assumption that $f \ge 0$. There, the positivity of $f$ is only used to prevent concentration of negative mass on the boundary. Assumption (c) allows us to dispense with this assumption.
	\end{enumerate}
\end{remark}

\section{Technical lemmas for Radon measures} 
This section is devoted to address some technical results which are essential to the proof of Theorem~\ref{thm:convex}. 
	\begin{figure}[h]
	\begin{tikzpicture}
	[inside/.style={fill=CadetBlue!40,fill opacity=0.85, draw opacity=0},
	outside/.style={fill=SeaGreen!85,fill opacity=0.5,draw =green!80, draw opacity=0.5,dashed,thick}]
	\draw[thick,draw =YellowOrange!50](-.6,1,-2)--(.6,1,-2);
	\draw[thick,draw =YellowOrange!50](-.6,-1,-2)--(.6,-1,-2);
	\draw[thick,draw =YellowOrange!50](-1,-.6,-2)--(-1,.6,-2);
	\draw[thick,draw =YellowOrange!50](1,-.6,-2)--(1,.6,-2);
	\draw[thick,draw =YellowOrange!50](-.6,-2,1)--(.6,-2,1);
	\draw[thick,draw =YellowOrange!50](-.6,-2,-1)--(.6,-2,-1);
	\draw[thick,draw =YellowOrange!50](-1,-2,.6)--(-1,-2,-.6);
	\draw[thick,draw =YellowOrange!50](1,-2,.6)--(1,-2,-.6);
	\draw[thick,draw =YellowOrange!50](-2,-.6,-1)--(-2,.6,-1);
	\draw[thick,draw =YellowOrange!50](-2,-.6,1)--(-2,.6,1);
	\draw[thick,draw =YellowOrange!50](-2,-1,-.6)--(-2,-1,.6);
	\draw[thick,draw =YellowOrange!50](-2,1,-.6)--(-2,1,.6);
	\draw[outside](-1,-1,-2)--(-1,1,-2)--(1,1,-2)--(1,-1,-2)--(-1,-1,-2);
	\draw[outside](-2,-1,-1)--(-2,-1,1)--(-2,1,1)--(-2,1,-1)--(-2,-1,-1);
	\draw[outside](-1,-2,-1)--(1,-2,-1)--(1,-2,1)--(-1,-2,1)--(-1,-2,-1);
	\draw[dashed,draw=black!50,thick](-2,-2,2)--(-2,-2,-2)--(2,-2,-2);
	\draw[dashed,draw=black!50,thick](-2,-2,-2)--(-2,2,-2);
	\draw[inside](2,2,2)--(2,-2,2)--(2,-2,-2)--(2,2,-2)--(2,2,2);
	\draw[inside](2,2,2)--(-2,2,2)--(-2,2,-2)--(2,2,-2)--(2,2,2);
	\draw[inside](2,2,2)--(-2,2,2)--(-2,-2,2)--(2,-2,2)--(2,2,2);
	\draw[dashed,draw=black!50,thick](2,2,2)--(2,-2,2)--(2,-2,-2)--(2,2,-2)--(2,2,2);
	\draw[dashed,draw=black!50,thick](2,2,2)--(-2,2,2)--(-2,2,-2)--(2,2,-2);
	\draw[dashed,draw=black!50,thick](2,-2,2)--(-2,-2,2)--(-2,2,2);
	\draw[outside,fill opacity=0.7](-1,-1,2)--(-1,1,2)--(1,1,2)--(1,-1,2)--(-1,-1,2);
	\draw[outside,fill opacity=0.7](2,-1,-1)--(2,-1,1)--(2,1,1)--(2,1,-1)--(2,-1,-1);
	\draw[outside,fill opacity=0.7](-1,2,-1)--(1,2,-1)--(1,2,1)--(-1,2,1)--(-1,2,-1);
	\draw[thick,draw =YellowOrange!50](-.6,1,2)--(.6,1,2);
	\draw[thick,draw =YellowOrange!50](-.6,-1,2)--(.6,-1,2);
	\draw[thick,draw =YellowOrange!50](-1,-.6,2)--(-1,.6,2);
	\draw[thick,draw =YellowOrange!50](1,-.6,2)--(1,.6,2);
	\draw[thick,draw =YellowOrange!50](-.6,2,1)--(.6,2,1);
	\draw[thick,draw =YellowOrange!50](-.6,2,-1)--(.6,2,-1);
	\draw[thick,draw =YellowOrange!50](-1,2,.6)--(-1,2,-.6);
	\draw[thick,draw =YellowOrange!50](1,2,.6)--(1,2,-.6);
	\draw[thick,draw =YellowOrange!50](2,-.6,-1)--(2,.6,-1);
	\draw[thick,draw =YellowOrange!50](2,-.6,1)--(2,.6,1);
	\draw[thick,draw =YellowOrange!50](2,-1,-.6)--(2,-1,.6);
	\draw[thick,draw =YellowOrange!50](2,1,-.6)--(2,1,.6);
	\draw(1.75,1.75,2) node  {$S_1$};
	\draw(0,0,2) node  {$S_2$};
	\draw(0,1.25,2) node  {$S_3$};
	\draw(2.7,-1.7,2.3) node  {$D$};
	\end{tikzpicture}
	\caption{Generic shape of the approximation set $D$ from Lemma~\ref{lem:cubo} ($d = 3$); composed by the first open cube approximation $S_1 \subset Q$, the second-step  $2$-dimensional relatively open caps $S_2 \subset \partial S_1$, and the last-step $1$-dimensional relatively open caps $S_3 \subset \partial S_2$.}
	\label{fig:convex}
\end{figure}
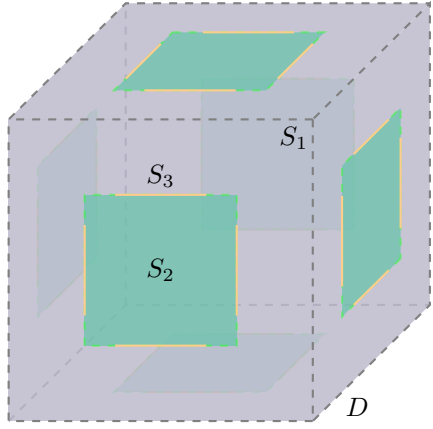

\begin{lemma}\label{lem:cubo}
	Let $\lambda$ be a probability measure on the unit open cube $Q \subset \R^d$ and assume that $\lambda$ does not charge points in $Q$, that is, $\lambda(\{x\}) = 0$ for all $x \in Q$. Let $\theta \in (0,1)$, then  there exists a convex Borel set $D \subset Q$ satisfying
	\[
		Q_r \subset D \subset \cl{Q_r} \quad \text{for some $r \in (0,1)$},
	\]
	and 
	\[
	\lambda(C) = \theta.
	\] 
\end{lemma}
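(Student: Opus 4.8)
The statement is a simple intermediate-value argument, where the only subtlety is choosing an appropriate one-parameter family of convex sets that sweeps out all of $Q$ and along which the measure of the sets varies \emph{continuously} — continuity is exactly where the non-atomicity hypothesis $\lambda(\{x\}) = 0$ enters. First I would fix an obvious candidate family: for $r \in (0,1)$ let $Q_r = rQ$ (the open concentric cube of side $r$), so that $Q_0 = \varnothing$, $Q_1 = Q$, the sets are convex, and $r \mapsto Q_r$ is monotone increasing. Define $\mu(r) := \lambda(Q_r)$ for the open cube and $M(r) := \lambda(\cl{Q_r})$ for the closed cube. Since $\lambda$ is a finite Borel measure, $\mu$ is left-continuous and nondecreasing, $M$ is right-continuous and nondecreasing, $\mu(r) \le M(r)$, and $\mu(0) = 0$, $M(1^-) \le \lambda(Q) = 1$ with in fact $\lim_{r \to 1^-} M(r) = \lambda(Q) = 1$ by inner/outer regularity (monotone convergence of $\cl{Q_r} \uparrow Q$ modulo $\partial Q$, on which $\lambda$ — a measure on the open cube — puts no mass).

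The next step is to observe that the only possible discontinuities are jumps, and that a jump of $M$ (or $\mu$) at some radius $r_0$ corresponds to $\lambda$ charging the "shell" $\cl{Q_{r_0}} \setminus Q_{r_0} = \partial Q_{r_0}$, i.e. the topological boundary of the cube $Q_{r_0}$. This boundary is a finite union of $(d-1)$-dimensional faces. A jump of size $\lambda(\partial Q_{r_0})$ must be filled in by hand: if $\theta$ falls inside the gap $(\mu(r_0), M(r_0)]$, one cannot realize it with a cube but one can realize it with a convex set $D$ sandwiched between $Q_{r_0}$ and $\cl{Q_{r_0}}$, by adjoining to $Q_{r_0}$ an appropriate sub-portion of $\partial Q_{r_0}$. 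This is precisely the role of the "caps" construction alluded to in Figure~\ref{fig:convex}: one adds whole faces of $\partial Q_{r_0}$ one at a time, then within the last partially-added face one repeats the sweeping argument one dimension down, adding a relatively open sub-cube of that face, then caps of \emph{its} boundary, and so on, recursing on dimension. At each stage, because $\lambda$ does not charge points, the relevant one-parameter family (sub-cubes of a $k$-dimensional face) again has measure varying with at most jump discontinuities located on lower-dimensional boundaries, so after at most $d$ levels of recursion the remaining "defect" lives on a finite set of points, which has $\lambda$-measure zero, and the target value $\theta$ is hit exactly. Convexity is preserved throughout since at every level we only ever take a cube union (relatively open/closed) portions of its boundary faces, arranged so that the union with the already-built convex body stays convex — this is a direct geometric check.

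Thus the structure of the argument is: (1) reduce to the monotone real functions $\mu(r) \le M(r)$ with $\mu(0)=0$, $M(1^-)=1$; (2) if $\theta$ is attained as $\mu(r) = \theta$ or $M(r) = \theta$ for some $r$, take $D = Q_r$ or $D = \cl{Q_r}$ and we are done; (3) otherwise there is a unique jump radius $r_0$ with $\mu(r_0) < \theta \le M(r_0)$ (existence by the intermediate value property for monotone functions: $\inf\{r : M(r) \ge \theta\}$), and we fill the gap by the recursive face-capping construction, using $\lambda(\{x\})=0$ to terminate. Assembling these gives the convex Borel set $D$ with $Q_r \subset D \subset \cl{Q_r}$ (with $r = r_0$) and $\lambda(D) = \theta$.

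\textbf{Main obstacle.} The genuinely delicate point is the recursive capping step: verifying that one can add sub-portions of $\partial Q_{r_0}$ to $Q_{r_0}$ keeping the body convex \emph{and} sweeping the measure continuously down through dimensions, with the recursion terminating because the leftover defect at the bottom is a finite point set of zero $\lambda$-mass. Everything else (monotonicity, left/right continuity, the intermediate value step) is routine measure theory. Once the geometry of the caps is set up cleanly — which is what Figure~\ref{fig:convex} is for — the measure-continuity bookkeeping at each level is the same argument as at the top level, applied to the restriction of $\lambda$ (pushed to the relevant face) which is again non-atomic, so it goes through.
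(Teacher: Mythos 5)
Your proof takes the same essential route as the paper: sweep concentric cubes $Q_r$, use the left-/right-continuity of $r\mapsto\lambda(Q_r)$ (coming from non-atomicity), locate the jump radius by an intermediate-value argument, and then fill the residual jump recursively by adding caps on the boundary face skeleton, the recursion terminating after at most $d$ steps because at the level of vertices the non-atomicity hypothesis kills the defect. The one genuine difference is in the geometry of the capping step: the paper adds \emph{concentric sub-cubes of a common side $s$ simultaneously to every one of the $2d$ faces}, and finds the common side $s$ by a second intermediate-value sweep, whereas you add whole open faces one at a time, insert a single partial sub-cube into one remaining face, and recurse into that one face's boundary. Both constructions work, and both rely on the same reduction: convexity of the union needs only be checked within each supporting hyperplane (a segment between two boundary points that stays in $\partial Q_r$ must lie in a single face), which reduces the convexity claim one dimension down and closes under recursion. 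The paper's simultaneous concentric choice is a little slicker because its symmetry makes the "sandwich" $Q_r\subset D\subset\cl{Q_r}$ and the convexity of $D$ immediate, and because it never has to branch on whether a whole face was exhausted; your sequential variant works but the bookkeeping is heavier and the phrase "direct geometric check" is doing some real work you would need to write out (you would in particular have to note that when all $2d$ open faces together still carry too little mass, the recursion must move directly to the $(d-2)$-dimensional skeleton of $\partial Q_r$, which your description of "recursing within the last partially-added face" only covers if one interprets "and so on, recursing on dimension" generously).
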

\begin{proof}
	In the case when $d = 1$ we define the monotone non-decreasing map
	\[
	r \mapsto  \phi(r) \coloneqq \int_{-r}^r  \dd \lambda, \quad r \in (0,1), 
	\] 
	which in particular is a function of bounded variation. Notice that, in this case, the one dimensional $\BV$-theory and the assumption on $\lambda$ give 
	\[
		\phi'(\{r\}) = \phi(r^+) - \phi(r^-) = \lambda(\{r\}) = 0.
	\]
	This proves $\phi$ is in fact continuous in the interval $(0,1)$. Therefore, since $\phi(0) = 0$ (again by assumption) and $\phi(1) = 1$, then the Mean Value Theorem ensures there exists $\cl r \in (0,1)$ with 
	$\lambda(Q_r) = \phi(r) = \theta$.   
	
	The case $d > 1$ requires a co-area-type argument. The first step is to define the map
	\[
	\phi(r) \coloneqq \int_{Q_r} \dd \lambda, \quad r \in (0,1),
	\] 
	is monotone non-decreasing and satisfies $\lim_{r \todown 0} \phi(r) = 0$ and $\lim_{r \toup 1} \phi(r) = 1$. 
	In particular, 
	\[
	|\phi'|(\{r\}) = \phi(r^+) - \phi(r^-) = \lambda(\partial Q_r) \quad \text{for all $r \in (0,1)$.}
	\]
	Set $r \coloneqq \sup\set{s \in (0,1)}{\phi(s) \le \theta}$. Clearly, if $\phi$ is lower semicontinuous at $r$, then we can set $C_1 = Q_r$ which automatically satisfies the conclusions of the Lemma (in this case is not necessary to use the assumption that $\lambda$ does not charge points). However, in general one cannot expect this as for instance there might be mass sitting on $\lambda(\partial Q_r)$. We may then assume that $\theta_1 \coloneqq \lambda(Q_r) = \phi(r^-) \le \theta$ and
	\begin{equation}\label{eq:finish}
	0 \le \theta - \theta_1 \le \phi(r^+) - \phi(r^-).
	\end{equation}
	This will be our first approximation. The second step to carry the same approximation now on $\{q_r^1,\dots,q_r^{2d}\}$, the $(2d)$ open $(d-1)$-dimensional faces of $\partial Q_r$, which are axis-directional translated $(d-1)$-dimensional cubes (centered at the origin) in $\R^{d-1}$; the number $(2d)$ of faces will not be relevant for our construction. Define the maps 
	\[
	\phi_{i,2}(s) \coloneqq \int_{q_s^i} \dd \lambda, \quad s \in (0,1), \; i \in \{1,\dots,2d\}.
	\]
	Repeating the same procedure as in the first step  on each of the faces simultaneously, we update the error of our estimate to 
	\[
	0 \le \theta - \theta_2 \coloneqq \theta - \theta_1 - \sum_{i = 1}^{2d} \phi_{i,2}(r_1^-),
	\]
	where $r_1  \coloneqq \sup\setn{s \in (0,r)}{\sum_{i = 1}^{2d} \phi_{i,2}(s) \le \theta - \phi(r^-)}$. Notice that 
	\[
	\theta_2 = \lambda(C_2),
	\]
	where
	\[
		C_2 \coloneqq Q_r \cup q_{r_1}^1 \cup \cdots \cup q_{r_1}^{2d}.
	\]
	Moreover, by construction (since each one of the added faces are concentric to each face of $Q_r$), $C_2$ is a (semi-open/semi-closed) convex set satisfying $Q_r \subset C_2 \subset \cl{Q_r}$. There are two cases, either $\theta = \theta_2$ and then $\lambda(C_1) = \theta$, or $\theta - \theta_2 > 0$ and we must keep adding bits of $\partial Q_r$, which may require to perform a similar argument on the $(d-2)$-dimensional concentric faces of $\partial Q_r$ and the $(d-2)$-dimensional faces of each $q_s^i$, $1 \le i \le 2d$. In general, the $(j+1)$th step is to iterate this argument (when $\theta - \theta_j > 0$) on all the possible $(d-j)$-dimensional faces of $\partial Q_r$ and all the other $(d-j)$-dimensional faces resulting of adding the previous $(d-j +1)$-dimensional concentric cubical caps. The key part of the construction is that, at the end of the $(j+1)$th step, one obtains a convex set $C_{j} \subset C_{j+1}$ satisfying $Q_r \subset C_{j+1} \subset \cl{Q_r}$ and 
	\[
	\text{$0 \le \theta -  \lambda(C_{j+1}) \eqqcolon \theta - \theta_{j+1}$ \quad for some \quad $\theta_{j+1} \ge \theta_j$}.
	\]   
	We now argue why there exists $1 \le j \le d$ such that $\theta_j = \theta$.  
	If $d =2$, then by the same argument that in the one-dimensional case we get that the maps $\phi_{i,2}$ are continuous, and hence it must be that $\theta_2$ reaches $\theta$. If $d = 3$, then the maps $\phi_{i,3}$ of the third step are continuous and hence at most $\theta_3$ reaches $\theta$. In general, the description of this procedure is tedious, but it is inductively natural and always reaches and endpoint (at most after $d$-steps) where we find a convex set $C$ containing the origin and satisfying $Q_r \subset C \subset \cl{Q_r}$ and
	\[
	\lambda(C) = \theta.
	\] 
	This finishes the proof.
\end{proof}

\begin{corollary}\label{cor:inner}
		Let $\lambda$ be a probability measure on the unit open cube $Q \subset \R^d$ and assume that $\lambda$ does not charge points in $Q$. Let $\theta \in (0,1)$ and let $\eps > 0$. Then, there exists an open Lipschitz set $D \subset Q$ satisfying
		\[
			\lambda(\partial D) = 0, \quad \text{and} \quad |\lambda(D) - \theta| \le \eps.
		\]
\end{corollary}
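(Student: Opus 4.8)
The idea is to reduce Corollary~\ref{cor:inner} to Lemma~\ref{lem:cubo} by replacing the convex set $D$ there (which is merely Borel, being a cube with some lower-dimensional caps attached to its boundary) by a genuinely open Lipschitz set carrying almost the same $\lambda$-mass and having $\lambda$-negligible boundary. First I would apply Lemma~\ref{lem:cubo} to obtain a convex Borel set $D_0 \subset Q$ with $Q_r \subset D_0 \subset \cl{Q_r}$ for some $r \in (0,1)$ and $\lambda(D_0) = \theta$. Since $Q_r$ is open, Lipschitz, and $\lambda(D_0 \setminus Q_r) \le \lambda(\partial Q_r)$, it suffices to handle the case where the prescribed mass is not achieved exactly by an open cube; in fact, the cleanest route is to bypass $D_0$ altogether and argue directly with open cubes, as follows.

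\textbf{Key steps.} Consider the monotone non-decreasing function $\phi(s) \coloneqq \lambda(Q_s)$ for $s \in (0,1)$, which satisfies $\lim_{s \todown 0}\phi(s) = 0$ (as $\lambda$ does not charge the center) and $\lim_{s\toup 1}\phi(s) = 1$. A monotone function has at most countably many discontinuities, and at each discontinuity point $s$ one has $\phi(s^+) - \phi(s^-) = \lambda(\partial Q_s) > 0$; since $\lambda$ is finite, the set $J \coloneqq \setn{s \in (0,1)}{\lambda(\partial Q_s) > 0}$ is at most countable. Hence for every $\eps > 0$ the value $\theta$ either lies in the (dense, co-countable) range of the restriction of $\phi$ to $(0,1)\setminus J$, or can be approximated within $\eps$ by such a value: concretely, set $r \coloneqq \sup\setn{s \in (0,1)}{\phi(s) \le \theta}$, pick $r' \in (0,1)\setminus J$ with $r' < r$ and $\phi(r) - \phi(r') \le \eps/2$ (possible because $\phi$ is left-continuous at $r$ and $J$ is countable, hence $(0,r)\setminus J$ accumulates at $r$), and also, if $\phi(r^+) \le \theta$ so that we may overshoot, pick instead $r'' \in (0,1)\setminus J$ with $r'' > r$ and $\phi(r'') \le \theta + \eps$. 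In either case we produce $\rho \in (0,1)\setminus J$ with $|\phi(\rho) - \theta| \le \eps$ and $\lambda(\partial Q_\rho) = 0$. Taking $D \coloneqq Q_\rho$, which is open, convex, and Lipschitz (it is a cube), we obtain $\lambda(\partial D) = 0$ and $|\lambda(D) - \theta| \le \eps$, as required.

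\textbf{Main obstacle.} The only subtle point is ensuring that one can simultaneously approximate the target mass $\theta$ \emph{and} land on a radius $\rho$ with $\lambda(\partial Q_\rho) = 0$. This is exactly where the countability of the jump set $J$ of the monotone function $\phi$ enters: without it, $\lambda$ could in principle place positive mass on uncountably many concentric cube-boundaries, which is impossible for a finite measure. Once that observation is in place, left-continuity (or right-continuity, depending on which side of $\theta$ we approach from) of $\phi$ together with the density of $(0,1)\setminus J$ near any given radius makes the approximation routine. I would present the argument via the function $\phi$ directly rather than invoking Lemma~\ref{lem:cubo}, since the lemma's delicate cap-by-cap construction is needed only when one insists on \emph{exact} mass $\theta$, whereas here an $\eps$-error is permitted and a plain shrink/expansion of the cube suffices; alternatively, one can keep the statement close to Lemma~\ref{lem:cubo} by starting from its set $D_0$, replacing it by the open cube $Q_{r-\delta}$ for $\delta$ small with $r - \delta \notin J$, and estimating $|\lambda(Q_{r-\delta}) - \theta| \le \lambda(\cl{Q_r}\setminus Q_{r-\delta}) \le \eps$ using left-continuity of $\phi$ at $r$.
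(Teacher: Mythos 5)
Your proof has a genuine gap. The function $\phi(s) \coloneqq \lambda(Q_s)$ is indeed left-continuous and monotone with an at most countable jump set $J$, but at the critical radius $r \coloneqq \sup\{\, s \in (0,1) : \phi(s) \le \theta \,\}$ you only know $\phi(r) = \phi(r^-) \le \theta \le \phi(r^+) = \lambda(\cl{Q_r})$, and the jump $\lambda(\partial Q_r) = \phi(r^+) - \phi(r)$ can be arbitrarily close to $1$. If $\theta$ sits well inside that jump, no concentric cube (shrunk or expanded, avoiding $J$ or not) achieves $|\lambda(Q_\rho) - \theta| \le \eps$, and your dichotomy ``approach from the left or from the right'' produces nothing. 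A concrete example in $d=2$: let $\lambda$ be the normalized $\Hcal^1$-measure on $\partial Q_{1/2}$; it charges no points, yet $\phi \equiv 0$ for $s \le 1/2$ and $\phi \equiv 1$ for $s > 1/2$, so for $\theta = 1/2$ and $\eps < 1/2$ no cube works. The caps in Lemma~\ref{lem:cubo} are thus not merely a refinement needed to hit the mass exactly; they are the only way to land inside a concentric-boundary jump. Your alternative of replacing $D_0$ by $Q_{r-\delta}$ fails for the same reason: $\lambda(\cl{Q_r} \setminus Q_{r-\delta}) \ge \lambda(\partial Q_r)$ does not tend to zero as $\delta \to 0^+$.

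The paper's proof proceeds by a different, topological approximation. It applies Lemma~\ref{lem:cubo} to get a Borel convex set $D_0$ with $\lambda(D_0) = \theta$ exactly, then sandwiches $D_0$ between a compact $K$ and an open $O$ by inner/outer regularity so that $\lambda(O \setminus K) < \eps$, interposes a Lipschitz open set $C$ with $K \subset C \subset O$ at positive distance from both $K$ and $O^\complement$, and finally considers the tubular neighborhoods $T_\delta \coloneqq \{\, y \in Q : \dist(y,C) < \delta \,\}$. For $\delta$ in a small interval these are open Lipschitz sets squeezed between $K$ and $O$, and their boundaries $\partial T_\delta$ are pairwise disjoint, so only countably many carry positive $\lambda$-mass; any $\delta$ outside that countable set yields $\lambda(\partial T_\delta) = 0$ and $|\lambda(T_\delta) - \theta| \le \lambda(O) - \lambda(K) < \eps$. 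The sandwiching by arbitrary compact and open sets is the key step your proposal is missing: once the exact-mass set $D_0$ produced by Lemma~\ref{lem:cubo} is not a cube, one must leave the family of concentric cubes entirely to approximate its $\lambda$-mass by a Lipschitz open set with $\lambda$-negligible boundary.
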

\begin{proof}
	From the previous lemma we may find a set $D$ satisfying $\lambda(D) = \theta$. Now, from the inner and outer regularity of Radon measures we may find a compact set $K$ and an open set $O$ such that $K \subset D \subset O$ satisfying
	\[
		0 \le \lambda(D) - \lambda(K) < \frac \eps2 \quad \text{and} \quad 0 \le \lambda(O) - \lambda(D) < \frac \eps2.
	\]
	Moreover, since $\dist(K,O^\complement) > 0$, there exists a Lipschitz open set $K \subset C \subset O$ with $\rho \coloneqq \dist(C,K \cup O^\complement) > 0$. On the other hand, since ${C}$ is a Lipschitz compact set, the family of sets 
	\[
		T_\delta = \set{x \in Q}{\dist(x,C) < \delta}, \quad \delta > 0,
	\] 
	is a a family of open Lipschitz sets satisfying $K \subset T_\delta \subset O$ for some $\delta \in [0,\delta_1]$, where $0 < \delta_1 \ll \rho$. Furthermore, if $0 < s < t <\delta_1$, then
	$\partial T_s \cap \partial T_t = \varnothing$. In particular, since $\lambda$ is a Radon measure, there exists a full $\Leb^1$-measure subset of $I \subset [0,\delta_1]$ such that
	\[
		\lambda(\partial T_\delta) = 0 \quad \text{for all $\delta \in I$.}
	\]
	Let us choose $\delta_0 \in I$ and recall from our construction that $K \subset T_{\delta_0} \subset O$. Hence,
	\[
		0 \le |\lambda(T_{\delta_0}) - \lambda(D)| \le  \lambda(O) - \lambda(K) < \eps.
	\]
	This finishes the proof.
\end{proof}
	
\begin{lemma}[shrinking sequence]\label{lem:shrink} Let $\lambda$ be a positive Radon measure on $\Omega$ and assume there exists a tangent measure $\tau \in \Tan(\lambda,x)$ which does not charge points on $\R^d$. Then,
for every $\theta \in (0,1)$, there exists an  infinitesimal sequence $r_m \todown 0$ and a sequence of open Lipschitz sets $R_m \subset Q_{r_m}(x)$ satisfying
\[
	\lambda(\partial R_m) = 0 \quad \text{for all $m \in \Nbb$,}
\]
and
\[
	\lim_{m \to \infty} \frac{\lambda(R_m)}{\lambda(Q_{r_m}(x))} = \theta. 
\]
\end{lemma}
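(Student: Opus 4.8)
The statement is a blow-up/localization result for a positive Radon measure $\lambda$ at a point $x$ admitting a tangent measure $\tau \in \Tan(\lambda,x)$ with $\tau(\{y\}) = 0$ for every $y \in \R^d$. The plan is to transfer the slicing construction of Lemma~\ref{lem:cubo} and Corollary~\ref{cor:inner} from $\tau$ back to $\lambda$ along a blow-up sequence. First I would use the definition of tangent measure to fix scales $s_m \todown 0$ and constants $c_m > 0$ with
\[
	\lambda_m \coloneqq c_m\, \Trm_{x,s_m}[\lambda] \toweakstar \tau \quad \text{in $\M(\R^d)$,}
\]
and, invoking~\eqref{eq:goodbu}, I may normalize so that $c_m = \lambda(Q_{s_m}(x))^{-1}$ and $|\tau|(Q) = |\tau|(\cl Q) = 1$; in particular $\lambda_m(Q) \to 1$. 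Since $\tau$ does not charge points, $\tau \mres Q$ (after normalization by $\tau(Q)$, which is positive) is a probability measure on $Q$ charging no points, so Corollary~\ref{cor:inner} applies: for each fixed tolerance there is an open Lipschitz set $D \subset Q$ with $\tau(\partial D) = 0$ and $|\tau(D) - \theta\, \tau(Q)| $ as small as we wish. Because $\tau(Q)$ is in general not exactly $1$ but can be taken arbitrarily close to $1$ along a suitable further choice of scales (using $\tau(\partial Q)=0$ so that $\lambda_m(Q)\to\tau(Q)$ and $\lambda_m(\cl Q)\to\tau(\cl Q)$ coincide), a standard diagonal argument will absorb this discrepancy.

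\textbf{Key steps in order.} (1) Extract the blow-up sequence $\lambda_m \toweakstar \tau$ with the normalizations above. (2) For each $k \in \Nbb$, apply Corollary~\ref{cor:inner} to the probability measure $\tau(Q)^{-1}\,\tau\mres Q$ with $\eps = 1/k$ to obtain an open Lipschitz $D_k \subset Q$ with $\tau(\partial D_k) = 0$ and $|\tau(D_k) - \theta\,\tau(Q)| \le \tau(Q)/k$. (3) Since $\tau(\partial D_k) = 0$ and $\tau(\partial Q) = 0$, the weak-$*$ convergence $\lambda_m \toweakstar \tau$ gives $\lambda_m(D_k) \to \tau(D_k)$ and $\lambda_m(Q) \to \tau(Q)$ as $m \to \infty$, for each fixed $k$; hence there is $m(k)$, which we may take strictly increasing, with
\[
	\left| \frac{\lambda_{m(k)}(D_k)}{\lambda_{m(k)}(Q)} - \frac{\tau(D_k)}{\tau(Q)} \right| \le \frac 1k.
\]
(4) Unwind the push-forward: writing $r_k \coloneqq s_{m(k)}$ and $R_k \coloneqq x + r_k D_k \subset Q_{r_k}(x)$, the scaling identity $\Trm_{x,r_k}[\lambda](E) = \lambda(x + r_k E)$ and the fact that $c_{m(k)}$ cancels in the ratio give
\[
	\frac{\lambda(R_k)}{\lambda(Q_{r_k}(x))} = \frac{\lambda_{m(k)}(D_k)}{\lambda_{m(k)}(Q)},
\]
while $R_k$ is an open Lipschitz subset of $Q_{r_k}(x)$ with $\lambda(\partial R_k) = c_{m(k)}^{-1}\lambda_{m(k)}(\partial D_k) = 0$ (using $\tau(\partial D_k)=0$ and weak-$*$ convergence applied to the boundary, or more carefully by choosing $D_k$ so that $\partial D_k$ is $\lambda_m$-null for the relevant $m$, which can be arranged exactly as in the proof of Corollary~\ref{cor:inner} by picking the tube-parameter in a co-null set for the fixed measure $\lambda_{m(k)}$). (5) Combine (2) and (3): by the triangle inequality,
\[
	\left| \frac{\lambda(R_k)}{\lambda(Q_{r_k}(x))} - \theta \right| \le \frac 1k + \frac 1k \todown 0,
\]
so $\lambda(R_k)/\lambda(Q_{r_k}(x)) \to \theta$, and $r_k \todown 0$ since $r_k = s_{m(k)}$ and $m(k) \nearrow \infty$. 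Relabel $R_k, r_k$ as $R_m, r_m$.

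\textbf{Main obstacle.} The only genuinely delicate point is the bookkeeping between the two measures: Corollary~\ref{cor:inner} naturally produces, for the fixed measure $\tau$, an open Lipschitz set whose boundary is $\tau$-null, but in step~(4) I need the boundary to be $\lambda$-null, i.e. $\lambda_{m(k)}$-null. The cleanest fix is to run the argument in the other order — first fix $m$, then apply Corollary~\ref{cor:inner} directly to $\lambda_m(Q)^{-1}\lambda_m\mres Q$ (which charges no points once $m$ is large, since $\lambda_m \toweakstar \tau$ and $\tau$ charges none — more precisely one verifies $\lambda_m(\{y\}) \le c_m \lambda(\{x + s_m y\})$ and either $\lambda$ already charges no points near $x$, or one passes to a tangent-of-tangent as in Step~1a of Theorem~\ref{thm:convex}) — obtaining $D \subset Q$ with $\lambda_m(\partial D) = 0$ and $|\lambda_m(D)/\lambda_m(Q) - \theta| \le \eps$. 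This immediately gives $R_m = x + s_m D$ with the required properties after noting $\lambda_m(Q) \to \tau(Q) \in (0,\infty)$, so all ratios are well-controlled. I would present it this way to avoid the interchange-of-limits subtlety entirely; the convergence $\lambda(R_m)/\lambda(Q_{r_m}(x)) \to \theta$ then follows by choosing $\eps = \eps_m \todown 0$ and the scales accordingly.
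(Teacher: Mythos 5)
Your overall strategy matches the paper's: fix a good blow-up $\lambda_m \coloneqq \lambda(Q_{s_m}(x))^{-1}\,\Trm_{x,s_m}[\lambda]\toweakstar\tau$ with $|\tau|(Q)=|\tau|(\cl Q)=1$, apply Corollary~\ref{cor:inner} to $\tau$ to get open Lipschitz $D_m\subset Q$ with $\tau(\partial D_m)=0$ and $\tau(D_m)$ close to $\theta$, pass to the limit in $m$ through a diagonal argument via portmanteau, and read off $R_m=x+r_m D_m$. This is also the paper's route, and your cancellation of $c_m$ in the ratio is correct.

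The genuine gap is in your preferred ``cleanest fix.'' You propose applying Corollary~\ref{cor:inner} directly to $\lambda_m(Q)^{-1}\lambda_m\mres Q$, claiming this measure charges no points once $m$ is large because $\lambda_m\toweakstar\tau$ and $\tau$ is atomless. That inference is false: weak-$*$ convergence does not transmit atomlessness to the approximating sequence. Since $\lambda_m$ is a push-forward of $\lambda$, we have $\lambda_m(\{y\})=\lambda(Q_{s_m}(x))^{-1}\lambda(\{x+s_m y\})$, so $\lambda_m$ has atoms whenever $\lambda$ charges points near $x$ — which is perfectly compatible with the hypothesis that \emph{one} element of $\Tan(\lambda,x)$ is atomless (think of $\lambda=\Hcal^1\mres\Gamma+\sum_i c_i\delta_{x_i}$ with $x\in\Gamma$). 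Your fallback reference to ``tangent-of-tangent as in Step~1a of Theorem~\ref{thm:convex}'' does not rescue this: those steps handle the \emph{complementary} case $\Scal$ (where $\delta_0\in\Tan(\Lambda,x)$), and in any case replacing $\tau$ by a different tangent measure does nothing to remove the atoms of $\lambda_m$, which is not a tangent measure. So you cannot invoke Corollary~\ref{cor:inner} for $\lambda_m$; its hypothesis need not hold.

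The right way to repair the boundary-nullity issue — and the one the paper uses under the phrase ``up to a small re-scaling at each $m$'' — exploits the \emph{structure} of the sets produced by Corollary~\ref{cor:inner} rather than its statement: the construction yields a one-parameter family $\{T_\delta\}_{\delta\in[0,\delta_1]}$ of open Lipschitz sets with $K\subset T_\delta\subset O$ and \emph{pairwise disjoint} boundaries $\partial T_s\cap\partial T_t=\varnothing$ for $s\neq t$. Since $K$ and $O$ are $\delta$-independent, you may first pick $m=m(k)$ large enough so that $\lambda_m(K)$ and $\lambda_m(O)$ are within $\eps$ of $\tau(K)$ and $\tau(O)$ (using $\tau(\partial K)=\tau(\partial O)=0$); the estimate $\theta-\eps<\lambda_m(T_\delta)<\theta+\eps$ then holds simultaneously for \emph{all} $\delta\in[0,\delta_1]$ by monotonicity and sandwiching. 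Only after $m(k)$ is fixed do you choose $\delta$: the disjointness of the boundaries means $\lambda_{m(k)}(\partial T_\delta)>0$ for at most countably many $\delta$, so you may pick $\delta$ in the intersection of the two co-null parameter sets (for $\tau$ and for $\lambda_{m(k)}$). This breaks the chicken-and-egg dependency you ran into in step~(4), where the tube parameter had to be chosen before $m(k)$ but be $\lambda_{m(k)}$-good afterward. With this reordering your argument closes correctly and coincides with the paper's proof.
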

\begin{proof}
	Since $\Trm_{0,r} [\tau]$ belongs to $\Tan(\lambda,x)$ for all $r > 0$, we may assume without any loss of generality that $\tau$ is a good blow-up as in~\eqref{eq:goodbu}. That is, there exists an infinitesimal sequence $r_j \todown 0$ such that
\begin{equation*}
\frac{1}{\lambda({Q_{r_j}(x)})} \, \Trm_{x,r_j}[\lambda] \toweakstar \tau \;\; \text{in $\M(\R^d;W)$},  \qquad  |\tau|(Q) = |\tau|(\cl Q) = 1.
\end{equation*}
By assumption and Corollary~\ref{cor:inner} we may find a sequence of Lipschitz open sets $(D_m)_{m \in \Nbb}$  satisfying $\subset D_m \subset Q$ for all $m \in \Nbb$. Moreover,
\[
	\tau(\partial D_m) = 0 \quad \text{and} \quad 
	|\tau(D_m) - \theta| \le \frac 1m.
\]
Hence, for fixed $m$, we deduce from the strict-convergence of the blow-up sequence that
\[
	\lim_{j \to \infty} \frac{\lambda(x + r_jD_m)}{\lambda({Q_{r_j}(x)})}  = \tau(D_m) = \theta +  \BigO(m^{-1}).
\]
Moreover, up to a small re-scaling at each $m$ we may assume without loss of generality that $\lambda(x + r_j\partial D_m) = 0$.
A standard diagonalization argument yields a subsequence $r_m \coloneqq R_m \todown 0$ such that
\[
	\lim_{m \to \infty} \frac{\lambda(R_{m})}{\lambda(Q_{r_{m}}(x))} = \theta, \qquad R_m \coloneqq x + r_{m}D_m \subset Q_{r_{m}}(x).
\]
By construction, the sequence of sets $(R_m)_{m \in \Nbb}$ has the desired properties. 
\end{proof}

\begin{corollary}\label{cor:theta}
	 Let $\lambda$ be a positive Radon measure on $\Omega$ and assume there exists a tangent measure $\tau \in \Tan(\lambda,x)$ which does not charge points. Let $f$ be a $\lambda$-measurable map and assume furthermore that $x$ is a $\lambda$-Lebesgue point of $f$. Then, for every $\theta \in (0,1)$, there exist a sequence $r_m \todown 0$ and a sequence of Lipschitz open sets $D_m \subset Q_{r_m}$ satisfying
	\[
		\lambda(\partial D_m) = 0, \qquad \lim_{m \to \infty} \, \frac{\lambda(x + D_m)}{\lambda(x + Q_{r_m})} = \theta,
	\]
	and
	\[
		\lim_{m \to \infty} \aveint{x + D_m}{} |f(y) - f(x)| \dd \lambda(y) = 0. 
	\]  
\end{corollary}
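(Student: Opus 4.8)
\textbf{Proof plan for Corollary~\ref{cor:theta}.} The plan is to combine the shrinking-set construction of Lemma~\ref{lem:shrink} with the Lebesgue-point hypothesis on $f$, being careful to interlace the two limiting procedures so that a single diagonal sequence works. First I would invoke Lemma~\ref{lem:shrink}: since $\tau \in \Tan(\lambda,x)$ does not charge points, we obtain an infinitesimal sequence $s_n \todown 0$ and Lipschitz open sets $R_n \subset Q_{s_n}(x)$ with $\lambda(\partial R_n)=0$ and $\lambda(R_n)/\lambda(Q_{s_n}(x)) \to \theta$. Writing $R_n = x + D_n$ with $D_n \subset Q_{s_n}$, these candidate sets already satisfy the first two required conclusions; what remains is to upgrade them so that the averaged oscillation of $f$ over $x + D_n$ also vanishes.

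The key observation is a lower volume (i.e.\ lower $\lambda$-mass) bound: because $\lambda(R_n) = (\theta + \mathrm{o}(1))\,\lambda(Q_{s_n}(x))$ and $R_n \subset Q_{s_n}(x)$, the set $x + D_n$ carries a definite fraction of the mass of $Q_{s_n}(x)$. Since $x$ is a $\lambda$-Lebesgue point of $f$,
\[
	\aveint{Q_{s_n}(x)}{} |f(y) - f(x)| \dd \lambda(y) \to 0 \quad \text{as $n \to \infty$,}
\]
and therefore, estimating the integral over the smaller set $x + D_n$ by the integral over $Q_{s_n}(x)$,
\[
	\aveint{x + D_n}{} |f - f(x)| \dd \lambda
	= \frac{\lambda(Q_{s_n}(x))}{\lambda(x + D_n)} \cdot \frac{1}{\lambda(Q_{s_n}(x))}\int_{x+D_n} |f - f(x)| \dd \lambda
	\le \frac{1}{\theta + \mathrm{o}(1)} \aveint{Q_{s_n}(x)}{} |f - f(x)| \dd \lambda,
\]
which tends to $0$. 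Here one uses $\lambda(x+D_n) = \lambda(R_n) \ge (\theta/2)\lambda(Q_{s_n}(x))$ for $n$ large, so the prefactor $\lambda(Q_{s_n}(x))/\lambda(x+D_n)$ stays bounded. Thus the sequence produced by Lemma~\ref{lem:shrink} already works, and no diagonalization beyond the one internal to that lemma is needed; I would simply relabel $r_m = s_m$, $D_m$ as above, and check the three conclusions in turn.

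The only mild subtlety — and the step I would treat most carefully — is ensuring that the $\lambda$-mass ratio stays bounded below along the chosen subsequence, so that dividing by $\lambda(x+D_m)$ is legitimate; this is immediate from the convergence $\lambda(R_m)/\lambda(Q_{r_m}(x)) \to \theta > 0$, but it is where the non-atomicity of $\tau$ (via Lemma~\ref{lem:shrink}) is essential, since without it the shrinking sets could collapse onto a $\lambda$-null portion of $Q_{r_m}(x)$ and the averaged oscillation bound would fail. A second routine point is that Lemma~\ref{lem:shrink} already guarantees $\lambda(\partial D_m) = 0$ (after the slight rescaling performed there), so the first conclusion requires no extra work. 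Assembling these observations gives the corollary.
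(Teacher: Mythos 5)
Your argument is correct and matches the paper's proof of Corollary~\ref{cor:theta} essentially verbatim: both invoke Lemma~\ref{lem:shrink} for the Lipschitz sets with the prescribed mass fraction and boundary-null property, and both then bound the averaged oscillation over $x+D_m$ by $(\theta + o(1))^{-1}$ times the averaged oscillation over $Q_{r_m}(x)$, which vanishes by the Lebesgue-point hypothesis. No gap.
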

	\begin{proof}
		The existence of the sequence of open Lipschitz sets $(D_m)$ satisfying the first two properties follows directly from the previous corollary. The third property follows from the estimate
		\[
			\aveint{x + D_m}{} |f(y) - f(x)| \dd \lambda(y) \le \frac{1}{\theta + \BigO(m^{-1})}\cdot \aveint{Q_{r_m}(x)}{} |f(y) - f(x)| \dd \lambda(y),
		\]
		and the fact that $x$ is a $\lambda$-Lebesgue point of $f$. 
	\end{proof}

\begin{lemma}\label{lem:tantan}
	Let $\lambda$ be a positive Radon measure on $\Omega$ and assume there exists $x \in \Omega$ such that $\lambda(\{x\}) > 0$. Then $\Tan(\lambda,x) = \set{c\delta_0}{c>0}$. 
\end{lemma}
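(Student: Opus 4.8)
The plan is to exploit the fact that the atom at $x$ forces all the mass of every blow‑up to collapse onto the origin. Write $a:=\lambda(\{x\})>0$. Since $\lambda$ is a Radon measure on $\Omega$ it is finite on $\overline{B_{\rho_0}}(x)\Subset\Omega$ for some $\rho_0>0$, so continuity from above along the decreasing family $\overline{B_\rho}(x)\downarrow\{x\}$ gives $\lambda(\overline{B_\rho}(x))\to a$, and in particular $\lambda(\overline{B_\rho}(x)\setminus\{x\})\to 0$ as $\rho\todown 0$. Moreover, for every $r>0$ the measure $\Trm_{x,r}[\lambda]$ has an atom at $0$ of mass exactly $a$, because $\Trm_{x,r}(x)=0$. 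Throughout I will use the two standard one‑sided semicontinuity properties of the $\Crm_c$–weak‑$*$ convergence of Radon measures: if $\mu_m\toweakstar\mu$, then $\liminf_m\mu_m(U)\ge\mu(U)$ for every bounded open $U$, and $\limsup_m\mu_m(K)\le\mu(K)$ for every compact $K$.

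Fix $\tau\in\Tan(\lambda,x)$ with a defining blow‑up sequence $c_m\,\Trm_{x,r_m}[\lambda]\toweakstar\tau$, where $r_m\todown 0$, $c_m>0$ and $\tau\neq 0$. Passing to a subsequence (which does not change the weak‑$*$ limit) we may assume $c_m\to c\in[0,\infty]$. First I would rule out $c=\infty$: since $\{0\}$ is compact and $c_m\,\Trm_{x,r_m}[\lambda](\{0\})=c_m a$, upper semicontinuity on compacts gives $\tau(\{0\})\ge\limsup_m c_m a=\infty$, contradicting that $\tau$ is a (locally finite) Radon measure. Next I would rule out $c=0$: for each $R>0$ we have $c_m\,\Trm_{x,r_m}[\lambda](B_R)\le c_m\lambda(\overline{B_{Rr_m}}(x))\to c\cdot a=0$, so lower semicontinuity on bounded open sets yields $\tau(B_R)\le\liminf_m c_m\lambda(\overline{B_{Rr_m}}(x))=0$ for every $R$, hence $\tau=0$, again a contradiction. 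Therefore $c\in(0,\infty)$.

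With $c$ finite and positive, the same two inequalities pin $\tau$ down exactly. On the one hand $\tau(\{0\})\ge\limsup_m c_m a=ca$; on the other hand, for every $R>0$, $\tau(\overline{B_R})\le\limsup_m c_m\,\Trm_{x,r_m}[\lambda](\overline{B_R})=\limsup_m c_m\lambda(\overline{B_{Rr_m}}(x))=ca$. Combining, $ca\le\tau(\{0\})\le\tau(\overline{B_R})\le ca$ for all $R$, so $\tau(\R^d\setminus\{0\})=0$ and $\tau(\{0\})=ca>0$, i.e. $\tau=ca\,\delta_0$. This proves $\Tan(\lambda,x)\subset\set{c\,\delta_0}{c>0}$. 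For the reverse inclusion, given $c>0$ take the constant scalings $c_m:=c/a$ and any $r_m\todown 0$: for $\varphi\in\Crm_c(\R^d)$ with $\supp\varphi\subset B_R$,
\[
\int\varphi\,\di\big(c_m\,\Trm_{x,r_m}[\lambda]\big)=c_m\varphi(0)a+c_m\!\!\int_{B_{Rr_m}(x)\setminus\{x\}}\!\!\!\varphi\!\left(\tfrac{y-x}{r_m}\right)\di\lambda(y),
\]
where the first term equals $c\varphi(0)$ and the second is bounded by $c_m\|\varphi\|_\infty\,\lambda(\overline{B_{Rr_m}}(x)\setminus\{x\})\to 0$; hence $c_m\,\Trm_{x,r_m}[\lambda]\toweakstar c\,\delta_0\neq 0$, so $c\,\delta_0\in\Tan(\lambda,x)$.

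I do not expect a genuine obstacle here: the argument is short and the only points requiring a little care are the use of the two one‑sided portmanteau inequalities for the $\Crm_c$–weak‑$*$ topology and the appeal to continuity from above of $\lambda$, both of which are routine once one recalls that a Radon measure is finite on compact subsets of $\Omega$ and that the blow‑up sequence may be taken along a subsequence on which $c_m$ converges in $[0,\infty]$.
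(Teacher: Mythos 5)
Your proof is correct and rests on the same core observation as the paper: the atom at $x$ forces every tangent measure to concentrate at the origin. The execution differs slightly, and in a way that is worth noting. The paper normalizes via the $d$-cone property so that $\tau$ is a probability measure on $Q$, and then invokes the fact that the blow-up sequence can be taken to converge strictly, from which $\tau(Q_s)=1$ for all $s$ follows in one line; that strict-convergence property is stated earlier in the paper only for $|\mu|$-a.e.\ point, so applying it at a single prescribed atom requires a moment's thought. You sidestep that entirely: passing to a subsequence along which $c_m\to c\in[0,\infty]$, ruling out $c=\infty$ by upper semicontinuity on the compact $\{0\}$, ruling out $c=0$ by lower semicontinuity on balls, and then sandwiching $ca\le\tau(\{0\})\le\tau(\overline{B_R})\le ca$ using only the two one-sided portmanteau inequalities. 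This is more elementary and self-contained, and you also verify the reverse inclusion $c\delta_0\in\Tan(\lambda,x)$ explicitly, whereas the paper leaves it implicit via the $d$-cone structure. Both are valid; yours trades a small amount of bookkeeping for not having to justify strict convergence at the atom.
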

\begin{proof}
	Let $\tau \in \Tan(\lambda,x)$ and set $0 < \alpha = \lambda(\{x\})$. Since $\Tan(\lambda,x)$ is a $d$-cone, it is enough to show that $\tau = \delta_0$ when $\tau$ is a probability measure on $Q$. Moreover, we may also assume the blow-up sequence converging to $\tau$ has the form
	\[
		\gamma_j = \frac{1}{\lambda({Q_{r_j}(x)})} \, \Trm_{x,r_j}[\lambda] \toweakstar \tau \;\; \text{in $\M(\R^d;W)$}.
	\] 
	It follows from the strict-convergence $\gamma \to \tau$ on $Q$ that
	\[
		\tau(Q_s) = \lim_{j \to \infty} \gamma_j(Q_s) = \frac 1\alpha \lim_{j \to \infty} \lambda(Q_{sr_j}(x)) = 1 \quad \text{for all $s > 0$.}
	\]
	Since $\tau$ is a probability measure on $Q$, this shows that $\tau \mres Q = \delta_0$ as desired.
\end{proof}

%

\subsection*{Acknowledgments} I would like to thank Bogdan Raita for fruitful conversations regarding the contents of Section~5. Special thanks go to Anna Skorobogatova, who kindly helped me to proofread the paper. Also I want to thank the reviewers for suggesting me to address relevant questions that were not contained in the original version of this paper.
This project has received funding from the European Research Council (ERC) under the European Union's Horizon 2020 research and innovation programme, grant agreement No 757254 (SINGULARITY).

\begin{bibdiv}
	\begin{biblist}
		
		\bib{acerbi1984semicontinuity-}{article}{
			author={Acerbi, Emilio},
			author={Fusco, Nicola},
			title={Semicontinuity problems in the calculus of variations},
			date={1984},
			ISSN={0003-9527},
			journal={Arch. Rational Mech. Anal.},
			volume={86},
			number={2},
			pages={125\ndash 145},
			url={https://0-doi-org.pugwash.lib.warwick.ac.uk/10.1007/BF00275731},
			review={\MR{751305}},
		}
	
		\bib{adamsbook}{book}{
			AUTHOR = {Adams, David R.},
			author={Hedberg, Lars Inge},
			TITLE = {Function spaces and potential theory},
			SERIES = {Grundlehren der Mathematischen Wissenschaften},
			VOLUME = {314},
			PUBLISHER = {Springer-Verlag, Berlin},
			YEAR = {1996},
			PAGES = {xii+366},
			ISBN = {3-540-57060-8},
			MRCLASS = {46E35 (31B99 46N20)},
			review = {\MR{1411441}},
			MRREVIEWER = {Takahide Kurokawa},
			DOI = {10.1007/978-3-662-03282-4},
			URL = {https://0-doi-org.pugwash.lib.warwick.ac.uk/10.1007/978-3-662-03282-4},
		}
	
		\bib{Alberti}{article}{
			author={Alberti, Giovanni},
			title={Rank one property for derivatives of functions with bounded
				variation},
			journal={Proc. Roy. Soc. Edinburgh Sect. A},
			volume={123},
			date={1993},
			number={2},
			pages={239--274},
			issn={0308-2105},
			review={\MR{1215412}},
			doi={10.1017/S030821050002566X},
		}
		
		\bib{alibert1997non-uniform-int}{article}{
			author={Alibert, J.~J.},
			author={Bouchitt\'{e}, G.},
			title={Non-uniform integrability and generalized {Y}oung measures},
			date={1997},
			ISSN={0944-6532},
			journal={J. Convex Anal.},
			volume={4},
			number={1},
			pages={129\ndash 147},
			review={\MR{1459885}},
		}
		
		\bib{ambrosio1992-On_the_relaxation}{article}{
			author={Ambrosio, Luigi},
			author={Dal~Maso, Gianni},
			title={On the relaxation in {${\mathrm BV}(\Omega;{\mathbf R}^m)$} of
				quasi-convex integrals},
			date={1992},
			ISSN={0022-1236},
			journal={J. Funct. Anal.},
			volume={109},
			number={1},
			pages={76\ndash 97},
			url={https://0-doi-org.pugwash.lib.warwick.ac.uk/10.1016/0022-1236(92)90012-8},
			review={\MR{1183605}},
		}
		
		\bib{arroyo-rabasa2017lower-semiconti}{article}{
			author={{Arroyo-Rabasa}, A.},
			author={{De Philippis}, G.},
			author={Rindler, F.},
			title={{Lower semicontinuity and relaxation of linear-growth integral
					functionals under PDE constraints}},
			date={2017},
			journal={to appear in Adv. Calc. Var.},
			eprint={1701.02230},
		}
		
		\bib{arroyo-rabasa2017relaxation-and-}{article}{
			author={Arroyo-Rabasa, Adolfo},
			title={Relaxation and optimization for linear-growth convex integral
				functionals under {PDE} constraints},
			date={2017},
			ISSN={0022-1236},
			journal={J. Funct. Anal.},
			volume={273},
			number={7},
			pages={2388\ndash 2427},
			url={https://0-doi-org.pugwash.lib.warwick.ac.uk/10.1016/j.jfa.2017.06.012},
			review={\MR{3677829}},
		}
		
		\bib{arroyo2018elementary}{article}{
			author={Arroyo-Rabasa, Adolfo},
			title={An elementary approach to the dimension of measures satisfying a
				first-order linear pde constraint},
			date={2019},
			journal={to appear in Proc. Amer. Math. Soc.},
		}

	\bib{arroyo-rabasa2018dimensional-est}{article}{
		author={Arroyo-Rabasa, Adolfo},
		author={De Philippis, Guido},
		author={Hirsch, Jonas},
		author={Rindler, Filip},
		title={Dimensional estimates and rectifiability for measures satisfying
			linear PDE constraints},
		journal={Geom. Funct. Anal.},
		volume={29},
		date={2019},
		number={3},
		pages={639--658},
		issn={1016-443X},
		review={\MR{3962875}},
	}
	
			\bib{new}{article}{
			author={Arroyo-Rabasa, Adolfo},
			author={{De~Philippis}, Guido},
			author={Hirsch, Jonas},
			author={Rindler, Filip},
			author={Anna Skorobogatova}
			title={An integrability estimate for measures satisfying a differential constraint},
			date={2021},
			journal={in preparation},
		}

		\bib{baia2013lower-semiconti}{article}{
			author={Ba\'{\i}a, Margarida},
			author={Chermisi, Milena},
			author={Matias, Jos\'{e}},
			author={Santos, Pedro~M.},
			title={Lower semicontinuity and relaxation of signed functionals with
				linear growth in the context of {$\mathscr A$}-quasiconvexity},
			date={2013},
			ISSN={0944-2669},
			journal={Calc. Var. Partial Differential Equations},
			volume={47},
			number={3-4},
			pages={465\ndash 498},
			url={https://0-doi-org.pugwash.lib.warwick.ac.uk/10.1007/s00526-012-0524-1},
			review={\MR{3070552}},
		}
		
		\bib{baia2013}{article}{
			author={Ba\'{\i}a, Margarida},
			author={Matias, Jos\'{e}},
			author={Santos, Pedro~M.},
			title={Characterization of generalized {Y}oung measures in the {$\mathscr
					A$}-quasiconvexity context},
			date={2013},
			ISSN={0022-2518},
			journal={Indiana Univ. Math. J.},
			volume={62},
			number={2},
			pages={487\ndash 521},
			url={https://0-doi-org.pugwash.lib.warwick.ac.uk/10.1512/iumj.2013.62.4928},
			review={\MR{3158518}},
		}
	
	\bib{BKK18}{article}{
		author={Ba\'{\i}a, Margarida},
		author={Kr\"{o}mer, Stefan},
		author={Kru\v{z}\'{\i}k, Martin},
		title={Generalized $W^{1,1}$-Young measures and relaxation of problems
			with linear growth},
		journal={SIAM J. Math. Anal.},
		volume={50},
		date={2018},
		number={1},
		pages={1076--1119},
		issn={0036-1410},
		review={\MR{3763090}},
		doi={10.1137/16M1103464},
	}
		
		\bib{BallJames1987}{article}{
			author={Ball, J.~M.},
			author={James, R.~D.},
			title={Fine phase mixtures as minimizers of energy},
			date={1987},
			ISSN={0003-9527},
			journal={Arch. Rational Mech. Anal.},
			volume={100},
			number={1},
			pages={13\ndash 52},
			url={https://0-doi-org.pugwash.lib.warwick.ac.uk/10.1007/BF00281246},
			review={\MR{906132}},
		}
		
		\bib{ball1984w1p-quasiconvex}{article}{
			author={Ball, J.~M.},
			author={Murat, F.},
			title={{$W^{1,p}$}-quasiconvexity and variational problems for multiple
				integrals},
			date={1984},
			ISSN={0022-1236},
			journal={J. Funct. Anal.},
			volume={58},
			number={3},
			pages={225\ndash 253},
			url={https://0-doi-org.pugwash.lib.warwick.ac.uk/10.1016/0022-1236(84)90041-7},
			review={\MR{759098}},
		}
		
		\bib{barroso2000a-relaxation-th}{article}{
			author={Barroso, Ana~Cristina},
			author={Fonseca, Irene},
			author={Toader, Rodica},
			title={A relaxation theorem in the space of functions of bounded
				deformation},
			date={2000},
			ISSN={0391-173X},
			journal={Ann. Scuola Norm. Sup. Pisa Cl. Sci. (4)},
			volume={29},
			number={1},
			pages={19\ndash 49},
			url={http://www.numdam.org/item?id=ASNSP_2000_4_29_1_19_0},
			review={\MR{1765537}},
		}
	
	
	\bib{BrezisBook}{book}{
		author={Brezis, Haim},
		title={Functional analysis, Sobolev spaces and partial differential
			equations},
		series={Universitext},
		publisher={Springer, New York},
		date={2011},
		pages={xiv+599},
		isbn={978-0-387-70913-0},
		review={\MR{2759829}},
	}
	
		
		
		\bib{chipot1988}{article}{
			author={Chipot, Michel},
			author={Kinderlehrer, David},
			title={Equilibrium configurations of crystals},
			date={1988},
			ISSN={0003-9527},
			journal={Arch. Rational Mech. Anal.},
			volume={103},
			number={3},
			pages={237\ndash 277},
			url={https://0-doi-org.pugwash.lib.warwick.ac.uk/10.1007/BF00251759},
			review={\MR{955934}},
		}
		
		\bib{conti2015dislocations}{article}{
			author={Conti, Sergio},
			author={Garroni, Adriana},
			author={Massaccesi, Annalisa},
			title={Modeling of dislocations and relaxation of functionals on
				1-currents with discrete multiplicity},
			date={2015},
			ISSN={0944-2669},
			journal={Calc. Var. Partial Differential Equations},
			volume={54},
			number={2},
			pages={1847\ndash 1874},
			url={https://0-doi-org.pugwash.lib.warwick.ac.uk/10.1007/s00526-015-0846-x},
			review={\MR{3396435}},
		}
		
		\bib{dacorogna1982weak-continuity}{book}{
			author={Dacorogna, B.},
			title={{Weak continuity and weak lower semicontinuity of nonlinear
					functionals}},
			series={Lecture Notes in Mathematics},
			publisher={Springer-Verlag, Berlin-New York},
			date={1982},
			volume={922},
			ISBN={3-540-11488-2},
		}
	
		\bib{palmieri}{article}{
			author={De Philippis, Guido},
			author={Palmieri, Luca},
			author={Rindler, Filip},
			title={On the two-state problem for general differential operators},
			journal={Nonlinear Anal.},
			volume={177},
			date={2018},
			number={part B},
			part={part B},
			pages={387--396},
			issn={0362-546X},
			review={\MR{3886580}},
			doi={10.1016/j.na.2018.03.015},
		}
		
		\bib{de-philippis2016on-the-structur}{article}{
			author={{De Philippis}, G.},
			author={{Rindler}, F.},
			title={{On the structure of ${\mathcal A}$-free measures and
					applications}},
			date={2016},
			ISSN={0003-486X; 1939-8980/e},
			journal={{Ann. Math.}},
			volume={184},
			number={3},
			pages={1017\ndash 1039},
			review={\MR{3549629}},
		}
		
		\bib{de-philippis2017characterizatio}{article}{
			author={De~Philippis, Guido},
			author={Rindler, Filip},
			title={Characterization of generalized {Y}oung measures generated by
				symmetric gradients},
			date={2017},
			ISSN={0003-9527},
			journal={Arch. Ration. Mech. Anal.},
			volume={224},
			number={3},
			pages={1087\ndash 1125},
			url={https://0-doi-org.pugwash.lib.warwick.ac.uk/10.1007/s00205-017-1096-1},
			review={\MR{3621818}},
		}
		
		\bib{desinone1993}{article}{
			author={De~Simone, Antonio},
			title={Energy minimizers for large ferromagnetic bodies},
			date={1993},
			ISSN={0003-9527},
			journal={Arch. Rational Mech. Anal.},
			volume={125},
			number={2},
			pages={99\ndash 143},
			url={https://0-doi-org.pugwash.lib.warwick.ac.uk/10.1007/BF00376811},
			review={\MR{1245068}},
		}
		
		
		\bib{diperna1987oscillations-an}{article}{
			author={DiPerna, Ronald~J.},
			author={Majda, Andrew~J.},
			title={Oscillations and concentrations in weak solutions of the
				incompressible fluid equations},
			date={1987},
			ISSN={0010-3616},
			journal={Comm. Math. Phys.},
			volume={108},
			number={4},
			pages={667\ndash 689},
			url={http://0-projecteuclid.org.pugwash.lib.warwick.ac.uk/euclid.cmp/1104116630},
			review={\MR{877643}},
		}
		
		\bib{federer1969geometric-measu}{book}{
			author={Federer, Herbert},
			title={Geometric measure theory},
			series={Die Grundlehren der mathematischen Wissenschaften, Band 153},
			publisher={Springer-Verlag New York Inc., New York},
			date={1969},
			url={https://0-mathscinet-ams-org.pugwash.lib.warwick.ac.uk/mathscinet-getitem?mr=0257325},
			review={\MR{0257325}},
		}

		\bib {kruzik}{article}{
	AUTHOR = {Fonseca, Irene},
	author = {Kru\v{z}\'{\i}k, Martin},
	TITLE = {Oscillations and concentrations generated by {$\scr A$}-free
		mappings and weak lower semicontinuity of integral
		functionals},
	JOURNAL = {ESAIM Control Optim. Calc. Var.},
	FJOURNAL = {ESAIM. Control, Optimisation and Calculus of Variations},
	VOLUME = {16},
	YEAR = {2010},
	NUMBER = {2},
	PAGES = {472--502},
	ISSN = {1292-8119},
	MRCLASS = {49J45 (28A33 28C15 35B05)},
	MRNUMBER = {2654203},
	MRREVIEWER = {Virginia De Cicco},
	URL = {https://0-doi-org.pugwash.lib.warwick.ac.uk/10.1051/cocv/2009006},
		}
		
		\bib{fonseca1999mathcal-a-quasi}{article}{
			author={Fonseca, I.},
			author={M{\"{u}}ller, S.},
			title={{{$\mathcal A$}-quasiconvexity, lower semicontinuity, and {Y}oung
					measures}},
			date={1999},
			ISSN={0036-1410},
			journal={SIAM J. Math. Anal.},
			volume={30},
			number={6},
			pages={1355\ndash 1390},
			url={http://dx.doi.org/10.1137/S0036141098339885},
			review={\MR{1718306}},
		}
		
		\bib{fonseca2004mathcal-a-quasi}{article}{
			author={Fonseca, Irene},
			author={Leoni, Giovanni},
			author={M\"{u}ller, Stefan},
			title={{$\mathscr A$}-quasiconvexity: weak-star convergence and the gap},
			date={2004},
			ISSN={0294-1449},
			journal={Ann. Inst. H. Poincar\'{e} Anal. Non Lin\'{e}aire},
			volume={21},
			number={2},
			pages={209\ndash 236},
			url={https://0-doi-org.pugwash.lib.warwick.ac.uk/10.1016/S0294-1449(03)00036-2},
			review={\MR{2021666}},
		}
		
		\bib{fonseca1993relaxation-of-q}{article}{
			author={Fonseca, Irene},
			author={M\"{u}ller, Stefan},
			title={Relaxation of quasiconvex functionals in {${\mathrm BV}(\Omega,{\mathbf
						R}^p)$} for integrands {$f(x,u,\nabla u)$}},
			date={1993},
			ISSN={0003-9527},
			journal={Arch. Rational Mech. Anal.},
			volume={123},
			number={1},
			pages={1\ndash 49},
			url={https://0-doi-org.pugwash.lib.warwick.ac.uk/10.1007/BF00386367},
			review={\MR{1218685}},
		}
	
	\bib{garroni}{article}{
		author={Garroni, A.},
		author={Nesi, V.},
		title={Rigidity and lack of rigidity for solenoidal matrix fields},
		journal={Proc. R. Soc. Lond. Ser. A Math. Phys. Eng. Sci.},
		volume={460},
		date={2004},
		number={2046},
		pages={1789--1806},
		issn={1364-5021},
		review={\MR{2067561}},
		doi={10.1098/rspa.2003.1249},
	}
	
		\bib{guerra}{article}{
			author = {{Guerra}, Andr{\'e}},
			author = {Rai{\c{t}}{\u{a}}, Bogdan},
			title = {On the necessity of the constant rank condition for $L^p$ estimates},
			journal = {arXiv e-prints},
			keywords = {Mathematics - Classical Analysis and ODEs, Mathematics - Analysis of PDEs},
			year = {2020},
			month = {jul},
			eid = {arXiv:2007.00484},
			pages = {arXiv:2007.00484},
			archivePrefix = {arXiv},
			eprint = {2007.00484},
			primaryClass = {math.CA},
			adsurl = {https://ui.adsabs.harvard.edu/abs/2020arXiv200700484G},
			adsnote = {Provided by the SAO/NASA Astrophysics Data System}
		}
	
	\bib{gustafson}{article}{
		author={Gustafson, Derek},
		title={A generalized Poincar\'{e} inequality for a class of constant
			coefficient differential operators},
		journal={Proc. Amer. Math. Soc.},
		volume={139},
		date={2011},
		number={8},
		pages={2721--2728},
		issn={0002-9939},
		review={\MR{2801612}},
		doi={10.1090/S0002-9939-2011-10607-5},
	}
	

	
		\bib{HormanderBook}{book}{
			author={H\"{o}rmander, Lars},
			title={The analysis of linear partial differential operators. I},
			series={Classics in Mathematics},
			note={Distribution theory and Fourier analysis;
				Reprint of the second (1990) edition [Springer, Berlin;  MR1065993
				(91m:35001a)]},
			publisher={Springer-Verlag, Berlin},
			date={2003},
			pages={x+440},
			isbn={3-540-00662-1},
			review={\MR{1996773}},
			doi={10.1007/978-3-642-61497-2},
		}
	
		\bib{hudson2018existence}{article}{
			author={Hudson, Thomas},
			title={An existence result for discrete dislocation dynamics in three
				dimensions},
			date={2018},
			journal={arXiv preprint arXiv:1806.00304},
			eprint={arXiv:1806.00304},
		}
		
		\bib{james1992}{incollection}{
			author={James, R.~D.},
			author={Kinderlehrer, D.},
			title={Frustration and microstructure: an example in magnetostriction},
			date={1992},
			booktitle={Progress in partial differential equations: calculus of
				variations, applications ({P}ont-\`a-{M}ousson, 1991)},
			series={Pitman Res. Notes Math. Ser.},
			volume={267},
			publisher={Longman Sci. Tech., Harlow},
			pages={59\ndash 81},
			review={\MR{1194189}},
		}
	
	
		\bib{kinderlehrer1991characterizatio}{article}{
			author={Kinderlehrer, David},
			author={Pedregal, Pablo},
			title={Characterizations of {Y}oung measures generated by gradients},
			date={1991},
			ISSN={0003-9527},
			journal={Arch. Rational Mech. Anal.},
			volume={115},
			number={4},
			pages={329\ndash 365},
			url={https://0-doi-org.pugwash.lib.warwick.ac.uk/10.1007/BF00375279},
			review={\MR{1120852}},
		}
	
		\bib{kinder1994}{article}{
			author={Kinderlehrer, David},
			author={Pedregal, Pablo},
			title={Gradient {Y}oung measures generated by sequences in {S}obolev
				spaces},
			date={1994},
			ISSN={1050-6926},
			journal={J. Geom. Anal.},
			volume={4},
			number={1},
			pages={59\ndash 90},
			url={https://0-doi-org.pugwash.lib.warwick.ac.uk/10.1007/BF02921593},
			review={\MR{1274138}},
		}

		\bib{kirchheim2016on-rank-one-con}{article}{
			author={Kirchheim, Bernd},
			author={Kristensen, Jan},
			title={On rank one convex functions that are homogeneous of degree one},
			date={2016},
			ISSN={0003-9527},
			journal={Arch. Ration. Mech. Anal.},
			volume={221},
			number={1},
			pages={527\ndash 558},
			url={https://0-doi-org.pugwash.lib.warwick.ac.uk/10.1007/s00205-016-0967-1},
			review={\MR{3483901}},
		}
		
		\bib{kristensen1999}{article}{
			author={Kristensen, Jan},
			title={Lower semicontinuity in spaces of weakly differentiable
				functions},
			date={1999},
			ISSN={0025-5831},
			journal={Math. Ann.},
			volume={313},
			number={4},
			pages={653\ndash 710},
			url={https://0-doi-org.pugwash.lib.warwick.ac.uk/10.1007/s002080050277},
			review={\MR{1686943}},
		}
	
		\bib{kristensen2019oscillation}{article}{
			title={Oscillation and concentration in sequences of PDE constrained measures},
			author={Kristensen, Jan},
			author= {Rai{\c{t}}{\u{a}}, Bogdan},
			journal={arXiv preprint arXiv:1912.09190},
			year={2019},
		}
		
		\bib{kristensen2010characterizatio}{article}{
			author={Kristensen, Jan},
			author={Rindler, Filip},
			title={Characterization of generalized gradient {Y}oung measures
				generated by sequences in {$W^{1,1}$} and {BV}},
			date={2010},
			ISSN={0003-9527},
			journal={Arch. Ration. Mech. Anal.},
			volume={197},
			number={2},
			pages={539\ndash 598},
			url={https://0-doi-org.pugwash.lib.warwick.ac.uk/10.1007/s00205-009-0287-9},
			review={\MR{2660519}},
		}
		
		\bib{kristensen2010relaxation-of-s}{article}{
			author={Kristensen, Jan},
			author={Rindler, Filip},
			title={Relaxation of signed integral functionals in {BV}},
			date={2010},
			ISSN={0944-2669},
			journal={Calc. Var. Partial Differential Equations},
			volume={37},
			number={1-2},
			pages={29\ndash 62},
			url={https://0-doi-org.pugwash.lib.warwick.ac.uk/10.1007/s00526-009-0250-5},
			review={\MR{2564396}},
		}

		\bib{marcellini}{article}{
			author={Marcellini, Paolo},
			title={Approximation of quasiconvex functions, and lower semicontinuity
				of multiple integrals},
			date={1985},
			ISSN={0025-2611},
			journal={Manuscripta Math.},
			volume={51},
			number={1-3},
			pages={1\ndash 28},
			url={https://0-doi-org.pugwash.lib.warwick.ac.uk/10.1007/BF01168345},
			review={\MR{788671}},
		}
		
		\bib{mattila1995geometry-of-set}{book}{
			author={Mattila, P.},
			title={{Geometry of sets and measures in {E}uclidean spaces}},
			series={Cambridge Studies in Advanced Mathematics},
			publisher={Cambridge University Press, Cambridge},
			date={1995},
			volume={44},
			ISBN={0-521-46576-1; 0-521-65595-1},
			url={http://dx.doi.org/10.1017/CBO9780511623813},
			review={\MR{1333890}},
		}
	
		\bib{Morrey1}{article}{
			author={Morrey, Charles B., Jr.},
			title={Quasi-convexity and the lower semicontinuity of multiple
				integrals},
			journal={Pacific J. Math.},
			volume={2},
			date={1952},
			pages={25--53},
			issn={0030-8730},
			review={\MR{54865}},
		}
	
	\bib{Morrey2}{book}{
		author={Morrey, Charles B., Jr.},
		title={Multiple integrals in the calculus of variations},
		series={Die Grundlehren der mathematischen Wissenschaften, Band 130},
		publisher={Springer-Verlag New York, Inc., New York},
		date={1966},
		pages={ix+506},
		review={\MR{0202511}},
	}
		
		\bib{muller1987homogenization-}{article}{
			author={M\"{u}ller, Stefan},
			title={Homogenization of nonconvex integral functionals and cellular
				elastic materials},
			date={1987},
			ISSN={0003-9527},
			journal={Arch. Rational Mech. Anal.},
			volume={99},
			number={3},
			pages={189\ndash 212},
			url={https://0-doi-org.pugwash.lib.warwick.ac.uk/10.1007/BF00284506},
			review={\MR{888450}},
		}
	
		\bib{muller_diagonal}{article}{
			AUTHOR = {M\"{u}ller, Stefan},
			TITLE = {Rank-one convexity implies quasiconvexity on diagonal
				matrices},
			JOURNAL = {Internat. Math. Res. Notices},
			FJOURNAL = {International Mathematics Research Notices},
			YEAR = {1999},
			NUMBER = {20},
			PAGES = {1087--1095},
			ISSN = {1073-7928},
			MRCLASS = {49J45 (42C40 49J10)},
			review = {\MR{1728018}},
			MRREVIEWER = {John M. Ball},
			URL = {https://0-doi-org.pugwash.lib.warwick.ac.uk/10.1155/S1073792899000598},
		}
	
		\bib{MullerBook}{article}{
			author={M\"{u}ller, Stefan},
			title={Variational models for microstructure and phase transitions},
			conference={
				title={Calculus of variations and geometric evolution problems},
				address={Cetraro},
				date={1996},
			},
			book={
				series={Lecture Notes in Math.},
				volume={1713},
				publisher={Springer, Berlin},
			},
			date={1999},
			pages={85--210},
			review={\MR{1731640}},
			doi={10.1007/BFb0092670},
		}
		
		\bib{murat1978compacite-par-c}{article}{
			author={Murat, F.},
			title={{Compacit{{\'{e}}} par compensation}},
			date={1978},
			journal={Ann. Sc. Norm. Sup. Pisa Cl. Sci.},
			volume={5},
			number={3},
			pages={489\ndash 507},
			url={http://www.numdam.org/item?id=ASNSP_1978_4_5_3_489_0},
		}
		
		\bib{murat1981compacite-par-c}{article}{
			author={Murat, F.},
			title={{Compacit{{\'{e}}} par compensation: condition n{{\'{e}}}cessaire
					et suffisante de continuit{{\'{e}}} faible sous une hypoth{{\`{e}}}se de rang
					constant}},
			date={1981},
			journal={Ann. Sc. Norm. Sup. Pisa Cl. Sci.},
			volume={8},
			number={1},
			pages={69\ndash 102},
			url={http://www.numdam.org/item?id=ASNSP_1981_4_8_1_69_0},
		}
		
		\bib{murat1985optimality-cond}{incollection}{
			author={Murat, F.},
			author={Tartar, L.},
			title={Optimality conditions and homogenization},
			date={1985},
			booktitle={Nonlinear variational problems ({I}sola d'{E}lba, 1983)},
			series={Res. Notes in Math.},
			volume={127},
			publisher={Pitman, Boston, MA},
			pages={1\ndash 8},
		}
		
		\bib{preiss1987geometry-of-mea}{article}{
			author={Preiss, David},
			title={Geometry of measures in {${\mathbf R}^n$}: distribution,
				rectifiability, and densities},
			date={1987},
			ISSN={0003-486X},
			journal={Ann. of Math. (2)},
			volume={125},
			number={3},
			pages={537\ndash 643},
			url={https://0-mathscinet-ams-org.pugwash.lib.warwick.ac.uk/mathscinet-getitem?mr=890162},
			review={\MR{890162}},
		}
		
		\bib{raita2018constant}{thesis}{
			author={Rai\c{t}\u{a}, Bogdan},
			title={Constant rank operators: lower semi-continuity and
				{$L^1$}-estimates},
			type={Ph.D. Thesis},
			date={2018},
		}
		
		\bib{raictua2018mathrm}{article}{
			author={Rai\c{t}\u{a}, Bogdan},
			title={{$\mathrm L^1$}-estimates for constant rank operators},
			date={2018},
			journal={arXiv preprint arXiv:1811.10057},
			eprint={arXiv:1811.10057},
		}
		
		\bib{raitua2019potentials}{article}{
			author={Rai\c{t}\u{a}, Bogdan},
			title={Potentials for {$\mathcal A$}-quasiconvexity},
			date={2019},
			ISSN={0944-2669},
			journal={Calc. Var. Partial Differential Equations},
			volume={58},
			number={3},
			pages={Art. 105, 16},
			url={https://0-doi-org.pugwash.lib.warwick.ac.uk/10.1007/s00526-019-1544-x},
			review={\MR{3958799}},
		}
	
		
		\bib{rindler2011lower-semiconti}{article}{
			author={Rindler, Filip},
			title={Lower semicontinuity for integral functionals in the space of
				functions of bounded deformation via rigidity and {Y}oung measures},
			date={2011},
			ISSN={0003-9527},
			journal={Arch. Ration. Mech. Anal.},
			volume={202},
			number={1},
			pages={63\ndash 113},
			url={https://0-doi-org.pugwash.lib.warwick.ac.uk/10.1007/s00205-011-0408-0},
			review={\MR{2835863}},
		}
		
		\bib{rindler2012lower-semiconti}{article}{
			author={Rindler, Filip},
			title={Lower semicontinuity and {Y}oung measures in {BV} without
				{A}lberti's rank-one theorem},
			date={2012},
			ISSN={1864-8258},
			journal={Adv. Calc. Var.},
			volume={5},
			number={2},
			pages={127\ndash 159},
			url={https://0-doi-org.pugwash.lib.warwick.ac.uk/10.1515/acv.2011.008},
			review={\MR{2912698}},
		}
		
		\bib{rindlerlocal}{article}{
			author={Rindler, Filip},
			title={A local proof for the characterization of {Y}oung measures
				generated by sequences in {BV}},
			date={2014},
			ISSN={0022-1236},
			journal={J. Funct. Anal.},
			volume={266},
			number={11},
			pages={6335\ndash 6371},
			url={https://0-doi-org.pugwash.lib.warwick.ac.uk/10.1016/j.jfa.2014.03.010},
			review={\MR{3192455}},
		}
	
		\bib{wilcox2}{article}{
			AUTHOR = {Schulenberger, John R.},
			author ={Wilcox, Calvin H.},
			TITLE = {Coerciveness inequalities for nonelliptic systems of partial
				differential equations},
			JOURNAL = {Ann. Mat. Pura Appl. (4)},
			FJOURNAL = {Annali di Matematica Pura ed Applicata. Serie Quarta},
			VOLUME = {88},
			YEAR = {1971},
			PAGES = {229--305},
			ISSN = {0003-4622},
			MRCLASS = {47F05 (35J30)},
			review = {\MR{313887}},
			MRREVIEWER = {B. Hellwig},
			URL = {https://0-doi-org.pugwash.lib.warwick.ac.uk/10.1007/BF02415070},
		}
		\bib{wilcox1}{article}{
			AUTHOR = {Schulenberger, John R.},
			author ={Wilcox, Calvin H.},
			TITLE = {A coerciveness inequality for a class of nonelliptic operators
				of constant deficit},
			JOURNAL = {Ann. Mat. Pura Appl. (4)},
			FJOURNAL = {Annali di Matematica Pura ed Applicata. Serie Quarta},
			VOLUME = {92},
			YEAR = {1972},
			PAGES = {77--84},
			ISSN = {0003-4622},
			MRCLASS = {35B45 (47F05)},
			review = {\MR{316867}},
			MRREVIEWER = {Marvin Shinbrot},
			URL = {https://0-doi-org.pugwash.lib.warwick.ac.uk/10.1007/BF02417937},
		}
	

			\bib{steinbook}{book}{
			author={Stein, Elias M.},
			TITLE = {Harmonic analysis: real-variable methods, orthogonality, and
	oscillatory integrals},
SERIES = {Princeton Mathematical Series},
VOLUME = {43},
NOTE = {With the assistance of Timothy S. Murphy,
	Monographs in Harmonic Analysis, III},
PUBLISHER = {Princeton University Press, Princeton, NJ},
YEAR = {1993},
PAGES = {xiv+695},
ISBN = {0-691-03216-5},
MRCLASS = {42-02 (35Sxx 43-02 47G30)},
review = {\MR{1232192}},
MRREVIEWER = {Michael Cowling},
}

				\bib{steinweiss}{book}{
		AUTHOR = {Stein, Elias M.}
		author = {and Weiss, Guido},
		TITLE = {Introduction to {F}ourier analysis on {E}uclidean spaces},
		NOTE = {Princeton Mathematical Series, No. 32},
		PUBLISHER = {Princeton University Press, Princeton, N.J.},
		YEAR = {1971},
		PAGES = {x+297},
		MRCLASS = {42A92 (31B99 32A99 46F99 47G05)},
		review = {\MR{0304972}},
		MRREVIEWER = {Edwin Hewitt},
	}

		\bib{sverak}{article}{
		title={On regularity for the Monge-Ampere equation without convexity assumptions},
		author={{\v{S}}ver{\'a}k, V},
		journal={Preprint, Heriot-Watt University},
		year={1991},
		}

		\bib{tartar1979compensated-com}{incollection}{
			author={Tartar, L.},
			title={{Compensated compactness and applications to partial differential
					equations}},
			date={1979},
			booktitle={Nonlinear anal. mech. {H}eriot-{W}att {S}ymposium, {V}ol. {IV}},
			series={Res. Notes in Math.},
			volume={39},
			publisher={Pitman, Boston, Mass.-London},
			pages={136\ndash 212},
		}
		
		\bib{tartar1983the-compensated}{incollection}{
			author={Tartar, L.},
			title={{The compensated compactness method applied to systems of
					conservation laws}},
			date={1983},
			booktitle={Syst. nonlinear partial differ. equations ({O}xford, 1982)},
			series={NATO Adv. Sci. Inst. Ser. C Math. Phys. Sci.},
			volume={111},
			publisher={Reidel, Dordrecht},
			pages={263\ndash 285},
		}
	
		\bib{tartar}{article}{
			author={Tartar, Luc},
			title={Some remarks on separately convex functions},
			conference={title={Microstructure and phase transition},},
			book={series={IMA Vol. Math. Appl.},
				volume={54},
				publisher={Springer, New York},},
			date={1993},
			pages={191--204},
			review={\MR{1320538}},
		}
	
		\bib{triebelbook}{book}{
			AUTHOR = {Triebel, Hans},
			TITLE = {Theory of function spaces},
			SERIES = {Modern Birkh\"{a}user Classics},
			PUBLISHER = {Birkh\"{a}user/Springer Basel AG, Basel},
			YEAR = {2010},
			PAGES = {285},
			ISBN = {978-3-0346-0415-4; 978-3-0346-0416-1},
			MRCLASS = {01A75},
			MRNUMBER = {3024598},
			review={\MR{3024598}},
		}
		
		
		\bib{young1937generalized-cur}{article}{
			author={Young, L.~C.},
			title={{Generalized curves and the existence of an attained absolute
					minimum in the calculus of variations}},
			date={1937},
			journal={C. R. Soc. Sci. Varsovie, Cl. III},
			volume={30},
			pages={212\ndash 234},
		}
		
		\bib{young1942generalized-sur}{article}{
			author={Young, L.~C.},
			title={{Generalized surfaces in the calculus of variations}},
			date={1942},
			journal={{Ann. Math.}},
			volume={43},
			pages={84\ndash 103},
		}
		
		\bib{young1942generalized-surII}{article}{
			author={Young, L.~C.},
			title={{Generalized surfaces in the calculus of variations. II}},
			language={English},
			date={1942},
			journal={{Ann. Math.}},
			volume={43},
			pages={530\ndash 544},
		}
		
	\end{biblist}
\end{bibdiv}

\end{document}